\newcommand\scalemath[2]{\scalebox{#1}{\mbox{\ensuremath{\displaystyle #2}}}}
\newtheorem{theorem}{Theorem}
\newtheorem{lemma}[theorem]{Lemma}
\newtheorem{remark}[theorem]{Remark}
\newtheorem{corollary}[theorem]{Corollary}
\newtheorem{definition}[theorem]{Definition}
\newtheorem{proposition}[theorem]{Proposition}
\numberwithin{theorem}{section} \numberwithin{equation}{section}
\begin{document}
\title[On isogenies among abelian surfaces]{On isogenies among certain abelian surfaces}
\author{Adrian Clingher}
\address{Dept.\!~of Mathematics, University of Missouri - St. Louis, MO 63121}
\email{clinghera@umsl.edu}
\author{Andreas Malmendier}
\address{Dept.\!~of Mathematics \& Statistics, Utah State University, Logan, UT 84322}
\email{andreas.malmendier@usu.edu}
%
\author{Tony Shaska}
\address{Dept.~of Mathematics \& Statistics,  Oakland University, Rochester, MI 48309}
\email{shaska@oakland.edu}
\begin{abstract}
We construct a three-parameter family of non-hyperelliptic and bielliptic plane genus-three curves whose associated Prym variety is two-isogenous to the Jacobian variety of a general hyperelliptic genus-two curve. Our construction is based on the existence of special elliptic fibrations with the section on the associated Kummer surfaces that provide a simple geometric interpretation for the rational double cover induced by the two-isogeny between the abelian surfaces. 
\end{abstract}
\keywords{genus-three curves, abelian surfaces, isogeny, trigonal construction, Kummer surfaces}
\subjclass[2010]{14J28, 14H40}
\maketitle
\section{Introduction}
A smooth, projective curve is called hyperelliptic if it admits a map of degree two onto a curve of genus zero. Within the (coarse) moduli space of irreducible, projective curves of genus three $\mathcal{M}_3$ we denote the hyperelliptic locus by $\mathcal{M}_3^h$ and the isomorphism class of such hyperelliptic curve $\mathcal{H}$ by $ [\mathcal{H}] \in \mathcal{M}^h_3$. It is known that  $\mathcal{M}_3^h$ is an irreducible five-dimensional sub-variety\footnote{The hyperelliptic involution on an irreducible, smooth, projective curve of genus $g$ is unique if $g \ge 2$.} of $\mathcal{M}_3$. Within the moduli space $\mathcal{M}_3$, we also define the bielliptic locus
\[
 \mathcal{M}_3^{b} = \left\lbrace [\mathcal{D}] \in \mathcal{M}_3 \Big| \; \mathcal{D} \; \text{is bielliptic} \right\rbrace \;,
 \]
 where bielliptic means that irreducible, projective curve $\mathcal{D}$ of genus three admits a degree-two morphism $\pi^{\mathcal{D}}_\mathcal{E}: \mathcal{D} \to \mathcal{E}$ onto an elliptic curve $\mathcal{E}$. We denote by $[\mathcal{D}]\in \mathcal{M}_3$ the isomorphism class of $\mathcal{D}$, and by $\tau$ the involution, i.e., the element of $\operatorname{Aut}(\mathcal{D})$ which interchanges the sheets of $\pi^{\mathcal{D}}_\mathcal{E}$, so that  $\mathcal{E} \cong \mathcal{D} /\langle\tau\rangle$.  For such a bielliptic genus-three curve $\mathcal{D}$ with a bielliptic involution $\tau$, the Prym variety $\operatorname{Prym}(\mathcal{D},\pi^{\mathcal{D}}_\mathcal{E})$ is defined to be the connected component of the kernel of the induced norm map $\pi^{\mathcal{D}}_{\mathcal{E}, \star}$.   %
 \par We recall from \cite{MR932781} that $\mathcal{M}_3^{b}$ is an irreducible four-dimensional sub-variety of  $\mathcal{M}_3$, and it is the unique component of maximal dimension of the singular locus\footnote{By the Castelnuovo-Severi inequality it follows that bielliptic curves of genus $g \ge 6$ admit precisely one bielliptic structure and that bielliptic curves of genus $g \ge 4$ cannot be hyperelliptic.} of $\mathcal{M}_3$. It was proven in \cite{MR1816214} that:
\begin{proposition}
\label{thm:genus3-hyperelliptc-bielliptic}
\hspace{1em}
\begin{enumerate}
\item $[\mathcal{H}] \in \mathcal{M}_3^{b} \cap \mathcal{M}_3^h$ iff $\mathcal{H}$ is a double cover of a genus-two curve $\mathcal{C}$.
\item $\mathcal{M}_3^{b} \cap \mathcal{M}_3^h$ is an irreducible, 3-dimensional, rational sub-variety of $\mathcal{M}_3^{b}$.
\end{enumerate}
\end{proposition}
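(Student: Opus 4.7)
The plan hinges on the uniqueness, hence centrality, of the hyperelliptic involution $\iota$ on a curve of genus~$\geq 2$. Assuming $\mathcal{H}$ carries both $\iota$ and a bielliptic involution $\tau$, one checks $\iota \neq \tau$ (else $\mathcal{E}=\mathcal{H}/\tau=\mathbb{P}^1$), and centrality forces them to commute, so $K:=\langle\iota,\tau\rangle$ is a Klein four group with a third involution $\iota\tau$. Since $\mathcal{H}/K$ is an involution quotient of $\mathbb{P}^1=\mathcal{H}/\iota$, it too is $\mathbb{P}^1$, so $g(\mathcal{H}/K)=0$. I would then invoke the Klein four genus identity
\[
g(\mathcal{H}/\iota)+g(\mathcal{H}/\tau)+g(\mathcal{H}/\iota\tau) \;=\; g(\mathcal{H})+2\,g(\mathcal{H}/K),
\]
which, with $0+1+g(\mathcal{H}/\iota\tau)=3+0$, gives $g(\mathcal{H}/\iota\tau)=2$, so $\mathcal{H}$ covers the genus-two curve $\mathcal{C}:=\mathcal{H}/\langle\iota\tau\rangle$. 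The reverse direction runs identically: a hyperelliptic $\mathcal{H}$ covering a genus-two $\mathcal{C}$ yields a Klein four group generated by $\iota$ and the covering involution, and the identity forces the third quotient to have genus~$1$.

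\textbf{Part (2).}
For the geometric structure of $\mathcal{M}_3^b\cap\mathcal{M}_3^h$ I would produce a normal form. Writing $\mathcal{H}:y^2=f(x)$ with $\deg f=8$, and letting $\bar\tau$ denote the involution $\tau$ induces on $\mathbb{P}^1=\mathcal{H}/\iota$, a Riemann--Hurwitz count on $\mathcal{H}/\tau\to\mathbb{P}^1/\bar\tau=\mathbb{P}^1$ shows that the two $\bar\tau$-fixed points of $\mathbb{P}^1$ cannot themselves be branch points; hence the branch locus consists of four $\bar\tau$-orbits. After normalising $\bar\tau:x\mapsto -x$, this yields
\[
\mathcal{H}:\; y^2 \;=\; \prod_{i=1}^{4}(x^2-a_i),
\]
with bielliptic quotient $\mathcal{E}:s^2=\prod_i(t-a_i)$ via $(t,s)=(x^2,y)$ and genus-two quotient $\mathcal{C}:z^2=t\prod_i(t-a_i)$ via $z=xy$, consistent with Part~(1). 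Every class in the intersection admits such a model, so $\mathcal{M}_3^b\cap\mathcal{M}_3^h$ is the quotient of an open subset of $\mathbb{A}^4$ (with coordinates $a_i$) by $S_4$ (permuting the $a_i$) together with the normaliser of $\langle\bar\tau\rangle$ in $\operatorname{PGL}_2$ acting by $a_i\mapsto\lambda^2 a_i$ and $a_i\mapsto c^2/a_i$. Since $\mathbb{A}^4$ is irreducible and the $\mathbb{G}_m$-part of the action has $1$-dimensional orbits, the quotient is irreducible of dimension $4-1=3$.

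\textbf{Rationality and main obstacle.}
For rationality I would exhibit three algebraically independent rational invariants for the above finite-plus-torus action, built from elementary symmetric functions of the $a_i$ together with homogeneous combinations that absorb the involution $a_i\mapsto c^2/a_i$, showing the resulting function field is purely transcendental; alternatively, the quotient map $\mathcal{H}\mapsto\mathcal{C}=\mathcal{H}/\langle\iota\tau\rangle$ provides a dominant morphism onto the rational space $\mathcal{M}_2$, whose generic fibre parametrises a non-trivial $2$-torsion of $\operatorname{Jac}(\mathcal{C})$ and can be shown to be rational. The principal obstacle is justifying the normal form -- specifically, the Riemann--Hurwitz count that forbids $\bar\tau$-fixed branch points and guarantees no exotic strata contribute components of maximal dimension; once this reduction is secured, both the dimension count and the rationality claim reduce to standard invariant-theoretic manipulations.
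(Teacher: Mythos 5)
The paper does not actually prove this proposition; it is imported verbatim from \cite{MR1816214}, so there is no internal argument to compare against. On its own merits, your Part (1) is correct and complete: the uniqueness (hence centrality) of the hyperelliptic involution, the Klein-four group $\langle\iota,\tau\rangle$, and the genus identity $\sum_i g(\mathcal{H}/\sigma_i)=g(\mathcal{H})+2g(\mathcal{H}/K)$ (Accola/Kani--Rosen) give exactly $g(\mathcal{H}/\iota\tau)=2$, and the converse runs the same computation backwards. Your normal form $y^2=\prod_{i=1}^4(x^2-a_i)$ is precisely the model the paper itself derives later in Equation~(\ref{Eq:Rosenhain_g3}), and the exclusion of the $\bar\tau$-fixed points from the branch locus is right (the cleanest justification: $\iota\tau$ is fixed-point-free since $\mathcal{H}\to\mathcal{C}$ is \'etale by Riemann--Hurwitz, so no Weierstrass point can be fixed by $\tau$, forcing the four $\tau$-fixed points to be the two full non-branch fibres over the fixed points of $\bar\tau$). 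The irreducibility and the dimension count $4-1=3$ then follow as you say.

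The genuine gap is rationality. Neither of your two routes establishes it. The second is fallacious: the map $[\mathcal{H}]\mapsto[\mathcal{C}]$ is generically \emph{finite} of degree $15$ (its fibre is the set of nontrivial two-torsion points of $\operatorname{Jac}(\mathcal{C})$, a finite set, not a positive-dimensional variety one could "show to be rational"), and a generically finite dominant map onto a rational variety says nothing about rationality of the source --- this is exactly the situation of Prym-type moduli covers, whose rationality is a nontrivial theorem when true. The first route is only a declaration of intent: exhibiting three algebraically independent invariants of the $(\mathbb{G}_m\rtimes\mathbb{Z}/2)\times S_4$ action does not show that they \emph{generate} the invariant function field, and rationality of quotients by finite groups is precisely where Noether's problem can fail. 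To close the gap you would need to actually compute the invariant field, e.g.\ via the explicit (dihedral) invariants for genus-three hyperelliptic curves with an extra involution as in \cite{MR3118614}, which is how the literature the paper cites handles it.
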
 
\par On the other hand, among the smooth genus-three curves, there are the ones that are given as plane quartics in $\mathbb{P}^2$.  However, smooth plane quartics are never hyperelliptic. We can ask whether the following abelian surfaces are related by isogeny: (1) the Jacobian variety $\operatorname{Jac}(\mathcal{C})$ associated with a hyperelliptic and bielliptic curve $\mathcal{H}$ in $\mathcal{M}_3^{b} \cap \mathcal{M}_3^h$ covering a smooth genus-two curve $\mathcal{C}$, and (2) the Prym variety $\operatorname{Prym}(\mathcal{D},\pi^{\mathcal{D}}_\mathcal{E})$ associated with a bielliptic plane genus-three curve $\mathcal{D}$ in $\mathcal{M}_3^{b}$. That is, we ask for what curves $\mathcal{H}$ and $\mathcal{D}$, there is an isogeny $\Psi: \operatorname{Prym}(\mathcal{D},\pi^{\mathcal{D}}_\mathcal{E}) \to \operatorname{Jac}(\mathcal{C})$, where curves are embedded as divisors representing the respective polarization.
\par In~\cite{MR946234} Barth studied abelian surfaces $\mathfrak{A}$ with $(1,2)$-polarization line bundle $\mathcal{L}$ and proved their close connection with Prym varieties of smooth bielliptic genus-three curves. An excellent summary of Barth's construction was given by Garbagnati in \cites{Garbagnati08, MR3010125, MR3563178}. Abelian surfaces with $(1,2)$-polarization were also discussed in \cites{MR2306633, MR2804549, MR2729013}. Bielliptic genus-three curves and abelian surfaces with $(1,2)$-polarization have also appeared as spectral curves of Lax representations of certain algebraic integrable systems, most importantly (for us) the Kovalevskaya top \cites{MR912838,MR923636, MR990136, MR3798190}. On the other hand, Kovalevskaya presented in her celebrated paper \cite{MR1554772} a separation of variables of the corresponding integrable system using a certain (hyperelliptic) genus-two curve, nowadays commonly referred to as \emph{Kowalewski curve}, whose Jacobian is associated with the integrals of motion of the Kovalevskaya top. 
\par Barth's seminal work proved that the linear system $|\mathcal{L}|$ is a pencil on $\mathfrak{A}$ of bielliptic genus-three curves. Horozov and van Moerbeke \cite{MR990136} wrote down a specific Lefschetz pencil of bielliptic genus-three curves $\mathcal{D}_\lambda$ over $\mathbb{P}^1 \ni \lambda$, generically smooth and with twelve double points. However, the members of the pencil are generically \emph{not} plane genus-three curves. The construction of the pencil is based on Barth's elegant geometric description for Prym varieties of bielliptic genus-three curves as intersection of quadrics in projective space~\cites{MR946234, Clingher:2018aa}. However, less attention has been given in this context to the elliptic fibrations with section that the associated Kummer surfaces admit. The first two authors studied several of the elliptic fibrations on the Kummer surface associated with an abelian surface with $(1,2)$-polarization in \cite{Clingher:2017aa}, using the results of Mehran \cites{MR2804549, MR2306633, MR2708983} and Garbagnati \cite{Garbagnati08}. Among these fibrations is a special elliptic fibration with twelve singular fibers which is directly induced by the linear system $|\mathcal{L}|$ on $\mathfrak{A}$.
\par In this article we will construct a three-parameter non-hyperelliptic and bielliptic genus-three curve whose associated Prym variety is two-isogenous to the Jacobian variety of the general three-parameter hyperelliptic genus-two curve.  We will consider the genus-two curve $\mathcal{C}$ to be in Rosenhain form
\begin{equation}
\label{Eq:Rosenhain_g2_intro}
  \mathcal{C}: \quad Y^2 = X \big(X- 1\big) \big(X-\lambda_1 \big) \big(X-\lambda_2 \big) \big(X-\lambda_3 \big)  \;,
\end{equation} 
with parameters $\lambda_1, \lambda_2, \lambda_3$. We define the subgroup $\Gamma_2(2n) = \lbrace M \in \Gamma_2 | \, M \equiv \mathbb{I} \mod{2n}\rbrace$ and Igusa's congruence subgroups $\Gamma_2(2n, 4n) = \lbrace M \in \Gamma_2(2n) | \, \operatorname{diag}(B) =  \operatorname{diag}(C) \equiv \mathbb{I} \mod{4n}\rbrace$ of the Siegel modular group $\Gamma_2= \{ M =\bigl(\begin{smallmatrix}A&B\\ C&D \end{smallmatrix} \bigr) \in \operatorname{Sp}_4(\mathbb{Z}) \}$, such that
\begin{equation}
 \Gamma_2/\Gamma_2(2)\cong S_6, \quad  \Gamma_2(2)/\Gamma_2(2,4)\cong (\mathbb{Z}/2\mathbb{Z})^4, \quad \Gamma_2(2,4)/\Gamma_2(4,8)\cong (\mathbb{Z}/2\mathbb{Z})^9,
\end{equation}
where $S_6$ is the permutation group of six elements.  Then, $\lambda_1, \lambda_2, \lambda_3$ are modular with respect to $\Gamma_2(2)$. We define a modular form $l$ such that $l^2=\lambda_1\lambda_2\lambda_3$, and three modular forms $m^{(i,j,k)}$ such that $(m^{(i,j,k)})^2=(\lambda_i-\lambda_j)(\lambda_i-\lambda_k)/[(1-\lambda_j)(1-\lambda_k)]$ with $\{i, j, k\} = \{1,2,3\}$. Then, $l$ is a modular with respect to $\Gamma_2(2,4)$, and $m^{(i,j,k)}$ are modular with respect to $\Gamma_2(4,8)$. 
\par The main theorem of this article is the following:
\begin{theorem}\label{thm:main}
Consider the plane bielliptic genus-three curves $\mathscr{D}_{[s^*_0:s^*_1]}$, given by
\begin{equation}
\left( w^2 - u^2 - \frac{C(s^*_0,s^*_1)}{E(s^*_0,s^*_1)} \, uv - \frac{D(s^*_0,s^*_1)}{E(s^*_0,s^*_1)}\,  v^2 \right)^2 = u^4 + B(s^*_0,s^*_1) \, u^2 v^2 + A^2(s^*_0,s^*_1) \, v^4 \,,
\end{equation}
where $[u:v:w] \in \mathbb{P}^2$, $A, B, C, D, E$ are polynomials in $[s_0:s_1]\in \mathbb{P}^1$ with coefficients in $\mathbb{Z}[l, \lambda_1, \lambda_2, \lambda_3]$ defined in Appendix~\ref{App:coeffs}, and $[s^*_0:s^*_1] \in \mathbb{P}^1$ is one of the six special points given by
\begin{equation}
\label{eqn:sol_s_intro}
  [s^*_0:s^*_1]=\Big[  (1+\lambda_i - \lambda_j - \lambda_k) l \; : \; ( \lambda_i - \lambda_j \lambda_k)
  \pm  m^{(i,j,k)}  (1-\lambda_j)(\lambda_0-\lambda_k)  \Big]\;,
\end{equation}  
with $\{ i, j, k\} = \{1,2,3\}$ such that $E(s^*_0,s^*_1) \not =0$.
\par Then, the curves $\mathscr{D}_{[s^*_0:s^*_1]}$ are smooth and irreducible, and admit a degree-two covering $\pi^{\mathscr{D}}_{\mathscr{E}}: \; \mathscr{D}_{[s^*_0:s^*_1]} \to \mathscr{E}_{[s^*_0:s^*_1]}$ onto a smooth elliptic curve $\mathscr{E}_{[s^*_0:s^*_1]}\cong \mathcal{D}^*/\langle \tau \rangle$ where $\tau$ is the bielliptic involution. Moreover, the Prym variety $\operatorname{Prym}( \mathscr{D}_{[s^*_0:s^*_1]} , \pi^{\mathscr{D}}_{\mathscr{E}})$ is an abelian surface that admits a $(1,2)$-isogeny 
\[ 
\Psi: \operatorname{Prym}( \mathscr{D}_{[s^*_0:s^*_1]} , \pi^{\mathscr{D}}_{\mathscr{E}}) \to \operatorname{Jac}(\mathcal{C})\]
onto the principally polarized abelian surface $\operatorname{Jac}(\mathcal{C})$ of $\mathcal{C}$ in Equation~(\ref{Eq:Rosenhain_g2_intro}).
\end{theorem}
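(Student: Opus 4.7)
The plan is to combine Barth's theorem on $(1,2)$-polarized abelian surfaces with an explicit model of the elliptic fibration on the associated Kummer surface induced by the pencil $|\mathcal{L}|$. I would start from $\operatorname{Jac}(\mathcal{C})$ with its principal polarization $\theta$, fix a suitable two-torsion point $t$, and form the quotient $\mathfrak{A} := \operatorname{Jac}(\mathcal{C}) / \langle t \rangle$; the polarization $\theta$ descends to a polarization $\mathcal{L}$ of type $(1,2)$ on $\mathfrak{A}$, and the dual isogeny $\widehat{\mathfrak{A}} \to \widehat{\operatorname{Jac}(\mathcal{C})} \cong \operatorname{Jac}(\mathcal{C})$ is also of type $(1,2)$. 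By Barth's theorem~\cite{MR946234}, the linear system $|\mathcal{L}|$ is a pencil of bielliptic genus-three curves whose Prym variety is canonically identified with $\widehat{\mathfrak{A}}$. Thus the claimed $(1,2)$-isogeny $\operatorname{Prym}(\mathscr{D},\pi^{\mathscr{D}}_{\mathscr{E}}) \to \operatorname{Jac}(\mathcal{C})$ is automatic as soon as the curves $\mathscr{D}_{[s^*_0:s^*_1]}$ are realized as smooth members of $|\mathcal{L}|$.

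The bulk of the argument is the explicit realization of the pencil $|\mathcal{L}|$. My approach is to use the elliptic fibration on $\operatorname{Kum}(\mathfrak{A})$ with twelve singular fibers directly induced by $|\mathcal{L}|$, whose existence was established in~\cite{Clingher:2017aa}. Since $\operatorname{Kum}(\mathfrak{A})$ has Picard rank $17$ and its N\'eron-Severi lattice admits generators defined over the ring of modular forms with respect to $\Gamma_2(4,8)$, one can write down a Weierstrass model for this fibration whose coefficients are polynomials in $[s_0:s_1]\in\mathbb{P}^1$ with coefficients among $\lambda_1,\lambda_2,\lambda_3,l$; these become the polynomials $A, B, C, D, E$ of Appendix~\ref{App:coeffs}. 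Pulling back a generic fiber along the rational double cover $\mathfrak{A}\dashrightarrow\operatorname{Kum}(\mathfrak{A})$ reconstructs a bielliptic genus-three curve $\mathscr{D}_{[s_0:s_1]}\in|\mathcal{L}|$, and the elliptic base of the fibration simultaneously supplies the target $\mathscr{E}_{[s_0:s_1]}$, so the bielliptic cover $\pi^{\mathscr{D}}_{\mathscr{E}}$ is built into the construction.

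The last and hardest step is to identify the six values $[s^*_0:s^*_1]$ for which $\mathscr{D}_{[s_0:s_1]}$ admits the precise plane-quartic normal form in the statement. A smooth non-hyperelliptic bielliptic genus-three curve has a canonical model of the shape $(w^2 - Q(u,v))^2 = P_4(u,v)$ with $Q$ quadratic and $P_4$ a binary quartic, reflecting the splitting of $H^0(K_{\mathscr{D}})$ into eigenspaces of $\tau$; this is where the theory of even eights and rational double covers enters. I would compute the canonical embedding of the generic pencil member directly from the Weierstrass model and then impose that the resulting $Q$ and $P_4$ take the normalized form of the theorem. This imposes an algebraic condition on $[s_0:s_1]$ which I expect to reduce to a single degree-six equation; by the $S_3$-symmetry in $\{\lambda_1,\lambda_2,\lambda_3\}$ and the sign ambiguity in $m^{(i,j,k)}$, the six roots should group naturally as in \eqref{eqn:sol_s_intro}, indexed by the three partitions $\{i,j,k\}=\{1,2,3\}$ together with a sign (the combinatorics of the six odd theta characteristics on $\mathcal{C}$). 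Smoothness and irreducibility then follow from $\Delta_{\mathscr{E}}(s^*_0,s^*_1)\Delta_{\mathscr{D}}(s^*_0,s^*_1)\neq 0$, checked directly from the discriminant of the defining equation together with \eqref{eqn:singular_locus_pencil_app}. The main obstacle is this explicit elimination: verifying that the imposed polynomial condition has exactly the six solutions stated, which is highly sensitive to the precise normalization of $A,B,C,D,E$ and is the delicate computation on which the whole theorem hinges.
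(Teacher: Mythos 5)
Your overall architecture matches the paper's: Barth's Theorem~\ref{thm:Barth} supplies the $(1,2)$-polarized Prym, the elliptic fibration with twelve $I_2$ fibers on $\operatorname{Kum}(\mathfrak{A})$ realizes the quotients of the members of $|\mathcal{L}|$, and a degree-two isogeny onto $\operatorname{Jac}(\mathcal{C})$ finishes the isogeny claim (the paper obtains this isogeny concretely from Mehran's even eights via the rational double cover $\psi:\mathscr{E}\to\mathscr{E}'$ of Proposition~\ref{prop:Kum}, which is what ties the explicit Rosenhain coefficients to a specific $(1,2)$-polarized $\mathfrak{A}$; your abstract quotient $\operatorname{Jac}(\mathcal{C})/\langle t\rangle$ does not by itself identify which of the $15$ isogeny classes carries the fibration you wrote down). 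One smaller confusion: only $A$ and $B$ are Weierstrass coefficients of the quotient fibration $\mathscr{E}_{[s_0:s_1]}$; the coefficients $C,D,E$ encode the quadric $a_2(u,v)$, i.e.\ the branch divisor of $\pi^{\mathscr{D}}_{\mathscr{E}}$, and arise from an entirely separate computation (Proposition~\ref{cor:connection_3points}), not from the Weierstrass model.

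The genuine gap is at the step you yourself flag as the crux. Your plan --- ``compute the canonical embedding of the generic pencil member and impose that $Q$ and $P_4$ take the normalized form'' --- is not well-posed as stated: the generic smooth member of $|\mathcal{L}|$ is already a plane quartic, and any bielliptic quartic can be written as $(w^2-Q)^2=P_4$, so it is unclear what is being normalized or why only six fibers survive. The missing idea is the geometric criterion the paper actually uses: the four branch points of $\mathscr{D}\to\mathscr{E}$ are the four distinguished sections $\mathsf{O},\mathsf{S}'_1,\mathsf{S}'_2,\mathsf{S}'_3$ representing the exceptional classes $K_0,\dots,K_3$ over the base points of $|\mathcal{L}|$ (Proposition~\ref{prop:Divisors}), and the specific normal form of the theorem --- with right-hand side $u^4+Bu^2v^2+A^2v^4$ and branch locus $\{\mathsf{O},2\mathsf{p}_1,\mathsf{p}_1+\mathsf{p}_2,-3\mathsf{p}_1-\mathsf{p}_2\}$ --- exists precisely when these four points sum to $\mathsf{O}$ in the fiberwise group law (Propositions~\ref{cor:bielliptic_quotient} and~\ref{cor:connection_3points}). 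Via the explicit Mordell--Weil basis this condition reads $2(2\mathsf{S}_1+\mathsf{S}_3)=\mathsf{O}$, which splits into $2\mathsf{S}_1+\mathsf{S}_3=\mathsf{O}$ (the quartic $p_4=0$, whose four roots are discarded) and $2\mathsf{S}_1+\mathsf{S}_3=\mathsf{T}_i$ (three quadratics $p_2^{(i,j,k)}=0$, yielding exactly the six points of Equation~\eqref{eqn:sol_s_intro}); see Proposition~\ref{prop:SpecialFibers} and Corollary~\ref{cor:special_points}. Your guess of ``a single degree-six equation'' happens to match the count of the surviving roots, but without the group-law criterion your elimination would also produce the four spurious roots of $p_4=0$ and gives no mechanism for separating them, nor for expressing the roots in terms of the modular forms $m^{(i,j,k)}$.
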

\par  The geometry underlying Theorem~\ref{thm:main} is the following: if one chooses 6 points in $\mathbb{P}^1$, partitioned into 2 and 4, then one obtains three double covers of $\mathbb{P}^1$, branched respectively at the marked sets of 2, 4, and all 6 points. We will label them $\mathcal{R}$, $\mathcal{E}$, $\mathcal{C}$ with genus $0$, $1$, and $2$, respectively. These three curves have a common double cover $\mathcal{H}$, which can be obtained as the fiber product over $\mathbb{P}^1$ of any two of the three. This is a Galois cover of $\mathbb{P}^1$ with group $\mathbb{Z}/2\mathbb{Z} \times \mathbb{Z}/2\mathbb{Z}$, and the three intermediate curves $\mathcal{R}$, $\mathcal{E}$, $\mathcal{H}$ are the quotients by the three $\mathbb{Z}/2\mathbb{Z}$ subgroups. $\mathcal{H}$ is hyperelliptic via the map to $\mathcal{R}$ and bielliptic via the map to $\mathcal{E}$. Its Jacobian decomposes as $\operatorname{Jac}{\mathcal{H}} \cong \operatorname{Jac}{\mathcal{C}} \times \mathcal{E}$. On the other hand, Recillas’ famous trigonal construction \cite{MR0480505} relates to such a tower $\{ \mathcal{R}, \mathcal{E}, \mathcal{H} \}$ a non-hyperelliptic and bielliptic genus-three curve $\mathcal{D}$ such that the Prym of the latter is two-isogenous to $\operatorname{Jac}{\mathcal{C}}$; see~\cite{MR1188194}.
\par Amazingly, one can use two special elliptic fibrations with section on the Kummer surfaces associated with $\operatorname{Jac}{\mathcal{C}}$ and $\operatorname{Prym}(\mathcal{D},\pi^{\mathcal{D}}_\mathcal{E})$ to provide a simple geometric interpretation for the rational double cover induced by the two-isogeny between the abelian surfaces.  We then work backwards and obtain explicit expressions for the coefficients of suitable normal forms for $\mathcal{D}$ and $\mathcal{C}$ in terms of Siegel modular forms. This is the content of Theorem~\ref{thm:main}. Applications of isogenies of Pryms to hyperelliptic Jacobians are of central importance in cryptography; see \cite{frey-sh}*{Section 9} for further details. 
\par The paper is structured as follows: in Section~\ref{sec2} we consider an abelian surface $\mathfrak{A}$ with polarization of type $(1,2)$. On the Kummer surface $\operatorname{Kum}(\mathfrak{A})$ we identify a special elliptic fibration, alongside with a set of generators for the Mordell-Weil group and symplectic automorphisms in Theorem~\ref{thm:involutions} that turn out to be  crucial for the proof of Theorem~\ref{thm:main}.  In Section~\ref{sec3}  we determine a convenient normal form for a hyperelliptic and bielliptic genus-three curve that is the double cover of a general genus-two curve. We then generalize this construction to pencils and establish the connection to the aforementioned elliptic pencil on $\operatorname{Kum}(\mathfrak{A})$, providing explicit formulas for the coefficients of all normal forms in terms of suitable modular forms. In Section~\ref{sec4} we give a geometric description of plane bielliptic genus-three curves and determine a criterion for the quotient (elliptic) curves to have a rational level-two structure and branch locus. In Section~\ref{sec:proof} we carry out the proof of Theorem~\ref{thm:main}: using the results of Section~\ref{sec2}, we identify six special members of the fibration induced by the pencil $|\mathcal{L}|$ on $\mathfrak{A}$ where the elliptic fiber satisfies the conditions of Proposition~\ref{cor:bielliptic_quotient} and its double cover is a smooth bielliptic plane quartic curve. Using the results of Section~\ref{sec3} the plane bielliptic genus-three curve can then be related back to the Rosenhain normal form of a general genus-two curve to prove our theorem.

\subsection*{Acknowledgments}
We would like to thank the reviewers for their thoughtful comments and efforts towards improving our manuscript. A.M. acknowledges support from the Simons Foundation through grant no.~202367. 
\bigskip

\section{Abelian and Kummer surfaces}
\label{sec2}
Polarizations on an abelian surface $\mathfrak{A}\cong \mathbb{C}^2/\Lambda$ are known to correspond to positive definite hermitian forms $H$ on $\mathbb{C}^2$,  satisfying $E = \operatorname{Im} H(\Lambda,\Lambda) \subset \mathbb{Z}$.  In turn, such a hermitian form determines a line bundle $\mathcal{L}$ in the N\'eron-Severi group $\mathrm{NS}(\mathfrak{A})$. One may always then choose a basis of $\Lambda$ such that $E$ is given  by a matrix $\bigl(\begin{smallmatrix} 0&D\\ -D&0 \end{smallmatrix} \bigr)$ with $D=\bigl(\begin{smallmatrix}d_1&0\\ 0&d_2 \end{smallmatrix} \bigr)$ where $d_1, d_2 \in \mathbb{N}$, $d_1, d_2 \ge 0 $, and $d_1$ divides $d_2$. The pair $(d_1, d_2)$ gives the {\it type} of the polarization. 
\par If $\mathfrak{A}=\operatorname{Jac}(\mathcal{C})$ is the Jacobian of a smooth curve $\mathcal{C}$ of genus two, the hermitian  form associated to the divisor class $[\mathcal{C}]$ is a polarization of type $(1, 1)$ - a \emph{principal polarization}.  Conversely, a principally polarized abelian surface is either the Jacobian of a smooth curve of genus two or the product of two complex elliptic curves, with the product polarization. 
\par Let $\mathfrak{A}$ an abelian surface defined over $\mathbb{C}$ and $-\mathbb{I}$ be the minus identity involution on $\mathfrak{A}$.  The quotient $\mathfrak{A}/\langle -\mathbb{I} \rangle$ has sixteen ordinary double points and its minimum resolution, denoted $\operatorname{Kum}(\mathfrak{A})$, is known as the \emph{Kummer surface} of  $\mathfrak{A}$. Thus, there is an even set of 16 disjoint rational curves $K_i$ for $0\le i \le 15$ such that $K_i \circ K_j = -2 \delta_{ij}$.   The double points are the images of the order-two points $\{ \mathsf{P}_0, \dots, \mathsf{P}_{15}\}$ on $\mathfrak{A}$, i.e., elements of $\mathfrak{A}[2]$, and the disjoint rational curves $\{ K_0, \dots, K_{15} \}$ are the exceptional divisors introduced in the blow-up process. The minimal primitive sub-lattice which contains all these curves in called \emph{Kummer lattice}. In particular, they form an even set in the N\'eron-Severi lattice. We recall that an \emph{even set} of rational curves is a set of disjoint $(-2)$-rational smooth curves $\{ K_0, \dots, K_{15} \}$ such that there exists a divisor $\delta$ in the N\'eron-Severi lattice with $K_0 + \dots +  K_{15} \sim 2 \delta$ where $\sim$ denotes linear equivalence. Since they form an even set, the class $\hat{K} = \frac{1}{2}(K_0 + \dots + K_{15} )$ is an element of this lattice with $\hat{K}^2=-8$. However, the classes $K_i$ and $\hat{K}$ do not generate, over $\mathbb{Z}$, the minimal primitive lattice containing these curves. The N\'eron-Severi lattice $\mathrm{NS}(\operatorname{Kum}\mathfrak{A})$ is generated over $\mathbb{Q}$ by the classes $K_i$, and one additional class $H$ with $H^2=8$ and $H \circ K_i=0$ for $0\le i \le 15$.
\subsection{Abelian surfaces with \texorpdfstring{$(1,2)$}{(1,2)}-polarization}
\label{ssec:AbSrfc}
Let us now consider the generic abelian surface $\mathfrak{A}$ with a $(1,2)$-polarization.  Let this polarization of type $(d_1,d_2)=(1,2)$ be given by an ample symmetric line bundle $\mathcal{L}$ such that $\mathcal{L}^2 =4$. We also assume that the Picard number $\rho(\mathfrak{A})=1$ such that the N\'eron-Severi group of $\mathfrak{A}$ is generated by $\mathcal{L}$ \cite{MR2062673}. The line bundle $\mathcal{L}$ defines an associated rational map $\phi=\phi_{\mathcal{L}}: \mathfrak{A} \to \mathbb{P}^{d_1d_2-1}=\mathbb{P}^1$. Since $h^0(\mathfrak{A},\mathcal{L})=2$, the linear system $|\mathcal{L}|$ is a pencil on $\mathfrak{A}$ and the map $\phi_{\mathcal{L}}$ is a rational map $\phi_{\mathcal{L}}: \mathfrak{A} \to \mathbb{P}^1$. As $\mathcal{L}^2 = 4$, each curve in $|\mathcal{L}|$ has self-intersection equal to $4$.  Since we assumed $\rho(\mathfrak{A})=1$, the abelian surface $\mathfrak{A}$ cannot be a product of two elliptic curves or isogenous to a product of two elliptic curves. 
\par It was proven in \cite{MR2062673}*{Prop.~4.1.6, Lemma 10.1.2} that the linear system $|\mathcal{L}|$ has exactly four base points if $(d_1,d_2)=(1,2)$. To characterize these four base points, Barth proves in~\cite{MR946234} that the base points form the translation group $T(\mathcal{L})=\{P \in \mathfrak{A} \mid \, t_P^*\mathcal{L}=\mathcal{L}\}$ where elements of $\mathfrak{A}$ act by translation $t_p(x)=x+P$. Moreover, he proves $T(\mathcal{L}) \cong (\mathbb{Z}/2\mathbb{Z})^2$ and that the base points all have order two on the abelian surface $\mathfrak{A}$, and we denote them by $\{ \mathsf{P}_0, \mathsf{P}_1, \mathsf{P}_2, \mathsf{P}_3 \}$. A curve in the pencil $|\mathcal{L}|$ is never singular at any of the base points $\{ \mathsf{P}_0, \mathsf{P}_1, \mathsf{P}_2, \mathsf{P}_3 \}$; see \cite{MR2729013}*{Lemma~3.2}. 
Barth's seminal duality theorem in~\cite{MR946234}  can then be stated as follows:
\begin{theorem}[Barth]
\label{thm:Barth}
In the situation above, let $\mathcal{D} \in |\mathcal{L}|$ be a smooth genus-three curve in the pencil $|\mathcal{L}|$. There exists a bielliptic  involution $\tau$ on $\mathcal{D}$ with degree-two quotient map $\pi^{\mathcal{D}}_{\mathcal{E}}: \mathcal{D} \to \mathcal{E} =\mathcal{D}/\langle \tau \rangle$ such that $\mathfrak{A}$ is naturally isomorphic to the Prym variety $\operatorname{Prym}(\mathcal{D},\pi^{\mathcal{D}}_{\mathcal{E}})$ and the involution $-\mathbb{I}$ restrict to $\tau$.
\par Conversely, if $\mathcal{D}$ is a smooth bielliptic genus-three curve with degree-two quotient map $\pi^{\mathcal{D}}_{\mathcal{E}}: \mathcal{D} \to \mathcal{E} =\mathcal{D}/\langle \tau \rangle$ then $\mathcal{D}$ is embedded in $\operatorname{Prym}(\mathcal{D},\pi^{\mathcal{D}}_{\mathcal{E}})$ as a curve of self-intersection four. The Prym variety $\operatorname{Prym}(\mathcal{D},\pi^{\mathcal{D}}_{\mathcal{E}})$ is an abelian surface with a polarization of type $(1,2)$.
\end{theorem}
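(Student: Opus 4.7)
The plan is to treat the two implications separately, in both cases exploiting the interaction between the bielliptic involution $\tau$ and the minus identity involution $-\mathbb{I}$ on the ambient abelian surface.

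For the forward direction, I first apply adjunction on $\mathfrak{A}$ (whose canonical class is trivial) to obtain $2g(\mathcal{D})-2 = \mathcal{D}^2 = 4$, confirming $g(\mathcal{D})=3$. Since $\mathcal{L}$ is symmetric, $(-\mathbb{I})^*\mathcal{L}\cong\mathcal{L}$; a standard eigenspace analysis of $-\mathbb{I}$ acting on $H^0(\mathfrak{A},\mathcal{L})$ for the $(1,2)$-type (in which one eigenspace exhausts all of $H^0$) shows the induced action on $|\mathcal{L}|\cong\mathbb{P}^1$ is trivial, so every $\mathcal{D}\in|\mathcal{L}|$ is preserved setwise, and I set $\tau:=(-\mathbb{I})|_{\mathcal{D}}$. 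The four base points $\mathsf{P}_0,\dots,\mathsf{P}_3\in T(\mathcal{L})\subset\mathfrak{A}[2]$ lie on $\mathcal{D}$ and are $-\mathbb{I}$-fixed, hence $\tau$-fixed; a transversality argument shows these are all the $\tau$-fixed points on a smooth $\mathcal{D}$, so Riemann-Hurwitz applied to $\pi^{\mathcal{D}}_{\mathcal{E}}\colon\mathcal{D}\to\mathcal{E}:=\mathcal{D}/\langle\tau\rangle$ with exactly four ramification points forces $g(\mathcal{E})=1$.

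To identify $\mathfrak{A}$ with $\operatorname{Prym}(\mathcal{D},\pi^{\mathcal{D}}_{\mathcal{E}})$, I use that the inclusion $\iota\colon\mathcal{D}\hookrightarrow\mathfrak{A}$ factors through a surjection $\iota_*\colon\operatorname{Jac}(\mathcal{D})\twoheadrightarrow\mathfrak{A}$ intertwining $\tau^*$ with $-\mathbb{I}$. The isogeny decomposition $\operatorname{Jac}(\mathcal{D})\sim(\pi^{\mathcal{D}}_{\mathcal{E}})^*\operatorname{Jac}(\mathcal{E})\times\operatorname{Prym}(\mathcal{D},\pi^{\mathcal{D}}_{\mathcal{E}})$ into $\tau^*$-isotypic pieces, coupled with the fact that $\iota_*$ kills the $\tau^*$-invariant piece (it maps into the finite $-\mathbb{I}$-fixed locus $\mathfrak{A}[2]$), yields an isogeny $\operatorname{Prym}(\mathcal{D},\pi^{\mathcal{D}}_{\mathcal{E}})\twoheadrightarrow\mathfrak{A}$. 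Matching the polarization induced by the $\Theta$-divisor of $\operatorname{Jac}(\mathcal{D})$ restricted to $\operatorname{Prym}$ (of type $(1,2)$) with $\mathcal{L}$ itself (also of type $(1,2)$) then upgrades this isogeny to an isomorphism.

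For the converse, I define $\operatorname{Prym}(\mathcal{D},\pi^{\mathcal{D}}_{\mathcal{E}})$ as the connected component of the identity in $\ker(\pi^{\mathcal{D}}_{\mathcal{E},\star})$, a two-dimensional abelian variety, and restrict to $\operatorname{Prym}$ twice the principal polarization of $\operatorname{Jac}(\mathcal{D})$, which has type $(1,2)$ by the standard Prym-theta analysis for ramified double covers. An Abel-Prym-type construction exhibits $\mathcal{D}$ as a divisor in this polarization class, and the self-intersection $\mathcal{D}^2 = 4$ follows from $\chi = \mathcal{D}^2/2 = 1\cdot 2 = 2$. The principal obstacle throughout is the polarized identification in the forward direction: producing the isogeny $\operatorname{Prym}\to\mathfrak{A}$ is formal, but pinning down that it has degree one requires careful tracking of $\Theta$-divisor restrictions and explicit matching of polarization classes, rather than mere isogeny.
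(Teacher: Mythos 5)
The paper gives no proof of this statement: it is quoted verbatim as Barth's duality theorem with a citation to \cite{MR946234}, so there is no internal argument to compare yours against. Measured against the standard treatment (Barth's original paper, or Birkenhake--Lange), your outline follows exactly the expected route: symmetry of $\mathcal{L}$ plus the eigenspace computation $h^0(\mathcal{L})^+=2$, $h^0(\mathcal{L})^-=0$ to show every member of $|\mathcal{L}|$ is $-\mathbb{I}$-invariant; identification of the four base points in $T(\mathcal{L})$ as the fixed locus of $\tau$; Riemann--Hurwitz to get $g(\mathcal{E})=1$; and the factorization of $\iota_*\colon\operatorname{Jac}(\mathcal{D})\to\mathfrak{A}$ through the anti-invariant part because $2\iota_*=0$ on the invariant part. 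That skeleton is correct.

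However, three points keep this from being a proof rather than a plan. First, in the converse you restrict ``twice the principal polarization'' of $\operatorname{Jac}(\mathcal{D})$ and claim type $(1,2)$; twice a principal polarization on a surface has type $(2,2)$. The correct statement is that the restriction of the canonical principal polarization of $\operatorname{Jac}(\mathcal{D})$ itself to $\operatorname{Prym}(\mathcal{D},\pi^{\mathcal{D}}_{\mathcal{E}})$ has type $(1,2)$, which for this ramified cover (degree two, four branch points, $g(\mathcal{E})=1$) follows from computing the elementary divisors of the restriction of the alternating form to the anti-invariant sublattice; you need to actually do that computation. Second, the assertion that $\mathcal{D}$ embeds in its Prym via an ``Abel--Prym-type construction'' with self-intersection four is precisely the nontrivial content of the converse: injectivity of the Abel--Prym map must be proved, not invoked, and your $\chi=\mathcal{D}^2/2$ computation presupposes that $\mathcal{D}$ already represents the polarization class. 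Third, in the forward direction you correctly identify that upgrading the isogeny $\operatorname{Prym}\to\mathfrak{A}$ to an isomorphism is the crux, but you only describe what would need to be tracked; a complete argument must show the kernel is trivial, e.g.\ by comparing the degrees of the two $(1,2)$-polarizations under $\iota_*$ and its dual, or by Barth's geometric identification of the kernel of $\iota_*$ with $(\pi^{\mathcal{D}}_{\mathcal{E}})^*\operatorname{Jac}(\mathcal{E})$ exactly. Finally, your ``transversality argument'' that the four base points exhaust the $\tau$-fixed points should be replaced by the standard parity argument: a symmetric divisor has even multiplicity at any two-torsion point at which its defining theta-characteristic is even, so a smooth member cannot pass through the remaining twelve two-torsion points at all.
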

\subsection{An elliptic fibration on \texorpdfstring{$(1,2)$}{(1,2)}-polarized Kummer surfaces}
\label{ssec:EF}
We will denote the exceptional curves associated with the base points on $\operatorname{Kum}(\mathfrak{A})$ by $\{K_0, K_1, K_2, K_3\}$.  The map $\phi_{\mathcal{L}}: \mathfrak{A} \to \mathbb{P}^1$ induces an elliptic fibration $\pi: \operatorname{Kum}(\mathfrak{A}) \to \mathbb{P}^1$ with section $\mathsf{O}$ as follows: first, a fibration is obtained by blowing up the base points of the pencil $|\mathcal{L}|$. The fibers of this fibration are the strict transform of the curves $\mathcal{D} \in |\mathcal{L}|$ and so general fiber is a smooth genus-three curve. The involution $\tau$ lifts to an involution on this fibration whose fixed points are the exceptional curves over $\{ \mathsf{P}_0, \mathsf{P}_1, \mathsf{P}_2, \mathsf{P}_3 \}$. We then take as the general fiber of $\pi$ the quotient of the general fiber of $\phi_{\mathcal{L}}$ by the bielliptic involution. Since the a curve in the pencil $|\mathcal{L}|$ is never singular at any of the base points $\{ \mathsf{P}_0, \mathsf{P}_1, \mathsf{P}_2, \mathsf{P}_3 \}$, we can take as zero-section $\mathsf{O}$ the exceptional curve over $\mathsf{P}_0$ such that the divisor class of the section is $[\mathsf{O}]=K_0$. Garbagnati \cites{Garbagnati08, MR3010125, MR3563178} proved:
\begin{proposition}[Garbagnati]
\label{prop:Garbagnati}
The fibration $\pi$ has twelve singular fibers of Kodaira type $I_2$ and no other singular fibers. The Mordell Weil group satisfies $\operatorname{MW}(\pi, \mathsf{O})_{\mathrm{tor}}=(\mathbb{Z}/2\mathbb{Z})^2$ and $\operatorname{rank} \operatorname{MW}(\pi, \mathsf{O})=3$. The smooth fiber class $F$ with $F^2=0$ and $F \circ K_0$=1 is given by
\[
 F = \frac{H-K_0 - K_1 - K_2 - K_3}{2} \,.
\] 
\end{proposition}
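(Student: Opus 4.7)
The plan is to pass to the blow-up $\pi : \tilde{\mathfrak{A}} \to \mathfrak{A}$ at all sixteen order-two points, on which $-\mathbb{I}$ lifts to a free involution with quotient $f: \tilde{\mathfrak{A}} \to \operatorname{Kum}(\mathfrak{A})$ realizing the minimal resolution as a double cover branched along the sixteen curves $K_i$. All four assertions then follow by tracking divisors and fibers through $f$, using the identifications $f^{*} K_i = 2 E_i$ and $f^{*} H = 2 \pi^{*} \mathcal{L}$; the latter is forced by $H \circ K_i = 0$, $H^2 = 8$, and $\rho(\mathfrak{A}) = 1$, which pins down $f^{*} H$ as a rational multiple of $\pi^{*} \mathcal{L}$ of the correct square.

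For the singular fibers, the pencil $|\mathcal{L}|$ lifts to a morphism $\tilde{\mathfrak{A}} \to \mathbb{P}^1$ (the base points having been blown up), and by the structure of $|\mathcal{L}|$ recalled above its twelve singular members $\mathcal{D}_i$ are irreducible nodal curves with a single node at a non-base two-torsion point $\mathsf{P}_i$ for $i=4,\dots,15$. The fiber of the morphism over the critical value is $\tilde{\mathcal{D}}_i + 2 E_i$, where the strict transform $\tilde{\mathcal{D}}_i$ is a smooth genus-two curve meeting $E_i$ transversally in the two branches of the node. Restricting $-\mathbb{I}$ to $\tilde{\mathcal{D}}_i$ gives an involution with six fixed points (the four base-point lifts plus the two branch points), so Riemann--Hurwitz forces the image $C_i = f(\tilde{\mathcal{D}}_i) \subset \operatorname{Kum}(\mathfrak{A})$ to be smooth rational. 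The projection formula with $f^{*} C_i = \tilde{\mathcal{D}}_i$ and $f^{*} K_i = 2 E_i$ then yields $C_i^2 = K_i^2 = -2$ and $C_i \circ K_i = 2$, so the corresponding fiber $C_i + K_i$ is a Kodaira $I_2$. Because $12 \cdot \chi(I_2) = 24 = \chi(\operatorname{Kum}(\mathfrak{A}))$, these twelve fibers account for the entire Euler number and no other singular fibers can occur.

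The smooth fiber class is obtained by pullback: the generic fiber of $\tilde{\mathfrak{A}} \to \mathbb{P}^1$ is the strict transform of a generic pencil member, of class $\pi^{*} \mathcal{L} - E_0 - E_1 - E_2 - E_3$, and
\[
f^{*}\!\left(\frac{H - K_0 - K_1 - K_2 - K_3}{2}\right) \;=\; \pi^{*} \mathcal{L} - E_0 - E_1 - E_2 - E_3
\]
by the identifications above, which yields the stated formula for $F$ (and its primitivity in $\mathrm{NS}(\operatorname{Kum}(\mathfrak{A}))$). For the Mordell--Weil rank, the Kummer construction and $\rho(\mathfrak{A}) = 1$ give $\rho(\operatorname{Kum}(\mathfrak{A})) = 17$, and Shioda--Tate
\[
17 \;=\; 2 + \operatorname{rank}\operatorname{MW}(\pi, \mathsf{O}) + \sum_v (m_v - 1) \;=\; 2 + \operatorname{rank}\operatorname{MW}(\pi, \mathsf{O}) + 12
\]
delivers the rank $3$.

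It remains to identify $\operatorname{MW}(\pi, \mathsf{O})_{\mathrm{tor}}$. The translation group $T(\mathcal{L}) \cong (\mathbb{Z}/2\mathbb{Z})^2$ consists entirely of order-two elements, hence each nontrivial translation commutes with $-\mathbb{I}$ and descends to an involution of $\operatorname{Kum}(\mathfrak{A})$ preserving $\pi$ and permuting the four sections $K_0, \dots, K_3$; interpreted via the base-point identification as elements of $\operatorname{MW}(\pi, \mathsf{O})$, the three nontrivial translations produce three nonzero $2$-torsion classes spanning $(\mathbb{Z}/2\mathbb{Z})^2$. The main obstacle is the reverse inclusion. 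I would argue it by embedding $\operatorname{MW}(\pi, \mathsf{O})_{\mathrm{tor}} \hookrightarrow \prod_v \Phi_v = (\mathbb{Z}/2\mathbb{Z})^{12}$, which already forces the torsion to be $2$-elementary, and then using the discriminant relation between $\mathrm{NS}(\operatorname{Kum}(\mathfrak{A}))$, the trivial lattice, and the Mordell--Weil lattice together with the known N\'eron--Severi form of the generic $(1,2)$-polarized Kummer surface to rule out any third generator.
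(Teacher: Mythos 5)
The paper does not actually prove this proposition --- it is quoted from Garbagnati with a citation --- so your attempt is necessarily an independent argument, and most of it is sound. The double-cover bookkeeping through $f:\tilde{\mathfrak{A}} \to \operatorname{Kum}(\mathfrak{A})$ (one correction: the lifted involution is \emph{not} free; its fixed locus is precisely the union of the sixteen exceptional curves, which is exactly why $f$ is branched along the $K_i$), the identification of the twelve $I_2$ fibers from the nodal members of $|\mathcal{L}|$ via Riemann--Hurwitz and the projection formula, the Euler-number count excluding further singular fibers, the formula for $F$, and the Shioda--Tate computation of the rank are all correct.

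The genuine gap is in the torsion. Your lower bound rests on the claim that the three nontrivial translations in $T(\mathcal{L})$ ``produce three nonzero $2$-torsion classes'' of $\operatorname{MW}(\pi,\mathsf{O})$. They do not: by Stone--von Neumann the Heisenberg group acts irreducibly on $H^0(\mathfrak{A},\mathcal{L})\cong\mathbb{C}^2$, so $T(\mathcal{L})$ acts on the base $\mathbb{P}^1=|\mathcal{L}|^{\vee}$ as the full Klein four-group in $\mathrm{PGL}_2$ --- in the paper's coordinates these are exactly $s\mapsto -s$, $s\mapsto 1/s$, $s\mapsto -1/s$; see Theorem~\ref{thm:involutions} and its corollary. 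Hence the descended automorphisms are not fiberwise translations and define no Mordell--Weil elements at all, and the sections $K_1,K_2,K_3$ obtained as images of $\mathsf{O}$ under them are the \emph{non-torsion} sections $\mathsf{S}'_l$ of height $4$ (Table~\ref{tab:height}), not $2$-torsion. The actual $2$-torsion sections $\mathsf{T}_k$ have divisor classes $2F+K_0-\bar{K}_{W_k}$ built from half-sums of Kummer-lattice classes (Equation~(\ref{eqn:secT})); their existence is precisely what must be proved, either lattice-theoretically from the Kummer lattice or, as the paper effectively does in Proposition~\ref{prop:Garbagnati2}, by exhibiting the fibration explicitly with each fiber $y^2=\prod_{i=0}^3\big(X-\Lambda_i(s_0,s_1)Z\big)$, a double cover of $\mathbb{P}^1$ branched at four rational points and therefore carrying full rational $2$-torsion. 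Your upper bound (injection of torsion into $\prod_v\Phi_v\cong(\mathbb{Z}/2\mathbb{Z})^{12}$ plus a discriminant count) is the right idea but is only sketched; as written, the statement $\operatorname{MW}(\pi,\mathsf{O})_{\mathrm{tor}}=(\mathbb{Z}/2\mathbb{Z})^2$ is not established in either direction.
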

The twelve non-neutral components of the reducible fibers of Kodaira type $A_1$ represent the classes $K_4, \dots, K_{15}$ of the Kummer lattice and are not intersected by the class of the zero section given by $K_0$. The remaining four classes $K_i$ with $0 \le i\le3$ satisfy $F\circ K_i=1$ and $K_j \circ K_i=0$ with $4\le j \le 15$. Thus, they represent sections of the elliptic fibration with section $(\pi,\mathsf{O})$ that we still denote by $K_i$ and intersect only neutral components of the reducible fibers, given by the divisor classes $F-K_j$ with $1 \le i\le3$ and $4\le j \le 15$.
\par In \cites{Clingher:2017aa,MR3995925} the authors introduced explicit normal forms for the elliptic fibration with section $(\pi,\mathsf{O})$, given as the affine Weierstrass model
\begin{equation}\label{eqn:EFS}
 Y^2  = X \Big( X^2 - 2 B(s) \, X + \big(B(s)^2 - 4 A(s)^2\big)  \Big) \,,
\end{equation}
where $A(s)$ and $B(s)$ are certain \emph{even} polynomials of degree four in $s$ -- we will determine them in Corollary~\ref{cor:pencilQ} and Equation~(\ref{eqn:coeffs_affine}), -- such that there are no singular fibers over $s=0, \infty$, and
\[
 A(s) = s^4 A(1/s)\,, \quad B(s) = s^4 B(1/s)\,,
\]
and the discriminant of the elliptic fiber is given by $\Delta=16 A(s)^2 \big(B(s)^2-4A(s)^2\big)^2$ has twelve roots of order two. Moreover, the elliptic fibration is invariant under the action of the hyperelliptic involution $(s,X,Y) \mapsto (s,X,-Y)$ -- which we denote by $p \mapsto -p$ for a point $p \in F$ in a fiber $F$ given by Equation~(\ref{eqn:EFS}) -- and three additional involutions given by
\begin{equation}
\begin{split}\label{eqn:involutions}
\jmath_1:& \quad (s,X,Y) \mapsto \Big(s' = -s,X,Y\Big)\,,\\
\jmath_2:& \quad (s,X,Y) \mapsto \Big(s''= \frac1s,\frac{X}{s^4},\frac{Y}{s^6}\Big) \,,\\
\jmath_3:& \quad (s,X,Y) \mapsto \Big(s''' = -\frac1s,\frac{X}{s^4},-\frac{Y}{s^6}\Big) \,.
\end{split}
\end{equation}
The involutions $s \mapsto -s$ and $s \mapsto 1/s$ and their composition map singular fibers of Equation~(\ref{eqn:EFS}) to singular fibers, and smooth fibers to smooth fibers. The zero section $\mathsf{O}$, given as the point at infinity in each fiber, and the two-torsion sections $\mathsf{T}_1, \mathsf{T}_2, \mathsf{T}_3$, given by
\begin{equation}\label{eqn:zero_sections}
 \mathsf{T}_1: (X,Y)=(0,0)\,, \quad \mathsf{T}_2: (X,Y)=(B-2A,0)\,, \quad \mathsf{T}_3: (X,Y)=(B+2A,0) \,,
\end{equation}
are invariant under the involutions $\jmath_1, \jmath_2, \jmath_3$, and the hyperelliptic involution. The two-torsion sections intersect the non-neutral components of eight reducible fibers of type $A_1$ each -- which we represent as sets $W_k = \{K_i \mid i \in I_k\}$ for index sets $I_k$ such that $|W_k|=8$ for $k= 1, 2, 3$ -- partitioning the twelve rational curves $K_j$ with $4 \le j \le 15$ into three sets of eight curves with pairwise intersections consisting of four curves, i.e., $|W_j \cap W_k|=4$ and $W_1 \cap W_2 \cap W_3=\emptyset$. None of the twelve reducible fibers are invariant under the action of the involutions $\jmath_1, \jmath_2$.  However, the sets $W_k$ and $W_j \cap W_k$  for $1\le j,k \le 3$ are invariant under $\jmath_1, \jmath_2$.  We may define divisors $\bar{K}_{W_k} = \frac{1}{2} \sum_{n \in I_k} K_n$ with $1 \le k \le 3$, which are known to be elements of the Kummer lattice \cites{Garbagnati08, MR3010125, MR3563178}, with $\bar{K}_{W_j}\circ \bar{K}_{W_k}=-2-2\delta_{jk}$ for $1\le j,k \le 3$. We also define divisors $\bar{K}_{W_j \cap W_k} = \frac{1}{2} \sum_{n \in I_j \cap I_k} K_n$ with $\bar{K}_{W_j \cap W_k}^2=-2$. By construction, the elements $\bar{K}_{W_k}$ and $\bar{K}_{W_j \cap W_k}$ for $1 \le j, k \le 3$ are invariant under the action of the involutions $\jmath_1, \jmath_2$. The twelve singular fibers of the fibration~(\ref{eqn:EFS}) arise when two-torsion sections collide. This happens as follows:
\begin{center}
\begin{tabular}{crcc}
colliding sections & equation & \# of points & fiber components\\
\hline
$\mathsf{T}_1=\mathsf{T}_2$ & $B-2A=0$ & $4$ & $W_1 \cap W_2$ \\
$\mathsf{T}_1=\mathsf{T}_3$ & $B+2A=0$ & $4$ & $W_1 \cap W_3$ \\
$\mathsf{T}_2=\mathsf{T}_3$ & $A=0$ & $4$ & $W_2 \cap W_3$
\end{tabular}
\end{center}
We have the following:
\begin{corollary}
The divisor classes of the two-torsion sections $\mathsf{T}_k$ are given by
\begin{equation}\label{eqn:secT}
 [ \mathsf{T}_k ] = 2 F + K_0 - \bar{K}_{W_k} \quad \text{for  $1 \le k \le 3$.}
\end{equation}
\end{corollary}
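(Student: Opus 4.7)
The plan is to determine $[\mathsf{T}_k]$ as an element of $\mathrm{NS}(\operatorname{Kum}(\mathfrak{A}))$ from three geometric inputs: (a) $\mathsf{T}_k$ is a section of $\pi$, so $[\mathsf{T}_k] \cdot F = 1$; (b) as tabulated just before the corollary, $\mathsf{T}_k$ passes through the non-neutral components of precisely the eight reducible fibers indexed by $W_k$; and (c) $\mathsf{T}_k \cong \mathbb{P}^1$ sits inside the K3 surface $\operatorname{Kum}(\mathfrak{A})$, so adjunction forces $[\mathsf{T}_k]^2 = -2$. These three inputs will determine the coefficients of an explicit ansatz.

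As a first step I would set up the ansatz $[\mathsf{T}_k] = K_0 + b F + \sum_{j=4}^{15} \gamma_j K_j$ with $b,\gamma_j \in \mathbb{Q}$. This is legitimate because $\mathsf{T}_k$ is a torsion section in $\operatorname{MW}(\pi,\mathsf{O})$ and hence, by Shioda's theorem, lies rationally in the trivial lattice spanned by $[\mathsf{O}] = K_0$, $F$, and all fiber components; the neutral component $C^{(0)}_j$ of each $I_2$ fiber can be absorbed into $bF$ via $F = C^{(0)}_j + K_j$, while the coefficient of $K_0$ is forced to equal $1$ by (a).

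Next I would solve for the $\gamma_j$'s by intersecting the ansatz against each non-neutral component $K_j$ for $j \ge 4$. Pairwise disjointness of the sixteen curves $K_i$ combined with $F \cdot K_j = 0$ collapses the computation to $[\mathsf{T}_k] \cdot K_j = -2\gamma_j$, while (b) prescribes this intersection to be $1$ if $j \in I_k$ and $0$ otherwise; hence $\gamma_j = -\tfrac12$ for $j \in I_k$ and $\gamma_j = 0$ otherwise, so $\sum_j \gamma_j K_j = -\bar{K}_{W_k}$. The remaining constant $b$ is then pinned down by (c): expanding $(K_0 + bF - \bar{K}_{W_k})^2$ using $K_0^2 = -2$, $F^2 = 0$, $F \cdot K_0 = 1$, $\bar{K}_{W_k}^2 = \tfrac14 \sum_{j \in I_k} K_j^2 = -4$, together with $F \cdot \bar{K}_{W_k} = K_0 \cdot \bar{K}_{W_k} = 0$, gives $-6 + 2b = -2$, and thus $b = 2$.

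The only mild subtlety is that the derivation yields half-integer coefficients $\gamma_j = -\tfrac12$, whereas the answer must lie in $\mathrm{NS}(\operatorname{Kum}(\mathfrak{A}))$; this is resolved by the membership of $\bar{K}_{W_k}$ in the Kummer lattice, recalled in the preceding discussion, which certifies that $[\mathsf{T}_k] = 2F + K_0 - \bar{K}_{W_k}$ is a genuine integral divisor class as claimed.
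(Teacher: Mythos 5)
Your proof is correct and follows essentially the same route as the paper's: both pin down $[\mathsf{T}_k]$ by its intersection numbers against $F$ and the rational curves $K_i$. The only cosmetic difference is that you fix the coefficient of $F$ via adjunction ($[\mathsf{T}_k]^2=-2$) and justify the ansatz by Shioda's theorem for torsion sections, whereas the paper instead records $[\mathsf{T}_k]\circ K_0=0$ and $[\mathsf{T}_k]\circ K_l=2$ for $1\le l\le 3$ and checks the formula against the full spanning set $\{F,K_0,\dots,K_{15}\}$ of $\mathrm{NS}(\operatorname{Kum}(\mathfrak{A}))\otimes\mathbb{Q}$.
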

\begin{proof}
The proof follows from $[ \mathsf{T}_k ] \circ F =1$, $[ \mathsf{T}_k ] \circ K_0 =0$, $[ \mathsf{T}_k ] \circ K_j =1$ for $j \in I_k$ and $[ \mathsf{T}_k ] \circ K_j =0$ for $j \not \in I_k$, and $[ \mathsf{T}_k ] \circ K_l =2$ for $1 \le l \le 3$.
\end{proof}
\par In \cites{Clingher:2017aa,MR3995925} three non-torsion sections $\mathsf{S}_1, \mathsf{S}_2, \mathsf{S}_3$ of the elliptic fibration $(\pi,\mathsf{O})$ of minimal height were constructed explicitly. We will review the explicit construction of these sections in Section~\ref{ssec:KummerPencils}. For two arbitrary sections $S'$ and $S''$ of the elliptic fibration, one defines the \emph{height pairing} using the formula
  \begin{equation}
   \langle S', S'' \rangle = \chi^{\text{hol}} + \mathsf{O}\circ S' + \mathsf{O}\circ S'' - S' \circ S'' - \sum_{\{s|\Delta=0\}} C_s^{-1}(S',S'') \,,
    \end{equation}  
where the holomorphic Euler characteristic is $\chi^{\text{hol}}=2$, and the inverse Cartan matrix $C_s^{-1}$ of a fibre of type $A_1$ located over point $s$ of the discriminant locus $\Delta=0$ contributes $(\frac{1}{2})$ if and only if both $S'$ and $S''$ intersect the non-neutral component. It turns out, that the sections $\mathsf{S}_1$ and $\mathsf{S}_2$ do not intersect the zero section $\mathsf{O}$ and intersect the non-neutral components of six reducible fibers of type $A_1$ each -- which we represent as complementary sets $V_k = \{K_i \mid i \in J_k\}$ for index sets $J_k$ such that $|V_k|=6$ for $k= 1, 2$ -- partitioning the twelve rational curves $K_j$ with $4 \le j \le 15$ into two disjoint sets of six curves. We also set $W'_1 = V_1 \cup \{ K_0, K_1\}, I'_1=J_1 \cup \{0,1\}$ and $W'_2 = V_2 \cup \{ K_2, K_3\}, I'_2=J_2 \cup \{2,3\}$ and define the divisors $\bar{K}_{W'_k} = \frac{1}{2} \sum_{n \in I'_k} K_n$ with $1 \le k \le 2$.   The sets $V_1$ and $V_2$ are invariant under the action of the involution $\jmath_1$ and interchanged under the action of $\jmath_2$.  The section $\mathsf{S}_3$ intersects the non-neutral components of all reducible fibers, and the zero section such that $\mathsf{S}_3 \circ \mathsf{O} =2$.
\par We have the following:
\begin{proposition}
\label{prop:MW}
The sections $\{\mathsf{O}, \mathsf{T}_1, \mathsf{T}_2, \mathsf{T}_3, \mathsf{S}_1, \mathsf{S}_2, \mathsf{S}_3\}$ form a basis of the Mordell-Weil group of sections. In particular, we have
\begin{equation}\label{eq:MWlattice}
 \operatorname{MW}(\pi, \mathsf{O}) =(\mathbb{Z}/2\mathbb{Z})^2 \oplus \langle 1 \rangle^{\oplus 2}  \oplus \langle 2 \rangle \,.
\end{equation}
\end{proposition}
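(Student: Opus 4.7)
The plan is to separate the torsion from the free summand, and then use Shioda--Tate to upgrade linear independence to generation. Throughout I rely on Garbagnati's Proposition~\ref{prop:Garbagnati}, which already supplies $\operatorname{MW}(\pi,\mathsf{O})_{\mathrm{tor}}=(\mathbb{Z}/2\mathbb{Z})^2$ and $\operatorname{rank}\operatorname{MW}(\pi,\mathsf{O})=3$. The torsion assertion is immediate: the sections $\mathsf{T}_1,\mathsf{T}_2,\mathsf{T}_3$ from (\ref{eqn:zero_sections}) are the three points $(X,Y)=(\xi,0)$ on the Weierstrass model (\ref{eqn:EFS}) with $\xi$ a root of the cubic factor of the right-hand side, hence two-torsion; since the three zeros of the cubic are collinear, the fibrewise group law gives $\mathsf{T}_1+\mathsf{T}_2+\mathsf{T}_3=\mathsf{O}$, and any two of them therefore generate the full torsion subgroup identified by Garbagnati.

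For the free part, I would compute the height-pairing Gram matrix of $\mathsf{S}_1,\mathsf{S}_2,\mathsf{S}_3$ directly from the displayed formula, using $\chi^{\mathrm{hol}}=2$ and $\mathsf{S}_i\circ\mathsf{S}_i=-2$ by K3-adjunction. The inputs are already recorded in the paragraphs preceding the proposition: $\mathsf{S}_1\circ\mathsf{O}=\mathsf{S}_2\circ\mathsf{O}=0$ and $\mathsf{S}_3\circ\mathsf{O}=2$; the sections $\mathsf{S}_1,\mathsf{S}_2$ each meet the non-neutral components of exactly six reducible fibres labelled by the disjoint sets $V_1,V_2$, while $\mathsf{S}_3$ meets the non-neutral component of every reducible fibre. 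Substituting these into the formula produces the diagonal entries $\langle\mathsf{S}_1,\mathsf{S}_1\rangle=\langle\mathsf{S}_2,\mathsf{S}_2\rangle=1$ and $\langle\mathsf{S}_3,\mathsf{S}_3\rangle=2$. The off-diagonals reduce to the mutual intersection numbers $\mathsf{S}_i\circ\mathsf{S}_j$; from the explicit construction of the three sections in Section~\ref{ssec:KummerPencils} one verifies $\mathsf{S}_1\circ\mathsf{S}_2=2$ and $\mathsf{S}_1\circ\mathsf{S}_3=\mathsf{S}_2\circ\mathsf{S}_3=1$, making every off-diagonal pairing vanish and exhibiting a sublattice of $\operatorname{MW}_{\mathrm{free}}$ isomorphic to $\langle 1\rangle^{\oplus 2}\oplus\langle 2\rangle$.

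To promote linear independence to generation, I would apply the Shioda--Tate formula. The trivial lattice $U\oplus A_1^{\oplus 12}$ has discriminant of absolute value $2^{12}$, and $|\operatorname{MW}_{\mathrm{tor}}|^{2}=16$, yielding
\[
\bigl|\operatorname{disc}\mathrm{NS}(\operatorname{Kum}\mathfrak{A})\bigr|\cdot 16 \;=\; 2^{12}\cdot\bigl|\operatorname{disc}\operatorname{MW}_{\mathrm{free}}\bigr|.
\]
The discriminant of $\mathrm{NS}(\operatorname{Kum}\mathfrak{A})$ for the generic $(1,2)$-polarized Kummer surface being known from \cites{Garbagnati08,MR3010125,Clingher:2017aa}, comparison with the computed discriminant $2$ of the sublattice $\mathbb{Z}\mathsf{S}_1\oplus\mathbb{Z}\mathsf{S}_2\oplus\mathbb{Z}\mathsf{S}_3$ forces the index of this sublattice inside $\operatorname{MW}_{\mathrm{free}}$ to be one. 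Combined with Garbagnati's torsion identification, this establishes the claimed basis and lattice decomposition.

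The main obstacle I anticipate is pinning down the mutual intersection numbers $\mathsf{S}_i\circ\mathsf{S}_j$. Whereas $\mathsf{S}_i\circ\mathsf{O}$ and the fibre correction terms amount to the same divisor-class bookkeeping already carried out in the paper for the $\mathsf{T}_k$, the pairwise intersections genuinely depend on the explicit equations of $\mathsf{S}_1,\mathsf{S}_2,\mathsf{S}_3$ as sections of~(\ref{eqn:EFS}), which are deferred to Section~\ref{ssec:KummerPencils}. Once those are in hand, the rest is a routine application of the height-pairing formula.
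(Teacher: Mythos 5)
Your argument follows the same route as the paper's: both compute the height--pairing Gram matrix of $\mathsf{S}_1,\mathsf{S}_2,\mathsf{S}_3$ from the intersection data (arriving at $\operatorname{diag}(1,1,2)$, exactly the entries of Table~\ref{tab:height}) and then close the argument by a discriminant comparison against the N\'eron--Severi lattice, whose determinant is known from Garbagnati's identification of the transcendental lattice as $H(2)\oplus H(2)\oplus\langle-8\rangle$. Your handling of the torsion and your explicit display of the Shioda--Tate determinant relation are somewhat more detailed than the paper's one-line appeal to ``numerical agreement,'' but the skeleton is identical, and your anticipated dependence on the explicit intersection numbers $\mathsf{S}_i\circ\mathsf{S}_j$ from Section~\ref{ssec:KummerPencils} is exactly the dependence the paper has.

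Precisely because you write the formula out, the last step deserves scrutiny: it does not close numerically as stated. With $\bigl|\det\bigl(U\oplus A_1(-1)^{\oplus 12}\bigr)\bigr|=2^{12}$, torsion of order $4$, and $\bigl|\det\bigl(H(2)^{\oplus2}\oplus\langle-8\rangle\bigr)\bigr|=2^7$, your displayed relation yields
\[
\bigl|\operatorname{disc}\operatorname{MW}_{\mathrm{free}}\bigr|\;=\;\frac{2^{7}\cdot 2^{4}}{2^{12}}\;=\;\tfrac12 ,
\]
whereas the sublattice $\mathbb{Z}\mathsf{S}_1\oplus\mathbb{Z}\mathsf{S}_2\oplus\mathbb{Z}\mathsf{S}_3$ has determinant $2$. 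Taken at face value this forces the index of that sublattice in $\operatorname{MW}_{\mathrm{free}}$ to be \emph{two}, not one; it would be consistent, for instance, with $\tfrac12(\mathsf{S}_1+\mathsf{S}_2+\mathsf{S}_3)$ being a genuine section of height $1$. So your concluding sentence does not follow from your displayed equation. This tension is inherited from, not introduced by, your argument --- the paper asserts ``numerical agreement'' without exhibiting the arithmetic --- but to make the proof complete you must either locate an error in one of the inputs (the quoted transcendental lattice, the fibre and torsion data, or the intersection numbers) or supply an independent argument excluding sections of height $\tfrac12$ relative to your basis. Everything before this point (the collinearity argument for the two-torsion, the Gram matrix computation) is correct and matches the paper.
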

\begin{proof}
Given the explicit form of the sections $\{\mathsf{O}, \mathsf{T}_1, \mathsf{T}_2, \mathsf{T}_3, \mathsf{S}_1, \mathsf{S}_2, \mathsf{S}_3\}$, we computed the intersection pairings for their divisor classes. The results are part of the Table~\ref{tab:intersection}. The height pairings of the corresponding sections of the elliptic fibration $(\pi,\mathsf{O})$ are given in Table~\ref{tab:height}.  We observe from Table~\ref{tab:height} that the pairwise orthogonal sections $\mathsf{S}_1, \mathsf{S}_2, \mathsf{S}_3$ of height less or equal two, generate a rank-three sub-lattice of the Mordell-Weil group of sections. It was proved in \cite{Garbagnati08}*{Prop.~\!2.2.4} that the transcendental lattice of the Kummer surface $\operatorname{Kum}(\mathfrak{A})$ is isometric to $H(2)\oplus H(2) \oplus \langle -8 \rangle$ such that the determinant of the discriminant form equals $2^7$. This is in numerical agreement with the determinant of the discriminant form for the N\'eron-Severi lattice obtained from an elliptic fibration with section, twelve singular fibers of Kodaira type $I_2$, and a Mordell Weil group of sections given by Equation~(\ref{eq:MWlattice}).
\end{proof}
We recall that an automorphism of finite order on a complex K3 surface is called symplectic if it acts trivially on the holomorphic two-form of the K3 surface, and it is called anti-symplectic if it acts as multiplication by $(-1)$. These notions were introduced by Nikulin in \cite{MR544937}. We have the following:
\begin{lemma}
\label{lem:antisymplectic}
The involutions $\jmath_1, \jmath_2, \jmath_3$ are three commuting anti-symplectic involutions of the elliptic fibration with section $(\pi,\mathsf{O})$ with $\jmath_3=-\jmath_1\jmath_2$. The  involutions $\jmath_l$ for $1\le l \le3$ act on the sections $\{\mathsf{O}, \mathsf{T}_1, \mathsf{T}_2, \mathsf{T}_3, \mathsf{S}_1, \mathsf{S}_2, \mathsf{S}_3\}$ as follows:
\begin{center}
\begin{tabular}{c|ccccrrr}
			& $\mathsf{O}$	& $\mathsf{T}_1$	& $\mathsf{T}_2$	& $\mathsf{T}_3$	& $\mathsf{S}_1$	& $\mathsf{S}_2$	& $\mathsf{S}_3$\\
\hline
$\jmath_1$	& $\mathsf{O}$	& $\mathsf{T}_1$	& $\mathsf{T}_2$	& $\mathsf{T}_3$	& $\mathsf{S}_1$	& $\mathsf{S}_2$	& $-\mathsf{S}_3$\\
$\jmath_2$	& $\mathsf{O}$	& $\mathsf{T}_1$	& $\mathsf{T}_2$	& $\mathsf{T}_3$	& $\mathsf{S}_2$	& $\mathsf{S}_1$	& $\mathsf{S}_3$\\
$\jmath_3$	& $\mathsf{O}$	& $\mathsf{T}_1$	& $\mathsf{T}_2$	& $\mathsf{T}_3$	& $-\mathsf{S}_2$	& $-\mathsf{S}_1$	& $\mathsf{S}_3$
\end{tabular}
\end{center}
\end{lemma}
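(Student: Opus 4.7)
The plan is to break the statement into four separately verifiable claims --- the algebraic relations among the $\jmath_l$, their anti-symplecticity, their action on the zero and two-torsion sections, and their action on $\mathsf{S}_1,\mathsf{S}_2,\mathsf{S}_3$ --- handling the first three by direct substitution in the Weierstrass form~(\ref{eqn:EFS}) and the last by combining a geometric restriction with an explicit calculation.

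The algebraic part is immediate from~(\ref{eqn:involutions}): each $\jmath_l$ preserves~(\ref{eqn:EFS}) because $A$ and $B$ satisfy $A(s)=s^4A(1/s)$ and $B(s)=s^4B(1/s)$; self-composition returns the identity; and $\jmath_1\jmath_2(s,X,Y)=(-1/s,X/s^4,Y/s^6)$, which differs from $\jmath_3$ exactly by the hyperelliptic sign on $Y$. Anti-symplecticity is then checked by pulling back the holomorphic 2-form $\omega=ds\wedge dX/Y$ under each involution. The computations $\jmath_1^{*}\omega=-\omega$, $\jmath_2^{*}(ds\wedge dX)=-ds\wedge dX/s^6$ combined with $Y\mapsto Y/s^6$, and the analogous cancellation for $\jmath_3$ (where the extra sign comes from $Y\mapsto -Y/s^6$ rather than from $ds$) all yield $\jmath_l^{*}\omega=-\omega$.

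For the distinguished sections, the easy cases populate the first four columns of the table. Each $\jmath_l$ is fibration-preserving and sends the point at infinity to itself, so $\jmath_l(\mathsf{O})=\mathsf{O}$. The two-torsion sections have $Y=0$ and $X$-coordinates $0$ and $B(s)\mp 2A(s)$, which under $X\mapsto X/s^4$ combined with $s\mapsto 1/s$ (or under $s\mapsto -s$) transform into themselves precisely because of the reciprocity and parity properties of $A$ and $B$; hence $\jmath_l(\mathsf{T}_k)=\mathsf{T}_k$ for all $l,k$.

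The substantive step is the last three columns. A coarse geometric constraint comes from the discussion preceding the lemma: the involutions permute reducible fibers, with $V_1,V_2$ invariant under $\jmath_1$ and interchanged under $\jmath_2$, while $\mathsf{S}_3$ meets non-neutral components of all twelve fibers. Combined with the invariance of the height pairing under $\jmath_l$, this restricts $\jmath_1(\mathsf{S}_k)$ and $\jmath_2(\mathsf{S}_k)$ to very short candidate lists involving $\pm\mathsf{S}_{k'}$ modulo two-torsion. To pin down the precise images --- the main obstacle, since at this level one cannot yet exclude a twist by a $\mathsf{T}_j$ or a sign --- I would substitute the explicit $(s,X,Y)$-expressions for $\mathsf{S}_1,\mathsf{S}_2,\mathsf{S}_3$ from Section~\ref{ssec:KummerPencils} into the formulas~(\ref{eqn:involutions}) and match coefficients in $s$ to read off the images directly. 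The entries for $\jmath_3$ then follow automatically from $\jmath_3=-\jmath_1\jmath_2$ together with the rows for $\jmath_1$ and $\jmath_2$.
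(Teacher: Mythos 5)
Your proposal is correct and follows essentially the same route as the paper: the paper verifies anti-symplecticity by pulling back the holomorphic two-form $ds\wedge dX/Y$ of the Weierstrass model~(\ref{eqn:EFS}) and settles the table by explicit computation with the section coordinates, exactly as you do. Your additional details (the sign bookkeeping for $\jmath_3$, the use of $A(s)=s^4A(1/s)$, $B(s)=s^4B(1/s)$ for the two-torsion sections, and the height-pairing/fiber-permutation constraints narrowing the candidates for $\jmath_l(\mathsf{S}_k)$) are all sound elaborations of what the paper leaves implicit.
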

\begin{proof}
We check that the involutions are anti-symplectic by using an explicit representative of the holomorphic two-form for the affine Weierstrass model in Equation~(\ref{eqn:EFS}) given by $ds\wedge dX/Y$. The rest of the statement follows by explicit computation.
\end{proof}
\par We should emphasize that the operations $+$ and $-$, when used with sections of a Jacobian elliptic fibration, are operations with respect to the group law in the Mordell-Weil group $\operatorname{MW}(\pi, \mathsf{O})$, i.e., the fiberwise application of the elliptic curve group law.  In contrast, before the symbols were used in the context of divisors in the N\'eron-Severi group. We have the following:
\begin{proposition}
\label{prop:Divisors}
There are four possible choices for sections $\{\mathsf{S}'_1 ,\mathsf{S}'_2 , \mathsf{S}'_3\}$ of the elliptic fibration with section $(\pi,\mathsf{O})$ (up to permutation and the action of the hyperelliptic involution), such that the divisor classes $K_0, K_1, K_2, K_3$ are represented as
\begin{equation}\label{eqn:KummerCurves}
 K_0 = [\mathsf{O}]\, \quad K_1 = [ \mathsf{S}'_1] \,,\quad K_2 = [\mathsf{S}'_2] \,, \quad K_3 = [\mathsf{S}_3'] \,.
\end{equation}
The sections are obtained as linear combinations of the non-torsion sections $\mathsf{S}_1, \mathsf{S}_2, \mathsf{S}_3$ generating $\operatorname{MW}(\pi, \mathsf{O})$, using the elliptic-curve group law in each fiber $F$ given by Equation~(\ref{eqn:EFS}) as follows:
\begin{center}
\begin{tabular}{c|c||l|l|l||l}
\# 	& action & $\mathsf{S}'_1$	& $\mathsf{S}'_2$ 	& $\mathsf{S}'_3$ & $\sum_{i=1}^3 \mathsf{S}'_i$\\[0.2em]
\hline\hline
1	& $\pm \operatorname{id}$		& $\pm 2\mathsf{S}_1$			&  $\pm (\mathsf{S}_1+\mathsf{S}_2+\mathsf{S}_3)$		& $\pm (\mathsf{S}_1-\mathsf{S}_2+\mathsf{S}_3)$	& $\pm 2(2\mathsf{S}_1+\mathsf{S}_3)$\\
2	& $\pm \jmath_1$ & $\pm 2\mathsf{S}_1$	& $\pm (\mathsf{S}_1+\mathsf{S}_2-\mathsf{S}_3)$	&  $\pm (\mathsf{S}_1-\mathsf{S}_2-\mathsf{S}_3)$		& $\pm 2(2\mathsf{S}_1-\mathsf{S}_3)$\\
3	& $\pm \jmath_2$ & $\pm 2\mathsf{S}_2$	& $\pm (\mathsf{S}_1+\mathsf{S}_2+\mathsf{S}_3)$	&  $\pm (-\mathsf{S}_1+\mathsf{S}_2+\mathsf{S}_3)$	& $\pm 2(2\mathsf{S}_2+\mathsf{S}_3)$\\
4	& $\mp \jmath_3$ & $\pm 2\mathsf{S}_2$	& $\pm (\mathsf{S}_1+\mathsf{S}_2-\mathsf{S}_3)$	&  $\pm (-\mathsf{S}_1+\mathsf{S}_2-\mathsf{S}_3)$		& $\pm 2(2\mathsf{S}_2-\mathsf{S}_3)$
\end{tabular}
\label{tab:choice_sections}
\end{center}
\end{proposition}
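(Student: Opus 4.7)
The plan is to use the Shioda height pairing together with the explicit lattice structure of $\operatorname{MW}(\pi,\mathsf{O})$ from Proposition~\ref{prop:MW} to enumerate all sections whose divisor classes realize the Kummer curves $K_1,K_2,K_3$. First I would compute the pairings among $K_0,K_1,K_2,K_3$ viewed as sections: since these are pairwise disjoint $(-2)$-curves with $K_i\cdot\mathsf{O}=K_i\cdot K_0=0$ for $i\geq 1$, and since each $K_i$ ($i\geq 1$) meets only neutral components of the twelve reducible $I_2$-fibres (as noted in the paragraph preceding Proposition~\ref{prop:MW}), the height formula with $\chi^{\text{hol}}=2$ yields $\langle K_i,K_i\rangle=4$ and $\langle K_i,K_j\rangle=2$ for distinct $i,j\in\{1,2,3\}$.

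Next, I would exploit the fact from Table~\ref{tab:height} that the free part of $\operatorname{MW}(\pi,\mathsf{O})$ is orthogonally generated by $\mathsf{S}_1,\mathsf{S}_2,\mathsf{S}_3$ with heights $1,1,2$. Writing an arbitrary section as $a\mathsf{S}_1+b\mathsf{S}_2+c\mathsf{S}_3+t$ with $t\in\operatorname{MW}(\pi,\mathsf{O})_{\mathrm{tor}}$, the height-four condition reduces to the diophantine equation $a^2+b^2+2c^2=4$, whose only integer solutions are $(\pm 2,0,0)$, $(0,\pm 2,0)$, and $(\pm 1,\pm 1,\pm 1)$. Imposing then the pairwise pairing constraint $a_i a_j+b_i b_j+2 c_i c_j=2$ on a candidate triple and running through the remaining finite case check rules out all incompatible combinations, so that up to permutation and a global sign exactly the four triples listed in the table survive.

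The torsion summand $t$ in each triple is then pinned down by the requirement that $[\mathsf{S}'_i]$ meet only neutral components of the reducible fibres: in view of the intersection data of $\mathsf{S}_1,\mathsf{S}_2$ with $V_1,V_2$, of $\mathsf{S}_3$ with every reducible fibre, and of $\mathsf{T}_1,\mathsf{T}_2,\mathsf{T}_3$ with $W_1,W_2,W_3$, this translates into a system of linear constraints modulo $2$ with a unique solution in each case. Finally, Lemma~\ref{lem:antisymplectic} shows that $\pm\jmath_1,\pm\jmath_2,\mp\jmath_3$ permute the four triples as recorded in the leftmost column of the table, which both explains and organises the solution set.

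I expect the principal obstacle to be the third step. Distinguishing genuine representatives of $\{K_1,K_2,K_3\}$ from other height-four triples with matching pairwise pairings requires tracking, for each of the twelve non-neutral components, the $\mathbb{Z}/2\mathbb{Z}$-parity of the intersections contributed by $a\mathsf{S}_1+b\mathsf{S}_2+c\mathsf{S}_3$ and by $t$. This bookkeeping rests on the explicit Weierstrass formulas to be produced in Section~\ref{ssec:KummerPencils}; once these are in hand, each verification becomes a straightforward, if somewhat tedious, intersection-theoretic computation matching the entries of Table~\ref{tab:intersection}.
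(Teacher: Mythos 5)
Your argument is correct and reaches the same conclusion, but it inverts the logic of the paper's proof in a way worth noting. The paper proceeds bottom-up: it first computes the candidate sections $2\mathsf{S}_1$, $\mathsf{S}_1+\mathsf{S}_2+\mathsf{S}_3$, $\mathsf{S}_1-\mathsf{S}_2+\mathsf{S}_3$ explicitly via the group law, verifies by direct computation that they meet $F$ once, meet only neutral fiber components, and are mutually disjoint and disjoint from $\mathsf{O}$ (the data of Table~\ref{tab:intersection}), generates the remaining three triples by acting with $\jmath_1,\jmath_2,\jmath_3$ and the hyperelliptic involution, and only then asserts tersely that ``using the height pairing one checks that these are the only possibilities.'' You run the argument top-down: you first derive the target Gram matrix $\langle K_i,K_i\rangle=4$, $\langle K_i,K_j\rangle=2$ from the height formula, reduce to the Diophantine equation $a^2+b^2+2c^2=4$ in the orthogonal basis of Proposition~\ref{prop:MW}, and enumerate the compatible triples (one ``axis'' vector $\pm2\mathsf{S}_1$ or $\pm2\mathsf{S}_2$ plus two $(\pm1,\pm1,\pm1)$ vectors with matching $c$-sign and opposite $b$- resp.\ $a$-signs), which is exactly the uniqueness step the paper leaves implicit. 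What your route buys is a transparent, self-contained proof that the list is exhaustive; what it does not avoid is the explicit computational core, since height $4$ alone does not force $\mathsf{O}\circ S=0$ and all-neutral fiber intersections (e.g.\ $\mathsf{O}\circ S=1$ with four non-neutral intersections also gives height $4$), so you still need the component-by-component intersection data from Section~\ref{ssec:KummerPencils} both to discard such representatives and to pin the torsion summand to zero --- a point you correctly flag as the main labor. The final step, using Lemma~\ref{lem:antisymplectic} to see that $\pm\jmath_1,\pm\jmath_2,\mp\jmath_3$ permute the four triples, coincides with the paper's organization.
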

\begin{proof}
We explicitly compute $2 \mathsf{S}_1, \mathsf{S}_1 + \mathsf{S}_2 + \mathsf{S}_3, \mathsf{S}_1 - \mathsf{S}_2 + \mathsf{S}_3$ using the elliptic-curve group law. Since these are sections of the elliptic fibration, we find that the intersection pairing with the smooth fiber $F$ always equals one. We then check that the three sections intersect only neutral components of the reducible fiber, i.e., the components $F-K_j$ for $4\le j \le 15$. We finally check that the three sections do not mutually intersect nor intersect the zero section $\mathsf{O}$. For $\mathsf{S}'_1=2\mathsf{S}_1$, $\mathsf{S}'_2=\mathsf{S}_1+\mathsf{S}_2+\mathsf{S}_3$, $\mathsf{S}'_3=\mathsf{S}_1-\mathsf{S}_2+\mathsf{S}_3$, the intersection pairings of all aforementioned divisor classes and height pairings of the corresponding sections are given in Table~\ref{tab:intersection}.  The sections of the table are then obtained by acting with involutions $\jmath_1, \dots, \jmath_3$ and the hyperelliptic involution. Using the height pairing one checks that these are the only possibilities.
\end{proof}
\begin{table}
\parbox{.45\linewidth}{
\scalemath{0.62}{
\begin{tabular}{c||r|r|r|r|r|r|r|r|r|r|r}
$\circ$ 	& $F$ 	& $\mathsf{O}$	& $\mathsf{T}_1$	& $\mathsf{T}_2$	& $\mathsf{T}_3$	& $\mathsf{S}'_1$	& $\mathsf{S}'_2$ 	& $\mathsf{S}'_3$ 	& $\mathsf{S}_1$	& $\mathsf{S}_2$	& $\mathsf{S}_3$\\
\hline\hline
$F$		& 0		& $1$	& $1$	& $1$	& $1$	& $1$	& $1$	& $1$	& $1$	& $1$	& $1$\\
$\mathsf{O}$	& $1$	& $-2$	& $0$	& $0$	& $0$	& $0$	& $0$	& $0$	& $0$	& $0$	& $2$\\
$\mathsf{T}_1$	& $1$	& $0$	& $-2$	& $0$	& $0$	& $2$	& $2$	& $2$	& $0$	& $0$	& $0$\\
$\mathsf{T}_2$	& $1$	& $0$	& $0$	& $-2$	& $0$	& $2$	& $2$	& $2$	& $0$	& $0$	& $0$\\
$\mathsf{T}_3$	& $1$	& $0$	& $0$	& $0$	& $-2$	& $2$	& $2$	& $2$	& $0$	& $0$	& $0$\\
$\mathsf{S}'_1$	& $1$	& $0$	& $2$	& $2$	& $2$	& $-2$	& $0$	& $0$	& $0$	& $2$	& $4$\\
$\mathsf{S}'_2$	& $1$	& $0$	& $2$	& $2$	& $2$	& $0$	& $-2$	& $0$	& $1$	& $1$	& $2$\\
$\mathsf{S}'_3$	& $1$	& $0$	& $2$	& $2$	& $2$	& $0$	& $0$	& $-2$	& $1$	& $3$	& $2$\\
$\mathsf{S}_1$	& $1$	& $0$	& $0$	& $0$	& $0$	& $0$	& $1$	& $1$	& $-2$	& $2$	& $1$\\
$\mathsf{S}_2$	& $1$	& $0$	& $0$	& $0$	& $0$	& $2$	& $1$	& $3$	& $2$	& $-2$	& $1$\\
$\mathsf{S}_3$	& $1$	& $2$	& $0$	& $0$	& $0$	& $4$	& $2$	& $2$	& $1$	& $1$	& $-2$
\end{tabular}}}
\quad
\parbox{.45\linewidth}{
\scalemath{0.62}{
\begin{tabular}{c||r|r|r|r|r|r|r|r|r|r}
$\langle\bullet,\bullet\rangle$ 	& $\mathsf{O}$	& $\mathsf{T}_1$	& $\mathsf{T}_2$	& $\mathsf{T}_3$	& $\mathsf{S}'_1$	& $\mathsf{S}'_2$ 	& $\mathsf{S}'_3$	& $\mathsf{S}_1$	& $\mathsf{S}_2$	& $\mathsf{S}_3$\\
\hline\hline
$\mathsf{O}$	& $0$	& $0$ 	& $0$	& $0$ 	& $0$	& $0$	& $0$	& $0$ 	& $0$	& $0$\\
$\mathsf{T}_1$	& $0$	& $0$ 	& $0$	& $0$	& $0$	& $0$	& $0$	& $0$ 	& $0$	& $0$\\
$\mathsf{T}_2$	& $0$	& $0$ 	& $0$	& $0$ 	& $0$	& $0$	& $0$	& $0$ 	& $0$	& $0$\\
$\mathsf{T}_3$	& $0$	& $0$ 	& $0$	& $0$ 	& $0$	& $0$	& $0$	& $0$ 	& $0$	& $0$\\
$\mathsf{S}'_1$	& $0$	& $0$ 	& $0$	& $0$	& $4$	& $2$	& $2$	& $2$ 	& $0$	& $0$\\
$\mathsf{S}'_2$	& $0$	& $0$ 	& $0$	& $0$ 	& $2$	& $4$	& $2$	& $1$ 	& $1$	& $2$\\
$\mathsf{S}'_3$	& $0$	& $0$ 	& $0$	& $0$ 	& $2$	& $2$	& $4$	& $1$ 	& $-1$	& $2$\\
$\mathsf{S}_1$	& $0$ 	& $0$	& $0$	& $0$	& $2$ 	& $1$	& $1$	& $1$	& $0$	& $0$\\
$\mathsf{S}_2$	& $0$ 	& $0$	& $0$	& $0$	& $0$ 	& $1$	& $-1$	& $0$	& $1$	& $0$\\
$\mathsf{S}_3$	& $0$ 	& $0$	& $0$	& $0$	& $0$ 	& $2$	& $2$	& $0$	& $0$	& $2$
\end{tabular}}}
\caption{Intersection and Height Pairings}
\label{tab:height}\label{tab:intersection}
\end{table} 
\begin{remark}
The different choices in Prop.~\ref{prop:Divisors} are permuted by automorphisms which fix the ample class; see Thm.~\ref{thm:involutions}.
\end{remark}
\par Using the elliptic-curve group law on each fiber $F_s \ni (s,X,Y) $ given by Equation~(\ref{eqn:EFS}), the three involutions in Equation~(\ref{eqn:involutions}), and a choice of sections $\{\mathsf{S}'_1 ,\mathsf{S}'_2 , \mathsf{S}'_3\}$ in Proposition~\ref{prop:MW}, we define involutions of the elliptic fibration with section $(\pi,\mathsf{O})$ mapping smooth or singular fibers to smooth or singular fibers, respectively,
\begin{equation}
\label{eqn:symplectic_involutions}
\begin{array}{llll}
 \imath_1 : \quad & (s,X,Y) \mapsto  (s',X',Y') &= -\jmath_1\Big(s, X, Y\Big) &+ \; \mathsf{S}'_1 \Big|_{F_{s'}}   \,,\\
 \imath_2 : \quad & (s,X,Y) \mapsto  (s'',X'',Y'') &= -\jmath_2\Big(s, X, Y\Big) &+  \; \mathsf{S}'_2 \Big|_{F_{s''}} \,,\\
 \imath_3 : \quad & (s,X,Y) \mapsto  (s''',X''',Y''') &= -\jmath_3\Big(s, X, Y\Big) &+   \; \mathsf{S}'_3 \Big|_{F_{s'''}} \,.
\end{array}
\end{equation}
By a slight abuse of notation we also denote the involutions more intuitively by $p \mapsto \imath_l(p) := - \jmath_l(p) + K_l$ for $p\in F$ and $1\le l \le 3$.
We have the following:
\begin{theorem}
\label{thm:involutions}
The involutions $\imath_1, \imath_2, \imath_3$ are three commuting symplectic involutions of the elliptic fibration with section $(\pi,\mathsf{O})$ on $\operatorname{Kum}(\mathfrak{A})$ such that $\imath_3=\imath_1\circ\imath_2$. The involutions act on the divisor classes $\{F, K_0, K_1, K_2, K_3 \}$ as follows:
\begin{center}
\begin{tabular}{c|ccccc}
			& $F$	& $K_0$	& $K_1$	& $K_2$	& $K_3$ \\
\hline
$\imath_1$ 	& $F$	& $K_1$	& $K_0$	& $K_3$	& $K_2$ \\
$\imath_2$ 	& $F$	& $K_2$	& $K_3$	& $K_0$	& $K_1$ \\
$\imath_3$ 	& $F$	& $K_3$	& $K_2$	& $K_1$	& $K_0$ 
\end{tabular}
\end{center}
\end{theorem}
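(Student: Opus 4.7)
The plan is to verify all claims by direct computation from the definition $\imath_l(p) = -\jmath_l(p) + \mathsf{S}'_l\big|_{F_{\jmath_l(s)}}$, feeding in the explicit action of the $\jmath_l$'s on the Mordell--Weil group from Lemma~\ref{lem:antisymplectic} together with the expressions $\mathsf{S}'_1 = 2\mathsf{S}_1$, $\mathsf{S}'_2 = \mathsf{S}_1 + \mathsf{S}_2 + \mathsf{S}_3$, $\mathsf{S}'_3 = \mathsf{S}_1 - \mathsf{S}_2 + \mathsf{S}_3$ from the first row of Proposition~\ref{prop:Divisors}. Two preliminary observations drive everything. First, each $\jmath_l$ fixes $\mathsf{O}$, so its restriction $F_s \to F_{\jmath_l(s)}$ is a group homomorphism and satisfies $\jmath_l(-q) = -\jmath_l(q)$. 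Second, each $\jmath_l$ fixes its own section, $\jmath_l(\mathsf{S}'_l) = \mathsf{S}'_l$, as one reads off from the tables (for instance $\jmath_2(\mathsf{S}_1 + \mathsf{S}_2 + \mathsf{S}_3) = \mathsf{S}_2 + \mathsf{S}_1 + \mathsf{S}_3$).

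With these in hand, the involution property is immediate:
\[
 \imath_l \circ \imath_l(p) \; = \; -\jmath_l\bigl(-\jmath_l(p) + \mathsf{S}'_l\bigr) + \mathsf{S}'_l \; = \; p - \jmath_l(\mathsf{S}'_l) + \mathsf{S}'_l \; = \; p.
\]
The composition law is equally short: using $\jmath_1\jmath_2 = -\jmath_3$ from Lemma~\ref{lem:antisymplectic},
\[
 \imath_1 \circ \imath_2(p) \; = \; \jmath_1\jmath_2(p) - \jmath_1(\mathsf{S}'_2) + \mathsf{S}'_1 \; = \; -\jmath_3(p) + \bigl(\mathsf{S}'_1 - \jmath_1(\mathsf{S}'_2)\bigr),
\]
and the direct evaluation $\jmath_1(\mathsf{S}'_2) = \mathsf{S}_1 + \mathsf{S}_2 - \mathsf{S}_3$ gives $\mathsf{S}'_1 - \jmath_1(\mathsf{S}'_2) = \mathsf{S}'_3$, whence $\imath_3 = \imath_1\circ\imath_2$. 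Commutativity is then automatic from the involution identities: $\imath_2\imath_1 = (\imath_1\imath_2)^{-1} = \imath_3^{-1} = \imath_3 = \imath_1\imath_2$, with analogous arguments for the remaining pairs.

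The symplectic property follows by combining three observations: Lemma~\ref{lem:antisymplectic} gives $\jmath_l^*\omega = -\omega$ for $\omega = ds\wedge dX/Y$; the hyperelliptic involution $p\mapsto -p$ is likewise anti-symplectic since it sends $dX/Y$ to $-dX/Y$; and translation by any section of a Weierstrass fibration preserves $ds$ and the fibre-invariant differential $dX/Y$, hence preserves $\omega$. Writing $\imath_l$ as the composition of $-\jmath_l$ with translation by $\mathsf{S}'_l$ and multiplying signs yields $\imath_l^*\omega = \omega$.

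Finally, the action on $\{F, K_0, K_1, K_2, K_3\}$ is read off in a few lines: each $\imath_l$ preserves the smooth-fibre class; $\imath_l(\mathsf{O}) = -\mathsf{O} + \mathsf{S}'_l = \mathsf{S}'_l$, so $K_0 \leftrightarrow K_l$; and for the remaining two indices a single computation of the shape $\imath_1(\mathsf{S}'_2) = -\jmath_1(\mathsf{S}'_2) + \mathsf{S}'_1 = \mathsf{S}'_3$ pins down the swap $K_2 \leftrightarrow K_3$ for $\imath_1$, with the rows for $\imath_2$ and $\imath_3$ filled in either by the same calculation or by the identity $\imath_3 = \imath_1\imath_2$. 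I expect the only real difficulty to be a bookkeeping one: keeping the fibre parameter straight when adding a section to $-\jmath_l(p)$ (since the latter lies over $\jmath_l(s)$, not over $s$), but the preliminary observation that $\jmath_l$ restricts to a group homomorphism on each fibre resolves this automatically.
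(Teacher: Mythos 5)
Your proposal is correct and follows the same route as the paper: each $\imath_l$ is the composition of the anti-symplectic $\jmath_l$, the (anti-symplectic) fibrewise inversion, and a (symplectic) translation by $\mathsf{S}'_l$, so it is symplectic, and the group relations and the action on $\{F,K_0,\dots,K_3\}$ are verified from Lemma~\ref{lem:antisymplectic} and the representation of the $K_l$ by the sections $\mathsf{S}'_l$ of Proposition~\ref{prop:Divisors}. Your version merely makes explicit, via the Mordell--Weil bookkeeping (in particular $\jmath_l(\mathsf{S}'_l)=\mathsf{S}'_l$ and $\mathsf{S}'_1-\jmath_1(\mathsf{S}'_2)=\mathsf{S}'_3$), the steps the paper dismisses as ``explicit computation,'' and correctly handles the fibre-parameter issue by noting that each $\jmath_l$ fixes $\mathsf{O}$ and hence restricts to a group isomorphism $F_s\to F_{\jmath_l(s)}$.
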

\begin{proof}
Each involution $\imath_l$ is a composition of the involution $\jmath_l$, an inversion given by the hyperelliptic involution, and a shift on the fiber. Since $\jmath_l$ is anti-symplectic by Lemma~\ref{lem:antisymplectic}, the involution $\imath_l$ is symplectic. One checks by explicit computation that the involutions $\imath_l$ commute and satisfy $\imath_3=\imath_1\circ\imath_2$.  The rest of the statement follows using the explicit representation of each class $K_l$ for $1\le l \le3$ in Equation~(\ref{eqn:KummerCurves}).
\end{proof}
We have the following consequence:
\begin{corollary}
For the abelian surface $\mathfrak{A}$ with polarization of type $(1,2)$ given by a line bundle $\mathcal{L}$, the translation group $T(\mathcal{L})=\{P \in \mathfrak{A} \mid \, t_P^*\mathcal{L}=\mathcal{L}\} \cong (\mathbb{Z}/2\mathbb{Z})^2$ induces the group of symplectic involutions $\{ \mathrm{id}, \imath_1, \imath_2, \imath_3 \}$ given by Equation~(\ref{eqn:symplectic_involutions}) on the elliptic fibration with section $(\pi,\mathsf{O})$ on the Kummer surface $\operatorname{Kum}(\mathfrak{A})$.
\end{corollary}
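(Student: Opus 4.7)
The plan is to realize the asserted isomorphism by descending the translation-by-$\mathsf{P}_l$ action on $\mathfrak{A}$ to $\operatorname{Kum}(\mathfrak{A})$ and identifying the resulting involution with $\imath_l$ for each $l\in\{1,2,3\}$. First, since $\mathsf{P}_l$ is 2-torsion, the translation $t_{\mathsf{P}_l}$ is an involution on $\mathfrak{A}$ that commutes with $-\mathbb{I}$ and preserves the holomorphic 2-form; it therefore descends to a symplectic involution on $\mathfrak{A}/\langle -\mathbb{I}\rangle$ and lifts to a symplectic involution $\tilde{t}_l$ on the minimal resolution $\operatorname{Kum}(\mathfrak{A})$. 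The defining property $t_{\mathsf{P}_l}^*\mathcal{L}\cong\mathcal{L}$ guarantees that $\tilde{t}_l$ preserves $|\mathcal{L}|$ and hence the elliptic fibration $\pi$. The action of $\tilde{t}_l$ on the exceptional divisors is by the corresponding translation in $T(\mathcal{L})$; restricted to the four base-point classes $K_0,K_1,K_2,K_3$, this yields the double transposition $K_0\leftrightarrow K_l$ together with $K_m\leftrightarrow K_n$ (where $\{0,l,m,n\}=\{0,1,2,3\}$), matching exactly the action of $\imath_l$ recorded in Theorem~\ref{thm:involutions}.

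Next I would identify the induced actions on the base $\mathbb{P}^1=|\mathcal{L}|$. The Heisenberg extension $1\to\mathbb{C}^*\to H(\mathcal{L})\to T(\mathcal{L})\to 0$ acting on $H^0(\mathfrak{A},\mathcal{L})$ yields a projective Klein four-group representation on $\mathbb{P}^1$; its three nontrivial elements act as commuting involutions, each with two fixed points. On the Weierstrass side, $\jmath_1,\jmath_2,\jmath_3$ induce on the base the involutions $s\mapsto -s$, $s\mapsto 1/s$, $s\mapsto -1/s$, which likewise form a Klein four-group. By matching the fibers of $|\mathcal{L}|$ that are setwise invariant under translation by $\mathsf{P}_l$ on $\mathfrak{A}$ with the fibers of $\pi$ fixed by $\jmath_l$ on $\operatorname{Kum}(\mathfrak{A})$, I would conclude that $\tilde{t}_l$ and $\imath_l$ induce the \emph{same} involution on $\mathbb{P}^1$.

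Finally, the composition $\phi_l:=\tilde{t}_l\circ\imath_l$ is then a symplectic automorphism of $(\pi,\mathsf{O})$ acting trivially on $\mathbb{P}^1$ and fixing the zero section $K_0$. On a generic smooth fiber $\phi_l$ is an elliptic-curve automorphism fixing the origin and preserving the invariant differential induced by the symplectic condition, hence is the identity; by density of smooth fibers, $\phi_l=\mathrm{id}$ on all of $\operatorname{Kum}(\mathfrak{A})$, giving $\tilde{t}_l=\imath_l$. The assignment $\mathsf{P}_l\mapsto\imath_l$ (with $\mathsf{P}_0\mapsto\mathrm{id}$) therefore realizes the desired group isomorphism. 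The main obstacle will be the middle step: one must check that translation by $\mathsf{P}_l$ corresponds to the specific base involution $\jmath_l$ rather than one of the other two, an identification that rests on tracing the Heisenberg action through the change of coordinates from theta-functions on $\mathfrak{A}$ to the pencil parameter $s$ in the Weierstrass model~(\ref{eqn:EFS}).
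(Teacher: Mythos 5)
Your proposal is correct and follows essentially the same route as the paper: descend the translations $t_{\mathsf{P}_l}$ to symplectic automorphisms of $\operatorname{Kum}(\mathfrak{A})$ preserving $(\pi,\mathsf{O})$, observe that they permute $K_0,\dots,K_3$ exactly as the $\imath_l$ do in Theorem~\ref{thm:involutions}, and conclude the identification. Your additional rigidity step (showing $\tilde{t}_l\circ\imath_l=\mathrm{id}$ via its trivial action on the base and on generic fibers) makes explicit a uniqueness point the paper leaves implicit, but it is the same argument in substance.
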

\begin{proof}
If we denote the four base points of the linear system $|\mathcal{L}|$ by $\{\mathsf{P}_0, \mathsf{P}_1, \mathsf{P}_2, \mathsf{P}_3\}$ and identify $\mathsf{P}_0 =0$ and the action by translation as follows:
\begin{center}
\begin{tabular}{c|cccc}
		& $0$	& $\mathsf{P}_1$	& $\mathsf{P}_2$	& $\mathsf{P}_3$ \\
\hline
$t_{\mathsf{P}_1}$ 	& $\mathsf{P}_1$	& $0$	& $\mathsf{P}_3$	& $\mathsf{P}_2$ \\
$t_{\mathsf{P}_2}$ 	& $\mathsf{P}_2$	& $\mathsf{P}_3$	& $0$	& $\mathsf{P}_1$ \\
$t_{\mathsf{P}_3}$ 	& $\mathsf{P}_3$	& $\mathsf{P}_2$	& $\mathsf{P}_1$	& $0$ 
\end{tabular}
\end{center}
The action of $t_{\mathsf{P}_i}$ on the abelian surface descends to a symplectic automorphism of $\operatorname{Kum}(\mathfrak{A})$. Since $\mathsf{P}_i \in T(\mathcal{L})$, the action of $t_{\mathsf{P}_i}$ on the abelian surface descends to an automorphism that preserves the elliptic fibration with section $(\pi,\mathsf{O})$ and map the zero section $\mathsf{O}$ to the section representing the image of the base point $\mathsf{P}_i$ on $\operatorname{Kum}(\mathfrak{A})$. By Theorem~\ref{thm:involutions}, this is  the group of symplectic involutions $\{ \mathrm{id}, \imath_1, \imath_2, \imath_3 \}$ given by Equation~(\ref{eqn:symplectic_involutions}).
\end{proof}
\section{Bielliptic and hyperelliptic genus-three curves}
\label{sec3}
In this section we construct a bi-double cover of $\mathbb{P}^1$ introducing the curves $\mathcal{H}$, $\mathcal{C}$, and $\mathcal{E} \cong \mathcal{Q}$, which are used in the construction of the bielliptic curve $\mathcal{D}$ and in Section~\ref{sec:proof} to show that the Prym variety of $\mathcal{D}$ is two-isogenous to the Jacobian variety of $\mathcal{C}$.
\par The intersection $\mathcal{M}_3^h \cap \mathcal{M}_3^b$ is exactly the locus of curves with automorphism group $V_4$ (the Klein 4-group) inside the hyperelliptic locus. Such curves are usually called \emph{hyperelliptic curves with extra involutions}. In \cite{MR3118614} the locus  $\mathcal{M}_3^{b} \cap \mathcal{M}_3^h$ was explicitly described in terms of invariants.  We shall construct a curve $\mathcal{H}\in \mathcal{M}_3^{b} \cap \mathcal{M}_3^h$  by choosing 4 out  of the six Weierstrass points of $\mathcal{C}$ to be the images of 4 pairs of points on the curve $\mathcal{H}$ such that all eight Weierstrass points of $\mathcal{H}$ in the preimage are fixed under the hyperelliptic involution, and each pair is kept fixed by the bielliptic involution. For a genus-two curve $\mathcal{C}$ given as sextic $Y^2 = f_6(X,Z)$, a class in $\mathcal{M}_2(2)$, i.e., the moduli space of genus-two curves with level-two structure,  is given by the ordered tuple $(\lambda_1, \lambda_2, \lambda_3)$ after we sent the three remaining roots to $0, \infty, 1$. We then choose the points $(1,\lambda_1, \lambda_2, \lambda_3)$ to be the images of the eight ramification points of $\mathcal{H}$. 
\subsection{A normal form}
We assume that the smooth genus-two curve $\mathcal{C}$ in Proposition~\ref{thm:genus3-hyperelliptc-bielliptic} is in Rosenhain normal form, i.e., for $[X:Z:Y] \in \mathbb{P}(1,1,3)$ the curve is given by
\begin{equation}
\label{Eq:Rosenhain_g2}
  Y^2 = XZ  \prod_{i=0}^3 \big(X-\lambda_i Z\big)  \;,
\end{equation} 
with the hyperelliptic map $\pi^{\mathcal{C}}:  \mathcal{C} \to \mathbb{P}^1$ given by $[X:Z:Y]  \mapsto [X:Z]$. The hyperelliptic involution on $\mathcal{C}$ has the 6 fixed points  $\mathfrak{p}_i=[\lambda_i:1:0]$ for $i=0, \dots,3$, $\mathfrak{p}_4=[0:1:0]$, and $\mathfrak{p}_5=[1:0:0]$.  To simplify our discussion in the situation of pencils, we will use $\lambda_0$ rather than choosing $\lambda_0=1$. Since $\mathcal{C}$ is smooth, we will assume that $\lambda_i \not= 0$ and $\lambda_i \not= \lambda_j$ for $0 \le i <j \le 3$.  The lambdas are ratios of squares of even theta functions $\theta_i^2=\theta_i^2(0,\tau)$  with zero elliptic argument, modular argument $\tau \in \mathbb{H}_2/\Gamma_2(2)$, and $1\le i \le10$ where we are using the same standard notation for even theta functions as in \cites{MR0141643, MR0168805, Clingher:2018aa}. We have a choice of $6!=720$ such expressions. In each case, there is a ratio of \emph{squares} of theta functions such that $l^2=\lambda_0\lambda_1\lambda_3\lambda_3$.  In the following we use the convention from \cite{MR2367218}:
\begin{lemma} 
\label{lem:Picard}
If $\mathcal{C}$ is a genus-two curve with period matrix $\tau$ and non-vanishing discriminant, then $\mathcal{C}$  is equivalent to the curve in Equation~(\ref{Eq:Rosenhain_g2}) with Rosenhain parameters $\lambda_0, \lambda_1, \lambda_2, \lambda_3$  given by 
\begin{equation}\label{Picard}
\lambda_0=1 \,, \quad \lambda_1 = \frac{\theta_1^2\theta_3^2}{\theta_2^2\theta_4^2} \,, \quad \lambda_2 = \frac{\theta_3^2\theta_8^2}{\theta_4^2\theta_{10}^2}\,, \quad \lambda_3 = \frac{\theta_1^2\theta_8^2}{\theta_2^2\theta_{10}^2}\,.
\end{equation}
Conversely, given three distinct complex numbers $(\lambda_1, \lambda_2, \lambda_3)$ different from $0, 1, \infty$ the complex abelian surface $\operatorname{Jac} (\mathcal{C})$ has the period matrix $[\mathbb{I}_2 | \tau]$ where $\mathcal{C}$ is the genus-two curve with period matrix $\tau$. 
\end{lemma}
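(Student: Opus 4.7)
The plan is to invoke the classical theta-function description of the Rosenhain moduli. First, I would fix a symplectic basis of $H_1(\mathcal{C},\mathbb{Z})$ and the induced identification of the six Weierstrass points $\mathfrak{p}_0,\dots,\mathfrak{p}_5$ with six of the ten even half-periods, following the standard dictionary used in \cite{MR2367218}. Under this identification, each even theta constant $\theta_i$ corresponds to a partition of the set of branch points into two complementary triples, and Thomae's formula expresses each $\theta_i^4$ as an explicit product of differences of branch points, up to a common nonzero factor depending only on $\tau$.

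Next, I would compute the three quotients
\[
\frac{\theta_1^2\theta_3^2}{\theta_2^2\theta_4^2},\qquad \frac{\theta_3^2\theta_8^2}{\theta_4^2\theta_{10}^2},\qquad \frac{\theta_1^2\theta_8^2}{\theta_2^2\theta_{10}^2}
\]
in terms of the branch points $0,1,\infty,\lambda_1,\lambda_2,\lambda_3$. Applying Thomae to each squared theta, the $\tau$-dependent prefactors cancel between numerator and denominator, the square-root ambiguity disappears because we work with squared ratios, and nearly all of the individual branch-point differences cancel in pairs. What survives collapses to $\lambda_1$, $\lambda_2$, $\lambda_3$ respectively, which verifies the first assertion once $\lambda_0=1$ is fixed by sending three branch points to $0,1,\infty$.

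For the converse, I would invoke the Torelli theorem: the principally polarized abelian surface $\operatorname{Jac}(\mathcal{C})$ determines the smooth genus-two curve $\mathcal{C}$ up to isomorphism. Given any triple $(\lambda_1,\lambda_2,\lambda_3)$ of pairwise distinct complex numbers none of which lies in $\{0,1,\infty\}$, the sextic in Equation~\eqref{Eq:Rosenhain_g2} has non-vanishing discriminant, so its Jacobian is a principally polarized abelian surface with period matrix $[\mathbb{I}_2\mid\tau]$ for some $\tau\in\mathbb{H}_2/\Gamma_2(2)$. Applying the forward direction to this $\tau$ returns the same triple $(\lambda_1,\lambda_2,\lambda_3)$, since any other Rosenhain model with those parameters is isomorphic to $\mathcal{C}$ over $\mathbb{C}$ and hence has the same period matrix modulo $\Gamma_2(2)$.

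The main obstacle is combinatorial rather than analytic: one must fix the precise dictionary matching the six labeled theta constants $\theta_1,\theta_2,\theta_3,\theta_4,\theta_8,\theta_{10}$ to the six complementary partitions of Weierstrass points that intervene in \eqref{Picard}. A different choice of ordering of the branch points yields a different-looking but equivalent family of Rosenhain expressions (one of $720$), so the content of the lemma is the compatibility of \emph{this} particular labeling of theta constants with the ordering $(\lambda_0,\lambda_1,\lambda_2,\lambda_3)$ of Weierstrass points used throughout the paper. Once that bookkeeping is fixed, the three theta quotients collapse to the Rosenhain parameters by the Thomae calculation described above.
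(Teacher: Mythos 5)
The paper itself offers no proof of this lemma: it is stated with a terminal \qed and deferred to the convention of \cite{MR2367218}, so there is no internal argument to compare against. Your outline is the standard way one would actually establish the statement, and it is sound in strategy: fix a symplectic homology basis, match the six branch points $0,1,\infty,\lambda_1,\lambda_2,\lambda_3$ to the ten even characteristics via the usual partition dictionary, apply Thomae's formula so that the $\tau$-dependent prefactors cancel in the ratios, and use Torelli for the converse. The converse as stated in the lemma is nearly tautological, and your Torelli remark covers what content it has.

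One step deserves more care than you give it. Thomae's formula determines the \emph{fourth} powers $\theta_i^4$ as products of differences of branch points; consequently the quotients in \eqref{Picard}, which involve only \emph{squares}, are pinned down by the Thomae computation only up to an overall sign. Your claim that ``the square-root ambiguity disappears because we work with squared ratios'' is therefore backwards: squaring the asserted identity $\lambda_1=\theta_1^2\theta_3^2/(\theta_2^2\theta_4^2)$ is what Thomae verifies, and one must still argue that the correct branch of the square root is taken. This is usually resolved either by Frobenius-type relations among the even theta constants or by a degeneration/specialization argument, and it is not purely cosmetic here: the subsequent definitions of $l$ and of the $m^{(i,j,k)}$ in Remark~\ref{rem:mu}, and hence the explicit coefficients in Theorem~\ref{thm:main}, depend on a consistent choice of these signs. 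With that sign determination supplied, and the combinatorial dictionary you already flag made explicit, your argument is complete.
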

\begin{remark}
\label{rem:mu}
We define
\begin{equation}
 l =  \frac{\theta_1^2\theta_3^2\theta_8^2}{\theta_2^2\theta_4^2\theta_{10}^2}  \,,\quad
 m^{(1,2,3)} =  \frac{\theta_1\theta_3\theta^2_6}{\theta_2\theta_4\theta^2_5}\,, \quad
  m^{(2,1,3)} =  i \frac{\theta_3\theta_8\theta^2_6}{\theta_4\theta_{10}\theta^2_7}\,,\quad
 m^{(3,1,2)} =  \frac{\theta_1\theta_8\theta^2_6}{\theta_2\theta_{10}\theta^2_9}\,,
\end{equation}
such that $l^2=\lambda_0\lambda_1\lambda_3\lambda_3$ and $(m^{(i,j,k)})^2=(\lambda_i-\lambda_j)(\lambda_i-\lambda_k)/[(\lambda_0-\lambda_i)(\lambda_0-\lambda_j)]$ with $\{i, j ,k\} =\{1,2,3\}$. The latter identities follow from the well known Frobenius identities for theta functions; see \cites{MR3712162, MR3731039}.
\end{remark}
We define the subgroup $\Gamma_2(2n) = \lbrace M \in \Gamma_2 | \, M \equiv \mathbb{I} \mod{2n}\rbrace$ and Igusa's congruence subgroups $\Gamma_2(2n, 4n) = \lbrace M \in \Gamma_2(2n) | \, \operatorname{diag}(B) =  \operatorname{diag}(C) \equiv \mathbb{I} \mod{4n}\rbrace$ of the Siegel modular group $\Gamma_2= \{ M =\bigl(\begin{smallmatrix}A&B\\ C&D \end{smallmatrix} \bigr) \in \operatorname{Sp}_4(\mathbb{Z}) \}$ such that
\begin{equation}
 \Gamma_2/\Gamma_2(2)\cong S_6, \quad  \Gamma_2(2)/\Gamma_2(2,4)\cong (\mathbb{Z}/2\mathbb{Z})^4, \quad \Gamma_2(2,4)/\Gamma_2(4,8)\cong (\mathbb{Z}/2\mathbb{Z})^9,
\end{equation}
where $S_6$ is the permutation group of six elements. The following lemma was proven in \cite{Clingher:2018aa}:
\begin{lemma}
$\lambda_1, \lambda_2, \lambda_3$ are modular with respect to $\Gamma_2(2)$, $l$ is a modular with respect to $\Gamma_2(2,4)$, and $m^{(i,j,k)}$ is modular with respect to $\Gamma_2(4,8)$ for $\{i, j ,k\} =\{1,2,3\}$.
\end{lemma}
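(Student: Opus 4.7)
The plan is to deduce all three invariance statements from Igusa's transformation law for theta constants under the Siegel modular group. For $M=\bigl(\begin{smallmatrix} A & B \\ C & D\end{smallmatrix}\bigr)\in\Gamma_2$ and an even characteristic $m=[\epsilon,\epsilon']$ one has
\[
\theta[Mm](0,M\!\cdot\!\tau) \;=\; \kappa(M)\,\phi_m(M)\,\det(C\tau+D)^{1/2}\,\theta[m](0,\tau),
\]
where $Mm$ is the transformed characteristic, $\kappa(M)$ is an eighth root of unity independent of $m$, and $\phi_m(M)$ is an eighth root of unity given by an explicit quadratic expression in the entries of $m$ and the ``diagonal data'' of $M$. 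Three classical consequences drive the argument: (a) on $\Gamma_2(2)$ the congruence $Mm\equiv m\pmod 1$ holds, so that $\theta^2[m]$ is mapped to $\phi_m(M)^2\det(C\tau+D)\,\theta^2[m]$; (b) on $\Gamma_2(2,4)$ the squared phase $\phi_m(M)^2$ is trivial for every even $m$; (c) on $\Gamma_2(4,8)$ even the first power $\phi_m(M)$ is trivial.

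Given (a)--(c), the three claims reduce to short homogeneity computations. For $\lambda_i$, writing $\lambda_i=\theta_a^2\theta_b^2/(\theta_c^2\theta_d^2)$ and applying (a), the factors $\det(C\tau+D)$ cancel (equal weight in numerator and denominator), and what remains is the fourth-power phase $\phi_{m_a}^2\phi_{m_b}^2\phi_{m_c}^{-2}\phi_{m_d}^{-2}$; the vanishing of Igusa's quadratic character on precisely the Rosenhain quadruples appearing in (\ref{Picard}) forces this to equal $1$, so each $\lambda_i$ is $\Gamma_2(2)$-invariant. For $l=\theta_1^2\theta_3^2\theta_8^2/(\theta_2^2\theta_4^2\theta_{10}^2)$ the determinant factors again cancel, and the residual sextic product $\prod_j\phi_{m_j}(M)^{\pm 2}$ is automatically trivial on $\Gamma_2(2,4)$ by (b). Finally, each $m^{(i,j,k)}$ is a ratio of \emph{first} powers of theta constants, for which the needed product $\prod_j \phi_{m_j}(M)^{\pm 1}=1$ follows from (c) on $\Gamma_2(4,8)$.

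The main obstacle is the explicit character cancellation in the first case, since (b) and (c) immediately imply the second and third. Verifying that Igusa's quadratic character indeed vanishes on the specific Rosenhain tuples requires unwinding the combinatorics of the ten even characteristics of genus two and the Frobenius relations already invoked in Remark~\ref{rem:mu}. Rather than redoing this bookkeeping, I would appeal to Igusa's tabulation of the characters together with the explicit evaluation carried out in \cite{Clingher:2018aa}, which is the reference cited for the statement.
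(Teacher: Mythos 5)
Your argument is sound, but note that the paper itself offers no proof of this lemma: it is stated as a quotation from \cite{Clingher:2018aa} and closed with an immediate \qed. So the comparison is really between your sketch and the proof in that reference, which proceeds along exactly the lines you describe. Your identification of the three levels is the correct one: $\theta[m]^4$ are weight-$2$ forms for $\Gamma_2(2)$ (so squares transform with a sign character there), $\theta[m]^2$ are weight-$1$ forms for $\Gamma_2(2,4)$ with trivial multiplier, and $\theta[m]$ are weight-$\tfrac12$ forms for $\Gamma_2(4,8)$ with trivial multiplier; since every one of $\lambda_i$, $l$, $m^{(i,j,k)}$ is a ratio of theta monomials of equal degree in numerator and denominator, the universal factor $\kappa(M)$ and the automorphy factor $\det(C\tau+D)^{1/2}$ cancel, and only the characteristic-dependent phases $\phi_m(M)$ survive --- which is precisely what your (a)--(c) control. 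The one place where your write-up is a sketch rather than a proof is the $\Gamma_2(2)$ statement for the $\lambda_i$: there the residual sign $e^{4\pi i(\phi_{m_a}+\phi_{m_b}-\phi_{m_c}-\phi_{m_d})}$ is only guaranteed to be $\pm 1$, and showing it is $+1$ for the specific Picard quadruples in \eqref{Picard} requires the finite check against Igusa's character tables (equivalently, the classical fact that the Rosenhain--Picard expressions descend to $\mathcal{M}_2(2)=\mathbb{H}_2/\Gamma_2(2)$). You acknowledge this and defer it to the same reference the paper cites, which is legitimate; just be aware that this deferred step is the only nontrivial content of the first assertion, the other two being immediate from Igusa's level theorems as you say.
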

\qed
\par By Proposition~\ref{thm:genus3-hyperelliptc-bielliptic}, a hyperelliptic and bielliptic genus-three curve $\mathcal{H}$ in the preimage of the curve $\mathcal{C}$ defined in Equation~(\ref{Eq:Rosenhain_g2}), i.e., the parameters $\lambda_i$ in Equations~(\ref{Picard}) under the map  $\mathcal{M}_3^{b} \cap \mathcal{M}_3^h \to \mathcal{M}_2$ is given by the equation
\begin{equation}
\label{Eq:Rosenhain_g3}
 y^2 = \prod_{i=0}^3 \big(x^2-\lambda_i z^2\big) \,,
\end{equation} 
with $[x:z:y] \in \mathbb{P}(1,1,4)$. On $\mathcal{H}$, there are \emph{two} involutions: the hyperelliptic involution $\imath^{\mathcal{H}}: [x:z:y] \mapsto [x:z:-y]$ and the bielliptic involution $\tau^{\mathcal{H}}: [x:z:y] \mapsto [-x:z:y]$. 
\par It is easy to check that the composition $\tau^{\mathcal{H}} \circ \imath^{\mathcal{H}}$ is fixed-point-free. An unramified double cover $\pi^\mathcal{H}_\mathcal{C}:\mathcal{H}  \to \mathcal{C}$ is given by
\begin{equation}
\label{mapQC}
\begin{split}
  \pi^\mathcal{H}_\mathcal{C}: \quad [x:z:y] & \mapsto  [X:Z:Y]=[x^2:z^2:xyz] \;.
\end{split} 
\end{equation}
The images of the four pairs of hyperelliptic fixed points and the two pairs of bielliptic fixed points under $\pi^\mathcal{H}_\mathcal{C}$ are exactly the Weierstrass points of the genus-two curve $\mathcal{C}$. It is easily proved that every unramified  double cover of a hyperelliptic genus two curve is obtained in this way \cite{MR990136}*{p.~\!387}; in particular, the cover is always hyperelliptic.
\par The quotient genus-one curve $\mathcal{Q} =  \mathcal{H} /\langle\tau^{\mathcal{H}} \rangle$ obtained from the bielliptic involution is the quartic curve
\begin{equation}
\label{Eq:Rosenhain_g1}
  y^2 = \prod_{i=0}^3 \big(X-\lambda_i Z\big) \,,
\end{equation} 
with $[X:Z:y] \in \mathbb{P}(1,1,2)$, and the double cover $\pi^{\mathcal{H}}_{\mathcal{Q}}: \mathcal{H} \to \mathcal{Q}$ is given by
\[
 \pi^{\mathcal{H}}_{\mathcal{Q}}: \quad [ x:\pm z:y] = [- x:\mp z:y] \mapsto  [X:Z:y]=[x^2:z^2:y] \;.
 \]
The four branch points of $ \pi^{\mathcal{H}}_{\mathcal{Q}}$ are precisely the images of the bielliptic fixed points. The situation is summarized in Figure~\ref{Relations_Curves}.  Here, the map $\mathbb{P}^1 \to \mathbb{P}^1$ is given by $[x:z] \mapsto [X:Z]=[x^2:z^2]$. Moreover, in the introduction the genus-one curve in the bi-double cover is called $\mathcal{E}$. Here, it is called $\mathcal{Q}$ and we prove in Section~\ref{sec:proof} that it is isomorphic to a curve $\mathcal{E}$ with a certain given equation.
\begin{figure}[ht]
\centerline{
\xymatrix{
& \mathcal{H} 	\ar[d]^{\pi^{\mathcal{H}}_{\mathcal{Q}}} \ar[ld]_{\pi^{\mathcal{H}}_{\mathcal{C}}}   \ar[rd] \\
\mathcal{C}	\ar[rd]_{\pi^{\mathcal{C}}} &  \mathcal{Q}	\ar[d]^{\pi^{\mathcal{Q}}}  & \mathbb{P}^1 	\ar[ld] \\
& \mathbb{P}^1
}}
 \caption{\label{Relations_Curves}}
\end{figure}

\par We have the following:
\begin{proposition}
\label{prop:normal_form}
The quotient $\mathcal{Q} =  \mathcal{H} /\langle\tau^{\mathcal{H}} \rangle$ in Equation~(\ref{Eq:Rosenhain_g1}) of the hyperelliptic and bielliptic genus-three curve in Equation~(\ref{Eq:Rosenhain_g3}) is isomorphic to the elliptic curve
\begin{equation}\label{eq:EC}
 \mathcal{E}: \quad \rho^2 \eta = \xi \Big( \xi^2 - 2 b \xi \eta +  (b^2-4a^2)\,  \eta^2 \Big) \,,
\end{equation} 
with $[\xi:\eta:\rho] \in \mathbb{P}^2$ and coefficients
\begin{equation}
\label{eqn:coeffs_ab}
\begin{split}
 a =(\lambda_0-\lambda_1)(\lambda_2-\lambda_3) \,, \quad
 b = 4 \lambda_0 \lambda_1+ 4 \lambda_2 \lambda_3-2 \lambda_0 \lambda_2 - 2  \lambda_0 \lambda_3 -  2\lambda_1 \lambda_2  - 2 \lambda_1 \lambda_3 \,.
\end{split}
\end{equation}
The elliptic curve~(\ref{eq:EC}) has two-torsion points $[\xi:\eta:\rho] = [0:1:0], [b \pm 2a:1:0]$, and the neutral element $[0:0:1]$.
\end{proposition}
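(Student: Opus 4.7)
The plan is to exhibit an explicit birational map $\mathcal{Q}\dashrightarrow\mathcal{E}$ by the classical transformation that brings a Jacobi quartic to Weierstrass form, and then promote it to a biregular isomorphism by the standard fact that any birational map between smooth projective curves is an isomorphism. The curve $\mathcal{Q}\subset\mathbb{P}(1,1,2)$ is a smooth double cover of $\mathbb{P}^1$ branched at the four distinct points $\lambda_0,\lambda_1,\lambda_2,\lambda_3$, so it has genus one and carries the four rational Weierstrass points $\mathfrak{q}_i=[\lambda_i:1:0]$. I would single out $\mathfrak{q}_0$ and send it to infinity on the target.

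First I would pass to the affine chart $Z=1$ and make the substitution $X-\lambda_0=1/u$, $y=v/u^2$, which converts the defining equation into the depressed cubic
\[
v^2 \;=\; \prod_{i=1}^{3}\bigl(1+(\lambda_0-\lambda_i)u\bigr),
\]
whose three affine roots sit at $u=1/(\lambda_i-\lambda_0)$ for $i=1,2,3$. Setting $e_3=(\lambda_0-\lambda_1)(\lambda_0-\lambda_2)(\lambda_0-\lambda_3)$, I would then shift and rescale via
\[
\xi \;=\; 4\bigl[e_3\, u + (\lambda_0-\lambda_2)(\lambda_0-\lambda_3)\bigr], \qquad \rho \;=\; 8\,e_3\, v,
\]
placing one of the three 2-torsion $x$-values at the origin and reducing the cubic to
\[
\rho^2 \;=\; \xi\bigl(\xi-4(\lambda_0-\lambda_3)(\lambda_1-\lambda_2)\bigr)\bigl(\xi-4(\lambda_0-\lambda_2)(\lambda_1-\lambda_3)\bigr).
\]

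The main algebraic step is then to verify that the $a$ and $b$ of~(\ref{eqn:coeffs_ab}) really do satisfy
\[
b-2a \;=\; 4(\lambda_0-\lambda_3)(\lambda_1-\lambda_2), \qquad b+2a \;=\; 4(\lambda_0-\lambda_2)(\lambda_1-\lambda_3).
\]
The sum $(b-2a)+(b+2a)=2b$ is a direct symmetric expansion, and the difference $(b+2a)-(b-2a)=4a$ reduces to the elementary identity $(\lambda_0-\lambda_2)(\lambda_1-\lambda_3)-(\lambda_0-\lambda_3)(\lambda_1-\lambda_2)=(\lambda_0-\lambda_1)(\lambda_2-\lambda_3)$. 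Once these two identities are in hand, the preceding equation reads $\rho^2=\xi(\xi^2-2b\xi+(b^2-4a^2))$, and homogenizing by the auxiliary variable $\eta$ produces precisely~(\ref{eq:EC}).

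Finally, to upgrade the birational map to an isomorphism I would invoke that $\mathcal{Q}$ and $\mathcal{E}$ are smooth projective irreducible curves, so birationality suffices. The bookkeeping on the four Weierstrass points then identifies the distinguished points: an asymptotic analysis at $X=\lambda_0$ gives $\xi\sim 1/(X-\lambda_0)$ and $\rho\sim (X-\lambda_0)^{-3/2}$, so $\mathfrak{q}_0\mapsto[0:0:1]$, the neutral element; direct substitution yields $\mathfrak{q}_1\mapsto[0:1:0]$, $\mathfrak{q}_2\mapsto[b-2a:1:0]$, and $\mathfrak{q}_3\mapsto[b+2a:1:0]$, the three 2-torsion points. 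I expect the only real obstacle to be tracking the constants through the shift and rescale; no subtle geometry is required beyond the two identities displayed above.
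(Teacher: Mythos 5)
Your proposal is correct and takes essentially the same route as the paper, whose proof is simply the assertion that the claim "follows by an explicit computation"; you have carried out exactly that computation (quartic to Weierstrass form via $X-\lambda_0=1/u$, shift, rescale, and the two symmetric-function identities for $b\pm 2a$). Your normalization is moreover consistent with the paper's later conventions, since your map sends the branch point $[X:Z:y]=[1:0:1]$ to $[4(\lambda_0-\lambda_2)(\lambda_0-\lambda_3):1:8(\lambda_0-\lambda_1)(\lambda_0-\lambda_2)(\lambda_0-\lambda_3)]$, which is the point $\mathsf{p}_1$ of Lemma~\ref{lem:adding}.
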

\begin{proof}
The proof follows by an explicit computation. 
\end{proof}
Later, we will also use the existence of certain rational points on $\mathcal{E}$ in Equation~(\ref{eq:EC}) that stems from the fact that $\mathcal{E}$ is isomorphic to the genus-one curve $\mathcal{Q} =  \mathcal{H} /\langle\tau^{\mathcal{H}} \rangle$ with four bielliptic branch points. We have the following:
\begin{lemma}\label{lem:adding}
On the elliptic curve $\mathcal{E}$ in Proposition~\ref{prop:normal_form}, there are the rational points $\mathsf{p}_1$ with coordinates given by
\begin{equation}
\begin{split}
   [\xi:\eta:\rho] & = \left[ 4(\lambda_0-\lambda_2)( \lambda_0-\lambda_3) : 1: 8(\lambda_0-\lambda_1)(\lambda_0-\lambda_2)( \lambda_0-\lambda_3)\right] \,,
\end{split} 
\end{equation}
and $\mathsf{p}_2$ with
\begin{equation}
\begin{split}
  [\xi:\eta:\rho]  = \left[ 4\lambda_0\lambda_1(\lambda_0-\lambda_2)( \lambda_0-\lambda_3) :\lambda_0^2:  8l(\lambda_0-\lambda_1)(\lambda_0-\lambda_2)( \lambda_0-\lambda_3)\right]\,.
\end{split} 
\end{equation}
Using the group law on $\mathcal{E}$, we obtain the rational points $2\mathsf{p}_1$ with coordinates
\begin{equation}
\label{eqn:2p1}
\begin{aligned}
  \xi  & =  (\lambda_0+\lambda_1-\lambda_2-\lambda_3)^2 \,,\quad  \eta = 1\,,\\
  \rho & = (\lambda_0+\lambda_1-\lambda_2-\lambda_3)(\lambda_0-\lambda_1-\lambda_2+\lambda_3)(\lambda_0-\lambda_1+\lambda_2-\lambda_3) \,,
\end{aligned} 
\end{equation}
and rational points $\mathsf{p}_1 \pm \mathsf{p}_2$ with coordinates
\begin{equation}
\label{eqn:p1+p2}
\begin{split}
  \xi  & = 4 ( \lambda_0\lambda_1+\lambda_2\lambda_3 \mp 2l)\,, \quad \eta = 1\,,\\
  \rho& = 8 \big( \pm l (\lambda_0 + \lambda_1+\lambda_2 + \lambda_3) - \lambda_0 \lambda_1\lambda_2 -\lambda_0 \lambda_1\lambda_3-\lambda_0 \lambda_2\lambda_3-\lambda_1 \lambda_2\lambda_3\big)\,.
\end{split}
\end{equation}
\end{lemma}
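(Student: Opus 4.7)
The proof is a direct computational verification, organized in two stages: check that $\mathsf{p}_1$ and $\mathsf{p}_2$ lie on $\mathcal{E}$, and then apply the chord-and-tangent construction on the affine Weierstrass model $\rho^2 = \xi(\xi^2 - 2b\xi + b^2-4a^2)$ (the dehomogenization of Equation~(\ref{eq:EC}) with $\eta=1$) to obtain the formulas for $2\mathsf{p}_1$ and $\mathsf{p}_1 \pm \mathsf{p}_2$. The basic simplification throughout is the factorization
\[
\xi\bigl(\xi^2 - 2b\xi\eta + (b^2-4a^2)\eta^2\bigr) = \xi\bigl(\xi - (b+2a)\eta\bigr)\bigl(\xi - (b-2a)\eta\bigr),
\]
combined with the identities $b+2a = 4(\lambda_0-\lambda_2)(\lambda_1-\lambda_3)$ and $b-2a = 4(\lambda_0-\lambda_3)(\lambda_1-\lambda_2)$, obtained by routine expansion from Equation~(\ref{eqn:coeffs_ab}). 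Substituting the coordinates of $\mathsf{p}_1$ (with $\eta=1$), each factor $\xi_1-(b\pm 2a)$ collapses to $4(\lambda_0-\lambda_1)(\lambda_0-\lambda_j)$ for $j\in\{2,3\}$, and the product of the three linear factors immediately yields $64(\lambda_0-\lambda_1)^2(\lambda_0-\lambda_2)^2(\lambda_0-\lambda_3)^2 = \rho_1^2$. The verification for $\mathsf{p}_2$ is analogous; the factor $\lambda_0^2 = \eta_2$ on the right-hand side is absorbed using $l^2 = \lambda_0\lambda_1\lambda_2\lambda_3$.

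For $2\mathsf{p}_1$, I would compute the tangent slope at $\mathsf{p}_1$ by differentiating the factored form $F(\xi)=\xi(\xi-(b+2a))(\xi-(b-2a))$; using the factorizations just recorded, the three summands of $F'(\xi_1)$ share the common factor $16(\lambda_0-\lambda_1)(\lambda_0-\lambda_2)(\lambda_0-\lambda_3) = 2\rho_1$, and the slope simplifies to the linear expression $m = 3\lambda_0 - \lambda_1 - \lambda_2 - \lambda_3$. The formula $\xi_{2\mathsf{p}_1} = m^2 + 2b - 2\xi_1$ then expands to a sum of monomials of total degree two in the $\lambda_i$ that can be recognized term by term as the perfect square $(\lambda_0+\lambda_1-\lambda_2-\lambda_3)^2$; finally $\rho_{2\mathsf{p}_1} = -\rho_1 - m(\xi_{2\mathsf{p}_1}-\xi_1)$ yields the cubic product displayed in Equation~(\ref{eqn:2p1}).

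For $\mathsf{p}_1 \pm \mathsf{p}_2$, I would dehomogenize $\mathsf{p}_2$ so that $\eta=1$ and apply the secant construction to the pair $(\mathsf{p}_1, \pm\mathsf{p}_2)$, where $-\mathsf{p}_2$ is obtained by flipping the sign of $\rho_2$; the slope is $m = (\rho_1 \mp \rho_2)/(\xi_1 - \xi_2)$ and the third intersection has $x$-coordinate $\xi = m^2 + 2b - \xi_1 - \xi_2$. The main obstacle here is the algebraic bookkeeping, since the intermediate expressions contain both the Rosenhain parameters $\lambda_i$ and the square root $l$; but the relation $l^2 = \lambda_0\lambda_1\lambda_2\lambda_3$ reduces all polynomials in $l$ to degree at most one, and after collecting terms the $\xi$-coordinate becomes $4(\lambda_0\lambda_1+\lambda_2\lambda_3 \mp 2l)$ as claimed. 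The $\rho$-coordinate then follows from $\rho = -\rho_1 - m(\xi - \xi_1)$ by the same reduction modulo $l^2 - \lambda_0\lambda_1\lambda_2\lambda_3$, producing the cubic symmetric expression of Equation~(\ref{eqn:p1+p2}).
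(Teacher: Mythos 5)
Your proposal is correct, and the computations it outlines do go through. The factorization $\xi^2-2b\xi\eta+(b^2-4a^2)\eta^2=(\xi-(b+2a)\eta)(\xi-(b-2a)\eta)$ together with the identities $b+2a=4(\lambda_0-\lambda_2)(\lambda_1-\lambda_3)$ and $b-2a=4(\lambda_0-\lambda_3)(\lambda_1-\lambda_2)$ is exactly the right simplification: it makes each of the three linear factors evaluated at $\mathsf{p}_1$ (respectively $\mathsf{p}_2$) collapse to a product of differences $(\lambda_0-\lambda_i)$ (respectively to $4\lambda_0\lambda_j(\lambda_0-\lambda_1)(\lambda_0-\lambda_i)$, with the leftover powers of $\lambda_0$ and $\lambda_1\lambda_2\lambda_3$ absorbed by $\eta_2=\lambda_0^2$ and $l^2=\lambda_0\lambda_1\lambda_2\lambda_3$), and the tangent slope $m=3\lambda_0-\lambda_1-\lambda_2-\lambda_3$ and the stated coordinates of $2\mathsf{p}_1$ and $\mathsf{p}_1\pm\mathsf{p}_2$ then follow from the standard chord--tangent formulas as you describe; I have checked these reductions and they are consistent with Equations~(\ref{eqn:2p1}) and~(\ref{eqn:p1+p2}). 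The route differs from the paper's in one respect: rather than verifying by direct substitution that $\mathsf{p}_1$ and $\mathsf{p}_2$ lie on $\mathcal{E}$, the paper obtains them (and their rationality) as the images of the four branch points $[X:Z:y]=[1:0:\pm1]$ and $[0:1:\pm l]$ of $\pi^{\mathcal{H}}_{\mathcal{Q}}$ on the quartic model~(\ref{Eq:Rosenhain_g1}) under the isomorphism of Proposition~\ref{prop:normal_form}, and then asserts that the group-law formulas follow by explicit computation. Your version is self-contained and considerably more explicit about how that computation is organized, which is a gain over the paper's one-line claim; what it omits is the geometric provenance of $\pm\mathsf{p}_1,\pm\mathsf{p}_2$ as images of the bielliptic branch points. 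That identification is not needed for the literal statement of the lemma, but it is the conceptual reason these particular points matter later (the branch locus $\{\mathsf{O},2\mathsf{p}_1,\mathsf{p}_1+\mathsf{p}_2,-3\mathsf{p}_1-\mathsf{p}_2\}$ in Proposition~\ref{cor:connection_3points}), so if your proof is to replace the paper's it would be worth recording that remark alongside the verification.
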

\begin{proof}
The points $\pm \mathsf{p}_1$ and $\pm \mathsf{p}_2$ are the images of the four branch points of $\pi^{\mathcal{H}}_{\mathcal{Q}}$, namely  $[X:Z:y]=[1:0:\pm1]$ and $[0:1:\pm l]$ on the genus-one curve in Equation~(\ref{Eq:Rosenhain_g1}), respectively. The rest of the proof follows by explicit computation.
\end{proof}
Moreover, we have the following:
\begin{proposition}
\label{prop:JacH}
Given a smooth genus-two curve $\mathcal{C}$, the hyperelliptic and bielliptic genus-three curve $\mathcal{H}$ in Equation~(\ref{Eq:Rosenhain_g3}) and the elliptic curve $\mathcal{E}$ in Equation~(\ref{eq:EC}) satisfy
\[
 \operatorname{Jac}(\mathcal{H}) \cong \operatorname{Prym}(\mathcal{H},\pi^{\mathcal{H}}_{\mathcal{E}}) \times \mathcal{E}\,, \qquad \operatorname{Prym}(\mathcal{H},\pi^{\mathcal{H}}_{\mathcal{E}}) \cong \operatorname{Jac}(\mathcal{C}) \,,
\]
where $\operatorname{Prym}(\mathcal{H},\pi^{\mathcal{H}}_{\mathcal{E}})$ is the Prym variety associated with $\pi^{\mathcal{H}}_{\mathcal{E}}$.
\end{proposition}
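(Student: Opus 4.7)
\medskip
\noindent\textbf{Proof plan.} The strategy is to exploit the Klein four-group $V_4 = \langle \tau^{\mathcal{H}}, \imath^{\mathcal{H}}\rangle \subset \operatorname{Aut}(\mathcal{H})$ and its three intermediate quotients: the elliptic curve $\mathcal{E} \cong \mathcal{H}/\langle \tau^{\mathcal{H}}\rangle$, the projective line $\mathbb{P}^1 \cong \mathcal{H}/\langle \imath^{\mathcal{H}}\rangle$, and the genus-two curve $\mathcal{C}\cong \mathcal{H}/\langle \tau^{\mathcal{H}}\circ\imath^{\mathcal{H}}\rangle$. The crucial observation is that $\pi^{\mathcal{H}}_{\mathcal{C}}$ is \'etale (since $\tau^{\mathcal{H}}\circ\imath^{\mathcal{H}}$ is fixed-point-free by the discussion preceding Equation~(\ref{mapQC})), while $\imath^{\mathcal{H}}$ acts as $-\mathbb{I}$ on all of $\operatorname{Jac}(\mathcal{H})$ because it is the hyperelliptic involution.

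I would first decompose the space $V := H^0(\mathcal{H}, \Omega^1) \cong \mathbb{C}^3$ into isotypical components for the $V_4$-action. The four characters are the trivial one together with $\chi_\tau, \chi_\imath, \chi_{\tau\imath}$, where $\chi_\alpha$ has kernel $\{1,\alpha\}$. Pulling back from the three quotients, one gets
\[
V^{\tau^{\mathcal{H}}} = (\pi^{\mathcal{H}}_{\mathcal{E}})^* H^0(\mathcal{E},\Omega^1), \qquad V^{\tau^{\mathcal{H}}\circ \imath^{\mathcal{H}}} = (\pi^{\mathcal{H}}_{\mathcal{C}})^* H^0(\mathcal{C},\Omega^1), \qquad V^{\imath^{\mathcal{H}}} = 0,
\]
of dimensions $1$, $2$, and $0$, respectively. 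Since $V^{V_4} \subset V^{\imath^{\mathcal{H}}} = 0$, the character $1$- and $\chi_\imath$-components vanish, and the count $1+2 = 3$ shows that $V$ decomposes as the direct sum of the $\chi_\tau$- and $\chi_{\tau\imath}$-isotypes.

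Dualizing and passing to lattices gives the isogeny $\operatorname{Jac}(\mathcal{H}) \sim \mathcal{E} \times \operatorname{Jac}(\mathcal{C})$. To identify the Prym: by definition $\operatorname{Prym}(\mathcal{H},\pi^{\mathcal{H}}_{\mathcal{E}})$ is the identity component of $\ker (\pi^{\mathcal{H}}_{\mathcal{E},*})$, equivalently the $(-1)$-eigenspace of $(\tau^{\mathcal{H}})^*$ on $\operatorname{Jac}(\mathcal{H})$. On tangent spaces this is exactly the $\chi_{\tau\imath}$-component, i.e.\ $(\pi^{\mathcal{H}}_{\mathcal{C}})^* H^0(\mathcal{C},\Omega^1)$; and since $\pi^{\mathcal{H}}_{\mathcal{C}}$ is \'etale, $(\pi^{\mathcal{H}}_{\mathcal{C}})^{*}\colon \operatorname{Jac}(\mathcal{C}) \to \operatorname{Jac}(\mathcal{H})$ is injective, with image equal to $\operatorname{Prym}(\mathcal{H},\pi^{\mathcal{H}}_{\mathcal{E}})$. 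This gives the second isomorphism $\operatorname{Prym}(\mathcal{H},\pi^{\mathcal{H}}_{\mathcal{E}}) \cong \operatorname{Jac}(\mathcal{C})$.

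For the first identity, the natural map
\[
\Phi : \operatorname{Prym}(\mathcal{H},\pi^{\mathcal{H}}_{\mathcal{E}}) \times \mathcal{E} \longrightarrow \operatorname{Jac}(\mathcal{H}), \qquad (A,\,e) \longmapsto A + (\pi^{\mathcal{H}}_{\mathcal{E}})^{*} e,
\]
is surjective with finite kernel contained in $2$-torsion. The hardest step — and the place where the structure of $\mathcal{H}$ as a \emph{simultaneously} hyperelliptic and bielliptic curve matters — will be to verify that this kernel is trivial. I would do this by computing, on the explicit normal form~(\ref{Eq:Rosenhain_g3}), the images of the eight Weierstrass points and of the four bielliptic fixed points in $\operatorname{Jac}(\mathcal{H})[2]$, and checking that the intersection $(\pi^{\mathcal{H}}_{\mathcal{E}})^{*}\mathcal{E} \cap (\pi^{\mathcal{H}}_{\mathcal{C}})^{*}\operatorname{Jac}(\mathcal{C})$ reduces to the identity, using that $\imath^{\mathcal{H}}$ fixes each summand setwise while acting on each as $-\mathbb{I}$. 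The main obstacle here is not the dimension count but this rigidity statement about $2$-torsion; once it is in place, $\Phi$ is an isomorphism and the proposition follows.
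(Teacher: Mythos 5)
Your skeleton is essentially the paper's own argument written out in more detail: the paper's proof also rests on extending $\tau^{\mathcal{H}}$ to $\operatorname{Jac}(\mathcal{H})$, splitting off the invariant elliptic curve and the anti-invariant Prym, and then using the equivariance of the \'etale cover $\pi^{\mathcal{H}}_{\mathcal{C}}$ with respect to $\tau^{\mathcal{H}}$ and $\imath^{\mathcal{C}}$. Your isotypical decomposition of $H^0(\mathcal{H},\Omega^1)$ and the identification of the Prym with the image of $(\pi^{\mathcal{H}}_{\mathcal{C}})^{*}$ are correct and are a faithful expansion of that sketch. However, there is a concrete error at the crux of the second identity: the claim that ``since $\pi^{\mathcal{H}}_{\mathcal{C}}$ is \'etale, $(\pi^{\mathcal{H}}_{\mathcal{C}})^{*}\colon\operatorname{Jac}(\mathcal{C})\to\operatorname{Jac}(\mathcal{H})$ is injective'' has the standard dichotomy exactly backwards. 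It is the \emph{ramified} double covers whose pullback on Jacobians is injective; for an \emph{\'etale} double cover the kernel of the pullback is $\{0,\eta\}\cong\mathbb{Z}/2\mathbb{Z}$, where $\eta$ is the two-torsion class defining the cover (here $\eta=\mathsf{P}_{45}=[\mathfrak{p}_4-\mathfrak{p}_5]$). What your argument actually delivers is therefore an isomorphism $\operatorname{Prym}(\mathcal{H},\pi^{\mathcal{H}}_{\mathcal{E}})\cong\operatorname{Jac}(\mathcal{C})/\langle\eta\rangle$, i.e.\ a degree-two isogeny onto $\operatorname{Jac}(\mathcal{C})$, not the asserted isomorphism; this cannot be repaired by the same reasoning.

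The first identity has a parallel problem: the deferred verification that $\ker\Phi$ is trivial would come out negative. For complementary abelian subvarieties of a principally polarized abelian variety, the kernel of the addition map is isomorphic to the kernel of the induced polarization on either factor; since $\Theta_{\operatorname{Jac}(\mathcal{H})}$ restricts on $(\pi^{\mathcal{H}}_{\mathcal{E}})^{*}\mathcal{E}$ to twice a principal polarization (the norm composed with the pullback is multiplication by $\deg\pi^{\mathcal{H}}_{\mathcal{E}}=2$), that kernel is $\mathcal{E}[2]\cong(\mathbb{Z}/2\mathbb{Z})^2$ and $\Phi$ is a degree-four isogeny. Equivalently, if $\Phi$ were an isomorphism the principal polarization would restrict to principal polarizations on both factors, contradicting the type-$(1,2)$ restriction to the Prym. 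So the route you outline honestly yields $\operatorname{Jac}(\mathcal{H})\sim\operatorname{Prym}(\mathcal{H},\pi^{\mathcal{H}}_{\mathcal{E}})\times\mathcal{E}$ and $\operatorname{Prym}(\mathcal{H},\pi^{\mathcal{H}}_{\mathcal{E}})\sim\operatorname{Jac}(\mathcal{C})$ only up to isogenies of explicit small degree. To be fair, the paper's own three-line proof is equally silent on how the isogenies are upgraded to the stated isomorphisms; if you want the literal statement you must add an argument beyond the eigenspace decomposition, for instance an explicit comparison of period lattices, rather than the two-torsion check you propose, which as set up would fail.
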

\begin{proof}
The involution $\tau^{\mathcal{H}}$ extends to the Jacobian variety $\operatorname{Jac}(\mathcal{H})$. Therefore, it contains two abelian sub-varieties, the elliptic curve $\mathcal{E}$ and the two-dimensional Prym-variety $\operatorname{Prym}(\mathcal{H},\pi^{\mathcal{H}}_{\mathcal{E}})$ which is anti-symmetric with respect to the extended involution. On the other hand, the \'etale double cover $\pi^\mathcal{H}_\mathcal{C}:\mathcal{H}  \to \mathcal{C}$ satisfies $\pi^\mathcal{H}_\mathcal{C} \circ \tau^{\mathcal{H}} = \imath^{\mathcal{C}}$, i.e., it is equivariant with respect to the bielliptic involution on $\mathcal{H}$ and the hyperelliptic involution on $\mathcal{C}$. The claim follows.
\end{proof}
\subsection{G\"opel groups and double covers}
We denote the space of two-torsion points on an abelian variety $\mathfrak{A}$ by $\mathfrak{A}[2]$. In the case of the Jacobian of a genus-two curve, every nontrivial two-torsion point can be expressed using differences of Weierstrass points of $\mathcal{C}$. Concretely, the sixteen order-two points of $\operatorname{Jac}(\mathcal{C})[2]$ are obtained using the embedding of the curve into the connected component of the identity in the Picard group, i.e., $\mathcal{C} \hookrightarrow \operatorname{Jac}(\mathcal{C}) \cong \operatorname{Pic}^0(\mathcal{C})$ with $\mathfrak{p} \mapsto [\mathfrak{p} -\mathfrak{p}_5]$. We obtain 15 elements $\mathsf{P}_{ij} \in \operatorname{Jac}(\mathcal{C})[2]$ with $0 \le i < j \le 5$ as
 \begin{equation}
 \label{oder2points}
  \mathsf{P}_{i5} = [ \mathfrak{p}_i - \mathfrak{p}_5] \; \text{for $0 \le i < 5$}\,, \qquad 
  \mathsf{P}_{ij}=[ \mathfrak{p}_i + \mathfrak{p}_j - 2 \, \mathfrak{p}_5]  \; \text{for $0 \le i < j \le 4$}\,, 
 \end{equation}
and set $\mathsf{P}_{0} = \mathsf{P}_{55}= [0]$. For $\{i,j,k,l,m,n\}=\{0, \dots, 5\}$,  the group law on $\operatorname{Jac}(\mathcal{C})[2]$ is given by the relations
 \begin{equation}
 \label{group_law}
    \mathsf{P}_0 +  \mathsf{P}_{ij} =  \mathsf{P}_{ij}, \quad  \mathsf{P}_{ij} +  \mathsf{P}_{ij} =  \mathsf{P}_{0}, \quad 
    \mathsf{P}_{ij} + \mathsf{P}_{kl} =  \mathsf{P}_{mn}, \quad \mathsf{P}_{ij} +
    \mathsf{P}_{jk} =  \mathsf{P}_{ik}.
 \end{equation}
\par The  space $A[2]$ of two-torsion points on an abelian variety $\mathfrak{A}$ admits a symplectic bilinear form, called the \emph{Weil pairing}. The Weil pairing is induced by the pairing
\[
 \langle [ \mathfrak{p}_i - \mathfrak{p}_j  ] ,[ \mathfrak{p}_k - \mathfrak{p}_l] \rangle =\#\{  \mathfrak{p}_{i}, \mathfrak{p}_{j}\}\cap \{ \mathfrak{p}_{k}, \mathfrak{p}_{l}\} \mod{2},
\]
We call a two-dimensional, maximal isotropic subspace of $A[2]$ with respect to the Weil pairing, i.e., a subspace such that the symplectic form vanishes on it, a \emph{G\"opel group} in $A[2]$. Such a maximal subgroup is isomorphic to $( \mathbb{Z}/2\mathbb{Z})^2$.
\par We give the following characterization of the choices involved in our construction of the curves $\mathcal{H}$ and $\mathcal{E}$ in Figure~\ref{Relations_Curves}:
\begin{proposition}
For a smooth genus-two curve $\mathcal{C}$, there are 15 inequivalent hyperelliptic and bielliptic genus-three curves $\mathcal{H}_{ij}$ for $0 \le i < j \le 5$ that are unramified double covers of $\mathcal{C}$. The double covers $\mathcal{H}_{ij} \to \mathcal{C}$ are in one-to-one correspondence with non-trivial elements of $\mathsf{P}_{ij} \in \operatorname{Jac}(\mathcal{C})[2]$. Moreover, isomorphisms $\mathcal{Q}_{ij} \cong \mathcal{E}$ -- understood as isomorphisms between genus-one curves with marked Weierstrass points -- are in one-to-one correspondence with G\"opel groups $G \subset \operatorname{Jac}(\mathcal{C})[2]$ such that $\mathsf{P}_{ij} \in G$.
\end{proposition}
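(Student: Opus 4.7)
My plan is to prove the proposition in two stages. First, for the classification of double covers, I would appeal to the standard bijection between unramified connected double covers of a smooth curve and non-trivial two-torsion line bundles on its Picard group (via $L \mapsto \operatorname{Spec}_{\mathcal{C}}(\mathcal{O}_\mathcal{C} \oplus L)$): for a genus-two curve this gives $2^{2g}-1=15$ inequivalent covers. To exhibit each cover explicitly, I would use the construction of Equation~(\ref{Eq:Rosenhain_g3}): choose a M\"obius transformation on the base $\mathbb{P}^1$ that moves $\mathfrak{p}_i \mapsto [0:1:0]$ and $\mathfrak{p}_j \mapsto [1:0:0]$, relabel the remaining four Weierstrass points as $\lambda_0', \lambda_1', \lambda_2', \lambda_3'$, and form $\mathcal{H}_{ij}: y^2 = \prod_{k=0}^3 (x^2 - \lambda_k' z^2)$ together with the quotient map $\pi^{\mathcal{H}_{ij}}_\mathcal{C}: [x:z:y]\mapsto [x^2:z^2:xyz]$. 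In these coordinates the function $X/Z$ on $\mathcal{C}$ has divisor $2(\mathfrak{p}_i - \mathfrak{p}_j)$, and $\pi^{\mathcal{H}_{ij}}_\mathcal{C}$ is precisely the cover obtained by adjoining $\sqrt{X/Z}$; hence the associated 2-torsion line bundle is $\mathcal{O}_\mathcal{C}(\mathfrak{p}_i - \mathfrak{p}_j)$, which equals $\mathsf{P}_{ij}$ under the parametrization~(\ref{oder2points}). Since this construction exhausts all 15 non-trivial classes, the correspondence is a bijection.

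For the second stage, the quotient $\mathcal{Q}_{ij} = \mathcal{H}_{ij}/\langle\tau^\mathcal{H}\rangle$ is a smooth genus-one curve with four marked Weierstrass points (the branch points of its natural two-to-one map to $\mathbb{P}^1$), which by construction lie over the complementary Weierstrass points $\{\mathfrak{p}_k : k\not\in\{i,j\}\}$ of $\mathcal{C}$. To identify $\mathcal{Q}_{ij}$ with the Weierstrass form $\mathcal{E}$ in Equation~(\ref{eq:EC}), I would invoke Proposition~\ref{prop:normal_form} and observe that the coefficients $a = (\lambda_0-\lambda_1)(\lambda_2-\lambda_3)$ and $b$ in Equation~(\ref{eqn:coeffs_ab}) are invariant under the transpositions $\lambda_0 \leftrightarrow \lambda_1$ and $\lambda_2 \leftrightarrow \lambda_3$ and under the swap $\{\lambda_0,\lambda_1\}\leftrightarrow\{\lambda_2,\lambda_3\}$; hence the pair $(a,b)$ depends only on the unordered partition of the four branch points into two pairs. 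Since there are exactly three partitions of a four-element set into two pairs, one obtains three inequivalent isomorphisms (one per partition) from $\mathcal{Q}_{ij}$ to a Weierstrass model of the form~(\ref{eq:EC}).

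Finally, I would identify these three partitions with the three G\"opel groups containing $\mathsf{P}_{ij}$. Using the Weil pairing $\langle \mathsf{P}_{ab},\mathsf{P}_{cd}\rangle = |\{a,b\}\cap\{c,d\}|\bmod 2$ together with the group relations in Equation~(\ref{group_law}), any G\"opel group containing $\mathsf{P}_{ij}$ must have the form $\{0,\mathsf{P}_{ij},\mathsf{P}_{kl},\mathsf{P}_{mn}\}$ with $\{k,l,m,n\}=\{0,\ldots,5\}\setminus\{i,j\}$ and $\mathsf{P}_{kl}+\mathsf{P}_{mn}=\mathsf{P}_{ij}$; conversely, every partition of the complementary four-element set into two pairs yields such a subgroup. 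The three G\"opel groups containing $\mathsf{P}_{ij}$ are therefore in natural bijection with the three partitions of the branch points of $\mathcal{Q}_{ij}$, giving the claimed correspondence. The main obstacle I anticipate is verifying that distinct partitions yield genuinely non-equivalent marked-Weierstrass presentations of $\mathcal{E}$ (rather than the same one up to an automorphism fixing the origin): this requires carrying out the explicit M\"obius transformation matching the four branch points of $\mathcal{Q}_{ij}$ to $\{0,\,b\pm 2a,\,\infty\}$ for each choice of $(a,b)$ and checking that the three resulting isomorphisms differ.
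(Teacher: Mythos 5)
Your proposal is correct and follows the same overall architecture as the paper's proof: realize each cover by the explicit model~(\ref{Eq:Rosenhain_g3}) after a M\"obius transformation sending $\{\mathfrak{p}_i,\mathfrak{p}_j\}$ to $\{0,\infty\}$, observe that the normal form~(\ref{eq:EC}) with its marked two-torsion depends only on the partition of the four remaining Weierstrass points into two pairs, and match the three partitions with the three G\"opel groups containing $\mathsf{P}_{ij}$ via the Weil pairing and the relations~(\ref{group_law}). The one place you genuinely diverge is in how the 15 covers are distinguished: the paper argues directly from the construction that the cover is determined by the unordered pair $\{\mathfrak{p}_i,\mathfrak{p}_j\}$, i.e., by the two-torsion class $[\mathfrak{p}_i-\mathfrak{p}_j]$, and then asserts that the resulting curves can be told apart by a $j$-invariant computation; you instead invoke the standard bijection between connected unramified double covers and non-trivial two-torsion line bundles, and pin down the bundle attached to~(\ref{mapQC}) as $\mathcal{O}_{\mathcal{C}}(\mathfrak{p}_i-\mathfrak{p}_j)=\mathsf{P}_{ij}$ by exhibiting the cover as the adjunction of $\sqrt{X/Z}$ with $\operatorname{div}(X/Z)=2\mathfrak{p}_i-2\mathfrak{p}_j$. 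Your route buys injectivity and the count $2^{4}-1=15$ for free from general theory, without any invariant computation, and it proves inequivalence of the covers (which is what the stated correspondence requires) rather than non-isomorphism of the abstract curves. The obstacle you flag at the end --- that the three partitions give genuinely distinct marked-Weierstrass presentations --- is exactly the point the paper settles by noting that $(a,b)$ in~(\ref{eqn:coeffs_ab}) is invariant under $\lambda_0\leftrightarrow\lambda_1$, $\lambda_2\leftrightarrow\lambda_3$, and $\{\lambda_0,\lambda_1\}\leftrightarrow\{\lambda_2,\lambda_3\}$ but not under $\lambda_1\leftrightarrow\lambda_2$ or $\lambda_1\leftrightarrow\lambda_3$; carrying out that check (or your proposed explicit M\"obius computation) is all that remains to complete your argument.
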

\begin{proof}
We constructed the curve $\mathcal{H}\in \mathcal{M}_3^{b} \cap \mathcal{M}_3^h$  by choosing 4 out  of the six Weierstrass points of $\mathcal{C}$ to be the images of 4 pairs of points on the curve $\mathcal{H}$ such that all eight Weierstrass points of $\mathcal{H}$ in the preimage are fixed under the hyperelliptic involution, and each pair is kept fixed by the bielliptic involution. That is, the construction of $\mathcal{H}$ was determined by $\{\mathfrak{p}_4, \mathfrak{p}_5\}$. The unordered pair represents a divisor class $[\mathfrak{p}_4 - \mathfrak{p}_5]$ with $2[\mathfrak{p}_4- \mathfrak{p}_5]\equiv 0$. Therefore, $[\mathfrak{p}_4 - \mathfrak{p}_5] \in \operatorname{Jac}(\mathcal{C})[2]$. One checks that the resulting curve for any two different Weierstrass points also has a different $j$-invariant. It is easy to see, that the elliptic curve $\mathcal{E}$ together with the set of two-torsion points $\{[0:1:0], [B \pm 2A:1:0]\}$, depends on the partition of Weierstrass points of $\mathcal{E}$ or, equivalently, on a partition of the Weierstrass points of $\mathcal{C}$. From every partition of Weierstrass points, we obtain three elements $\mathsf{P}_{ij}, \mathsf{P}_{kl}, \mathsf{P}_{mn} \in \operatorname{Jac}(\mathcal{C})[2]$ with $\{i,j,k,l,m,n\}=\{0, \dots, 5\}$, each generating a $(\mathbb{Z}/2\mathbb{Z})$ subgroup.  Because the only relation between these classes is given by $\mathsf{P}_{ij} + \mathsf{P}_{kl} + \mathsf{P}_{mn} =0$, the classes generate a subgroup in $\operatorname{Jac}(\mathcal{C})[2]$ isomorphic to $(\mathbb{Z}/2\mathbb{Z})^2$. Because the pairs of Weierstrass points are all disjoint, the associated subgroup is in fact isotropic with respect to the Weil pairing.
\end{proof}
\begin{remark}
\label{rem:element}
For the smooth genus-two curve $\mathcal{C}$ in Equation~(\ref{Eq:Rosenhain_g2}), the hyperelliptic and bielliptic genus-three curve $\mathcal{H}$ in Figure~\ref{Relations_Curves} corresponds to the divisor $\mathsf{P}_{45} \in  \operatorname{Jac}(\mathcal{C})[2]$. 
\end{remark}
\subsection{Pencils of hyperelliptic curves}\label{pencils}
We start with the hyperelliptic and bielliptic genus-three curve $\mathcal{H}$ in the preimage of $\mathcal{M}_3^{b} \cap \mathcal{M}_3^h \to \mathcal{M}_2$ given by Equation~(\ref{Eq:Rosenhain_g3}).
The automorphism $\imath^\mathcal{H} \times  \imath^\mathcal{H}$ of $\mathcal{H} \times \mathcal{H}$ induces an automorphism on the symmetric square $\operatorname{Sym}^2(\mathcal{H})$ which by a slight abuse of notation we will denote by $\imath^\mathcal{H} \times  \imath^\mathcal{H}$ as well. We have the following:
\begin{lemma}
On the variety $\mathscr{H}=\operatorname{Sym}^2(\mathcal{H})/\langle \imath^\mathcal{H} \times  \imath^\mathcal{H} \rangle$, there is a pencil over $\mathbb{P}^1 \ni [s_0:s_1]$ of hyperelliptic and bielliptic genus-three curves $\mathscr{H}_{[s_0:s_1]}$ given by
\begin{equation}
\label{eqn:hyperpencil}
 \mathscr{H}_{[s_0:s_1]}: \quad y^2 =\prod_{i=0}^3  \left( x^2 - \frac{(s_0+ \lambda_i s_1)^2}{\lambda_i}  z^2\right),
\end{equation}
with $[x:z:y] \in \mathbb{P}(1,1,4)$. In particular, the central fiber over $[s_0:s_1]=[0:1]$ is isomorphic to $\mathcal{H}$.
\end{lemma}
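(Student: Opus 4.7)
The plan is to construct a rational map $\bar\Phi: \mathscr{H} \dashrightarrow \mathbb{P}^1$ whose fibers realize the pencil and then to verify that each generic fiber is cut out by the stated equation. For $P_i = [x_i : z_i : y_i] \in \mathcal{H}$ $(i = 1, 2)$, set
\[
\Phi(P_1, P_2) := [x_1 x_2 : z_1 z_2] \in \mathbb{P}^1.
\]
This expression is manifestly symmetric in $P_1, P_2$ and independent of the $y_i$, hence invariant under both the $S_2$-action on $\mathcal{H}\times\mathcal{H}$ and the diagonal action of $\imath^\mathcal{H}$, so it descends to $\bar\Phi: \mathscr{H} \dashrightarrow \mathbb{P}^1$. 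Define $\mathscr{H}_{[s_0:s_1]} := \bar\Phi^{-1}([s_0:s_1])$.

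To identify each fiber with the claimed affine curve, I work in the chart $s_1 = 1$ with projective representatives $z_1 = z_2 = 1$ and introduce the invariant coordinates
\[
x := x_1 + x_2, \qquad y := y_1 y_2/l,
\]
where $y$ is $(\imath^\mathcal{H} \times \imath^\mathcal{H})$-invariant because $y_1 y_2 \mapsto (-y_1)(-y_2) = y_1 y_2$. The key algebraic step is the factorization
\[
\lambda_i x^2 - (s_0 + \lambda_i)^2 = -(\lambda_i - x_1^2)(\lambda_i - x_2^2),
\]
verified by expanding the left side as a quadratic in $\lambda_i$ and matching elementary symmetric functions: the roots sum to $x^2 - 2 s_0 = (x_1 + x_2)^2 - 2 x_1 x_2 = x_1^2 + x_2^2$ and multiply to $s_0^2 = x_1^2 x_2^2$. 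Dividing by $\lambda_i$, taking the product over $i = 0, \dots, 3$, and using $\prod_i(\lambda_i - x_j^2) = y_j^2$ (from the defining equation of $\mathcal{H}$ with $z_j = 1$) together with $\prod_i \lambda_i = l^2$, one obtains
\[
\prod_{i=0}^{3} \left(x^2 - (s_0+\lambda_i)^2/\lambda_i\right) = y_1^2 y_2^2/l^2 = y^2,
\]
which is the claimed fiber equation; the projective form with general $s_1$ and $z$ then follows by homogeneity.

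For the central fiber at $[s_0:s_1] = [0:1]$, the equation reduces directly to $y^2 = \prod_i (x^2 - \lambda_i z^2)$, the defining equation of $\mathcal{H}$. The matching of the $\bar\Phi$-fiber with this curve is immediate: the condition $s_0 = x_1 x_2 = 0$ forces one of the two points, say $P_2$, to have $x_2 = 0$; on $\mathcal{H}$ with $z_2 = 1$ this gives $y_2^2 = l^2$, so $y_2 = \pm l$, and the two choices are identified by $\imath^\mathcal{H}$. Then $P_1$ ranges freely over $\mathcal{H}$, and the coordinate map specializes to $(x, y) = (x_1, \pm y_1)$, giving the explicit isomorphism $\mathscr{H}_{[0:1]} \cong \mathcal{H}$. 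The main technical obstacle is keeping the projective normalizations straight between $\operatorname{Sym}^2(\mathcal{H})$, $\mathbb{P}(1,1,4)$, and $\mathbb{P}^1$; this is handled cleanly by restricting to the affine chart above and extending the polynomial identity by continuity to the projective family.
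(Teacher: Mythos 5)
Your proposal is correct and follows essentially the same route as the paper: the paper's proof uses exactly the substitution $y=y^{(1)}y^{(2)}/l$, $x=x^{(1)}z^{(2)}+x^{(2)}z^{(1)}$, $s_0z=x^{(1)}x^{(2)}$, $s_1z=z^{(1)}z^{(2)}$ and observes that the pencil equation factors into two copies of the defining equation of $\mathcal{H}$, which is precisely your key identity $\lambda_i x^2-(s_0+\lambda_i)^2=-(\lambda_i-x_1^2)(\lambda_i-x_2^2)$ in the affine chart. You in fact supply more detail than the paper does, particularly in verifying the central-fiber isomorphism (where your phrase that the two signs of $y_2$ are ``identified by $\imath^{\mathcal H}$'' should be read as the diagonal identification $(P_1,P_2^+)\sim(\imath^{\mathcal H}P_1,P_2^-)$, which your formula $(x,y)=(x_1,\pm y_1)$ already accounts for).
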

\begin{proof}
If we set $y=y^{(1)}y^{(2)}/l$, $x=x^{(1)} z^{(2)}+x^{(2)} z^{(1)}$, and $s_0 z=x^{(1)}x^{(2)}$ and $s_1 z=z^{(1)}z^{(2)}$, Equation~(\ref{eqn:hyperpencil}) becomes the product of two copies of Equation~(\ref{Eq:Rosenhain_g3}). Since the variables are invariant under the product of the hyperelliptic involutions on each copy of $\mathcal{H}$, the statements follows.
\end{proof}
We make the following:
\begin{remark}
The bielliptic and hyperelliptic involution on the curve in Equation~(\ref{Eq:Rosenhain_g3}) both lift to involutions on the fibers of the pencil~(\ref{eqn:hyperpencil}).
\end{remark}
We define two pencils $\mathscr{Q}_{[s_0:s_1]}$ and $\mathscr{C}_{[s_0:s_1]}$ of genus-one and genus-two curves over $\mathbb{P}^1 \ni [s_0:s_1]$, respectively. They are given by
\begin{equation}
\label{Eq:Rosenhain_g1_pencil}
\begin{split}
  \mathscr{Q}_{[s_0:s_1]}: \quad y^2 = \prod_{i=0}^3 \left(X- \frac{(s_0+ \lambda_i s_1)^2}{\lambda_i}  Z\right) \,, \\
  \mathscr{C}_{[s_0:s_1]}: \quad Y^2 = X Z \prod_{i=0}^3 \left(X- \frac{(s_0+ \lambda_i s_1)^2}{\lambda_i}  Z\right),
 \end{split}
\end{equation} 
with $[X:Z:y] \in \mathbb{P}(1,1,2)$ and $[X:Z:Y] \in \mathbb{P}(1,1,3)$.  The pencils are constructed such that the diagram of Figure~\ref{Relations_Curves} holds for every fiber over $[s_0:s_1]$, and central fibers over $[s_0:s_1]=[0:1]$ are exactly the curves $\mathcal{H}$, $\mathcal{C}$, and $\mathcal{Q}$, respectively. That is, we have
\[
 \mathscr{Q}_{[0:1]} = \mathcal{Q} \,, \qquad \mathscr{C}_{[0:1]} = \mathcal{C} \,, \qquad \mathscr{H}_{[0:1]} = \mathcal{H} \,.
\] 
We define another pencil $\mathscr{Q}'_{[t_0:t_1]}$ of genus-one curves over $\mathbb{P}^1 \ni [t_0:t_1]$ given by
\begin{equation}
\label{Eq:Rosenhain_g1_pencil_Kummer}
\begin{split}
  \mathscr{Q}'_{[t_0:t_1]}: \quad Y^2 = t_0 t_1 \prod_{i=0}^3 \left(x- \frac{t_0 + \lambda_i^2 t_1}{\lambda_i}  z\right) \,, 
\end{split}
\end{equation} 
with $[x:z:Y] \in \mathbb{P}(1,1,2)$, and a two-to-one map $\mathscr{Q} \to \mathscr{Q}'$ by setting
\begin{equation}
\label{eqn:psi}
\begin{split}
 \Big( [s_0:s_1], [X:Z:y]\Big) \mapsto  & \Big([t_0:t_1], [x:z:Y]\Big)= \Big([s_0^2:s_1^2], [X-2s_0s_1Z:Z:s_0s_1y]\Big)\,.
\end{split}
\end{equation}
From these pencils, we obtain the total spaces of fibrations (without multiple fibers)
\[
\mathscr{C} = \coprod_{[s_0:s_1]\in \mathbb{P}^1} \mathscr{C}_{[s_0:s_1]}\,, \qquad
 \mathscr{Q} = \coprod_{[s_0:s_1]\in \mathbb{P}^1} \mathscr{Q}_{[s_0:s_1]}\,, \qquad 
  \mathscr{Q}' = \coprod_{[t_0:t_1]\in \mathbb{P}^1} \mathscr{Q}'_{[t_0:t_1]}\,.
\] 
In the next section, we will show that the total space $\mathscr{Q}$ and $\mathscr{Q}'$ are in fact singular models for certain Kummer surfaces. Singular fibers for pencils of genus-two curves were classified by Namikawa and Ueno in \cites{MR0319996,MR0369362,MR0384794,MR0384795}. We have the following immediate:
\begin{proposition}
The pencil  $\mathscr{C} \to \mathbb{P}^1$  has twelve singular fiber of Namikawa-Ueno type $I_{2-0-0}$ and four singular fibers of type $I_{4-0-0}$ with modulus point $\bigl( \begin{smallmatrix} \tau_1 & * \\ * &\infty \end{smallmatrix} \bigr)$.
\end{proposition}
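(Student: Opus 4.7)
The plan is to read off the singular fibers of $\mathscr{C}\to\mathbb{P}^1$ from the discriminant of the defining sextic and then match each collision pattern against the Namikawa--Ueno table. Viewing the fiber $\mathscr{C}_{[s_0:s_1]}$ as $Y^2 = XZ\prod_{i=0}^3(X-\mu_i Z)$ with $\mu_i = (s_0+\lambda_i s_1)^2/\lambda_i$, I would first observe that the associated binary sextic has discriminant proportional to $\prod_i\mu_i^2\cdot\prod_{i<j}(\mu_i-\mu_j)^2$. Using the identity $\mu_i-\mu_j = (\lambda_j-\lambda_i)(s_0^2-\lambda_i\lambda_j s_1^2)/(\lambda_i\lambda_j)$, this factors, up to a nonzero constant in the $\lambda_i$, as
\begin{equation*}
\Delta(s_0,s_1) \;=\; \prod_{i=0}^{3}(s_0+\lambda_i s_1)^4 \cdot \prod_{0\le i<j\le 3}(s_0^2-\lambda_i\lambda_j s_1^2)^2.
\end{equation*}
The total degree $4\cdot 4+6\cdot 2\cdot 2 = 40$ matches the expected discriminant degree, and $\Delta(1,0)\neq 0$ confirms that the fiber at $[1:0]$ is smooth.

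Next, the zero locus of $\Delta$ on $\mathbb{P}^1$ consists of exactly $16$ pairwise distinct points (for generic Rosenhain parameters): the four points $[-\lambda_k:1]$ ($k=0,\ldots,3$) coming from the quartic factors, and the twelve points $[\pm\sqrt{\lambda_i\lambda_j}:1]$ ($0\le i<j\le 3$) coming from the quadratic factors. At any of the twelve transversal points, two moving branch points $\mu_i,\mu_j$ collide simply; recentering $X$ at $(\mu_i+\mu_j)/2$ shows the total space has an $A_1$ singularity at the node of the special fiber. One blow-up resolves it and produces a semistable fiber consisting of the elliptic normalization with its two branch-points joined by a single $(-2)$-curve. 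This is the Namikawa--Ueno type $[I_{2-0-0}]$, with two irreducible components. At any of the four points $[-\lambda_k:1]$, the branch point $\mu_k$ meets $X=0$ to second order in the local parameter $u = s_0+\lambda_k s_1$: the local equation is $Y^2 = cX(X-u^2/\lambda_k)$ with a unit $c$, exhibiting an $A_3$ surface singularity whose resolution inserts a chain of three $(-2)$-curves between the two preimages of the node. This is the Namikawa--Ueno type $[I_{4-0-0}]$, with four irreducible components.

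To pin down the modulus point at each $[I_{4-0-0}]$ fiber, I would normalize the nodal sextic $Y^2 = X^2 Z \prod_{j\neq k}(X - (\lambda_j-\lambda_k)^2/\lambda_j \cdot Z)$ via $Y = XW$, obtaining the elliptic curve
\begin{equation*}
E_k : \quad W^2 \;=\; Z \cdot \prod_{j\neq k}\Big(X - \tfrac{(\lambda_j-\lambda_k)^2}{\lambda_j}\,Z\Big),
\end{equation*}
whose $j$-invariant gives the parameter $\tau_1$. Since the generalized Jacobian of the nodal special fiber is an extension of $E_k$ by $\mathbb{G}_m$ arising from the single cycle in the dual graph, its image in the Satake compactification of $\mathcal{A}_2$ is exactly $\bigl(\begin{smallmatrix}\tau_1 & * \\ * & \infty\end{smallmatrix}\bigr)$. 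The main obstacle I anticipate is the rigorous matching with Namikawa--Ueno's list for the $[I_{4-0-0}]$ fibers: one must carefully track the resolution of the $A_3$ surface singularity -- or, equivalently, perform the degree-two base change $u\mapsto v^2$ that renders the collision transversal after pullback -- and verify that the resulting dual graph, with its weights and polarization data, exactly reproduces the classification entry.
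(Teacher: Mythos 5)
Your proposal is correct and follows essentially the same route as the paper: both locate the $12+4$ singular fibers at the collisions $\mu_i=\mu_j$ (i.e.\ $s_0^2=\lambda_i\lambda_j s_1^2$) and $\mu_k=0$ (i.e.\ $s_0+\lambda_k s_1=0$), and read off the Namikawa--Ueno types from the order of vanishing there. The only difference is one of detail: the paper compresses the local analysis into the normal form $Y^2=(X^3+\alpha X+1)((X-\beta)^2+\epsilon^n)$ with $n=2,4$, whereas you make the discriminant factorization and the $A_1$/$A_3$ resolutions explicit.
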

\par Comparing Equation~(\ref{Eq:Rosenhain_g1_pencil}) with Equation~(\ref{Eq:Rosenhain_g1}), we introduce the functions
\begin{equation}
\label{eqn:params_Lambdas}
 \Lambda_i(s_0,s_1) =   \frac{(s_0+ \lambda_i s_1)^2}{\lambda_i}  \,, \qquad  L(s_0,s_1) = \frac{\prod_{i=0}^3 (s_0+ \lambda_i s_1)}{l} \,,
\end{equation}
for $0 \le i \le 3$ such that $L^2=\Lambda_0\Lambda_1\Lambda_2\Lambda_3$.
Using Proposition~\ref{prop:normal_form} we have the immediate:
\begin{corollary}
\label{cor:pencilQ}
The pencil $\mathscr{Q}$ is isomorphic to the elliptic fibration $\pi: \mathscr{E} \to \mathbb{P}^1$,
\begin{equation}
\label{eqn:Kummer_EF}
 \mathscr{E}_{[s_0:s_1]}: \quad \rho^2 \eta = \xi \Big( \xi^2 - 2 B(s_0,s_1)\, \xi \eta +  \big(B^2(s_0,s_1)-4A^2(s_0,s_1)\big)\,  \eta^2 \Big) \,,
\end{equation}
with section $\mathsf{O}: [\xi:\eta:\rho] = [0:0:1]$ and
\begin{equation}
\label{eqn:coeffs_AB}
\begin{split}
 A(s_0,s_1) & =\big(\Lambda_0(s_0,s_1)-\Lambda_1(s_0,s_1)\big)\big(\Lambda_2(s_0,s_1)-\Lambda_3(s_0,s_1)\big) \,,\\
 B(s_0,s_1) & = 4 \Lambda_0(s_0,s_1) \, \Lambda_1(s_0,s_1)+ 4 \Lambda_2(s_0,s_1) \, \Lambda_3(s_0,s_1)-2 \Lambda_0(s_0,s_1) \, \Lambda_2(s_0,s_1) \\
 &  - 2  \Lambda_0(s_0,s_1)  \, \Lambda_3(s_0,s_1)  -  2\Lambda_1(s_0,s_1) \, \Lambda_2(s_0,s_1)   - 2 \Lambda_1(s_0,s_1) \, \Lambda_3(s_0,s_1)  \,.
\end{split}
\end{equation}
In particular, $A$ and $B$ are even polynomials of degree four, such that there are no singular fibers over $[s_0:s_1]=[0:1], [1:0]$, and
\[
 l^2  A(s_0,s_1) = A(l s_1,s_0)\,, \quad l^2  B(s_0,s_1) =  B(l s_1,s_0)\,.
\]
\end{corollary}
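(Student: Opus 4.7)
The plan is to realize the corollary as a parametrized version of Proposition~\ref{prop:normal_form}, applied fiberwise. The fiber $\mathscr{Q}_{[s_0:s_1]}$ in Equation~(\ref{Eq:Rosenhain_g1_pencil}) is precisely a quartic of the form covered by Proposition~\ref{prop:normal_form}, with parameters $\lambda_i$ replaced by $\Lambda_i(s_0,s_1)$. Substituting $\lambda_i \mapsto \Lambda_i(s_0,s_1)$ into Equations~(\ref{eq:EC}) and~(\ref{eqn:coeffs_ab}) therefore yields the fiberwise Weierstrass model~(\ref{eqn:Kummer_EF}) with coefficients~(\ref{eqn:coeffs_AB}). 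The neutral element $[\xi:\eta:\rho] = [0:0:1]$ is independent of $[s_0:s_1]$ and hence assembles into a global zero section $\mathsf{O}$, giving the isomorphism $\mathscr{Q} \cong \mathscr{E}$ of fibrations.

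To verify that $A$ and $B$ are even of total degree four in $(s_0,s_1)$, I use the expansion $\Lambda_i = s_0^2/\lambda_i + 2 s_0 s_1 + \lambda_i s_1^2$. The identity $\Lambda_i - \Lambda_j = (\lambda_i-\lambda_j)(s_1^2 - s_0^2/(\lambda_i\lambda_j))$ shows that each difference is biquadratic with only even-power monomials, so $A=(\Lambda_0-\Lambda_1)(\Lambda_2-\Lambda_3)$ inherits this property. For $B$, each product $\Lambda_i\Lambda_j$ contributes $2(\lambda_i^{-1}+\lambda_j^{-1})$ to the coefficient of $s_0^3 s_1$ and $2(\lambda_i+\lambda_j)$ to the coefficient of $s_0 s_1^3$; combining with the multiplicities $(+4,+4,-2,-2,-2,-2)$ of~(\ref{eqn:coeffs_AB}) gives zero in both cases. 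Smoothness at $[s_0:s_1]=[0:1], [1:0]$ is immediate from $\Lambda_i(0,1)=\lambda_i$ and $\Lambda_i(1,0)=1/\lambda_i$, which are distinct and nonzero by smoothness of $\mathcal{C}$; the quartic in $\mathscr{Q}$ therefore has four distinct roots and the Weierstrass discriminant $16 A^2 (B^2-4A^2)^2$ is nonzero.

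For the functional equation $l^2 A(s_0,s_1) = A(ls_1,s_0)$, substituting $(ls_1, s_0)$ into the expansion and using $l^2 = \lambda_0\lambda_1\lambda_2\lambda_3$ yields $\Lambda_i(ls_1,s_0) - \Lambda_j(ls_1,s_0) = (\lambda_i-\lambda_j)(s_0^2 - \lambda_k\lambda_m s_1^2)$ where $\{k,m\}=\{0,1,2,3\}\setminus\{i,j\}$. Since $\Lambda_i(s_0,s_1)-\Lambda_j(s_0,s_1) = -(\lambda_i\lambda_j)^{-1}(\lambda_i-\lambda_j)(s_0^2 - \lambda_i\lambda_j s_1^2)$, multiplying the two factors of $A$ and using $\lambda_0\lambda_1\cdot\lambda_2\lambda_3=l^2$ yields the desired identity. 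The analogous identity for $B$ follows from the same bookkeeping applied to the six products $\Lambda_i\Lambda_j$. The most computation-heavy step is this final verification, together with the cancellation of odd-power terms in $B$; both reduce to finite, explicit polynomial identities.
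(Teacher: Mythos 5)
Your proposal is correct and follows the same route as the paper, which states this corollary as an immediate fiberwise application of Proposition~\ref{prop:normal_form} under the substitution $\lambda_i \mapsto \Lambda_i(s_0,s_1)$ and leaves the remaining verifications implicit. Your explicit checks of the evenness of $A$ and $B$, the smoothness of the fibers over $[0:1]$ and $[1:0]$, and the functional equations via $\Lambda_i - \Lambda_j = (\lambda_i-\lambda_j)\bigl(s_1^2 - s_0^2/(\lambda_i\lambda_j)\bigr)$ are all accurate and simply make explicit what the paper omits.
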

\qed
\par Similarly, we obtain the following:
\begin{corollary}
\label{cor:pencilQp}
The pencil $\mathscr{Q}'$ is isomorphic to the elliptic fibration $\pi': \mathscr{E}' \to \mathbb{P}^1$,
\begin{equation}
\label{eqn:Ep}
 \mathscr{E}'_{[t_0:t_1]}: \quad \rho^{\prime 2} \eta' = \xi' \Big( \xi^{\prime 2} - 2 B'(t_0,t_1)\, \xi' \eta' +  \big(B^{\prime 2}(t_0,t_1)-4A^{\prime 2}(t_0,t_1)\big)\,  \eta^{\prime 2} \Big) \,,
\end{equation}
with section $\mathsf{O}': [\xi':\eta':\rho'] = [0:0:1]$, and polynomials
\[
  A'(t_0,t_1) = t_0 t_1 \, A\big(\sqrt{t_0}, \sqrt{t_1}\big) \,, \quad  B'(t_0,t_1) = t_0 t_1 \, B\big(\sqrt{t_0}, \sqrt{t_1}\big)\,,
\]
which are well defined polynomials because of Corollary~\ref{cor:pencilQ}. Moreover, the two-to-one map in Equation~(\ref{eqn:psi}) extends to a  double cover $\psi: \mathscr{E} \to \mathscr{E}'$ given by
\begin{equation}
\label{eqn:psi2}
\begin{split}
\psi: \; \Big( [s_0:s_1], [\xi:\eta:\rho]\Big) \mapsto  & \Big([t_0:t_1],[\xi':\eta':\rho'] \Big)
= \Big([s_0^2:s_1^2], [s^2_0 s^2_1\xi:\eta:s^3_0s^3_1\rho]\Big)\,.
\end{split}
\end{equation}
\end{corollary}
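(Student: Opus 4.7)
My plan is to reduce the computation to a direct application of Proposition~\ref{prop:normal_form}, after carefully accounting for the extra factor of $t_0 t_1$ in the defining equation of $\mathscr{Q}'$. First, I would read off the four affine roots of the quartic in Equation~(\ref{Eq:Rosenhain_g1_pencil_Kummer}) as $\mu'_i(t_0,t_1) = (t_0 + \lambda_i^2 t_1)/\lambda_i$ for $0 \le i \le 3$, and absorb the factor $\sqrt{t_0 t_1}$ into $Y$ to reach the unadorned Rosenhain form $\tilde y^2 = \prod_{i=0}^3(x - \mu'_i z)$, to which Proposition~\ref{prop:normal_form} applies verbatim. This gives an intermediate Weierstrass model with coefficients $\tilde A(t_0,t_1)$ and $\tilde B(t_0,t_1)$ obtained from Equation~(\ref{eqn:coeffs_ab}) by substituting $\mu'_i$ for $\lambda_i$.

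Next I would reinterpret the removal of the square root as a quadratic twist by $\alpha = t_0 t_1$ of the intermediate Weierstrass model. Under such a twist, the coefficients rescale as $A' = \alpha\, \tilde A$ and $B' = \alpha\, \tilde B$. The identification with the claimed expressions then comes from a translation argument: the identity $\mu'_i(s_0^2, s_1^2) = \Lambda_i(s_0, s_1) - 2 s_0 s_1$, combined with the invariance of $a = (\mu_0-\mu_1)(\mu_2-\mu_3)$ and $b = 4(\mu_0\mu_1 + \mu_2\mu_3) - 2(\mu_0+\mu_1)(\mu_2+\mu_3)$ in Equation~(\ref{eqn:coeffs_ab}) under the simultaneous shift $\mu_i \mapsto \mu_i + c$, yields $\tilde A(s_0^2, s_1^2) = A(s_0, s_1)$ and $\tilde B(s_0^2, s_1^2) = B(s_0, s_1)$. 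Together with Corollary~\ref{cor:pencilQ}, this produces the polynomials $A'(t_0,t_1) = t_0 t_1\, A(\sqrt{t_0}, \sqrt{t_1})$ and $B'(t_0,t_1) = t_0 t_1\, B(\sqrt{t_0}, \sqrt{t_1})$, which are indeed polynomials because $A$ and $B$ are jointly even in $s_0, s_1$.

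For the double cover $\psi$, I would trace how the composition of the affine translation $X = x + 2 s_0 s_1 Z$ (which implements $\Lambda_i(s_0,s_1) \mapsto \mu'_i(s_0^2, s_1^2)$) with the twist rescaling $Y = s_0 s_1\, y$ acts on Weierstrass coordinates. The translation is absorbed in the definition of the Weierstrass transformations of Proposition~\ref{prop:normal_form} applied to $\mathscr{Q}$ and $\mathscr{Q}'$ separately, whereas the twist by $\alpha = t_0 t_1 = s_0^2 s_1^2$ imposes the scalings $\xi' = s_0^2 s_1^2\, \xi$ and $\rho' = s_0^3 s_1^3\, \rho$ (after fixing the projective normalization $\eta' = \eta$, which is consistent with $A' = \alpha \tilde A$, $B' = \alpha \tilde B$ and the relation $c^3 = e^2 d$ between the three scalings of $[\xi:\eta:\rho]$). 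This precisely matches Equation~(\ref{eqn:psi2}). The main technical point is the translation invariance of $b$ — which follows from its symmetric expression in the pairs $\{\mu_0,\mu_1\}$ and $\{\mu_2,\mu_3\}$ by a short expansion — and the careful bookkeeping of the projective scalings so that the twist becomes an integral morphism; neither constitutes a real obstacle.
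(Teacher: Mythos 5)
Your proof is correct and takes essentially the same route as the paper: the paper's one-sentence proof likewise brings the quartic~(\ref{Eq:Rosenhain_g1_pencil_Kummer}) into Weierstrass form by making the point over the root $(t_0+\lambda_0^2t_1)/\lambda_0$ the neutral element and the point over $(t_0+\lambda_1^2t_1)/\lambda_1$ a two-torsion point, i.e.\ by applying the construction of Proposition~\ref{prop:normal_form} to the roots $\mu_i'$. Your factorization of that computation through the untwisted model followed by a quadratic twist by $t_0t_1$, together with the shift-invariance of $a$ and $b$ from Equation~(\ref{eqn:coeffs_ab}), merely makes explicit the bookkeeping that the paper leaves implicit.
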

\begin{proof}
Making the point $[x:z:Y]=[t_0+ \lambda^2_0 t_1:\lambda_0:0]$ the neutral element of an elliptic curve, the point $[x:z:Y]=[t_0 + \lambda^2_1 t_1:\lambda_1:0]$ a two-torsion point, we can bring Equation~(\ref{Eq:Rosenhain_g1_pencil_Kummer}) into the normal form in Equation~(\ref{eqn:Ep}).
\end{proof}
\subsection{Relation between elliptic pencils and Kummer surfaces}
\label{ssec:KummerPencils}
Corollary~\ref{cor:pencilQ} proves that the elliptic fibration with section $(\pi: \mathscr{E} \to \mathbb{P}^1,\mathsf{O})$ is equivalent to the pencil $\mathscr{Q}$ given by
\begin{equation}
\label{Eq:Rosenhain_g1_pencil_b}
\begin{split}
  \mathscr{Q}_{[s_0:s_1]}: \quad y^2 = \prod_{i=0}^3 \Big(X- \Lambda_i(s_0,s_1) \,  Z\Big) \,.
 \end{split}
\end{equation} 
Therefore, Proposition~\ref{prop:normal_form} and Lemma~\ref{lem:adding} can be applied in \emph{each} fiber by replacing $\lambda_i \mapsto \Lambda_i(s_0,s_1)$ for $0 \le i \le 3$ and $l \mapsto L(s_0,s_1)$. Three two-torsion sections for the elliptic fibration $(\pi ,\mathsf{O})$ are given by $\mathsf{T}_1, \mathsf{T}_2, \mathsf{T}_3: [\xi:\eta:\rho] = [0:1:0], [B \pm 2A:1:0]$. Two non-torsion sections -- which by slight abuse of notation we will still denote by $\mathsf{p}_1, \mathsf{p}_2$ -- are obtained by assigning the points $\mathsf{p}_1$ and $\mathsf{p}_2$ in Lemma~\ref{lem:adding} in each fiber. The existence of a third non-torsion section $\mathsf{p}_3$ in the pencil is easily verified by assigning the point $[X:Z:y]=[4s_0s_1: 1:  L(-s_0,s_1)]$ in each fiber $\mathscr{Q}_{[s_0:s_1]}$ and then converting to coordinates $[\xi:\eta:\rho]$. 
\par We also define sections $\{\mathsf{S}_1, \mathsf{S}_2, \mathsf{S}_3\}$ as follows:
\begin{equation}
\scalemath{0.8}{
\begin{array}{c|l}
\text{sec.} & [\xi:\eta:\rho] \\[0.2em]
\hline
& \\[-0.9em]
\mathsf{S}_1 &\big[ 
4\lambda_0\lambda_1 \prod_{i=2}^3 (\lambda_i-\lambda_0)(s_0^2-\lambda_0\lambda_i s_1^2) :
8 \prod_{i=1}^3 (\lambda_i-\lambda_0)(s_0^2-\lambda_0\lambda_i s_1^2) : \lambda_0^2l^2\big] \\[0.2em]
\mathsf{S}_2 & \big[ 
4 l  \prod_{i=2}^3 (\lambda_i-\lambda_1)(s_0^2-\lambda_i \lambda_1 s_1^2) :
8 \prod_{i=1}^3 (\lambda_0-\lambda_i) \prod_{1\le j < k \le3} (s_0^2-\lambda_j  \lambda_k s_1^2) : l^3\big] \\[0.2em]
\mathsf{S}_3 & \big[ 
 4l s_0s_1  \prod_{i=0}^1 \prod_{j=2}^3 (s_0^2-\lambda_i \lambda_j s_1^2) :
-8 \prod_{i=1}^3 (s_0^2-\lambda_0  \lambda_i s_1^2) \prod_{1\le j < k \le3}  (s_0^2-\lambda_j  \lambda_k s_1^2) : l^3 s_0^3s_1^3\big]\\[0.2em]
\end{array}}
\end{equation}
It follows:
\begin{proposition}
\label{prop:Garbagnati2}
For the elliptic fibration with section $(\pi: \mathscr{E} \to \mathbb{P}^1,\mathsf{O})$ in Equation~(\ref{eqn:Kummer_EF}) and in Proposition~\ref{prop:Garbagnati}, the sections $\{\mathsf{O}, \mathsf{T}_1, \mathsf{T}_2, \mathsf{T}_3, \mathsf{S}_1, \mathsf{S}_2, \mathsf{S}_3\}$  are generators of the Mordell-Weil group $\operatorname{MW}(\pi,\mathsf{O})\cong (\mathbb{Z}/2\mathbb{Z})^2 \oplus \langle 1 \rangle^{\oplus 2} \oplus \langle 2 \rangle$. In particular, we have
\begin{equation}
\label{eqn:sections_EF}
 \mathsf{p}_1 = \mathsf{S}_1 \,, \quad \mathsf{p}_2 = -\mathsf{S}_2 + \mathsf{S}_3 \,, \quad \mathsf{p}_3 = \mathsf{S}_2 + \mathsf{S}_3 \,.
\end{equation}
\end{proposition}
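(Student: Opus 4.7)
The plan is to first verify that the elliptic fibration $(\pi,\mathsf{O})$ arising from Corollary~\ref{cor:pencilQ} is the fibration of Proposition~\ref{prop:Garbagnati} (namely the one induced by $\phi_{\mathcal{L}}$ on $\operatorname{Kum}(\mathfrak{A})$), and then to check the claimed height pairings and group-law identities. The first step is essentially geometric: the pencil $\mathscr{Q}$ was constructed in Section~\ref{pencils} as the bielliptic quotient of the pencil $\mathscr{H}$ of genus-three curves of shape (\ref{Eq:Rosenhain_g3}) with Rosenhain parameters $\Lambda_i(s_0,s_1)$, so by the discussion of Subsection~\ref{ssec:EF} and Proposition~\ref{prop:JacH} applied fiberwise, the resulting elliptic surface $\mathscr{E}\to\mathbb{P}^1$ coincides with the fibration on $\operatorname{Kum}(\mathfrak{A})$ induced by $|\mathcal{L}|$. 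To confirm the Kodaira type count I would compute $\Delta=16A(s_0,s_1)^2(B^2-4A^2)^2$ from (\ref{eqn:coeffs_AB}), use the evenness and the involution $s_0\leftrightarrow ls_1$ of Corollary~\ref{cor:pencilQ} to factor $A$ and $B\pm 2A$ into pairs of linear factors, and verify that exactly twelve distinct points of $\mathbb{P}^1$ are double roots, matching Proposition~\ref{prop:Garbagnati}.

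The second step is to verify that each of $\mathsf{S}_1,\mathsf{S}_2,\mathsf{S}_3$ given by the explicit formulas satisfies the Weierstrass equation (\ref{eqn:Kummer_EF}); this is a direct polynomial identity in $s_0,s_1$ with coefficients in $\mathbb{Q}(\lambda_0,\lambda_1,\lambda_2,\lambda_3,l)$ modulo $l^2=\lambda_0\lambda_1\lambda_2\lambda_3$. Next I would determine the intersection pattern of each $\mathsf{S}_k$ with the reducible fibers over $A=0$, $B\pm 2A=0$: by inspection of the $\xi$-coordinates of $\mathsf{S}_1$ and $\mathsf{S}_2$ against the explicit two-torsion locations $[0{:}1{:}0]$ and $[B\pm2A{:}1{:}0]$, one sees that neither meets $\mathsf{O}$, and each meets the non-neutral component of exactly six of the twelve $I_2$-fibers; the partition into $V_1,V_2$ is read off from which of $B-2A$, $B+2A$, $A$ becomes proportional to the $\xi$-coordinate of $\mathsf{S}_k$ over each double root. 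For $\mathsf{S}_3$, the presence of the factor $s_0^3s_1^3$ in the $\rho$-entry shows it meets $\mathsf{O}$ in two points (so $\mathsf{S}_3\circ\mathsf{O}=2$), and it intersects the non-neutral component of every reducible fiber. Plugging these intersection data into the height-pairing formula reproduces Table~\ref{tab:height}; in particular $\langle\mathsf{S}_1,\mathsf{S}_1\rangle=\langle\mathsf{S}_2,\mathsf{S}_2\rangle=1$, $\langle\mathsf{S}_3,\mathsf{S}_3\rangle=2$, and all cross-pairings among the $\mathsf{S}_k$ vanish. Since the determinant of the height matrix equals $2$, which matches the discriminant computation of the transcendental lattice in Proposition~\ref{prop:MW}, these three sections together with the two-torsion sections $\mathsf{T}_1,\mathsf{T}_2,\mathsf{T}_3$ generate the full Mordell-Weil group with the claimed structure.

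For the identifications (\ref{eqn:sections_EF}), the equality $\mathsf{p}_1=\mathsf{S}_1$ is immediate by comparing the formula for $\mathsf{p}_1$ in Lemma~\ref{lem:adding} applied with $\lambda_i\mapsto\Lambda_i(s_0,s_1)$ with the definition of $\mathsf{S}_1$. For the other two identifications, I would compute $\mathsf{S}_2+\mathsf{S}_3$ and $\mathsf{S}_3-\mathsf{S}_2$ using the elliptic-curve chord-and-tangent law on (\ref{eqn:Kummer_EF}) in each fiber, and match the resulting rational $\xi$- and $\rho$-coordinates against the formulas for $\mathsf{p}_2$ and $\mathsf{p}_1\pm\mathsf{p}_2$ of Lemma~\ref{lem:adding} (after substituting $\lambda_i\mapsto\Lambda_i$ and $l\mapsto L$). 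Observe that the $\xi$-coordinate of $\mathsf{p}_1\pm\mathsf{p}_2$ in (\ref{eqn:p1+p2}) is $4(\Lambda_0\Lambda_1+\Lambda_2\Lambda_3\mp 2L)$, which after expansion in $s_0,s_1$ recovers the $\xi$-entry of $\mathsf{S}_2+\mathsf{S}_3$ (resp.~$\mathsf{S}_3-\mathsf{S}_2$) after clearing common factors; the relations $\mathsf{p}_2=-\mathsf{S}_2+\mathsf{S}_3$ and $\mathsf{p}_3=\mathsf{S}_2+\mathsf{S}_3$ then follow by comparing signs of the $\rho$-coordinates.

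The principal obstacle is computational bookkeeping: the expressions for $\mathsf{S}_2$ and $\mathsf{S}_3$ are dense polynomials in $s_0,s_1,\lambda_0,\ldots,\lambda_3,l$, and verifying both the Weierstrass identities and the group-law addition formulas requires careful use of the relation $l^2=\lambda_0\lambda_1\lambda_2\lambda_3$ together with the degree-four evenness constraints of Corollary~\ref{cor:pencilQ}. In practice this is best carried out in a computer algebra system; the key conceptual checks are that the three sections have minimal height with respect to the reducible-fiber contributions, and that the Mordell-Weil lattice $\langle 1\rangle^{\oplus 2}\oplus\langle 2\rangle$ together with $(\mathbb{Z}/2\mathbb{Z})^2$-torsion accounts exactly for the discriminant $2^7$ of the transcendental lattice $H(2)\oplus H(2)\oplus\langle -8\rangle$ of $\operatorname{Kum}(\mathfrak{A})$ cited in the proof of Proposition~\ref{prop:MW}.
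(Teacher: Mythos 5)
Your computational core — verifying the Weierstrass identities for $\mathsf{S}_1,\mathsf{S}_2,\mathsf{S}_3$, reading off their intersections with the twelve $I_2$ fibers and with $\mathsf{O}$, reproducing the height pairings of Table~\ref{tab:height}, invoking the discriminant match with the transcendental lattice from the proof of Proposition~\ref{prop:MW} to conclude generation, and checking the relations~(\ref{eqn:sections_EF}) by the fiberwise group law — is exactly the paper's argument, which likewise reduces everything to an explicit computation in the affine chart $s_0=1$, $s_1=s/\mu$ and then cites Table~\ref{tab:height} and Proposition~\ref{prop:MW}. Your identification $\mathsf{p}_1=\mathsf{S}_1$ by direct substitution $\lambda_i\mapsto\Lambda_i$ in Lemma~\ref{lem:adding} is also correct.

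However, your opening step contains a genuine error. You claim that because $\mathscr{Q}$ is the fiberwise bielliptic quotient of the pencil $\mathscr{H}$, Proposition~\ref{prop:JacH} applied fiberwise shows that $\mathscr{E}\to\mathbb{P}^1$ ``coincides with the fibration on $\operatorname{Kum}(\mathfrak{A})$ induced by $|\mathcal{L}|$.'' This is precisely the inference the paper warns against: the bielliptic involution on the fibers of $\mathscr{H}$ is \emph{not} the restriction of $-\mathbb{I}$ on the relevant $(1,2)$-polarized surface, and Proposition~\ref{prop:JacH} identifies the fiberwise Prym with $\operatorname{Jac}(\mathcal{C})$ — which is only two-isogenous to $\mathfrak{A}$ — not with $\mathfrak{A}$ itself. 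The identification of the total space $\mathscr{E}$ with $\operatorname{Kum}(\mathfrak{A})$ for a $(1,2)$-polarized $\mathfrak{A}$ is established only later, in Proposition~\ref{prop:Kum}, via the rational double cover $\psi:\mathscr{E}\to\mathscr{E}'$ branched along an even eight and Mehran's classification; at the stage of Proposition~\ref{prop:Garbagnati2} the paper relies solely on the lattice-theoretic matching (twelve $I_2$ fibers plus the Mordell--Weil lattice forcing the correct discriminant form) to identify the fibration with that of Proposition~\ref{prop:Garbagnati}. Since your proposal also carries out that numerical matching, the proof survives once you delete the appeal to Proposition~\ref{prop:JacH}; but as written that step asserts a false geometric identification and would mislead a reader about where the $(1,2)$-polarization actually comes from.
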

\begin{proof}
Using our previous definitions and results in Lemma~\ref{lem:Picard} we set
\begin{equation}
 \mu = \frac{\theta_1\theta_3\theta_8}{\theta_2\theta_4\theta_{10}} \,,
\end{equation}
such that $\mu^4=\lambda_0\lambda_1\lambda_2\lambda_3$. If we use the affine chart given by $s_0=1$, $s_1 = s/\mu$, $\xi=X$, $\eta=1$, $\rho=Y$ in Corollary~\ref{cor:pencilQ}, we obtain the Weierstrass model~(\ref{eqn:EFS}) and two involutions $s\mapsto -s$ and $s\mapsto 1/s$; see Equations~(\ref{eqn:involutions}). In fact, the coefficients in Equation~(\ref{eqn:EFS}) are obtained from Corollary~\ref{cor:pencilQ} by setting
\begin{equation}
\label{eqn:coeffs_affine}
 A(s) := A\left(s_0=1, \ s_1 = \frac{s}{\mu}\right) \,,\quad B(s) := B\left(s_0=1, \ s_1 = \frac{s}{\mu}\right) \,.
\end{equation}
A straight forward computation shows that sections $\{\mathsf{O}, \mathsf{T}_1, \mathsf{T}_2, \mathsf{T}_3, \mathsf{S}_1, \mathsf{S}_2, \mathsf{S}_3\}$ have exactly the intersection and height pairings given by Table~\ref{tab:height} and form a basis of the Mordell-Weil group $\operatorname{MW}(\pi,\mathsf{O})$.  Using the elliptic-curve group law in each fiber, one finds that the sections $\{\mathsf{S}_1, \mathsf{S}_2, \mathsf{S}_3\}$ satisfy relations~(\ref{eqn:sections_EF}).
\end{proof}
\par We turn to the symmetric square $\operatorname{Sym}^2(\mathcal{C})$ associated with a smooth genus-two curve $\mathcal{C}$.  The automorphism $\imath^\mathcal{C} \times  \imath^\mathcal{C}$ of $\mathcal{C} \times \mathcal{C}$ again induces an automorphism on the symmetric square $\operatorname{Sym}^2(\mathcal{C})$ which by a slight abuse of notation we will denote by $\imath^\mathcal{C} \times  \imath^\mathcal{C}$ as well. The variety $\operatorname{Sym}^2(\mathcal{C})/\langle \imath^\mathcal{C} \times  \imath^\mathcal{C} \rangle$ admits a birational model that can be easily derived: in terms of the variables $z_1=Z^{(1)}Z^{(2)}$, $z_2=X^{(1)}Z^{(2)}+X^{(2)}Z^{(1)}$, $z_3=X^{(1)}X^{(2)}$, and $z_4=Y^{(1)}Y^{(2)}$  with $[z_1:z_2:z_3:z_4] \in \mathbb{P}(1,1,1,3)$, it is given by the equation
\begin{equation}
\label{kummer_middle}
  z_4^2 = z_1 z_3  \prod_{i=0}^3 \big( \lambda_i^2 \, z_1  -  \lambda_i \, z_2 +  z_3 \big) \;.
\end{equation}
\begin{definition}
The hypersurface in $\mathbb{P}(1,1,1,3)$ given by Equation~(\ref{kummer_middle}) is called Shioda sextic and was described in \cite{MR2296439}.
\end{definition}
\par One easily checks the following:
\begin{lemma}
\label{lem:Shioda}
The Shioda sextic in Equation~(\ref{kummer_middle}) is birational to the Kummer surface $\operatorname{Kum}(\operatorname{Jac}\mathcal{C})$ associated with the Jacobian $\operatorname{Jac}(\mathcal{C})$ of a genus-two curve~$\mathcal{C}$ in Rosenhain normal form~(\ref{Eq:Rosenhain_g2}).
\end{lemma}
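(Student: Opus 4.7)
The plan is to construct an explicit birational map from $\operatorname{Sym}^2(\mathcal{C})/\langle \imath^\mathcal{C}\times\imath^\mathcal{C}\rangle$ onto the Shioda sextic $S \subset \mathbb{P}(1,1,1,3)$ cut out by Equation~(\ref{kummer_middle}), and then to combine it with two classical birational identifications: the Abel-Jacobi map $\operatorname{Sym}^2(\mathcal{C}) \to \operatorname{Jac}(\mathcal{C})$ discussed at the beginning of the section, and the identification of $\imath^\mathcal{C}\times\imath^\mathcal{C}$ on $\operatorname{Sym}^2(\mathcal{C})$ with the minus-identity $-\mathbb{I}$ on $\operatorname{Jac}(\mathcal{C})$. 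The latter holds because $\mathfrak{p}+\imath^\mathcal{C}(\mathfrak{p})$ is linearly equivalent to $2\mathfrak{p}_5$ for the hyperelliptic Weierstrass point $\mathfrak{p}_5$, so that $[\imath^\mathcal{C}(\mathfrak{p})-\mathfrak{p}_5]=-[\mathfrak{p}-\mathfrak{p}_5]$ in $\operatorname{Pic}^0(\mathcal{C})$. Concatenating these three birational equivalences and passing to the minimal resolution of $\operatorname{Jac}(\mathcal{C})/\langle -\mathbb{I}\rangle$ yields the claim.

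To produce the first map, I would use the rational map $\Phi:\mathcal{C}\times\mathcal{C} \to \mathbb{P}(1,1,1,3)$ defined in coordinates by
\[
 \Phi\bigl(P^{(1)},P^{(2)}\bigr) = \bigl[Z^{(1)}Z^{(2)} : X^{(1)}Z^{(2)}+X^{(2)}Z^{(1)} : X^{(1)}X^{(2)} : Y^{(1)}Y^{(2)}\bigr].
\]
Multiplying the two defining equations of $\mathcal{C}$ and using the identities $X^{(1)}X^{(2)}Z^{(1)}Z^{(2)}=z_1 z_3$ and $(X^{(1)}-\lambda_i Z^{(1)})(X^{(2)}-\lambda_i Z^{(2)}) = \lambda_i^2 z_1 - \lambda_i z_2 + z_3$ shows directly that the image satisfies Equation~(\ref{kummer_middle}). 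The coordinates $z_1,z_2,z_3$ are $S_2$-symmetric in the two factors and unchanged by $\imath^\mathcal{C}\times\imath^\mathcal{C}$ (which acts only on the $Y^{(j)}$), while $z_4$ is $S_2$-symmetric and invariant under $\imath^\mathcal{C}\times\imath^\mathcal{C}$ because $(-Y^{(1)})(-Y^{(2)})=Y^{(1)}Y^{(2)}$. Hence $\Phi$ descends to a rational map $\bar{\Phi}:\operatorname{Sym}^2(\mathcal{C})/\langle\imath^\mathcal{C}\times\imath^\mathcal{C}\rangle \to S$.

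To verify that $\bar{\Phi}$ is birational, I would exhibit a rational inverse on a dense open subset: from $[z_1:z_2:z_3]$, the unordered pair $\{[X^{(1)}:Z^{(1)}],[X^{(2)}:Z^{(2)}]\}$ is recovered as the zero locus of the binary quadratic $z_1T^2 - z_2TU + z_3 U^2=0$ in $[T:U]\in\mathbb{P}^1$, and together with the additional datum $z_4$ one then reconstructs the unordered pair $\{Y^{(1)},Y^{(2)}\}$ up to the simultaneous sign flip, which is exactly what survives after quotienting by $\imath^\mathcal{C}\times\imath^\mathcal{C}$. Equivalently, the quotient maps $\mathcal{C}\times\mathcal{C}\to\operatorname{Sym}^2(\mathcal{C})\to\operatorname{Sym}^2(\mathcal{C})/\langle\imath^\mathcal{C}\times\imath^\mathcal{C}\rangle$ are each of degree two, and a generic fibre of $\Phi$ over $S$ contains exactly four points for generic Rosenhain parameters, so $\bar{\Phi}$ is generically one-to-one. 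The main subtlety I anticipate is the careful handling of the exceptional loci involved: the Abel-Jacobi map contracts the curve $\{\{\mathfrak{p},\imath^\mathcal{C}(\mathfrak{p})\}:\mathfrak{p}\in\mathcal{C}\}$ inside $\operatorname{Sym}^2(\mathcal{C})$ to the origin of $\operatorname{Jac}(\mathcal{C})$, the quotient $\operatorname{Jac}(\mathcal{C})/\langle -\mathbb{I}\rangle$ has sixteen nodes from the two-torsion $\operatorname{Jac}(\mathcal{C})[2]$, and the Shioda sextic itself has singularities inherited from the weighted-projective embedding. Since all these phenomena occur in codimension at least one, they do not obstruct the birational statement, but they are the geometric features one must track if one later upgrades the conclusion to an isomorphism of minimal smooth models.
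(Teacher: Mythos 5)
Your proposal is correct and follows essentially the same route as the paper: the paper introduces the very same coordinates $z_1,z_2,z_3,z_4$ just before the lemma, and its proof likewise chains together the birational identification $\operatorname{Sym}^2(\mathcal{C})\dashrightarrow\operatorname{Jac}(\mathcal{C})$ (phrased there via $\operatorname{Pic}^2(\mathcal{C})$ and the blow-down of the graph of the hyperelliptic involution), the fact that $-\mathbb{I}$ restricts to $\imath^{\mathcal{C}}\times\imath^{\mathcal{C}}$, and the definition of $\operatorname{Kum}$ as the resolution of $\operatorname{Jac}(\mathcal{C})/\langle-\mathbb{I}\rangle$. You supply somewhat more detail than the paper does (the explicit verification that the image satisfies Equation~(\ref{kummer_middle}), the rational inverse via the binary quadratic $z_1T^2-z_2TU+z_3U^2$, and the linear-equivalence argument for $-\mathbb{I}$), but the underlying argument is the same.
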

\begin{remark}
\label{rem:6lines}
Equation~(\ref{kummer_middle}) defines a double cover of $\mathbb{P}^2 \ni [z_1:z_2:z_3]$ branched along six lines given by
\begin{equation}
\label{eqn:6lines}
    \lambda_i^2 \, z_1  -  \lambda_i \, z_2 +  z_3 =0 \quad \text{with $0\le i \le 3$}\,, \quad  z_1=0, \quad z_3=0,  \quad
 \end{equation}
The six lines are tangent to the common conic $\mathcal{K}: z_2^2 - 4 \, z_1 z_3=0$. Conversely, any six lines tangent to a common conic can always be brought into the form of Equations~\ref{eqn:6lines}. A picture is provided in Figure~\ref{fig:6Lines}.
\end{remark}
\begin{figure}[ht]
\scalemath{0.8}{
$$
  \begin{xy}
   <0cm,0cm>;<1.5cm,0cm>:
    (2,0.75)*++!D\hbox{$\mathcal{K}=0$},
    (.5,0.18);(3.5,0.18)**@{-},
    (0.5,2);(1.6,0)**@{-},
    (3.1,2);(2.6,0)**@{-},
    (0.7,.5);(2,2.5)**@{-},
    (0.5,1.82);(3.5,1.82)**@{-},
    (2,2.7);(3.45,0)**@{-},
    (2,1)*\xycircle(.8,.8){},
  \end{xy}
  $$}
\caption{Double cover branched along reducible sextic}
\label{fig:6Lines}
\end{figure}
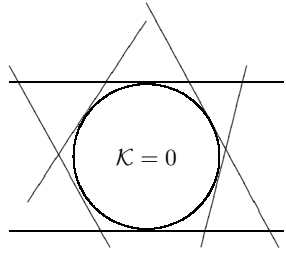
\par Equation~(\ref{kummer_middle}) is birationally equivalent to Equation~(\ref{Eq:Rosenhain_g1_pencil_Kummer}) as can be seen by setting
\[
  [z_1:z_2:z_3:z_4] = [ t_1 z \, : \, x \, : \, t_0 z \, : \, l zY ] \,,
\]
and is in turn is equivalent to the elliptic fibration with section $(\pi': \mathscr{E}' \to \mathbb{P}^1,\mathsf{O}')$ in Corollary~\ref{cor:pencilQp}. We make the following:
\begin{remark}
\label{rem:Kummer}
The elliptic fibration with section $(\pi': \mathscr{E}' \to \mathbb{P}^1,\mathsf{O}')$ has six singular fibers of Kodaira type $I_2$, two singular fibers of type $I_0^*$ (over $[t_0:t_1]=[0:1],[1:0]$), and a Mordell-Weil group $\operatorname{MW}(\pi',\mathsf{O}')\cong (\mathbb{Z}/2\mathbb{Z})^2 \oplus \langle 1 \rangle$. The elliptic fibration is induced by a pencil of lines in $\mathbb{P}^2$ passing through one of intersection point between two lines in Figure~\ref{fig:6Lines}. This is precisely the fibration on $\operatorname{Kum}(\operatorname{Jac}\mathcal{C})$ described in \cite{MR2296439}. 
\end{remark}
We have established that $\mathscr{E}'$ defines a pencil on $\operatorname{Sym}^2(\mathcal{C})/\langle \imath^\mathcal{C} \times  \imath^\mathcal{C} \rangle$. The minimal resolution of $\mathscr{E}'$ is the Kummer surface $\operatorname{Kum}(\operatorname{Jac}\mathcal{C})$ associated with the Jacobian $\operatorname{Jac}(\mathcal{C})$ of a smooth genus-two curve~$\mathcal{C}$. The involution $-\mathbb{I}$ on $\operatorname{Jac}(\mathcal{C})$ restricts to the hyperelliptic involution on each factor of $\mathcal{C}$ in $\operatorname{Sym}^2(\mathcal{C})$. The Weierstrass model in Remark~\ref{rem:Kummer} with two singular fibers of Kodaira type $I_0^*$ over $[t_0:t_1]=[0:1]$ and $[t_0:t_1]=[1:0]$ extends to an elliptic fibration with section on the Kummer surface and two reducible fibers of type $D_4$. We then have the following:
\begin{lemma}
\label{lem:branching}
The map $\psi: \mathscr{E} \to \mathscr{E}'$ in Equation~(\ref{eqn:psi2}) extends to rational double cover between the minimal resolutions $\widehat{\psi}: \widehat{\mathscr{E}} \dasharrow \widehat{\mathscr{E}}' \cong \operatorname{Kum}(\operatorname{Jac}\mathcal{C})$ that is branched along the eight non-central components of the two reducible fibers of type $D_4$.
\end{lemma}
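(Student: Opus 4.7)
The plan is to realize $\widehat{\psi}$ as the quotient of $\widehat{\mathscr{E}}$ by a Nikulin (symplectic) involution whose eight isolated fixed points resolve to the eight non-central components of the two $I_0^*$ fibers in $\widehat{\mathscr{E}}'$.

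First, I introduce the involution $\sigma$ on $\mathscr{E}$ defined by
\[
\sigma:\; \big([s_0:s_1],[\xi:\eta:\rho]\big) \longmapsto \big([-s_0:s_1],[\xi:\eta:-\rho]\big),
\]
which is the composition of $\jmath_1$ from Equation~(\ref{eqn:involutions}) with the fiber-wise hyperelliptic inversion $\rho\mapsto-\rho$. A direct substitution in Equation~(\ref{eqn:psi2}) shows $\psi\circ\sigma=\psi$, so $\psi$ factors through the quotient $\mathscr{E}/\langle\sigma\rangle$. Since $\jmath_1$ is anti-symplectic by Lemma~\ref{lem:antisymplectic} and the fiber-wise inversion is anti-symplectic as well (it negates the holomorphic two-form $ds\wedge d\xi/\rho$), their composition $\sigma$ extends to a symplectic involution of the K3 surface $\widehat{\mathscr{E}}$.

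Next, I locate the fixed locus. On the base, $[s_0:s_1]\mapsto[-s_0:s_1]$ has exactly the two fixed points $[0:1]$ and $[1:0]$. By Corollary~\ref{cor:pencilQ}, the fibers of $\mathscr{E}$ over these points are smooth elliptic curves, and $\sigma$ restricts to the fiber-wise inversion there. The inversion on an elliptic curve has exactly four fixed points, namely the origin and the three two-torsion points, i.e., the intersections with the sections $\mathsf{O},\mathsf{T}_1,\mathsf{T}_2,\mathsf{T}_3$. This yields eight isolated fixed points of $\sigma$ on $\widehat{\mathscr{E}}$, the correct number for a symplectic involution on a K3 surface. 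By the theory of Nikulin involutions, $\widehat{\mathscr{E}}/\langle\sigma\rangle$ then acquires eight ordinary double points of type $A_1$, and its minimal resolution is again a K3 surface, equipped with a quotient rational map $\widehat{\mathscr{E}}\dasharrow \widetilde{\widehat{\mathscr{E}}/\langle\sigma\rangle}$ of degree two, branched along the eight exceptional $(-2)$-curves.

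To finish, I identify this resolved K3 with $\widehat{\mathscr{E}}'\cong\operatorname{Kum}(\operatorname{Jac}\mathcal{C})$ through $\psi$ and match the eight exceptional curves with the non-central fiber components. The base fixed points $[0:1]$ and $[1:0]$ map under $\psi$ to $[0:1]$ and $[1:0]\in\mathbb{P}^1$, which are precisely the locations of the two $I_0^*$ fibers of $\pi'$ by Remark~\ref{rem:Kummer}. Using $(\xi',\rho')=(s_0s_1)^2\xi,(s_0s_1)^3\rho)$ from Equation~(\ref{eqn:psi2}), the four 2-torsion fixed points on each smooth fiber of $\pi$ over a branch point all collapse under $\psi$ to the single singular point of the Weierstrass fiber of $\mathscr{E}'$, whose minimal resolution is the $D_4$-type exceptional configuration making up the non-central components of the $I_0^*$ fiber.

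The main obstacle is the last step: the local verification that the eight $A_1$-exceptional divisors on the Nikulin quotient correspond bijectively to the eight \emph{non-central} components, rather than being absorbed into the central multiplicity-two components. This requires a local coordinate computation near the $D_4$ singularities of $\mathscr{E}'$ at $(\xi',\rho')=(0,0)$ above $t_0 t_1=0$, tracking the four fixed points on each smooth fiber of $\pi$ through the successive blow-ups of the $D_4$ resolution tree and checking that each one lands on a distinct multiplicity-one component. Once this local matching is established, the branch locus description follows and the double-cover structure is identified with $\widehat{\psi}$.
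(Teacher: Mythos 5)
Your approach is sound, and it is considerably more structured than the paper's own proof, which consists of the single sentence that the claim is ``straight forward since the rational map is explicitly given,'' i.e.\ the authors simply read the ramification off the formula for $\psi$. You instead identify the deck transformation $\sigma$ of $\psi$ as $\jmath_1$ composed with the fiberwise inversion, check $\psi\circ\sigma=\psi$, observe that $\sigma$ is symplectic with exactly eight fixed points (the intersections of $\mathsf{O},\mathsf{T}_1,\mathsf{T}_2,\mathsf{T}_3$ with the two smooth fibers over $[0:1]$ and $[1:0]$), and invoke the Nikulin quotient construction. This is correct, and it buys a conceptual explanation for why an even eight appears at all, something the paper only recovers afterwards via Mehran's classification in Proposition~\ref{prop:Kum}. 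Your observation that $\psi$ contracts each of the two fibers over $[0:1]$ and $[1:0]$ to the cusp of the corresponding Weierstrass fiber of $\mathscr{E}'$ is also right; it is the standard picture of a quadratic base change converting $I_0^*$ into $I_0$.

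The step you flag as the main obstacle does not actually require chasing coordinates through the $D_4$ resolution tree. The birational map from the resolved Nikulin quotient to $\widehat{\mathscr{E}}'$ commutes with the two elliptic fibrations over the $t$-line, and a birational map between minimal K3 surfaces is an isomorphism, so it identifies the fiber of the quotient over $[0:1]$ (resp.\ $[1:0]$) with the corresponding $I_0^*$ fiber of $\widehat{\mathscr{E}}'$. The four exceptional $(-2)$-curves arising from the four $A_1$ points in one such fiber are pairwise disjoint irreducible curves contained in that fiber, hence are components of it; and the only four pairwise disjoint components of a $\tilde{D}_4$ configuration are the four simple (non-central) ones, since the central component meets all the others. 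Alternatively, the sections $\mathsf{O},\mathsf{T}_1,\mathsf{T}_2,\mathsf{T}_3$ pass through the four fixed points in each fiber and map to $\mathsf{O}',\mathsf{T}_1',\mathsf{T}_2',\mathsf{T}_3'$, which meet the four distinct simple components of each $I_0^*$ fiber. With either remark your argument closes completely.
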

\begin{proof} The proof is straight forward since the rational map is explicitly given.
\end{proof}
Mehran proved in \cite{MR2804549} that there are 15 distinct isomorphism classes of rational double covers of $\operatorname{Kum}(\operatorname{Jac} \mathcal{C})$ and computed the fifteen even eights (up to taking complements) on the Kummer surface $\operatorname{Kum}(\operatorname{Jac} \mathcal{C})$ that give rise to all distinct 15 isomorphism classes of rational double  covers \cite{MR2804549}*{Prop.~4.2}. An \emph{even eight} is an even set (as defined in Section~\ref{sec2}) of eight exceptional curves. Each even eight is enumerated by points $\mathsf{P}_{ij} \in \operatorname{Jac}(\mathcal{C})[2]$ with $0 \le i < j \le5$, and given as a sum
\[
 \Delta_{ij} = K_{0i} + \dots + \widehat{K_{ij}} + \dots + K_{i5} + K_{0j} + \dots + \widehat{K_{ij}} + \dots + K_{j5} \;,
 \]
where $K_{00}=0$, and $K_{ij}$ are the exceptional divisors obtained by resolving the nodes $p_{ij}$, i.e., the images of the points $\mathsf{P}_{ij}$, and the hat indicates divisors that are not part of the even eight. Moreover, Mehran proved that every rational map $\psi_{\Delta}: \operatorname{Kum}(\mathfrak{A}) \dashrightarrow \operatorname{Kum}(\operatorname{Jac} \mathcal{C})$ from a $(1,2)$-polarized to a principally polarized Kummer surface is induced by an isogeny $\Psi_{\Delta}:  \mathfrak{A} \to \operatorname{Jac}(\mathcal{C})$ of abelian surfaces of  degree two~\cite{MR2804549}, and that all inequivalent $(1,2)$-polarized abelian surfaces $\mathfrak{A}$ are obtained in this way. We have the following:
\begin{proposition}
\label{prop:Kum}
There exists an abelian surface $\mathfrak{A}$ with a polarization of type $(1,2)$ such that the variety $\mathscr{E}$ is birational to the Kummer surface $\operatorname{Kum}(\mathfrak{A})$. In particular, the map $\psi$ is induced by an isogeny $\Psi: \mathfrak{A} \to \operatorname{Jac}(\mathcal{C})$  of degree two.
\end{proposition}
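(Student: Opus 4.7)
The plan is to identify the branch locus of $\widehat{\psi}$ explicitly as one of Mehran's fifteen even eights on $\operatorname{Kum}(\operatorname{Jac}\mathcal{C})$, and then invoke her classification theorem (quoted immediately before the proposition) to produce the desired $(1,2)$-polarized abelian surface $\mathfrak{A}$ together with the degree-two isogeny $\Psi$.

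First I would analyse the branch locus of the rational double cover $\widehat\psi: \widehat{\mathscr{E}} \dashrightarrow \widehat{\mathscr{E}}' \cong \operatorname{Kum}(\operatorname{Jac}\mathcal{C})$ given in Lemma~\ref{lem:branching}. By Remark~\ref{rem:Kummer}, the elliptic fibration $\pi'$ has two singular fibers of Kodaira type $I_0^*$ situated over $[t_0:t_1]=[0:1]$ and $[1:0]$; on the minimal resolution $\operatorname{Kum}(\operatorname{Jac}\mathcal{C})$ these become reducible fibers of type $D_4$, whose eight non-central components are precisely exceptional divisors $K_{ij}$ resolving nodes of the singular Kummer model of $\operatorname{Sym}^2(\mathcal{C})/\langle \imath^\mathcal{C}\times\imath^\mathcal{C}\rangle$. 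I would read off which eight exceptional divisors actually appear by tracking, via the explicit formula~(\ref{eqn:psi2}) for $\psi$ and the Shioda sextic model of Lemma~\ref{lem:Shioda}, which unordered pairs of Weierstrass points $\mathfrak{p}_i+\mathfrak{p}_j$ of $\mathcal{C}$ are mapped to the fibers over $t_0=0$ and $t_1=0$. In view of Remark~\ref{rem:element}, I expect these eight divisors to constitute exactly the even eight $\Delta_{45}$ associated with the class $\mathsf{P}_{45} \in \operatorname{Jac}(\mathcal{C})[2]$.

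Once the branch locus is identified as an even eight $\Delta = \Delta_{ij}$, Mehran's theorem furnishes an abelian surface $\mathfrak{A}$ with $(1,2)$-polarization and a degree-two isogeny $\Psi_{\Delta}: \mathfrak{A} \to \operatorname{Jac}(\mathcal{C})$ whose induced rational double cover $\psi_{\Delta}: \operatorname{Kum}(\mathfrak{A}) \dashrightarrow \operatorname{Kum}(\operatorname{Jac}\mathcal{C})$ is branched precisely along $\Delta$. Since the rational double cover of $\operatorname{Kum}(\operatorname{Jac}\mathcal{C})$ with a prescribed even-eight branch locus is unique up to isomorphism, this forces $\widehat{\mathscr{E}}$ to be birationally equivalent to $\operatorname{Kum}(\mathfrak{A})$ and $\widehat{\psi}$ to coincide with $\psi_{\Delta}$; in particular $\psi$ is induced by the isogeny $\Psi := \Psi_{\Delta}$. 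As a consistency check, I would verify that the elliptic fibration structure on $\widehat{\mathscr{E}}$ read off from Corollary~\ref{cor:pencilQ} and Proposition~\ref{prop:Garbagnati2}, namely twelve singular fibers of Kodaira type $I_2$ and Mordell--Weil lattice $(\mathbb{Z}/2\mathbb{Z})^2 \oplus \langle 1 \rangle^{\oplus 2} \oplus \langle 2 \rangle$, is exactly the fibration that Proposition~\ref{prop:Garbagnati} attaches to $\operatorname{Kum}(\mathfrak{A})$ for any $(1,2)$-polarized abelian surface of Picard rank~$17$.

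The main obstacle is the explicit matching of the branch configuration with a specific $\Delta_{ij}$: I must verify that the eight exceptional divisors occurring as the non-central components of the two $D_4$ fibers of $\pi'$ are indeed eight of the sixteen Kummer exceptional curves $K_{ij}$ and that they form one of Mehran's distinguished even eights, rather than some other collection of eight disjoint $(-2)$-curves. This amounts to a careful combinatorial tracking of the Weierstrass-point pairs through the birational identification $\operatorname{Sym}^2(\mathcal{C})/\langle \imath^\mathcal{C}\times\imath^\mathcal{C}\rangle \dashrightarrow \widehat{\mathscr{E}}'$, using the six tangent lines of Remark~\ref{rem:6lines} and the coordinate change $[z_1:z_2:z_3:z_4] = [t_1 z : x : t_0 z : lzY]$ preceding Remark~\ref{rem:Kummer}. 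Everything else reduces to the direct application of Mehran's existence and uniqueness results.
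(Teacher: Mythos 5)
Your proposal follows essentially the same route as the paper's proof: identify the ramification locus of $\widehat{\psi}$ (the eight non-central components of the two $D_4$ fibers) as Mehran's even eight $\Delta_{45}$ associated with $\mathsf{P}_{45}\in\operatorname{Jac}(\mathcal{C})[2]$, and then apply her classification of rational double covers to obtain $\mathfrak{A}$ and the degree-two isogeny $\Psi$. The only difference is that the paper cites earlier work of Clingher--Malmendier for the identification of the even eight rather than re-deriving it by tracking Weierstrass-point pairs as you propose; both are valid.
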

\begin{proof}
It was shown  in \cites{Clingher:2017aa,MR3995925} that the eight non-central components of the two reducible fibers of type $D_4$ form an even eight on $ \operatorname{Kum}(\operatorname{Jac}\mathcal{C})$. In fact, the sum of the components in the even eight label by $p_{45}$ that forms the ramification locus of $\widehat{\psi}$ is given by
 \[
 \Delta_{45} = K_{04} +  K_{14} + K_{24} + K_{34} + K_{05} + K_{15}+ K_{25} + K_{35}  \,.
\] 
The result then follows from Lemma~\ref{lem:branching} and \cite{MR2804549}*{Prop.~5.1}.
\end{proof}
\begin{remark}
The construction in Proposition~\ref{prop:Kum} was based on a double cover branched along the even eight $\Delta_{45}$ labelled by the point $\mathsf{P}_{45} \in \operatorname{Jac}(\mathcal{C})[2]$. This is in agreement with the construction of the hyperelliptic and bielliptic genus-three curves $\mathcal{H}$ in Figure~\ref{Relations_Curves} which was based on the divisor $\mathsf{P}_{45} \in  \operatorname{Jac}(\mathcal{C})[2]$; see Remark~\ref{rem:element}.
\end{remark}
We make the following crucial remark:
\begin{remark}
The involution $-\mathbb{I}$ on the abelian surface $\mathfrak{A}$ with $(1,2)$-polarization does \underline{not} restrict to the bielliptic involution on each factor of $\mathcal{H}$ in $\mathscr{H}$. Therefore, the generic Prym variety associated with the bielliptic quotient map $\mathscr{H}_{[s_0:s_1]} \to \mathscr{Q}_{[s_0:s_1]}$ of a general fiber is not isomorphic to $\mathfrak{A}$. Instead it is isomorphic to $\operatorname{Jac}(\mathcal{C})$ by Proposition~\ref{prop:JacH} which is only two-isogenous to $\mathfrak{A}$ by Proposition~\ref{prop:Kum}.
\end{remark}
\section{Plane  bielliptic curves}
\label{sec4}
In this section we provide a geometric characterization of plane bielliptic genus-three curves $\mathcal{D}$ and their bielliptic quotients. The precise characterization of the associated branch loci turns out to be critical to relate the bielliptic genus-three curve $\mathcal{D}$ to a genus-two curve $\mathcal{C}$ such that the Prym variety of the former is isogenous to the Jacobian variety of the latter.
\par Let $\mathcal{D}$ be a bielliptic curve.  Then there is an involution $\tau \in \operatorname{Aut} (\mathcal{D})$ such that $\mathcal{D}/\langle \tau \rangle$ is a genus one curve.  For $g=3$ there are two loci in the moduli space $\mathcal{M}_3$ such that the automorphism group has precisely order two, namely the hyperelliptic locus $\mathcal{M}_3^h$ and the $\mathcal{M}_3^b$ of dimension five and four respectively. In \cite{MR1816214} a normal form for bielliptic genus-three curves was determined. From \cite{kyoto}*{Table~1} we see that a generic curve $[\mathcal{D}] \in \mathcal{M}_3^b$ has a degree two cover $\pi^{\mathcal{D}}: \mathcal{D} \to \mathbb{P}^1$ ramified at four points. The curve has the equation
\begin{equation}\label{bielliptic}
w^4 + w^2 (u^2+av^2) + bu^4 + cu^3v + d u^2 v^2 + euv^3 + gv^4 =0\, ,
\end{equation}
where $e=1$ or $g=1$. 
\par Precise equations, in terms of invariants of binary sextics, describing the locus $\mathcal{M}_3^h \cap \mathcal{M}_3^b$ can be easily obtained; see \cite{MR3118614}. The same can not be said for the locus $\mathcal{M}_3^b$; see \cite{genus3}. However, there is a geometric description of the locus $\mathcal{M}_3^b$ which seems to have been known from the XIX century and it was pointed out to us by I. Dolgachev. 
\subsection{Characterization of plane bielliptic curves}
Let $\mathcal{D}$ be a canonical curve of genus 3 over $\mathbb{C}$ with a bielliptic involution $\tau:  \mathcal{D} \to \mathcal{D}$. In its canonical plane model given in \cite{MR1816214}, $\tau$ is induced by a projective involution $\tilde{\tau}$ whose set of fixed points consists of a point $u_0 \in\mathbb{P}^2$ and a line $\ell_0$. The intersection $\ell_0 \cap \mathcal{D}$ are the fixed points of $\tau$ on $\mathcal{D}$, namely the branch points of the degree 2 cover $\pi^{\mathcal{D}}: \mathcal{D} \to \mathbb{P}^1$. The following characterization is originally due to Kowalevskaya; see Dolgachev \cite{dolga}:
\begin{theorem}[Kowalevskaya] 
The point $u_0$ is the intersection point of four distinct bitangents of $\mathcal{D}$.  Conversely, if a plane quartic has four  bitangents intersecting at a point $u_0$, then there exists a bielliptic involution $\tau$ of $\mathcal{D}$ such that the projective involution $\tilde{\tau}$ has $u_0$ as its isolated fixed point.
\end{theorem}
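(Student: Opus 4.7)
For the forward direction I would fix projective coordinates $[u:v:w]$ on $\mathbb{P}^{2}$ so that $u_{0}=[0:0:1]$ and $\ell_{0}=\{w=0\}$; then $\tilde{\tau}$ becomes the involution $[u:v:w]\mapsto[u:v:-w]$, and the $\tilde{\tau}$-invariance of $\mathcal{D}$ forces its defining quartic to contain only even powers of $w$:
\begin{equation*}
F(u,v,w)\;=\;A\,w^{4}+C(u,v)\,w^{2}+E(u,v),
\end{equation*}
with $A\in\mathbb{C}^{\times}$, $C$ a binary quadratic, and $E$ a binary quartic. For any line $\ell$ through $u_{0}$ of direction $(\beta,-\alpha)$ (i.e.\ cut out by $\alpha u+\beta v=0$), the restriction $F|_{\ell}$ is a biquadratic in $w$ whose four roots occur in $\pm$-pairs; such a biquadratic is the square of a quadratic --- exactly the condition for $\ell$ to be a bitangent --- precisely when its discriminant $C(\beta,-\alpha)^{2}-4A\,E(\beta,-\alpha)$ vanishes. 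This discriminant is a binary quartic in $(\alpha,\beta)$, hence has four projective roots counted with multiplicity, and the smoothness of $\mathcal{D}$ (equivalently, the smoothness of the elliptic quotient $\mathcal{E}=\mathcal{D}/\langle\tau\rangle$) forces these roots to be simple, producing four distinct bitangents through $u_{0}$.

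For the converse I would start from arbitrary coordinates in which $u_{0}=[0:0:1]$ and write
\begin{equation*}
F\;=\;A\,w^{4}+B(u,v)\,w^{3}+C(u,v)\,w^{2}+D(u,v)\,w+E(u,v).
\end{equation*}
The linear transformation $(u,v,w)\mapsto\bigl(u,v,w-B(u,v)/(4A)\bigr)$ is a projective automorphism fixing $u_{0}$, and it eliminates the $w^{3}$-coefficient; after relabelling I may therefore assume $B\equiv 0$. On any line $\ell$ through $u_{0}$ of direction $(\beta,-\alpha)$ the restriction $F|_{\ell}$ then has no $w^{3}$-term, so the requirement that $F|_{\ell}$ be a perfect square $A(w^{2}+pw+q)^{2}$ --- the algebraic manifestation of bitangency along a line through $u_{0}$ --- forces $p=0$ and, in particular, $D(\beta,-\alpha)=0$. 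Hence every bitangent through $u_{0}$ yields a root of the binary cubic $(\alpha,\beta)\mapsto D(\beta,-\alpha)$.

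The decisive step is a degree count: a non-zero binary cubic has at most three projective roots, so the hypothesis that \emph{four distinct} bitangents pass through $u_{0}$ forces $D\equiv 0$. Consequently $F$ contains only even powers of $w$, and the involution $\tilde{\tau}:[u:v:w]\mapsto[u:v:-w]$ preserves $\mathcal{D}$, has $u_{0}$ as its unique isolated fixed point, and $\ell_{0}=\{w=0\}$ as its line of fixed points. That the restriction $\tau=\tilde{\tau}|_{\mathcal{D}}$ is genuinely bielliptic follows from Riemann--Hurwitz: with $g(\mathcal{D})=3$ and the four transversal fixed points $\mathcal{D}\cap\ell_{0}$, one computes $2\cdot 3-2=2(2g'-2)+4$, giving $g'=1$, so $\mathcal{D}/\langle\tau\rangle$ is a smooth elliptic curve.

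The main subtlety is the converse's degree-counting step, whose success hinges crucially on the hypothesis that the four bitangents are \emph{distinct}: if they were allowed to coincide, a non-zero cubic $D$ could have a triple root and still be compatible with a non-reduced family of four bitangents, so the distinctness assumption is essential rather than cosmetic. A secondary technical point is verifying that the substitution killing $B(u,v)w^{3}$ is a legitimate element of the stabilizer of $u_{0}$ in $\mathrm{PGL}_{3}(\mathbb{C})$, which reduces to checking that the associated $3\times 3$ matrix is invertible and sends $[0:0:1]$ to itself --- entirely routine.
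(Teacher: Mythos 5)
Your forward direction matches the paper's: in coordinates where $\tilde{\tau}$ is $[u:v:w]\mapsto[u:v:-w]$, the quartic is even in $w$ and the bitangents through $u_0$ are cut out by the vanishing of the discriminant $C^2-4AE$ (the paper's $a_2^2-a_4$), a binary quartic whose roots are simple by smoothness. Your converse, however, is genuinely different from the paper's and is correct. The paper invokes Proposition~6.1.4 of Dolgachev's \emph{Classical Algebraic Geometry}: any three concurrent bitangents form a syzygetic triad, so all eight contact points lie on a conic $B_2=0$, whence $F=B_2^2+l_1l_2l_3l_4$, and one last substitution kills the odd term of $B_2$. You instead argue by pure coefficient counting: after normalizing away the $w^3$-term, bitangency of a line through $u_0$ forces the restricted quartic to be a perfect square with vanishing odd part, hence the binary \emph{cubic} coefficient $D(u,v)$ of $w$ vanishes at each of the four distinct bitangent directions, so $D\equiv 0$. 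This is more elementary and self-contained (no appeal to the syzygetic-triad theorem), and it is in the same spirit as the paper's own proof of its Theorem~\ref{thm:charcterization}, which also proceeds by killing the odd coefficients $a_1,a_3$. What the paper's route buys is the conic through the eight contact points, an object it reuses later in the discussion of satellite conics; what your route buys is a shorter, reference-free argument in which the role of the hypothesis of \emph{four distinct} bitangents (versus the three that a cubic could accommodate) is made completely transparent. One small omission: you should note that the coefficient $A$ of $w^4$ is nonzero, i.e.\ $u_0\notin\mathcal{D}$; this is forced, since a bitangent through a point of the smooth quartic would have intersection number exceeding four unless it were the tangent line at that point, which is incompatible with having four distinct such lines. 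Your Riemann--Hurwitz computation at the end is fine, since smoothness of $\mathcal{D}$ forces the quartic $E(u,v)$ to have four simple roots and hence $\tau$ to have exactly four fixed points.
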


\par We give another characterization of bielliptic quartic curves:
\begin{theorem} \label{thm:charcterization}
$\mathcal{D}$ is bielliptic if and only if the following conditions are satisfied:
\begin{itemize} 
\item[(i)] There exists  a line $\ell_0$ intersecting $\mathcal{D}$ at four distinct points $p_1,\ldots, p_4$ with tangent lines $\ell_i$ at the points $p_i$ that intersect at one point $u_0$.

\item[(ii)] Let $P_{u_0}(\mathcal{D})$ be the cubic polar of $\mathcal{D}$ with respect to the point $u_0$ and let $Q$ be the conic component of $P_{u_0}(\mathcal{D})$ (note that the line $\ell_0$ from above is a line component of $P_{u_0}(\mathcal{D})$). Then $\ell_0$ is the polar line of $\mathcal{D}$ with respect to $u_0$.
\end{itemize}
\end{theorem}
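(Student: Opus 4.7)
The plan is to choose projective coordinates adapted to the pair $(u_0,\ell)$, namely $u_0 = [0:0:1]$ and $\ell = \{w=0\}$, and to translate the two conditions into algebraic constraints on the coefficients of the defining quartic. Writing
\[
f(u,v,w) \;=\; \sum_{k=0}^{4} c_k(u,v)\, w^k,
\]
with each $c_k$ a homogeneous form of degree $4-k$ in $(u,v)$, the goal is to show that condition (i) forces $c_1\equiv 0$ and condition (ii) forces $c_3\equiv 0$, so that $f$ reduces to the normal form of equation~\eqref{eq1}. The projective involution $\tilde{\tau}\colon [u:v:w]\mapsto[u:v:-w]$ then restricts to a bielliptic involution on $\mathcal{D}$.

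For the implication $(\Leftarrow)$, the intersection $\ell\cap\mathcal{D}$ is cut out by $c_0(u,v)=0$ and yields four distinct points $p_i=[\beta_i:\alpha_i:0]$. A direct computation of $\nabla f$ at $p_i$, using $\partial_w f|_{w=0}=c_1$, gives the tangent line
\[
(\partial_u c_0)(p_i)\,u + (\partial_v c_0)(p_i)\,v + c_1(p_i)\,w \;=\; 0,
\]
which passes through $u_0=[0:0:1]$ precisely when $c_1(p_i)=0$. Since $c_1$ is a cubic form vanishing at four distinct points of $\mathbb{P}^1$, condition (i) forces $c_1\equiv 0$. This already explains the parenthetical remark of (ii), because the first polar $\partial_w f = 4 c_4 w^3 + 3 c_3 w^2 + 2 c_2 w$ then has $w$ as a factor, exhibiting $\ell$ as a line component of the cubic polar with residual conic $Q\colon 4 c_4 w^2 + 3 c_3 w + 2 c_2 = 0$. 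For (ii), the polar line of $\mathcal{D}$ with respect to $u_0=(0,0,1)$ is represented by the third derivative $\partial_w^3 f = 24 c_4 w + 6 c_3(u,v)$; equating this line with $\ell=\{w=0\}$ forces $c_3\equiv 0$. With $c_1=c_3=0$ the quartic takes the Kowalevskaya form, and the quotient $\mathcal{D}/\langle\tilde\tau\rangle$, given in $\mathbb{P}(1,1,2)$ by $c_4 W^2 + c_2(u,v) W + c_0(u,v)=0$ with $W=w^2$, is a double cover of $\mathbb{P}^1$ branched over the four zeros of the discriminant $c_2^2 - 4 c_4 c_0$, hence of genus one by Riemann-Hurwitz, so $\tilde\tau|_\mathcal{D}$ is a bielliptic involution.

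For the converse $(\Rightarrow)$, I would start from the Kowalevskaya form $f=w^4-2a_2 w^2+a_4$ delivered by the previous theorem, take $\ell=\{w=0\}$ and $u_0=[0:0:1]$, and verify (i) and (ii) by direct computation: since $\partial_w f|_{w=0}=0$, the tangent lines at the four points of $\ell\cap\mathcal{D}$ (the zeros of $a_4$) have no $w$-component and hence all pass through $u_0$; and $\partial_w^3 f = 24 w$ shows that the polar line of $\mathcal{D}$ with respect to $u_0$ is exactly $\ell$. The first polar $\partial_w f = 4w(w^2-a_2)$ splits visibly as $\ell$ together with the conic $Q\colon w^2=a_2$. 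The main conceptual step, and the one on which I would focus care, is to identify in the coordinate-adapted form the precise algebraic content of (i) and (ii) as the vanishing of the odd-in-$w$ coefficients $c_1$ and $c_3$; everything else is bookkeeping with polars and a Riemann--Hurwitz count for the quotient.
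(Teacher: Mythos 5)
Your proof is correct and follows essentially the same route as the paper's: choose coordinates with $u_0=[0:0:1]$ and $\ell=\{w=0\}$, expand $f$ in powers of $w$, and show that condition (i) forces the $w^1$-coefficient to vanish while condition (ii) forces the $w^3$-coefficient to vanish, reducing $f$ to the normal form of Equation~\eqref{eq1} (your direct computation of the tangent lines and of $\partial_w^3 f$ replaces the paper's appeal to the intersection property of polars plus B\'ezout and the iterated-polar identity, but the content is identical). The only point you leave implicit is that $c_4\neq 0$ (the paper's $a_0\neq 0$), which your quotient-genus computation needs; it does follow from (ii), since the polar line $V(24c_4w+6c_3)$ can coincide with $\ell$ --- a line not passing through $u_0$ --- only when $c_4\neq 0$.
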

\begin{proof} Suppose $\mathcal{D}$ is bielliptic so that its equation $f(u,v,w) = 0$ can be written in the form
\begin{equation}\label{eq1}
\begin{split}
f(u,v,w) & = w^4-2a_2(u,v)w^2 + a_4(u,v) \\
& = (w^2-a_2(u,v))^2+(a_4(u,v)-a_2(u,v)^2) = 0 \,.
\end{split}
\end{equation}
One checks that for the curve in Equation~(\ref{eq1}) the involution $\tau: [u:v:w] \mapsto [u:v:-w]$ is induced by a projective involution $\tilde{\tau}$ whose set of fixed points consists of the point $u_0=[0:0:1] \in\mathbb{P}^2$ and the line $\ell_0= V(w)$. The polar cubic $P_{u_0}(\mathcal{D})$ has the equation $q = w(w^2-a_2(u,v)) = 0$. It is the union of the line $\ell_0$ and the conic $Q = V(w^2-a_2(u,v))$. The line $\ell_0$ intersect $\mathcal{D}$ at the points $p_i: [\beta_i:\alpha_i:0]$, where $a_4(\beta_i,\alpha_i) -a_2(u,v)^2 =0$. The tangent lines at the points $p_i$ are $\ell_i = V(\alpha_i u - \beta_i v)$.  By the main property of polars, $P_{u_0}(\mathcal{D})$ intersects $\mathcal{D}$ at the points $p$ such that the tangent line of $\mathcal{D}$ at $p$ contains the point $u_0$. Thus, the tangent lines $\ell_i$ at $p_i \in \mathcal{D}$ all pass through the point $u_0$, which -- given the normalization of the curve $\mathcal{D}$ in Equation~(\ref{eq1}) -- is $u_0 = [0:0:1]$. Thus,  part i) is verified.  
\par Using Equation~\eqref{eq1}  we compute the line polar $P_{u_0^3}(\mathcal{D}) = V \left(\frac{\partial^3}{\partial w^3}(F) \right)$ of $\mathcal{D}$. It coincides with the line $\ell_0$. On other hand 
\[ 
P_{u_0^3}(\mathcal{D}) = P_{u_0^2}(P_{u_0}(\mathcal{D})) = P_{u_0^2}(qw) = P_{u_0}(q+P_{u_0}(q)w) = 2P_{u_0}(q)+P_{u_0^2}(q)w = w\,,
\]
where we identify the polar curves with the corresponding partial derivatives. This implies that $V \left(P_{u_0}(q) \right) = V(w) = \ell_0$. This checks property (ii).
\par Let us prove the converse. Choose coordinates to assume that $\ell_0 = V(w)$ and the intersection point of the four tangent lines is $u_0 =  [0:0:1]$. The cubic polar $P_{u_0}(\mathcal{D})$ must contain the line component equal to $\ell_0$. Write the equation of $\mathcal{D}$ in the form
\[ 
a_0w^4+a_1(u,v)w^3+a_2(u,v)w^2+a_3(u,v)w+a_4(u,v) = 0 \,.
\]
Then we get  
\[ 
\begin{split}
P_{u_0}(\mathcal{D}) & = V(4a_0w^3+3a_1(u,v)w^2+a_2(u,v)w+a_3(u,v))\, , \\
 P_{u_0^2}(\mathcal{D}) & = V(12a_0w^2+6a_1(u,v)w+a_2(u,v))\, ,\\
   P_{u_0^3}(\mathcal{D}) & = 24a_0w+6a_1(u,v) \,.
\end{split} 
\]
Since $w$ divides the equation of the  polar cubic, we obtain $a_3(u,v) = 0$. If $a_0 = 0$, then $u_0\in \mathcal{D}$, and the line polar  $P_{u_0^3}(\mathcal{D})$ vanishes at $u_0$. But this polar is the tangent line of $\mathcal{D}$ at $u_0$. This implies that $\mathcal{D}$ is singular at $u_0$. So, we may assume that $a_0 \ne 0$.
Thus, the first condition implies that $\mathcal{D}$ can be written in the form
\[w^4+a_1(u,v)w^3+a_2(u,v)w^2+a_4(u,v) = 0 \,.\]
As in the first part of the proof, we obtain that $a_1(u,v) = 0$ if and only if condition (ii) is satisfied. Thus $\mathcal{D}$ can be written in the form Equation~\eqref{eq1}, and hence is bielliptic.
\end{proof}
\par For any general line $\ell$ let $\ell_1, \ldots, \ell_4$ be the tangents of $\mathcal{D}$ at the points $a_1 +  \dots a_4 = \mathcal{D} \cap \ell$ with $\ell_i \cap \mathcal{D} = 2a_i + c_i + d_i$. Adding up, we see that 
\[ 
\sum (c_i+d_i) \sim 4K_\mathcal{D} - 2\sum a_i \sim 4K_\mathcal{D} -2K_\mathcal{D} = 2K_\mathcal{D} \, ,
\]
where $K_\mathcal{D}$ is a canonical divisor. This shows that there exists a conic $S(\ell)$ that cuts out on $\mathcal{D}$ the  divisor $\sum (c_i+d_i)$ of degree 8. This conic is called the \emph{satellite conic} of $\ell$ (see \cite{Cohen2}). The map 
\begin{equation}
\begin{split}
S : \mathbb{P}^2 & \longrightarrow \mathbb{P}^5\, , \\
\ell &  \mapsto S(\ell)\,,
\end{split}
\end{equation}
is given by polynomials of degree 10 whose coefficients are polynomials in coefficients of  $\mathcal{D}$ of degree 7. Since $2\ell+S(\ell)$ and $T = \ell_1+\cdots+\ell_4$ cut out on $\mathcal{D}$ the same divisor, we obtain that the equation of $\mathcal{D}$ can  be written in the form
\[  F = l_1\cdots l_4+l^2q = 0\,,\]
where $\ell_i = V(l_i), \ell = V(l)$, and $S(\ell) = V(q)$.
\par Using an automorphism of $\mathbb{P}^2$ we can assume that the line $\ell$ is given by $\ell=V(w)$. For a general quartic, given by
\[
\begin{split}
& a{x}^{4}+b{y}^{4}+c{z}^{4}+6\,f{y}^{2}{z}^{2}+6\,g{x}^{2}{z}^{2}+6\,h{x}^{2}{y}^{2}+12\,l{x}^{2}yz+12\,mx{y}^{2}z+12\,nxy{z}^{2}\\
& +4\,{x}^{3}y a_{{1}} +4\,{x}^{3}za_{{2}}+4\,x{y}^{3}b_{{0}}+4\,x{z}^{3}c_{{0}}+4\,{y
}^{3}zb_{{2}}+4\,y{z}^{3}c_{{1}} =0 \,,
\end{split}
\]
an expression of the satellite conic is
\[
\begin{split}
S & = {x}^{2} \left( 9\,a{f}^{2}-16\,ab_{{2}}c_{{1}} \right) +2\,xy \left( 
18\,a_{{1}}{f}^{2}-32\,a_{{1}}b_{{2}}c_{{1}} \right) +2\,xz \left( 18
\,a_{{2}}{f}^{2}-32\,a_{{2}}b_{{2}}c_{{1}} \right) \\
& +{y}^{2} \left( 54
\,{f}^{2}h-96\,b_{{2}}c_{{1}}h \right) +2\,xy \left( 54\,{f}^{2}l-108
\,fmn-96\,b_{{2}}c_{{1}}l+72\,c_{{1}}{m}^{2}+72\,b_{{2}}{n}^{2}
 \right) \\
 & +{z}^{2} \left( 54\,f{g}^{2}-96\,b_{{2}}c_{{1}}g \right) \,.
 \end{split}
\]
For the bielliptic curve in \eqref{bielliptic} this satellite conic is 
\[
S=-{u}^{2}ace+ \frac 1 4\,{u}^{2}a{d}^{2}-{v}^{2}ce+ \frac 1 4\,{v}^{2}{d}^{2}-{w}^{2}cea+ \frac 1 4\,{w}^{2}d{a}^{2} \,.
\]
A line  $\ell$ is called a \emph{bielliptic line} if  the four tangents $\ell_i$ intersect at a common point $u_0$.   Choose the coordinates such that $u_0 = [0: 0 :1]$ and $l = w$. Then the equation of $\mathcal{D}$ is of the form
\[  
F = w^2\big(a_0w^2+a_1(u,v)w+a_2(u,v)\big)+a_4(u,v) = 0\,. 
\]
It is a bielliptic curve if and only if $a_1(u,v) = 0$. This is equivalent to $P_{u_0}(S(\ell)) = \ell$. Thus, we have obtained the following:
\begin{theorem} 
Suppose   $\ell$ is a bielliptic line.  Then $\mathcal{D}$ is bielliptic if and only if the polar line of the satellite conic $S(\ell)$ with respect to the point $u_0$ coincides with $\ell$.
\end{theorem}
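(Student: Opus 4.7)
The plan is to choose coordinates adapted to the pair $(\ell, u_\ell)$, reduce the equation of $\mathcal{D}$ to a standard form tied to $\ell$ and its satellite conic, and then recognize both the bielliptic condition and the polar-coincidence condition as the vanishing of the same coefficient. First, choose projective coordinates with $u_\ell = [0:0:1]$ and $\ell = V(w)$. Because $\ell$ is a bielliptic line, each tangent line $\ell_i$ at a point of $\mathcal{D}\cap\ell$ passes through $u_\ell$, so the product $l_1 l_2 l_3 l_4$ is a quartic form $a_4(u,v)$ in $u,v$ alone. Combined with the divisorial identity $2\ell+S(\ell)=\ell_1+\cdots+\ell_4$ on $\mathcal{D}$ established in the paragraph preceding the theorem, the defining equation of $\mathcal{D}$ admits the form
\[
 F = a_4(u,v) + w^2\, q(u,v,w) = 0,
\]
where $q=0$ cuts out the satellite conic $S(\ell)$. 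Expanding $q = a_0 w^2 + a_1(u,v)\, w + a_2(u,v)$ with $a_j$ homogeneous of degree $j$ in $(u,v)$, the equation becomes
\[
 F = a_0 w^4 + a_1(u,v)\, w^3 + a_2(u,v)\, w^2 + a_4(u,v).
\]

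Next, I will read off the bielliptic condition. By the Kowalevskaya characterization proved at the start of the section, after possibly rescaling $w$ the curve $\mathcal{D}$ is bielliptic with associated isolated fixed point $u_\ell$ precisely when this equation reduces to the normal form $w^4 - 2a_2(u,v)\, w^2 + a_4(u,v) = 0$, i.e.\ exactly when $a_1(u,v) \equiv 0$. Note that $a_0\neq 0$, for otherwise $u_\ell \in \mathcal{D}$ and the vanishing of the linear polar $P_{u_\ell^3}(\mathcal{D})$ at $u_\ell$ would force $\mathcal{D}$ to be singular there, contradicting smoothness.

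Finally, I will compute the polar line $P_{u_\ell}(S(\ell))$ directly. The polar of the conic $V(q)$ at the point $[0:0:1]$ is
\[
 \frac{\partial q}{\partial u}(0,0,1)\, u + \frac{\partial q}{\partial v}(0,0,1)\, v + \frac{\partial q}{\partial w}(0,0,1)\, w = 0.
\]
Substituting the expression for $q$ and using that $a_1$ is linear in $(u,v)$ while $\partial_u a_2$ and $\partial_v a_2$ vanish at $(0,0)$, this reduces to $a_1(u,v) + 2 a_0 w = 0$. Since $a_0 \neq 0$, this line coincides with $\ell = V(w)$ if and only if $a_1(u,v) \equiv 0$, which matches the bielliptic condition identified in the previous step, yielding the desired equivalence. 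No serious obstacle is anticipated; the entire argument is routine bookkeeping. The only point that merits careful verification is the assertion that $F$ can be written as $a_4(u,v) + w^2 q(u,v,w)$, which follows from the definition of the satellite conic together with the observation that $V(l_1 l_2 l_3 l_4)$ and $V(w^2 q)$ cut the same degree-eight divisor on $\mathcal{D}$.
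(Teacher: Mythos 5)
Your proposal is correct and follows essentially the same route as the paper: the paper also chooses coordinates with $u_\ell=[0:0:1]$ and $\ell=V(w)$, writes $F=w^2\bigl(a_0w^2+a_1(u,v)w+a_2(u,v)\bigr)+a_4(u,v)$ using the satellite-conic decomposition, and identifies both the bielliptic condition and the condition $P_{u_\ell}(S(\ell))=\ell$ with the vanishing of $a_1$. Your version is in fact slightly more complete, since you write out the polar-line computation $a_1(u,v)+2a_0w=0$ and justify $a_0\neq 0$ explicitly, steps the paper leaves implicit by reference to the earlier characterization theorem.
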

\par Let $\ell$ be a bielliptic line. The polar cubic of $P_{u_0}(\mathcal{D})$ passes through $\mathcal{D} \cap \ell$, hence it contains $\ell$ as an  irreducible component. In particular, $P_{u_0}(\mathcal{D})$ is singular. Recall that the locus  of points $u\in \mathbb{P}^2$ such that  $P_u(\mathcal{D})$ is a singular cubic  is the Steinerian curve $\operatorname{St}( \mathcal{D} )$. If $\mathcal{D}$ is  general enough,  the degree of $\operatorname{St}(\mathcal{D})$ is equal to 12 and it has 
24 cusps and 21 nodes. The cusps correspond to points such that the polar cubic is cuspidal, the nodes correspond to  points such that the polar cubic is reducible. The line components define the set of 21 bielliptic lines. In \cite{Cohen2} the 21 lines are described as singular points of multiplicity 4 of the curve of degree 24 in the dual plane parameterizing lines that the tangents to $\mathcal{D}$ at three intersection points of $\mathcal{D}$ and $\ell$ are concurrent.
\par According to  \cite{Cohen2}*{p.~327}, the equation of the satellite conic $S(\ell)$ is equal to 
\[ S = \mathcal{D}_{7,2,10} + \ell \cdot \mathcal{D}_{7,1,9}  +  \ell^2 \cdot \mathcal{D}_{7,0,8} = 0\,,\]
where $\mathcal{D}_{a,b,c}\in S^{a}(S^4(V^*)^*)\otimes S^b(V)\otimes S^c (V^*)$ is a comitant of degree $a$ in coefficients of $\mathcal{D}$, of degree $b$ in  coordinates, in  the plane $\mathbb{P}(V^*)$ (we use Grothendieck's notation) and of degree $c$ in  coordinates of the dual plane. Thus the vanishing of $a_1(u,v)$ from above is equivalent to the vanishing of the comitant $\mathcal{D}_{7,1,9}$. In  \cite{Cohen2} Cohen gives an explicit equation of $\mathcal{D}_{7,1,9}$:
\begin{theorem} $\mathcal{D}$ is bielliptic if and only if  $\mathcal{D}_{7,1,9}$, considered as a map $\mathbb{P}(V)\longrightarrow \mathbb{P}(V^*)$ has one  of the 21 lines corresponding to the nodes of $\operatorname{St}(\mathcal{D})$ as its indeterminacy point.  The rational map is given by polynomials of degree 9 with polynomial coefficients in coefficients of $\mathcal{D}$ of degree $7$. 
\end{theorem}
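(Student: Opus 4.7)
The plan is to translate the criterion of the preceding theorem — that $\mathcal{D}$ is bielliptic if and only if for some bielliptic line $\ell$ the polar line $P_{u_\ell}(S(\ell))$ coincides with $\ell$ — into a vanishing statement for the comitant $\mathcal{D}_{7,1,9}$, and then to observe that this vanishing is tautologically the indeterminacy condition for the associated rational map $\mathbb{P}(V)\to\mathbb{P}(V^*)$.

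First I would fix one of the $21$ candidate bielliptic lines $\ell$ (i.e.\ a node of $\operatorname{St}(\mathcal{D})$) and pass to projective coordinates $[u:v:w]$ adapted to $\ell$ and to its dual point, so that $\ell = V(w)$ and $u_\ell = [0:0:1]$. In these coordinates the satellite conic has the form $S(\ell) = V(q)$ with
\[
q = q_0(u,v) + w\, q_1(u,v) + q_2\, w^2,
\]
so its polar line with respect to $u_\ell$ is $V(\partial_w q) = V\bigl(q_1(u,v) + 2q_2 w\bigr)$. The equality $P_{u_\ell}(S(\ell)) = V(w) = \ell$ thus amounts precisely to the vanishing of the middle coefficient $q_1(u,v)$. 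I would then invoke Cohen's decomposition $S(\ell) = \mathcal{D}_{7,2,10} + \ell\cdot\mathcal{D}_{7,1,9} + \ell^2\cdot\mathcal{D}_{7,0,8}$ of the satellite conic to identify, modulo the factor $\ell = w$ that is killed by restriction to $\ell$, the cross-term $q_1(u,v)$ with $\mathcal{D}_{7,1,9}(\ell)$ evaluated at the chosen line. Consequently the polar-line criterion is equivalent to $\mathcal{D}_{7,1,9}(\ell) \equiv 0$ as a linear form in $[u:v:w]$.

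Next I would make the formal passage from "the linear form $\mathcal{D}_{7,1,9}(\ell)$ vanishes identically" to "$\ell$ is an indeterminacy point of $\mathcal{D}_{7,1,9}$." Writing $\mathcal{D}_{7,1,9}(\ell)(u,v,w) = \alpha_0(\ell)\,u + \alpha_1(\ell)\,v + \alpha_2(\ell)\,w$, the $\alpha_i(\ell)$ are polynomials of degree $9$ in the coordinates of $\ell$ whose coefficients are polynomials of degree $7$ in those of $\mathcal{D}$, and the rational map $\ell \mapsto [\alpha_0(\ell):\alpha_1(\ell):\alpha_2(\ell)]$ is by definition indeterminate at $\ell$ if and only if all three $\alpha_i(\ell)$ vanish simultaneously — equivalently, if and only if $\mathcal{D}_{7,1,9}(\ell)\equiv 0$. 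Combining this with the preceding step and with the identification, already recalled before the theorem, of the $21$ nodes of $\operatorname{St}(\mathcal{D})$ with the $21$ candidate bielliptic lines, both directions of the iff follow.

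The main obstacle is the comitant bookkeeping in the identification step: one must verify, starting from the intrinsic equivariant characterization of $\mathcal{D}_{7,1,9}$ and from the explicit formula of \cite{Cohen2}*{p.~327}, that in the adapted coordinate frame the comitant really does recover the middle coefficient $q_1(u,v)$ of $S(\ell)$, and that the stated multi-degree $(7,1,9)$ in the coefficients of $\mathcal{D}$, in the point coordinates, and in the line coordinates is correct. Once this dictionary between $\mathcal{D}_{7,1,9}$ and the $w$-linear part of $S(\ell)$ is firmly in place, the remainder is only the elementary polarity computation on the conic $V(q)$ and the formal interpretation of indeterminacy loci of rational maps between projective planes.
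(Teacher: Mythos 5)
Your proposal is correct and follows essentially the same route as the paper, which gives no separate proof environment here: the theorem is presented as a summary of the immediately preceding discussion (the polar-line-of-the-satellite-conic criterion, the identification of the $21$ bielliptic lines with the line components at the nodes of $\operatorname{St}(\mathcal{D})$, Cohen's decomposition $S = \mathcal{D}_{7,2,10} + \ell\cdot\mathcal{D}_{7,1,9} + \ell^2\cdot\mathcal{D}_{7,0,8}$, and the remark that vanishing of $a_1(u,v)$ is equivalent to vanishing of $\mathcal{D}_{7,1,9}$), which is exactly the chain you reconstruct. The comitant bookkeeping you flag as the main obstacle is likewise left to the reference \cite{Cohen2} in the paper, so your treatment is no less complete than the original.
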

\par This gives the equations of the locus of bielliptic curves in $\mathcal{M}_3$. It is unknown to us if this locus has ever been explicitly determined in terms of the invariants of the ternary quartics. 
\subsection{Ramification locus}
In this section we determine explicit equations for plane bielliptic genus-three curves based on their characterization in Theorem~\ref{thm:charcterization}. All computations in this section are carried out over an arbitrary field $K$. We start with the plane bielliptic genus-three curve $\mathcal{D}$ given in Equation~(\ref{eq1}), i.e.,
\begin{equation}
  \mathcal{D}: \quad w^4 - 2 \, a_2(u,v) \, w^2 + a_4(u,v) = 0 \,,
\end{equation}
with $[u:v:w] \in \mathbb{P}^2$ and general homogeneous polynomials $a_2$ and $a_4$ of degree two and four, respectively, and the bielliptic involution $\tau:[u:v:w] \mapsto [u:v:-w]$. It follows from \cite{MR1816214}*{Corollary~2.2} that any such smooth curve $\mathcal{D}$ is the canonical model of a bielliptic non-hyperelliptic curve of genus three. The bielliptic quotient $\mathcal{D}/\langle \tau \rangle$ is the genus-one curve $\mathcal{Q}$ given by
\begin{equation}
   \mathcal{Q}: \quad W^2 = a_2(u,v)^2 - a_4(u,v) =  c_4 u^4 + c_3 u^3v + c_2 u^2v^2 + c_1 uv^3 + c_0 v^4 \,,
\end{equation}
with $W=w^2 - a_2(u,v)$ and $[u,v,W]\in\mathbb{P}(1,1,2)$. Using a standard technique, as explained for example in \cite{MR3263663}*{App.~A}, we convert this genus-one curve to Weierstrass form given any $K$-rational point on the curve. We have the following:
\begin{lemma}
\label{lem:normal_form_BEC}
Given a $K$-rational point, the bielliptic quotient $\mathcal{D}/\langle \tau \rangle$ is isomorphic to the elliptic curve $\mathcal{E}$ given by
\begin{equation}\label{eqn:quotientEC0}
 \mathcal{E}: \quad \rho^2 \eta = \xi^3 + f  \xi^2 \eta + g \xi \eta^2 + h \eta^3 \,,
\end{equation}
with $[\xi:\eta:\rho] \in \mathbb{P}^2$ and 
\begin{align*}
 f = 3 \,c_1^2 - 8 \, c_0 c_2 \,,\quad h = (c_1^3 - 4 \, c_0 c_1 c_2 + 8 \, c_0^2 c_3)^2\,,\\
 g = 3 \, c_1^4 -16 \, c_0 c_1^2 c_2 + 16 \, c_0^2 (c_2^2 + c_1c_3) -64 c_0^3 c_4\,.
\end{align*}
\end{lemma}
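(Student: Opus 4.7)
The goal is to convert the quartic genus-one model $\mathcal{Q}: W^2 = c_4 u^4 + c_3 u^3 v + c_2 u^2 v^2 + c_1 u v^3 + c_0 v^4$ to a short Weierstrass cubic, using the given $K$-rational point to define the birational map. This is the classical Euler/Jacobi reduction of a quartic to a cubic, whose modern form appears in \cite{MR3263663}*{App.~A}. My plan is the following.

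\textbf{Step 1 (Placing the rational point).} A $K$-rational point on $\mathcal{Q}$ is, in particular, a $K$-rational point on $\mathbb{P}^1 \ni [u:v]$. By a projective change of coordinates on $\mathbb{P}^1$, I would move this point to $[0:1]$. Above $[0:1]$ the fiber is $W^2 = c_0$, so the existence of a $K$-rational point forces $c_0 = d^2$ for some $d \in K$. Work in the affine chart $v=1$, so the curve becomes $W^2 = c_0 + c_1 u + c_2 u^2 + c_3 u^3 + c_4 u^4$ with distinguished point $(u,W)=(0,d)$.

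\textbf{Step 2 (Birational transformation).} The idea is to intersect $\mathcal{Q}$ with the affine tangent line $W = d + \tfrac{c_1}{2d} u$ at $(0,d)$, which already has contact of order two there, and use the two remaining intersection points to parametrize. Concretely, introduce
\[
\xi \;=\; \frac{2 d (W+d) + c_1 u}{u^{2}}\,, \qquad
\rho \;=\; \frac{\bigl(c_1^2 - 4 c_0 c_2\bigr)(W+d) + 2 d\bigl(2 c_0 c_3\, u - c_2 c_1 u\bigr) + \cdots}{u^{3}}\,,
\]
i.e.\ divide the deviation of $\mathcal{Q}$ from its tangent line at the rational point by appropriate powers of $u$. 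Substituting into $W^{2}=Q_4(u)$ shows that $(\xi,\rho)$ satisfy a cubic Weierstrass equation of the form $\rho^{2} = \xi^{3} + \tilde f\,\xi^{2} + \tilde g\,\xi + \tilde h$, where \emph{a priori} $\tilde f, \tilde g, \tilde h$ are polynomials in $c_0,\dots, c_4$ and $d$. The inverse map sends the point at infinity of the cubic to $(0,d)$, which is why the rational point is needed.

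\textbf{Step 3 (Clearing square roots and matching coefficients).} Because $d$ enters the substitution only through the combinations $d^{2}=c_0$ and $dW$, an appropriate rescaling $(\xi,\rho) \mapsto (\lambda^{2}\xi, \lambda^{3}\rho)$ with $\lambda$ a monomial in $d$ eliminates all odd powers of $d$ from $\tilde f, \tilde g, \tilde h$, leaving polynomials in $c_0,\dots,c_4$ only. I would then identify these with the claimed
\[
f = 3c_1^{2}-8c_0c_2, \qquad g = 3c_1^{4}-16c_0c_1^{2}c_2+16c_0^{2}(c_2^{2}+c_1c_3)-64c_0^{3}c_4,
\]
and $h=(c_1^{3}-4c_0c_1c_2+8c_0^{2}c_3)^{2}$. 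The fact that $h$ is automatically a square is a consistency check: the image of the rational point $(0,d)$ under the rescaled birational map lies at $(\xi,\eta,\rho)=(0,1,\pm(c_1^{3}-4c_0c_1c_2+8c_0^{2}c_3))$, which must be a $K$-rational point of $\mathcal{E}$, so the value of $\xi^{3}+f\xi^{2}+g\xi+h$ at $\xi=0$ has to be a square in $K$. This confirms that the transformation produces exactly the normal form stated.

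\textbf{Main obstacle.} The proof is conceptually standard, so the real work is computational: carrying out the substitution in Step~2, collecting powers of $d$, and verifying that, after the rescaling in Step~3, the surviving polynomial coefficients agree with $f,g,h$ as written. This is a routine but bulky polynomial identity in $c_0,\dots,c_4,d$ (modulo $d^{2}=c_0$), most conveniently checked with a computer algebra system, and it is the only nontrivial part of the argument; everything else is packaging.
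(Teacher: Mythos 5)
Your proposal is correct and takes essentially the same route as the paper: the paper's proof likewise normalizes the rational point to $[u:v:W]=[0:1:\alpha]$ with $c_0=\alpha^2$ and then writes down the explicit birational substitution realizing the classical quartic-to-Weierstrass reduction, leaving the coefficient identities to direct computation. The only difference is presentational --- the paper records the inverse transformation explicitly rather than deriving the forward map from the tangent-line construction as you do.
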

\begin{proof}
By a change of coordinates we can assume that the $K$-rational point is given by $[u,v,W]=[0:1:\alpha]$, that is, $c_0=\alpha^2$.  Using the transformation
\[
 u = - \frac{4 c_0 \xi \eta v}{\tilde{\rho}} \,, \quad w = \alpha v^2 \, \frac{\tilde{\rho}^2- 2c_1 \xi \eta \tilde{\rho} -2  \xi^3\eta - 2 (c_1^2-4 c_0c_2) \xi^2\eta^2  }{\tilde{\rho}^2} \,,
\]
followed by the transformation
\[
 \tilde{\rho} = \rho \eta + c_1 \xi \eta+ (c_1^3 - 4 \, c_0 c_1 c_2 + 8 \, c_0^2 c_3) \eta^2 \,,
\] 
proves the lemma.
\end{proof}
\begin{remark}
The elliptic curve~(\ref{eqn:quotientEC0}) remains well defined, independently of the existence of a $K$-rational point. However, in general there is only an isomorphism 
\[
 \operatorname{Jac}(\mathcal{D}/\langle \tau \rangle) \cong \mathcal{E} \,.
 \] 
 The existence of a $K$-rational point is required for an isomorphism $\mathcal{D}/\langle \tau \rangle \cong \mathcal{E}$. The Jacobian was first found by Hermite as the determinant of a symmetric matrix that defines a conic bundle which degenerates over $\mathcal{E}$; see \cite{MR2004218}. 
\end{remark}
The following lemma is easily verified:
\begin{lemma}
\label{cor:ECgeneral}
The elliptic curve $\mathcal{E}$ with full $K$-rational two-torsion given by
\begin{equation}\label{eqn:quotientEC}
 \mathcal{E}: \quad \rho^2 \eta = \xi \Big(\xi^2 - 2b \, \xi \eta + (b^2-4 a^2) \, \eta^2\Big) \,,
\end{equation}
with $[\xi:\eta:\rho] \in \mathbb{P}^2$, is isomorphic to the genus-one curve
\begin{equation}
\label{eqn:Qbiquadratic}
 \mathcal{Q}: \quad W^2 = u^4 + b \, u^2v^2 + a^2  v^4 \,,
\end{equation}
where $[u:v:W] \in \mathbb{P}(1,1,2)$. An isomorphism $\varphi: \mathcal{E}\overset{\cong}{\longrightarrow} \mathcal{Q}$ is given by
\begin{equation}
\label{eqn:iso_curves}
[\xi:\eta:\rho] \mapsto  [u:v:W] =\Big[ \rho\eta \,: -2\xi\eta : (\rho^2 \eta + 2b\, \xi^2 \eta-2\xi^3) \eta \Big]\,,
\end{equation}
and by mapping points $\mathsf{T}_1: [\xi:\eta:\rho] =[0:1:0]$ and $\mathsf{O}: [0:0:1]$ to $[u:v:W]=[1:0:1]$ and $[1:0:-1]$, respectively. 
\end{lemma}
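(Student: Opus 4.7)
The plan is to verify that $\varphi$ defines a morphism from $\mathcal{E}$ to $\mathcal{Q}$, to exhibit an explicit inverse rational map, and to handle the two base points of $\varphi$ separately.

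First I would observe that the formulas for $u, v, W$ are homogeneous of respective degrees $2, 2, 4$ in $(\xi, \eta, \rho)$, hence compatible with the weights $(1,1,2)$ on $\mathbb{P}(1,1,2)$. The key algebraic step is then a direct verification that $W^2 = u^4 + b u^2 v^2 + a^2 v^4$ on $\mathcal{E}$: expanding $(\rho^2\eta + 2b\xi^2\eta - 2\xi^3)^2$ and subtracting $\eta^2(\rho^4 + 4b\rho^2\xi^2 + 16a^2\xi^4)$ yields
\[
-4\xi^3\bigl(\rho^2\eta - \xi(\xi^2 - 2b\xi\eta + (b^2-4a^2)\eta^2)\bigr),
\]
which vanishes precisely by the defining equation of $\mathcal{E}$.

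Next I would extend $\varphi$ across its base locus. The common vanishing $u = v = W = 0$ on $\mathcal{E}$ consists exactly of $\mathsf{T}_1 = [0:1:0]$ and $\mathsf{O} = [0:0:1]$; since $\mathcal{E}$ is smooth and projective, the rational map extends uniquely to a morphism on all of $\mathcal{E}$, and its values at these two points are computed in local charts. Near $\mathsf{T}_1$ in the chart $\eta = 1$, the equation of $\mathcal{E}$ yields $\rho^2 = (b^2 - 4a^2)\xi + O(\xi^2)$, so $W/u^2 = (\rho^2 + 2b\xi^2 - 2\xi^3)/\rho^2 \to 1$ while $v/u \to 0$, identifying the limit with $[1:0:1]$. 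Near $\mathsf{O}$ in the chart $\rho = 1$, one has $\eta = \xi^3 + O(\xi^5)$, from which $u \sim \xi^3$, $v \sim -2\xi^4$, $W \sim -\xi^6$, and rescaling by $\xi^{-3}$ in $\mathbb{P}(1,1,2)$ gives the limit $[1:0:-1]$.

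Finally, I would exhibit an inverse. Solving $u = \rho\eta$ and $v = -2\xi\eta$ in an affine chart and using the equation of $\mathcal{Q}$ to resolve the ambiguity gives
\[
\varphi^{-1}: [u:v:W] \mapsto \bigl[\,v(2u^2 - 2W + bv^2)\,:\,v^3\,:\,-2u(2u^2 - 2W + bv^2)\,\bigr],
\]
whose three coordinates all scale by $\lambda^3$ under $(u,v,W) \mapsto (\lambda u, \lambda v, \lambda^2 W)$, so the formula is well-defined on $\mathbb{P}(1,1,2)$. A direct computation shows that $\varphi\circ\varphi^{-1}$ and $\varphi^{-1}\circ\varphi$ agree with the identity on dense open subsets, hence on all of $\mathcal{Q}$ and $\mathcal{E}$ respectively by smoothness of both curves. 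The main obstacle is the limit analysis at the base points $\mathsf{T}_1$ and $\mathsf{O}$; the remainder reduces to polynomial bookkeeping modulo the defining equation of $\mathcal{E}$.
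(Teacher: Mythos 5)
Your proposal is correct, and it carries out precisely the direct verification that the paper omits: the paper states this lemma with the remark that it ``is easily verified'' and gives no proof at all. Your key identity $(\rho^2\eta + 2b\xi^2\eta - 2\xi^3)^2 - \eta^2(\rho^4 + 4b\rho^2\xi^2 + 16a^2\xi^4) = -4\xi^3\bigl(\rho^2\eta - \xi(\xi^2 - 2b\xi\eta + (b^2-4a^2)\eta^2)\bigr)$ checks out, the base-locus analysis at $\mathsf{T}_1$ and $\mathsf{O}$ gives the stated images $[1:0:1]$ and $[1:0:-1]$, and the inverse formula is confirmed by the computation $2u^2 - 2W + bv^2 = 4\xi^3\eta$ on $\mathcal{E}$, so nothing is missing.
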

\qed
\par For the elliptic curve $\mathcal{E}$ in Equation~(\ref{eqn:quotientEC}), the flex-point is the point at infinity $[\xi:\eta:\rho] =[0:0:1]$ which is also the base point $\mathsf{O}$ for the elliptic-curve group law. The point  $\mathsf{T}_1: [\xi:\eta:\rho] =[0:1:0]$ is a non-trivial two-torsion point. We have the following:
\begin{proposition}
\label{cor:bielliptic_quotient}
The plane bielliptic genus-three curve
\begin{equation}
\label{eqn:D}
   \mathcal{D}: \quad \Big(w^2 - a_2(u,v)\Big)^2  =  u^4 + b \, u^2v^2 + a^2 v^4  \,,
\end{equation}
where $[u:v:w] \in \mathbb{P}^2$, $a_2$ is a homogeneous polynomial of degree two, and $a, b$ are $K$-rational numbers such that $a (b^2-4a^2) \not =0$, admits the bielliptic involution $\tau:[u:v:w] \mapsto [u:v:-w]$ and a degree-two cover given by 
\begin{equation}
 \pi^{\mathcal{D}}_{\mathcal{Q}}: \; \mathcal{D} \to \mathcal{Q}, \qquad [u:v:w] \mapsto [u:v:W=  w^2 - a_2(u,v)] \,,
\end{equation}
onto the genus-one curve $\mathcal{Q}$ in Equation~(\ref{eqn:Qbiquadratic}). The branch locus of the bielliptic involution $\tau$ is isomorphic via $\varphi$ to a collection of points $\{\mathrm{pt}_1$, $\mathrm{pt}_2$, $\mathrm{pt}_3$, $\mathrm{pt}_4\} \subset \mathcal{E}$ in Equation~(\ref{eqn:iso_curves}) satisfying
\begin{equation}
 \xi^3 - a_2\Big( \rho, - 2 \xi \Big) \, \eta - (b^2-4a^2) \, \xi \eta^2 =  0 \,.
\end{equation}
In particular, we have $\sum_{i=1}^4 \mathrm{pt}_i=\mathsf{O}$. Conversely, the elliptic curve $\mathcal{E}$ in Equation~(\ref{eqn:iso_curves}) and $\{\mathrm{pt}_1$, $\mathrm{pt}_2$, $\mathrm{pt}_3$, $\mathrm{pt}_4\} \subset \mathcal{E}$ with $\sum_{i=1}^4 \mathrm{pt}_i=\mathsf{O}$ determine Equation~(\ref{eqn:D}) uniquely.
\end{proposition}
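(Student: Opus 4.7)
The proof breaks into two directions: the direct construction (items on $\mathcal{D}$ imply the branch locus on $\mathcal{E}$) and the converse (recovering $a_2$ from four points summing to $\mathsf{O}$). My plan is to first dispatch the easy structural claims, then do the key substitution, then invoke the classical fact about the group law on plane cubics.

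First I would verify that $\tau:[u:v:w]\mapsto[u:v:-w]$ preserves $\mathcal{D}$ by inspection, and that the map $\pi^{\mathcal{D}}_{\mathcal{Q}}:[u:v:w]\mapsto [u:v:W]$ with $W=w^2-a_2(u,v)$ identifies the quotient with $\mathcal{Q}$, since the defining equation becomes exactly $W^2=u^4+b\,u^2v^2+a^2v^4$. The ramification locus of $\tau$ on $\mathcal{D}$ is $\{w=0\}\cap\mathcal{D}$, equivalently the locus on $\mathcal{Q}$ where $W=-a_2(u,v)$. Then, using the isomorphism $\varphi$ from Lemma~\ref{cor:ECgeneral} given by $u=\rho\eta$, $v=-2\xi\eta$, $W=(\rho^2\eta+2b\xi^2\eta-2\xi^3)\eta$, and using homogeneity of $a_2$ to write $a_2(u,v)=\eta^2 a_2(\rho,-2\xi)$, the condition $W=-a_2(u,v)$ pulls back to
\[
\rho^2\eta+2b\xi^2\eta-2\xi^3+\eta\,a_2(\rho,-2\xi)=0.
\]
Substituting the elliptic-curve relation $\rho^2\eta=\xi^3-2b\xi^2\eta+(b^2-4a^2)\xi\eta^2$ collapses this to the cubic
\[
\xi^3-a_2(\rho,-2\xi)\,\eta-(b^2-4a^2)\,\xi\eta^2=0,
\]
as claimed.

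Next, to see that the four branch points $\mathrm{pt}_1,\dots,\mathrm{pt}_4$ sum to $\mathsf{O}$ I would invoke the standard fact that for an elliptic curve embedded as a plane cubic with base point at a flex, any auxiliary plane cubic cuts out a divisor of nine points whose sum is trivial in the group law. In our situation the auxiliary cubic $\xi^3-a_2(\rho,-2\xi)\eta-(b^2-4a^2)\xi\eta^2=0$ vanishes to high order at $\mathsf{O}=[0:0:1]$ (working in the affine chart $\rho=1$ one checks that both $\mathcal{E}$ and the auxiliary cubic behave like $\eta\sim\xi^3$ up to constant, forcing the local intersection multiplicity at $\mathsf{O}$ to be $5$), leaving exactly four residual intersection points, whose sum must therefore be $\mathsf{O}$.

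For the converse, I would count parameters: cubics of the form $\xi^3-A\rho^2\eta+2B\rho\xi\eta-4C\xi^2\eta-(b^2-4a^2)\xi\eta^2$ with $a_2(u,v)=Au^2+Buv+Cv^2$ form a $3$-dimensional affine family (the coefficient of $\xi^3$ is normalized, and $a,b$ are fixed by $\mathcal{E}$). Prescribing that such a cubic passes through the four points $\mathrm{pt}_1,\dots,\mathrm{pt}_4$ imposes four linear conditions on $(A,B,C)$, but the constraint $\sum\mathrm{pt}_i=\mathsf{O}$ together with the forced high-order vanishing at $\mathsf{O}$ cuts one of them, leaving a $3\times3$ linear system whose generic invertibility forces a unique $(A,B,C)$, hence a unique $a_2$ and a unique Equation~(\ref{eqn:D}).

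The main obstacle I anticipate is not the substitution computation (which is mechanical) but the bookkeeping at $\mathsf{O}$: verifying that the intersection multiplicity of $\mathcal{E}$ with the auxiliary cubic at the flex is exactly $5$ and that the remaining four residual points are simple for generic $a_2$ (so that one genuinely gets four distinct branch points consistent with the Riemann--Hurwitz count for the degree-two cover $\mathcal{D}\to\mathcal{E}$). This local analysis, together with the rank check of the $3\times 3$ system in the converse, are the steps that warrant care; the remainder of the proof is a direct computation.
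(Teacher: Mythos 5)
Your reduction of the branch condition to the cubic $G(\xi,\eta,\rho)=\xi^3-a_2(\rho,-2\xi)\,\eta-(b^2-4a^2)\,\xi\eta^2=0$ is correct and agrees with the paper, but the step you yourself flag as delicate is wrong as stated: the local intersection multiplicity of $G=0$ with $\mathcal{E}$ at $\mathsf{O}$ is not $5$. Write $a_2(u,v)=Au^2+Buv+Cv^2$ and work in the chart $\rho=1$. The branch of $\mathcal{E}$ at the origin is $\eta=\xi^3+O(\xi^5)$, while the branch of $G=0$ is $\eta=\xi^3/A+O(\xi^4)$; substituting the former into $G$ gives $(1-A)\xi^3+2B\xi^4+O(\xi^5)$, so the multiplicity is $3$ for generic $a_2$ (it rises only when $A=1$, which is exactly the degenerate situation of Remark~\ref{lem:3points} where $\mathsf{O}$ itself becomes a branch point). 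Two branches $\eta=c_1\xi^3+\cdots$ and $\eta=c_2\xi^3+\cdots$ with $c_1\ne c_2$ meet with multiplicity $3$, not $5$. The two residual points you are missing sit at the two-torsion point $\mathsf{T}_1=[0:1:0]$: modulo the equation of $\mathcal{E}$ one has $G\equiv\xi\cdot\mathcal{K}$, where $\mathcal{K}$ is precisely the conic of Equation~(\ref{eqn:K}); the line $\xi=0$ cuts $\mathcal{E}$ in $\mathsf{O}+2\mathsf{T}_1$ and the conic cuts it in $2\mathsf{O}+\sum_{i=1}^4\mathrm{pt}_i$. Hence $G\cdot\mathcal{E}=\sum_i\mathrm{pt}_i+3\mathsf{O}+2\mathsf{T}_1$, and your conclusion $\sum_i\mathrm{pt}_i=\mathsf{O}$ survives only because $2\mathsf{T}_1=\mathsf{O}$ --- a fact your argument never invokes and which the claimed residual divisor $5\mathsf{O}$ conceals.

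The paper sidesteps this bookkeeping by working with the conic $\mathcal{K}$ from the start: six intersection points with $\mathcal{E}$, two of which lie at $\eta=0$, leaving four points that sum to $\mathsf{O}$. You should either adopt that factorization or correct your residual divisor to $3\mathsf{O}+2\mathsf{T}_1$ and add the two-torsion observation. The same repair is needed in your converse: the count ``four linear conditions on $(A,B,C)$, one of which is absorbed by the high-order vanishing at $\mathsf{O}$'' leans on the same false multiplicity claim. The honest version is that the four point conditions give an inhomogeneous linear system in $(A,B,C)$ whose compatibility condition is $\sum_i\mathrm{pt}_i=\mathsf{O}$ (equivalently, that the four points together with $\mathsf{O}$ counted twice lie on a conic), after which generic uniqueness of $(A,B,C)$, hence of $a_2$ and of Equation~(\ref{eqn:D}), follows as in the paper's uniqueness-of-the-conic argument. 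The remaining structural claims (the involution, the quotient map, and the substitution producing $G$) are correct and essentially identical to the paper's computation.
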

\begin{proof}
The first part follows by explicit computation using Lemma~\ref{cor:ECgeneral} and the group law on $\mathcal{E}$. Conversely, the elliptic curve in Equation~(\ref{eqn:quotientEC0}) is isomorphic to the general genus-one quotient curve given by Lemma~\ref{lem:normal_form_BEC} iff we impose $h=0$. The condition $h=0$ allows us to express the coefficients $c_2 \alpha^2, c_3 \alpha^4, c_4\alpha^6$ with $c_0=\alpha^2$ as simple rational functions of $A, B, c_1$. The general isomorphism $\varphi: \mathcal{E} \to \mathcal{Q}$ is given by
\begin{equation}
[\xi:\eta:\rho] \mapsto  [u:v:W] =\Big[ (2c_1 \xi + \rho)\eta \,: -2\xi\eta : (\rho^2 \eta + 2b \xi^2 \eta-2\xi^3) \eta \Big]\,,
\end{equation}
such that
\begin{equation}
 \mathcal{Q}: \quad W^2 = \left \lbrace\begin{array}{l} u^4 + b \, u^2v^2 + a^2 \, v^4  \\
    +\, c_1 \Big(2u+c_1 v\Big)\Big(2u^2+2c_1uv+(c_1^2+b)v^2\Big)v\end{array}\right. \,,
\end{equation}
where $[u:v:W] \in \mathbb{P}(1,1,2)$, $c_1 \in K$ is an arbitrary coefficient. 
\par The branch locus on $\mathcal{E}$ in Equation~(\ref{eqn:quotientEC0}) uniquely defines a conic. This conic is given by
\begin{equation}\label{eqn:K}
 \mathcal{K}: \quad (1-\gamma)  \xi^2 + 4\beta \gamma  \rho\eta + 4 \alpha  \xi\eta - (1+\gamma) (b^2-4a^2)  \eta^2 =0 \,,
\end{equation}
with $\alpha, \beta, \gamma \in K$. If a plane curve of degree $n$ intersects an elliptic curve in $3n$ points, then these points always sum up using the group law of the elliptic curve $\mathcal{E}$ in Equation~(\ref{eqn:iso_curves}). In our case, we expect six points $\mathrm{pt}_1, \dots, \mathrm{pt}_6 \in \mathcal{E}$ such that $[\mathrm{pt}_1 + \dots + \mathrm{pt}_6  - 6 \mathsf{O}]= 0 \in \operatorname{Pic}^0(\mathcal{E})$ as it is the divisor class of $\operatorname{Div}(\mathcal{K}/\mathcal{L}^6)$ where $\mathcal{L}: \eta=0$ is the flex-line. However, the conic and the elliptic curve intersect at $\eta=0$; one checks this computing the resultant of $\mathcal{K}$ and the defining equation of $\mathcal{E}$. From Equation~(\ref{eqn:K}) one checks that the intersection at $\eta=0$ has order two whence $\mathrm{pt}_5=\mathrm{pt}_6=\mathsf{O}$. Therefore, the remaining four points $\mathrm{pt}_1, \dots, \mathrm{pt}_4$ satisfy $[\mathrm{pt}_1 + \dots + \mathrm{pt}_4  - 4 \mathsf{O}]= 0$. We set $a_2(u,v)=\gamma \big(u+(\beta+c_1)v\big)^2-(\alpha+\beta^2\gamma-\gamma b/2) v^2$, and the branching locus then satisfies
\begin{equation}
 \xi^3 - (b^2-4a^2) \, \xi \eta^2 =  a_2\Big( 2 c_1\xi +\rho, - 2 \xi \Big) \, \eta\,.
\end{equation}
In turn, the plane genus-three curve is given by setting
\begin{equation}
\begin{split}
 w^2  = W + a_2(u,v) = - \Big(\xi^3 - (b^2-4a^2) \xi \eta^2\Big)\, \eta +  a_2\Big( 2 c_1\xi +\rho, - 2 \xi \Big) \,\eta^2 \,.
 \end{split}
\end{equation}
Since $\Delta_{\mathcal{E}}=\Delta_{\mathcal{Q}}=16 a^2 (b^2-4a^2)^2$, we can set $c_1=0$ without loss of generality.
\end{proof}
We have the following:
\begin{remark}\label{lem:3points}
The point $\mathsf{O}$ is a branch point if and only if $a_2(u,0)=u^2$. We then write $a_2(u,v)=(u+\beta v)^2-(\alpha+\beta^2-b/2) v^2$ with $\alpha, \beta \in K$. The remaining points of the branch locus lie on the intersection of $\mathcal{E}$ with the line $2\alpha \xi + 2\beta \rho - (b^2-4a^2)\eta=0$. If the point $\mathsf{O}$ is in the branch locus of $\pi^{\mathcal{D}}_{\mathcal{Q}}$, then the remaining points $\{\mathrm{pt}_1$, $\mathrm{pt}_2$, $\mathrm{pt}_3\}$ satisfy $\sum_{i=1}^3 \mathrm{pt}_i=\mathsf{O}$ on $\mathcal{E}$. 
\end{remark}
On $\mathcal{E}$ we have different involutions acting on points $p\in \mathcal{E}$: (1) the hyperelliptic involution $\imath^{\mathcal{E}}: p \mapsto -p$ given by $[\xi:\eta:\rho] \mapsto  [\xi:\eta:-\rho]$;  (2) the involution $\imath^{\mathcal{E}}_{\mathsf{T}_1}: p \mapsto p + \mathsf{T}_1$ obtained by translation by two-torsion $\mathsf{T}_1$ and given by
\begin{equation}
 \label{dual_isog_involution}
 \imath^{\mathcal{E}}_{\mathsf{T}_1}:\quad [\xi:\eta:\rho]\mapsto \left[ (b^2-4a^2)\, \xi\eta: \xi^2: - (b^2-4a^2)\, \rho\eta \right]\,;
\end{equation} 
(3) the composition $\imath^{\mathcal{E}}  \circ \imath^{\mathcal{E}}_{\mathsf{T}_1} =  \imath^{\mathcal{E}}_{\mathsf{T}_1} \circ \imath^{\mathcal{E}}: p \mapsto -p + \mathsf{T}_1$. We have the following:
\begin{lemma}\label{lem:involutions}
The involutions act  on $\mathcal{Q}$ as follows
\begin{equation}
\begin{split}
 \varphi \circ \imath^{\mathcal{E}}:& \quad  [u:v:W] \mapsto \Big[ - u: v: W\Big] = \Big[ u: -v: W\Big]\,,\\
 \varphi \circ \imath^{\mathcal{E}}_{\mathsf{T}_1}:& \quad  [u:v:W] \mapsto \Big[ - u: v: - W\Big]= \Big[ u: -v: - W\Big] \,,\\
  \varphi \circ \left(\imath^{\mathcal{E}}  \circ \imath^{\mathcal{E}}_{\mathsf{T}_1}\right):& \quad  [u:v:W] \mapsto \Big[ u: v: - W\Big] \,.
\end{split}
\end{equation}
\end{lemma}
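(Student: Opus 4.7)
The plan is a direct coordinate-level verification using the explicit isomorphism $\varphi$ in Equation~(\ref{eqn:iso_curves}) together with the weighted-projective identification $[u:v:W]=[\lambda u:\lambda v:\lambda^2 W]$ in $\mathbb{P}(1,1,2)$. For each of the three involutions $\iota$ on $\mathcal{E}$, I would substitute its image into the formula for $\varphi$, extract a common factor, and rescale by an appropriate $\lambda\in K^{\times}$ to bring the resulting triple into one of the asserted normal forms. The nontrivial content is concentrated in the translation-by-$\mathsf{T}_1$ case; the hyperelliptic case is essentially free and the composition case is purely formal.

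First, for $\imath^{\mathcal{E}}:[\xi:\eta:\rho]\mapsto[\xi:\eta:-\rho]$, the first coordinate of $\varphi(\imath^{\mathcal{E}}(p))$ becomes $-\rho\eta$ while the middle coordinate $-2\xi\eta$ and the third coordinate $(\rho^{2}\eta+2b\xi^{2}\eta-2\xi^{3})\eta$ are unchanged (the latter because only $\rho^{2}$, not $\rho$, appears). Rescaling by $\lambda=-1$ gives $[\rho\eta:2\xi\eta:(\rho^{2}\eta+2b\xi^{2}\eta-2\xi^{3})\eta]=[u:-v:W]=[-u:v:W]$, which is the first row.

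The translation involution $\imath^{\mathcal{E}}_{\mathsf{T}_{1}}$ in Equation~(\ref{dual_isog_involution}) is the main work. Substituting $(\xi',\eta',\rho')=((b^{2}-4a^{2})\xi\eta,\,\xi^{2},\,-(b^{2}-4a^{2})\rho\eta)$ into $\varphi$, the first two coordinates share the common factor $(b^{2}-4a^{2})\xi^{2}$, and rescaling by $\lambda=-1/[(b^{2}-4a^{2})\xi^{2}]$ sends them to $\rho\eta=u$ and $2\xi\eta=-v$. The delicate step is the third coordinate: a naive expansion of $W'=((\rho')^{2}\eta'+2b(\xi')^{2}\eta'-2(\xi')^{3})\eta'$ does not visibly reduce to $-W$. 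Here one uses the Weierstrass equation
\begin{equation*}
\rho^{2}\eta=\xi^{3}-2b\xi^{2}\eta+(b^{2}-4a^{2})\xi\eta^{2}
\end{equation*}
of $\mathcal{E}$ to replace $\rho^{2}\eta$ by a cubic in $(\xi,\eta)$; after this substitution both $W$ and $\lambda^{2}W'$ collapse to $\pm\xi\eta(\xi^{2}-(b^{2}-4a^{2})\eta^{2})$ with opposite signs, giving $\lambda^{2}W'=-W$. This yields $[u:-v:-W]=[-u:v:-W]$, the second row. The use of the curve equation to cancel the transcendental-looking piece of $W'$ is the only genuine obstacle, and it is routine.

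Finally, the third row follows at once from the group-theoretic relation $\imath^{\mathcal{E}}\circ\imath^{\mathcal{E}}_{\mathsf{T}_{1}}=\imath^{\mathcal{E}}_{\mathsf{T}_{1}}\circ\imath^{\mathcal{E}}$ and the two cases already proved: composing $[u:v:W]\mapsto[-u:v:W]$ with $[u:v:W]\mapsto[-u:v:-W]$ gives $[u:v:-W]$. Commutativity of the two component involutions is immediate on $\mathcal{Q}$ from the explicit formulas, and it also follows abstractly from the fact that $\imath^{\mathcal{E}}$ is inversion while $\imath^{\mathcal{E}}_{\mathsf{T}_{1}}$ is translation by a two-torsion point, which always commute. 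This completes the verification of all three rows.
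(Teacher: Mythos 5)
Your verification is correct and matches the paper's approach exactly: the paper's entire proof is ``The proof follows by computation,'' and you have simply carried out that computation, including the one genuinely non-obvious step of using the Weierstrass relation $\rho^2\eta=\xi^3-2b\xi^2\eta+(b^2-4a^2)\xi\eta^2$ to reduce the third coordinate under $\imath^{\mathcal{E}}_{\mathsf{T}_1}$ to $\xi\eta(\xi^2-(b^2-4a^2)\eta^2)=-W$. The only cosmetic caveat is that your rescaling by $-1/[(b^2-4a^2)\xi^2]$ is valid only on the dense open set where $\xi\neq 0$, which suffices since morphisms of curves agreeing on a dense open subset agree everywhere.
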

\begin{proof}
The proof follows by computation.
\end{proof}
\subsection{Biquadratic quotients}\label{sec:odd_case}
\label{ssec4}
We discuss the case in Proposition~\ref{cor:bielliptic_quotient} for a branch locus on $\mathcal{E}$ in Equation~(\ref{eqn:quotientEC}) that consists of the points $\mathsf{O}: [\xi:\eta:\rho] =[0:0:1]$ and $\mathrm{pt}_1$, $\mathrm{pt}_2$, $\mathrm{pt}_3$ with $\sum_{i=1}^3 \mathrm{pt}_i=\mathsf{O}$. We have the following:
\begin{lemma}
\label{lem:adding_points}
Given two $K$-rational points $q_1, q_2  \in \mathcal{E}$ such that $[u:v:W]=[R_i:1:S_i]=\varphi(q_i)$ for $1\le i\le 2$ with $R_1^2 \not =R_2^2$, the point $q_3=-q_1-q_2 \in \mathcal{E}$ is a $K$-rational point with $[R_3:1:S_3]=\varphi(q_3)$, and we have for $a, b$ in Equation~(\ref{eqn:quotientEC}) the relations
\begin{equation}
\label{eqn:coeffs_3a}
\begin{split}
 a^2  = R_1^2R_2^2 + \frac{R_1^2S_2^2-R_2^2 S_1^2}{R_1^2-R_2^2} \,,\quad b= - \frac{R_1^4-R_2^4-S_1^2+S_2^2}{R_1^2-R_2^2} \,,
 \end{split} 
\end{equation} 
and 
\begin{equation}
\begin{split}
\label{eqn:coeffs_3b}
  R_3 = \frac{R_2S_1-R_1S_2}{R_1^2-R_2^2} \,,\quad S_3 = - R_1 R_2 + \frac{(R_1S_1-R_2S_2)(R_2S_1-R_1S_2)}{R_1^2-R_2^2}  \,.
  \end{split} 
\end{equation} 
\end{lemma}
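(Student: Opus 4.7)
The plan is to transport the chord--tangent group law on $\mathcal{E}$ back to $\mathcal{Q}$ via the isomorphism $\varphi$ of Lemma~\ref{cor:ECgeneral}. The identities~(\ref{eqn:coeffs_3a}) for $a^2$ and $b$ follow directly from the fact that both $(R_1,S_1)$ and $(R_2,S_2)$ satisfy $S_i^2 = R_i^4 + b\,R_i^2 + a^2$. This is a linear system in the unknowns $b$ and $a^2$ with coefficient matrix $\bigl(\begin{smallmatrix} R_1^2 & 1 \\ R_2^2 & 1 \end{smallmatrix}\bigr)$, which is invertible precisely by the hypothesis $R_1^2 \ne R_2^2$; Cramer's rule produces the stated formulas.

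For~(\ref{eqn:coeffs_3b}), I would first invert $\varphi$. In the affine chart $\eta = 1$, equating $[R_i:1:S_i] = [\rho_i:-2\xi_i:\rho_i^2 + 2b\xi_i^2 - 2\xi_i^3]$ in $\mathbb{P}(1,1,2)$ and rescaling by the common factor $-2\xi_i$ gives
\[
 \xi_i \;=\; 2R_i^2 + b - 2S_i, \qquad \rho_i \;=\; -2R_i\,\xi_i \qquad (i=1,2).
\]
The condition $q_3 = -q_1 - q_2$ is equivalent to the three points $(\xi_1,\rho_1), (\xi_2,\rho_2), (\xi_3,\rho_3)$ being collinear on the Weierstrass cubic. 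Substituting the chord $\rho = m\xi + c$ with $m = (\rho_1-\rho_2)/(\xi_1-\xi_2)$ into $\rho^2 = \xi^3 - 2b\xi^2 + (b^2-4a^2)\xi$ and comparing the coefficient of $\xi^2$ in the resulting cubic yields $\xi_3 = m^2 + 2b - \xi_1 - \xi_2$ and $\rho_3 = m(\xi_3-\xi_1) + \rho_1$. Applying $\varphi$ to $(\xi_3,\rho_3)$ and rescaling gives $R_3 = -\rho_3/(2\xi_3)$ and $S_3 = R_3^2 + b/2 - \xi_3/2$.

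The remaining task is the algebraic simplification of these rational functions of $(R_1, R_2, S_1, S_2, a, b)$ to the forms in~(\ref{eqn:coeffs_3b}). The key reductions are: factor $\xi_1-\xi_2 = 2(R_1^2-R_2^2) - 2(S_1-S_2)$ out of numerator and denominator wherever possible, and repeatedly eliminate $a^2$ and $b$ using the relations $S_i^2 = R_i^4 + bR_i^2 + a^2$, i.e.\ using~(\ref{eqn:coeffs_3a}) itself. The main obstacle is simply the bookkeeping, since intermediate expressions involve polynomials of degree up to six in the $R_i$ and $S_i$; every individual step is elementary. A cleaner route that avoids the unwieldy intermediate expressions is to \emph{define} $R_3, S_3$ by the right-hand sides of~(\ref{eqn:coeffs_3b}), lift to $(\xi_3,\rho_3) = (2R_3^2 + b - 2S_3,\ -2R_3\,\xi_3)$, and verify directly via a single polynomial identity (modulo the defining relations of $\mathcal{Q}$ at the two given points) that $(\xi_3,\rho_3)$ lies both on $\mathcal{E}$ and on the chord through $(\xi_1,\rho_1)$ and $(\xi_2,\rho_2)$; uniqueness of the third intersection then identifies $q_3$ with $-q_1-q_2$ and completes the proof.
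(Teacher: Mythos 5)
Your method is correct and is essentially the paper's own (the published proof merely asserts that (\ref{eqn:coeffs_3a}) follows from the points lying on $\mathcal{Q}$ and that (\ref{eqn:coeffs_3b}) is ``checked using the elliptic-curve group law''); your Cramer's-rule derivation of $a^2$ and $b$, and your inversion $\xi_i=2R_i^2+b-2S_i$, $\rho_i=-2R_i\xi_i$ of $\varphi$, are both right, as are the chord formulas $\xi_3=m^2+2b-\xi_1-\xi_2$ and $S_3=R_3^2+b/2-\xi_3/2$. One concrete warning about your proposed shortcut of \emph{verifying} the right-hand sides of (\ref{eqn:coeffs_3b}) rather than deriving them: that verification will fail for $S_3$ as printed. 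Assign weight $1$ to $R_i$ and weight $2$ to $S_i$, so that $S^2=R^4+bR^2+a^2$ is homogeneous; the printed $R_3$ has the correct weight $1$, but the second term of the printed $S_3$, namely $(R_1S_1-R_2S_2)(R_2S_1-R_1S_2)/(R_1^2-R_2^2)$, has weight $4$ instead of $2$. A numerical check bears this out: with $(R_1,S_1)=(1,2)$ and $(R_2,S_2)=(2,3)$ one gets $b=-10/3$, $a^2=19/3$, and the chord construction you describe yields $(R_3,S_3)=(-1/3,-22/9)$, which lies on $\mathcal{Q}$, whereas the printed formula gives $S_3=-2/3$, which does not. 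The denominator in the $S_3$ formula should be $(R_1^2-R_2^2)^2$. So take your first (direct computation) route, which will land on the corrected formula, rather than the verification route against the statement as printed; the discrepancy is a typo in the lemma, not a flaw in your argument.
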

\begin{proof}
Given two different $K$-rational points $\mathrm{pt}_1, \mathrm{pt}_2$ on the elliptic curve $\mathcal{E}$ in Equation~(\ref{eqn:quotientEC}) such that  $[R_1:1:S_1]=\varphi(q_1)$ and $[R_2:1:S_2]=\varphi(q_2)$ with $R_1^2 \not =R_2^2$, the point $q_3=-q_1-q_2$ is $K$-rational. Since the coordinates $[R_i:1:S_i]$ for $1\le i \le3$ label points on $\mathcal{Q}$ satisfying Equation~(\ref{eqn:Qbiquadratic}), we can easily derive Equation~(\ref{eqn:coeffs_3a}). We compute the coordinates for all points on $\mathcal{E}$ and check using the elliptic-curve group law that $q_1+q_2+q_3=\mathsf{O}$. 
\end{proof}
\par  For $\epsilon_1, \epsilon_2, \epsilon_3 \in \{\pm 1\}$ and $\varepsilon=(\epsilon_1, \epsilon_2,\epsilon_3)$ and two distinct $K$-rational points $q_1, q_2  \in \mathcal{E}$ such that $[u:v:W]=[R_i:1:S_i]=\varphi(q_i)$ for $1\le i\le 2$ with $R_1^2 \not =R_2^2$, we define the plane bielliptic genus-three curves $\mathcal{D}^{\varepsilon}$ given by
\begin{equation}\label{eqn:De1e2e3}
\scalemath{0.8}{
\begin{aligned}
w^4 -  2 w^2 \left(  u^2 -  \Big(\epsilon_1 R_1+\epsilon_2 R_2 +\epsilon_3 \frac{\epsilon_2 S_1- \epsilon_1 S_2}{\epsilon_1 R_1-\epsilon_2 R_2}\Big) \, uv
 + \left(\epsilon_1\epsilon_2 R_1R_2- \epsilon_3 \frac{R_1 S_2-R_2 S_1}{\epsilon_1R_1 - \epsilon_2 R_2} \right) \, v^2 \right)\\
-2 \left( \epsilon_1 R_1+\epsilon_2 R_2 + \epsilon_3 \frac{\epsilon_2 S_1-\epsilon_1 S_2}{\epsilon_1 R_1-\epsilon_2 R_2}\right) \Big(u-\epsilon_1 R_1 v\Big)\Big(u-\epsilon_2 R_2 v\Big)\Big(u-\epsilon_3 \frac{R_2S_1-R_1S_2}{R_1^2-R_2^2} v\Big) \, v =0\,,
 \end{aligned}}
\end{equation}
with $[u:v:W] \in \mathbb{P}(1,1,2)$. We have the following:
\begin{lemma}
\label{prop:bielliptic_curve_3points}
Given two $K$-rational points $q_1, q_2 \in \mathcal{E}$ such that  $[R_i:1:S_i]=\varphi(q_i)$  for $1\le i\le 2$ with $R_1^2 \not =R_2^2$, we set $q_3=-q_1-q_2 \in \mathcal{E}$ with $[R_3:1:S_3]=\varphi(q_3)$. The following holds:
\begin{enumerate}
\item For all $\epsilon_1, \epsilon_2, \epsilon_3 \in \{\pm 1\}$,  the plane genus-three curves $\mathcal{D}^{\varepsilon}$ in Equation~(\ref{eqn:De1e2e3}) admit the bielliptic involution $\imath^{\mathcal{D}}_b:[u:v:w] \mapsto [u:v:-w]$ and the degree-two quotient map $\pi^{\mathcal{D}^{\varepsilon}}_{\mathcal{Q}}$ given by
\begin{equation}
\label{eqn:quotient_3}
\begin{split}
 \pi^{\mathcal{D}^{\varepsilon}}_{\mathcal{Q}} :  \quad w  \mapsto W =  & \, w^2 - u^2 +  \Big(\epsilon_1 R_1+\epsilon_2 R_2 +\epsilon_3 \frac{\epsilon_2 S_1- \epsilon_1 S_2}{\epsilon_1 R_1-\epsilon_2 R_2}\Big)  \, uv \\
 & - \left(\epsilon_1\epsilon_2 R_1R_2- \epsilon_3 \frac{R_1 S_2-R_2 S_1}{\epsilon_1R_1 - \epsilon_2 R_2} \right) \, v^2 \,,
 \end{split}
\end{equation}
onto the curve $\mathcal{Q}$ in Equation~(\ref{eqn:Qbiquadratic}), isomorphic to $\mathcal{E}$ in Equation~(\ref{eq:EC}). 
\item The branch points $ \mathrm{pt}_i \in \mathcal{E}$ with $1\le i\le 4$ of $\pi^{\mathcal{D}^{\varepsilon}}_{\mathcal{Q}}$ are given by:
\begin{equation}
 \begin{array}{c|rrr|rrrr}
  \# &\epsilon_1 & \epsilon_2 & \epsilon_3 & \mathrm{pt}_1 & \mathrm{pt}_2 & \mathrm{pt}_3 & \mathrm{pt}_4 \\
  \hline
  1& 1	& 1	& 1		& q_1		& q_2		&  q_3		& \mathsf{O} \\	
  2&-1	& -1	& -1		&-q_1		&-q_2		& -q_3		& \mathsf{O} \\
  3& 1	&-1	& -1		& q_1		& \mathsf{T}_1+q_2	& \mathsf{T}_1+q_3	& \mathsf{O} \\
  4& -1	& 1	& 1		& -q_1		& \mathsf{T}_1-q_2		& \mathsf{T}_1-q_3		& \mathsf{O} \\	
  5& -1	&-1	& 1		& \mathsf{T}_1+q_1	& \mathsf{T}_1+q_2	& q_3		& \mathsf{O} \\
  6&  1	&1	& -1		& \mathsf{T}_1-q_1		& \mathsf{T}_1-q_2		& -q_3		& \mathsf{O} \\
  7&-1	&1	& -1		& \mathsf{T}_1+q_1	& q_2		& \mathsf{T}_1+q_3	& \mathsf{O} \\
  8&1		& -1	& 1		& \mathsf{T}_1-q_1		& -q_2		& \mathsf{T}_1-q_3		& \mathsf{O} 	
 \end{array}
 \end{equation}
 \item With respect to the elliptic-curve group law, we have $\sum_{i=1}^3 \mathrm{pt}_i=\mathsf{O}$.
\end{enumerate}
\end{lemma}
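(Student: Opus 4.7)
The plan is, for each sign pattern $\varepsilon = (\epsilon_1,\epsilon_2,\epsilon_3) \in \{\pm 1\}^3$, to fit Equation~(\ref{eqn:De1e2e3}) into the template of Proposition~\ref{cor:bielliptic_quotient} and then to identify the four branch points of the resulting bielliptic quotient by tracing them through the isomorphism $\varphi$ of Lemma~\ref{cor:ECgeneral}. Reading the coefficients of Equation~(\ref{eqn:De1e2e3}) off directly, I have
\begin{equation*}
\mathcal{D}^{\varepsilon}: \quad w^4 - 2\, a_2^{\varepsilon}(u,v)\, w^2 + a_4^{\varepsilon}(u,v) = 0,
\end{equation*}
with $a_2^{\varepsilon}(u,v) = u^2 - C^{\varepsilon} u v + D^{\varepsilon} v^2$, where $C^{\varepsilon}, D^{\varepsilon}$ are the scalars appearing in Equation~(\ref{eqn:quotient_3}), and
\begin{equation*}
a_4^{\varepsilon}(u,v) = -2\, C^{\varepsilon} \bigl(u - \epsilon_1 R_1 v\bigr)\bigl(u - \epsilon_2 R_2 v\bigr)\bigl(u - \epsilon_3 R_3 v\bigr)\, v,
\end{equation*}
with $R_3 = (R_2 S_1 - R_1 S_2)/(R_1^2 - R_2^2)$ as in Lemma~\ref{lem:adding_points}. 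Completing the square in $w^2$ makes the bielliptic involution $w \mapsto -w$ manifest and yields the degree-two quotient map $\pi^{\mathcal{D}^{\varepsilon}}_{\mathcal{Q}}$ with $W = w^2 - a_2^{\varepsilon}$ satisfying $W^2 = (a_2^{\varepsilon})^2 - a_4^{\varepsilon}$.

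The heart of part (1) is then the polynomial identity
\begin{equation*}
\bigl(a_2^{\varepsilon}(u,v)\bigr)^2 - a_4^{\varepsilon}(u,v) \;=\; u^4 + b\, u^2 v^2 + a^2\, v^4,
\end{equation*}
which identifies $\mathcal{Q}$ with the curve of Equation~(\ref{eqn:Qbiquadratic}) for $a, b$ from Equation~(\ref{eqn:coeffs_3a}). On expansion, the coefficient of $u^3 v$ vanishes automatically because the same factor $C^{\varepsilon}$ appears in front of $uv$ in $a_2^{\varepsilon}$ and as the overall prefactor in $a_4^{\varepsilon}$; the coefficient of $u v^3$ vanishes by the identity $D^{\varepsilon} = \epsilon_1\epsilon_2 R_1 R_2 + \epsilon_1\epsilon_3 R_1 R_3 + \epsilon_2\epsilon_3 R_2 R_3$, which after clearing denominators is exactly the defining formula for $R_3$ in Equation~(\ref{eqn:coeffs_3b}); and the remaining two coefficients produce precisely $b$ and $a^2$ from Equation~(\ref{eqn:coeffs_3a}) after using the relations $S_i^2 = R_i^4 + b R_i^2 + a^2$ together with the expression for $S_3$.

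For part (2), the branch locus of $\pi^{\mathcal{D}^{\varepsilon}}_{\mathcal{Q}}$ is cut out by $w=0$, which forces $a_4^{\varepsilon}(u,v) = 0$; the four roots are $u = \epsilon_i R_i v$ for $i = 1,2,3$ and $v = 0$. The root $v = 0$ gives $[u:v:W] = [1:0:-1]$, which via $\varphi^{-1}$ is $\mathsf{O} \in \mathcal{E}$, so $\mathrm{pt}_4 = \mathsf{O}$ in every row. A short direct computation (already verifiable for $\varepsilon = (+,+,+)$) shows that $-a_2^{\varepsilon}(\epsilon_i R_i, 1)$ simplifies to one of $\pm S_i$, and the resulting sign is then interpreted via Lemma~\ref{lem:involutions}: the $u$-sign is governed by the hyperelliptic involution on $\mathcal{E}$ while the $W$-sign is governed by translation by $\mathsf{T}_1$. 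A careful case-by-case comparison across the eight sign patterns then matches the three nontrivial branch points with the corresponding entries of the table. Finally, part (3) is immediate: for rows 1 and 2, one has $\pm(q_1 + q_2 + q_3) = \mathsf{O}$ directly from Lemma~\ref{lem:adding_points}, while in rows 3--8 the three points differ from a permutation of $\{\pm q_1, \pm q_2, \pm q_3\}$ by an even number of translations by $\mathsf{T}_1$, which cancel since $2\mathsf{T}_1 = \mathsf{O}$. The main obstacle is the sign bookkeeping in the branch-locus identification; rather than expanding all eight cases directly, the cleanest path is to settle $\varepsilon = (+,+,+)$ first and then transport the remaining seven cases by means of the involutions of Lemma~\ref{lem:involutions}.
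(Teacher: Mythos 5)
Your proposal is correct and follows essentially the same route as the paper: both decompose $\mathcal{D}^{\varepsilon}$ as $w^4-2a_2^{\varepsilon}w^2+a_4^{\varepsilon}=0$, verify that $(a_2^{\varepsilon})^2-a_4^{\varepsilon}=u^4+b\,u^2v^2+a^2v^4$ independently of the signs, read the branch points off from $a_4^{\varepsilon}=0$ as $[\epsilon_iR_i:1:\pm S_i]$ plus $\mathsf{O}$, interpret the sign flips through Lemma~\ref{lem:involutions}, and deduce (3) from $q_1+q_2+q_3=\mathsf{O}$ together with $2\mathsf{T}_1=\mathsf{O}$. The extra detail you supply on why the $u^3v$ and $uv^3$ coefficients vanish is a welcome elaboration of a step the paper leaves implicit, but it is not a different argument.
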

\begin{proof}
Using Equations~(\ref{eqn:coeffs_3a}), (\ref{eqn:coeffs_3b}), $\mathcal{Q}$ in Equation~(\ref{eqn:Qbiquadratic}) can be written as
\begin{equation}
\label{eqn:Q_3pts}
 \begin{split}
 & \mathcal{Q}:   \quad W^2  =  2 \, \Big(R_1+R_2 + \frac{S_1-S_2}{R_1-R_2}\Big)  \, (u-R_1 v) (u-R_2 v) (u-R_3 v)\, v \\
 &+ \left( u^2 - \Big(R_1+R_2 + \frac{S_1-S_2}{R_1-R_2}\Big) \, uv +  \left(R_1R_2-  \frac{R_1 S_2-R_2 S_1}{R_1 - R_2} \right) \,v^2\right)^2 \,.
  \end{split}
 \end{equation}
Regardless of what signs $\epsilon_1, \epsilon_2, \epsilon_3$ are chosen in Equation~(\ref{eqn:De1e2e3}), the bielliptic quotient is always the same, namely it coincides with the curve $\mathcal{Q}$ in Equation~(\ref{eqn:Q_3pts}). (1) is then immediate; for (2) one checks that the branch points of the map $\pi^{\mathcal{D}^{\varepsilon}}_{\mathcal{Q}}$ in Equation~(\ref{eqn:quotient_3}) are the points with coordinates $[\epsilon_1 R_1:1:\epsilon_2 \epsilon_3 S_1]$, $[\epsilon_2 R_2:1:\epsilon_1 \epsilon_3 S_2]$, $[\epsilon_3 R_3:1:\epsilon_1 \epsilon_2 S_3]$ and $\mathsf{O}$. Lemma~\ref{lem:involutions} provides the geometric interpretation for these branch points; (3) follows from Lemma~\ref{lem:adding_points}.
\end{proof}
\par We consider the plane bielliptic curve $\mathcal{D}$ given by
\begin{equation}
\label{eqn:master_curve_3points}
\mathcal{D}: \quad \Big( e \big(w^2 - u^2\big) -  c uv - d v^2 \Big)^2 = e^2\Big(u^4 + b  u^2 v^2 + a^2 v^4\Big) \,,
\end{equation}
with $[u:v:w] \in \mathbb{P}^2$ and $a, b, c, d, e \in K$. We discuss the singular locus of Equation~(\ref{eqn:master_curve_3points}). We have the following:
\begin{lemma}
\label{lem:smooth}
The plane genus-three curve in Equation~(\ref{eqn:master_curve_3points}) is irreducible and non-singular if and only if $\Delta_{\mathcal{E}} \Delta_{\mathcal{D}} \not =0$ where
\begin{equation}
\label{eqn:singular_locus}
\begin{split}
\Delta_{\mathcal{E}} = & \; 16 \, a^2 \big(b^2-4a^2\big)^2 \,, \\
\Delta_{\mathcal{D}} = & \; - \left(  \left( c^2-be^2-4\, de \right)^2 -12 \, d e^2 \left( be+d \right)  \right)^3 \ \\
&\; + \Big( 54 \, a  c^2 e^4 - c^6+ 3 \left( be + 4 d \right) c^4 e\\
&\quad - 3 \left( b^2e^2+2 \, bde+10 \, d^2 \right) c^2e^2+ \left( be-2\,d \right)^{3}e^3 \Big)^{2} \,.
\end{split}
\end{equation}
\end{lemma}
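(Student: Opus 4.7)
The plan is to analyse the singular locus of $\mathcal{D}$ directly via the partial derivatives of the defining polynomial and match the resulting conditions with the vanishing of $\Delta_{\mathcal{E}}$ and $\Delta_{\mathcal{D}}$. Write $G = e(w^2 - u^2) - cuv - dv^2 = e\bigl(w^2 - a_2(u,v)\bigr)$ with $a_2(u,v) = u^2 + (c/e)uv + (d/e)v^2$ and $H(u,v) = u^4 + bu^2v^2 + a^2v^4$, so that $F = e^2\bigl[(w^2 - a_2)^2 - H\bigr]$. The case $e = 0$ makes $F = (cuv+dv^2)^2$ manifestly reducible and also causes $\Delta_{\mathcal{D}}$ to vanish identically, so we may assume $e \neq 0$. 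From $F_w = 4e^2 w(w^2 - a_2)$, any singular point lies in one of two cases: either $w^2 = a_2$ with $w \neq 0$, or $w = 0$.

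In the case $w^2 = a_2$ with $w \neq 0$, the equation $F = 0$ forces $H(u,v) = 0$, while $F_u = F_v = 0$ reduces to $u(2u^2+bv^2) = 0$ and $v(bu^2+2a^2v^2) = 0$. An elementary case analysis (splitting on $u = 0$, $v = 0$, or neither) shows that these three conditions admit a simultaneous nontrivial solution $[u:v] \in \mathbb{P}^1$ if and only if $a = 0$ or $b^2 = 4a^2$, equivalently when $\Delta_{\mathcal{E}} = 16a^2(b^2-4a^2)^2$ vanishes.

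In the case $w = 0$, the restriction is $F(u,v,0) = e^2\bigl(a_2^2 - H\bigr) = v \cdot P_3(u,v)$ where
\[
P_3(u,v) = 2ce\, u^3 + (c^2 + 2de - be^2)\,u^2v + 2cd\, uv^2 + (d^2 - a^2e^2)\,v^3,
\]
the cubic whose roots together with $[1:0]$ recover the four branch points of the bielliptic involution on $\mathcal{Q}$ in agreement with Proposition~\ref{cor:bielliptic_quotient}. Since $F_w$ vanishes identically on $w = 0$, and since $P_3$ is homogeneous of degree three, applying Euler's identity at a zero of $P_3$ yields $u_0 F_u(u_0,v_0,0) + v_0 F_v(u_0,v_0,0) = 0$, so singularity at $(u_0, v_0, 0)$ is equivalent to $[u_0:v_0]$ being a repeated root of the binary quartic $v P_3(u,v)$. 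A separate check at the point $[1:0:0]$ (where $F_v(1,0,0) = 2ec$) shows that this vanishes precisely when either $c = 0$ (so $v = 0$ is a repeated factor) or the cubic $P_3(u,1)$ has a multiple root.

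The final step is to identify $\Delta_{\mathcal{D}}$, up to a nonzero constant, with the classical discriminant of the binary quartic $vP_3(u,v)$. In terms of the Eisenstein invariants $I_2,I_3$ of this binary quartic, one verifies by direct expansion (tedious but straightforward, and most efficiently dispatched by computer algebra) that the two parenthesised polynomials appearing in the definition of $\Delta_{\mathcal{D}}$ equal $12\,I_2$ and $216\,I_3$ respectively, whence
\[
\Delta_{\mathcal{D}} = -1728\,\bigl(I_2^3 - 27 I_3^2\bigr).
\]
Since $I_2^3 - 27 I_3^2$ is the classical discriminant of a binary quartic, $\Delta_{\mathcal{D}} \neq 0$ is equivalent to $vP_3(u,v)$ having four distinct roots in $\mathbb{P}^1$, i.e.\ to the absence of singularities of $\mathcal{D}$ at $w = 0$. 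Combining both cases, $\mathcal{D}$ is smooth if and only if $\Delta_{\mathcal{E}}\,\Delta_{\mathcal{D}} \neq 0$; irreducibility then follows automatically, since any factorisation of a plane quartic produces singular points at the intersection of its components. The main technical obstacle is precisely the polynomial identity relating the two given expressions to $12\,I_2$ and $216\,I_3$, which is a lengthy but purely algebraic verification.
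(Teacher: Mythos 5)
Your argument follows essentially the same route as the paper's own (very terse) proof: dispose of $e=0$, split the singular locus according to $F_w=4e^2w(w^2-a_2)$, attribute singular points with $w\neq 0$ to repeated roots of $u^4+bu^2v^2+a^2v^4$ (i.e.\ to $\Delta_{\mathcal{E}}=0$, with $b^2=4a^2$ also giving reducibility), and identify $\Delta_{\mathcal{D}}$ with the discriminant of the binary quartic $F|_{w=0}=vP_3(u,v)$ --- this is exactly the paper's ``iterated (reduced) discriminant with respect to $w$ and $u$,'' since $\operatorname{disc}_w F$ is a constant times $H^2(a_2^2-H)$ and the reduced second discriminant is that of $a_2^2-H=e^{-2}vP_3$. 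Your logic is sound, including the Euler-identity reduction on $w=0$ and the observation that smoothness of a plane curve already forces irreducibility; the only cosmetic caveat is that a repeated root of $H$ at which $a_2$ vanishes yields its singular point on $w=0$ rather than off it, which does not affect the biconditional.

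The one step you defer to ``direct expansion,'' however, does not close for the formula as printed, and this is the crux of the proof. The first bracket is indeed $12I$. But $216J = 18c^2de\,(c^2+2de-be^2)-(c^2+2de-be^2)^3-54c^2e^2(d^2-a^2e^2)$ agrees with the printed second bracket in every term except one: it contains $+54\,a^2c^2e^4$, whereas the lemma prints $54\,a\,c^2e^4$; these coincide only when $a\in\{0,1\}$. With the printed coefficient the stated lemma is in fact false: for $(a,b,c,d,e)=(5,118,8,3,1)$ one finds $vP_3=16\,v(u-v)^3$, so the curve is singular at $[1:1:0]$ (one checks $F=F_u=F_v=F_w=0$ there), yet $\Delta_{\mathcal{E}}\neq 0$ and the printed $\Delta_{\mathcal{D}}=-0^3+(-69120)^2\neq 0$. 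So the term must read $54\,a^2c^2e^4$ (and $54\,A^2C^2E^4$ in Equation~(\ref{eqn:singular_locus_pencil})); with that correction your identification $\Delta_{\mathcal{D}}=-(12I)^3+(216J)^2=-1728\,(I^3-27J^2)$ is exact and your proof is complete. You should actually carry out the expansion and record the corrected coefficient rather than assert the identity for the formula as given.
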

\begin{proof}
For $e=0$ Equation~(\ref{eqn:master_curve_3points}) is singular and $\Delta_{\mathcal{D}}=0$. We assume $e\not =0$. Then, we can set $e=1$ since rescaling $c\mapsto ce$ and $d\mapsto de$ eliminates $e$ from the equation. One checks that for $b^2-4a^2=0$, Equation~(\ref{eqn:master_curve_3points}) factors and the curve is reducible. We assume $b^2-4a^2=0\not =0$. One checks that the curve in Equation~(\ref{eqn:master_curve_3points}) has a singular point with $w=0$  iff $\Delta_{\mathcal{D}}=0$. $\Delta_{\mathcal{D}}$ is the iterated (reduced) discriminant of Equation~(\ref{eqn:master_curve_3points}) with respect to $w$ and $u$ (or $v$). For $w \not =0, u=0$, the curve in Equation~(\ref{eqn:master_curve_3points}) has a singular point if $a=0$. We assume $\Delta_{\mathcal{E}} \not =0$. Then, there are no singular points with $w \not =0, v\not =0$ and $\Delta_{\mathcal{E}} \Delta_{\mathcal{D}} \not =0$.
\end{proof}
\begin{remark}
\label{rem:smooth}
If $c=0$ or $e=0$, then $\Delta_{\mathcal{D}} =0$ in Equation~(\ref{eqn:singular_locus}).
\end{remark}
\par The genus-one curve $\mathcal{Q}$ in Equation~(\ref{eqn:Qbiquadratic}) is isomorphic via $\varphi$ in Equation~(\ref{eqn:iso_curves}) to the elliptic curve $\mathcal{E}$ in Equation~(\ref{eqn:quotientEC}). We now make the latter coincide with the elliptic curve in Proposition~\ref{prop:normal_form} to obtain the following:
\begin{proposition}
\label{cor:connection_3points}
Let $\mathcal{D}$ be the plane bielliptic curve given by
\begin{equation}
\label{eqn:master_curve_3points_b}
\mathcal{D}: \quad \left( w^2 - u^2 -  \frac{c}{e} \, uv - \frac{d}{e} \, v^2 \right)^2 = u^4 + b  u^2 v^2 + a^2 v^4 \,,
\end{equation}
with $[u:v:w] \in \mathbb{P}^2$, $a, b$ given in Equation~(\ref{eqn:coeffs_ab}), and coefficients
\begin{equation}
 \label{eqn:coeffs_cde}
 c  = c\big(\lambda_0,\lambda_1,\lambda_2,\lambda_3, l\big) \,,\quad
 d  = d\big(\lambda_0,\lambda_1,\lambda_2,\lambda_3, l\big) \,,\quad
 e  = e\big(\lambda_0,\lambda_1,\lambda_2,\lambda_3, l\big) \,,
\end{equation}
with polynomials $c, d, e$ given in Appendix~\ref{App:coeffs} such that $\Delta_{\mathcal{E}} \Delta_{\mathcal{D}} \not =0$. Then, the curve $\mathcal{D}$ is smooth and irreducible, and admits the involution $\tau:[u:v:w] \mapsto [u:v:-w]$ and the degree-two cover
\begin{equation}
\label{eqn:double_cover}
 \pi^{\mathcal{D}}_{\mathcal{Q}}: \; \mathcal{D} \to \mathcal{Q}\,, \quad [u:v:w] \mapsto \left[u:v:W=  w^2 - u^2 - \frac{c}{e} \, uv - \frac{d}{e} \, v^2\right] \,,
\end{equation}
onto $\mathcal{Q} \cong \mathcal{E}$ with branch points $\{ \mathsf{O}, 2\mathsf{p}_1, \mathsf{p}_1+  \mathsf{p}_2, -3\mathsf{p}_1 - \mathsf{p}_2\} \subset \mathcal{E}$ where $\mathcal{E}$ is the smooth elliptic curve given in Equation~(\ref{eqn:quotientEC}) with $\Delta_{\mathcal{E}} \not =0$ and the $K$-rational points $\mathsf{p}_1, \mathsf{p}_2$ in Lemma~\ref{lem:adding}.
\end{proposition}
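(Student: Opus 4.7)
My plan is to derive the stated corollary from the combination of Lemma~\ref{prop:bielliptic_curve_3points}, Lemma~\ref{lem:adding}, and Remark~\ref{lem:3points}, treating smoothness as an immediate consequence of Lemma~\ref{lem:smooth}. First, since $a,b$ are the polynomials from Equation~(\ref{eqn:coeffs_ab}), Proposition~\ref{prop:normal_form} identifies the quotient curve $\mathcal{Q}: W^{2}=u^{4}+b\,u^{2}v^{2}+a^{2}v^{4}$ with the elliptic curve $\mathcal{E}$ of Equation~(\ref{eqn:quotientEC}) via the birational map $\varphi$ of Equation~(\ref{eqn:iso_curves}). The bielliptic involution $\tau:[u:v:w]\mapsto[u:v:-w]$ and the double cover $\pi^{\mathcal{D}}_{\mathcal{Q}}$ are then visible from the shape of Equation~(\ref{eqn:master_curve_3points_b}), exactly as in Corollary~\ref{cor:bielliptic_quotient}. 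The assumption $\Delta_{\mathcal{E}}\Delta_{\mathcal{D}}\neq 0$ together with Lemma~\ref{lem:smooth} gives smoothness and irreducibility.

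For the branch locus I would take $q_{1}=2\mathsf{p}_{1}$ and $q_{2}=\mathsf{p}_{1}+\mathsf{p}_{2}$, so that automatically $q_{3}=-q_{1}-q_{2}=-3\mathsf{p}_{1}-\mathsf{p}_{2}$ and $q_{1}+q_{2}+q_{3}=\mathsf{O}$, matching the condition of Remark~\ref{lem:3points}. Lemma~\ref{lem:adding} supplies explicit coordinates on $\mathcal{E}$ for $2\mathsf{p}_{1}$ (Equation~(\ref{eqn:2p1})) and for $\mathsf{p}_{1}\pm\mathsf{p}_{2}$ (Equation~(\ref{eqn:p1+p2})); I push these through $\varphi$ to obtain explicit pairs $(R_{1},S_{1}),(R_{2},S_{2})$ on $\mathcal{Q}$, and Lemma~\ref{prop:bielliptic_curve_3points} then produces a bielliptic model $\mathcal{D}^{\varepsilon}$ whose branch points on $\mathcal{E}$ include $\mathsf{O}$ together with a triple obtained from $\{q_{1},q_{2},q_{3}\}$ by translation by $\mathsf{T}_{1}$, for some sign vector $\varepsilon\in\{\pm 1\}^{3}$. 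By comparing line~1 of the table in Lemma~\ref{prop:bielliptic_curve_3points}, the choice $\varepsilon=(1,1,1)$ gives precisely the prescribed set $\{\mathsf{O},2\mathsf{p}_{1},\mathsf{p}_{1}+\mathsf{p}_{2},-3\mathsf{p}_{1}-\mathsf{p}_{2}\}$.

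With this choice the coefficients multiplying $uv$ and $v^{2}$ in Equation~(\ref{eqn:De1e2e3}) are rational functions of $R_{1},R_{2},S_{1},S_{2}$, which in turn are rational functions of $\lambda_{0},\lambda_{1},\lambda_{2},\lambda_{3}$ and $l$ (with $l^{2}=\lambda_{0}\lambda_{1}\lambda_{2}\lambda_{3}$). Clearing a common denominator $E(\lambda_{0},\lambda_{1},\lambda_{2},\lambda_{3},l)$ then brings the model into the normalized form of Equation~(\ref{eqn:master_curve_3points_b}) and reads off the polynomials $c,d,e$ of Appendix~\ref{App:coeffs}; the identifications $a^{2}=a^{2}(\lambda_{i})$ and $b=b(\lambda_{i})$ from~(\ref{eqn:coeffs_ab}) drop out automatically from (\ref{eqn:coeffs_3a}) in Lemma~\ref{lem:adding_points}. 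The main obstacle is purely bookkeeping: the coordinates of $2\mathsf{p}_{1}$ and $\mathsf{p}_{1}+\mathsf{p}_{2}$ are rather large, so the reduction of $R_{2}S_{1}-R_{1}S_{2}$, $R_{1}^{2}-R_{2}^{2}$, and the numerator of $R_{1}S_{1}-R_{2}S_{2}$ to the symmetric polynomial form required by the appendix must be done carefully, repeatedly replacing $l^{2}$ by $\lambda_{0}\lambda_{1}\lambda_{2}\lambda_{3}$ and checking that the final polynomials $c,d,e$ are indeed independent of the square-root ambiguity in $l$ modulo the relation defining $\mathcal{E}$.
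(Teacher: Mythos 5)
Your proposal is correct and follows essentially the same route as the paper: smoothness and irreducibility from Lemma~\ref{lem:smooth} under the hypothesis $\Delta_{\mathcal{E}}\Delta_{\mathcal{D}}\neq 0$, then an application of Lemma~\ref{prop:bielliptic_curve_3points} with $q_1=2\mathsf{p}_1$, $q_2=\mathsf{p}_1+\mathsf{p}_2$ (hence $q_3=-3\mathsf{p}_1-\mathsf{p}_2$), coordinates computed via $\varphi$, and the coefficient formulas extracted from Equations~(\ref{eqn:coeffs_3a})--(\ref{eqn:coeffs_3b}). The only detail the paper makes explicit that you elide is the appeal to Remark~\ref{rem:smooth} to see that $e\neq 0$ (so the cover in Equation~(\ref{eqn:double_cover}) is well defined), but this is already implied by your hypothesis $\Delta_{\mathcal{D}}\neq 0$.
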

\begin{proof}
It follows from Lemma~\ref{lem:smooth} that $\mathcal{D}$ is smooth and irreducible, and from Remark~\ref{rem:smooth} that Equation~(\ref{eqn:double_cover}) is well defined. We apply Lemma~\ref{prop:bielliptic_curve_3points} to the situation encountered in Proposition~\ref{prop:normal_form} with $q_1=2\mathsf{p}_1$, $q_2=\mathsf{p}_1+ \mathsf{p}_2$ where the $K$-rational points $\mathsf{p}_1, \mathsf{p}_2$ were given in Lemma~\ref{lem:adding}. Using the isomorphism $\varphi: \mathcal{E}\to \mathcal{Q}$ we compute the coordinates $[u:v:W]=[R_i:1:S_i]=\varphi(q_i)$ for $1\le i\le 2$. We then use Equations~(\ref{eqn:coeffs_3a}) and~(\ref{eqn:coeffs_3b}) to obtain formulas for the coefficients $c, d, e$. 
\end{proof}
\begin{remark}
Replacing $l \mapsto -l$ is equivalent to $\mathsf{p}_2 \mapsto -\mathsf{p}_2$ due to Lemma~\ref{lem:adding}. Thus, Proposition~\ref{cor:connection_3points} generalizes to branch points $\{ \mathsf{O}, 2\mathsf{p}_1, \mathsf{p}_1+ \epsilon_2 \mathsf{p}_2, -3\mathsf{p}_1 - \epsilon_2 \mathsf{p}_2\} \subset \mathcal{E}$ with $\epsilon_2 \in \{\pm 1\}$ when replacing $l \mapsto \epsilon_2 l$ in Equations~(\ref{eqn:coeffs_cde}).
\end{remark}
\begin{remark}
It follows from Lemma~\ref{lem:involutions} that inversion $q \mapsto -q$ on the elliptic curve is equivalent to $u \mapsto -u$. Moreover, for $[R_i:1:S_i] \mapsto [-R_i:1:S_i]=\varphi(-q_i)$ with $1\le i\le 2$ we have $[R_3:1:S_3] \mapsto [-R_3:1:S_3]$ in Equations~(\ref{eqn:coeffs_3b}). Therefore, a bielliptic plane quartic with branch points $\{ \mathsf{O}, -2\mathsf{p}_1, -\mathsf{p}_1- \epsilon_2 \mathsf{p}_2, 3\mathsf{p}_1 + \epsilon_2 \mathsf{p}_2\}$ is obtained by setting $c \mapsto -c$ in Equation~(\ref{eqn:master_curve_3points}).
\end{remark}
\par We also briefly discuss the existence of an additional involution for the plane genus-three curve $\mathcal{D}$ in Equation~(\ref{eqn:master_curve_3points}). We have the following:
\begin{lemma}
\label{lem:extra_auto}
The bielliptic plane genus-three curve $\mathcal{D}$ in Proposition~\ref{cor:connection_3points} admits an additional involution of the form
\[
  [u:v:w] \mapsto [\alpha^2 v: u : \alpha w] \,,
\]
iff $a=\pm d/e$ and $\alpha^2=d/e$. In particular, such an involution exists for $\mathcal{D}$ if $\lambda_0\lambda_2=\lambda_1\lambda_3$ or $\lambda_0\lambda_3=\lambda_1\lambda_2$.
\end{lemma}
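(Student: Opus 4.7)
The proof proceeds in two parts: the ``iff'' characterization is a direct coefficient comparison, and the sufficiency of the Rosenhain conditions is a polynomial identity verified using the explicit formulas of the Appendix.

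For the first part, I would set $\beta = c/e$, $\gamma = d/e$ and write $F(u,v,w) = (w^2 - u^2 - \beta uv - \gamma v^2)^2 - u^4 - b u^2v^2 - a^2 v^4$ so that $\mathcal{D} = \{F=0\}$. The map $\sigma : [u:v:w] \mapsto [\alpha^2 v : u : \alpha w]$ is manifestly an involution of $\mathbb{P}^2$ for any $\alpha \neq 0$, and it preserves $\mathcal{D}$ iff $\sigma^* F = \mu F$ for some nonzero scalar $\mu$. Pulling back,
\[
\sigma^* F = \alpha^4\Bigl(w^2 - \alpha^2 v^2 - \beta uv - \tfrac{\gamma}{\alpha^2} u^2\Bigr)^2 - \alpha^8 v^4 - b\alpha^4 u^2v^2 - a^2 u^4,
\]
so comparing the coefficient of $w^4$ immediately forces $\mu = \alpha^4$. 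The coefficient of $w^2 u^2$ then equates $-2\alpha^2 \gamma = -2\alpha^4$, giving $\alpha^2 = \gamma = d/e$, and the coefficients of $w^2uv$, $w^2 v^2$, $u^3v$, $u^2 v^2$, $uv^3$ all reduce to the same relation. Finally, the coefficient of $u^4$ in $\sigma^* F - \alpha^4 F$ is $\gamma^2 - a^2$, and the coefficient of $v^4$ is $-\alpha^4(\gamma^2-a^2)$, so both vanish precisely when $a = \pm \gamma = \pm d/e$. Conversely, once $\alpha^2 = d/e$ and $a = \pm d/e$ hold, the same comparison shows $\sigma^* F = \alpha^4 F$ identically, establishing the iff part.

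For the second part, the task is to show that the polynomial identity $a^2 e^2 - d^2 = 0$ holds on each of the two loci $\lambda_0 \lambda_2 = \lambda_1 \lambda_3$ and $\lambda_0 \lambda_3 = \lambda_1 \lambda_2$. I would substitute $a = (\lambda_0 - \lambda_1)(\lambda_2 - \lambda_3)$ from Equation~\eqref{eqn:coeffs_ab} and the polynomials $d(\lambda_0,\lambda_1,\lambda_2,\lambda_3, l)$, $e(\lambda_0,\lambda_1,\lambda_2,\lambda_3, l)$ from the Appendix into $a^2 e^2 - d^2$, reduce modulo the defining relation $l^2 = \lambda_0\lambda_1\lambda_2\lambda_3$, and check the vanishing by substituting $\lambda_2 = \lambda_1 \lambda_3 / \lambda_0$ (respectively $\lambda_3 = \lambda_1 \lambda_2 / \lambda_0$) together with $l = \pm \lambda_1 \lambda_3$ (respectively $l = \pm \lambda_1 \lambda_2$), as forced by the defining relation on each locus.

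The main obstacle is bookkeeping rather than substance: the polynomials $d, e$ in the Appendix have relatively high degree in $\lambda_0,\dots,\lambda_3,l$, so the verification of the factorization of $a^2 e^2 - d^2$ is most efficiently carried out by direct symbolic computation. Conceptually the result is transparent, however: the loci $\lambda_0 \lambda_2 = \lambda_1 \lambda_3$ and $\lambda_0 \lambda_3 = \lambda_1 \lambda_2$ are precisely two of the three symmetric hyperplanes distinguished by the partition $\{\lambda_0,\lambda_1\}\cup\{\lambda_2,\lambda_3\}$ of Weierstrass points underlying the construction of $\mathcal{E}$ in Proposition~\ref{prop:normal_form}; on either of them the four branch points $\{\mathsf{O},\,2\mathsf{p}_1,\,\mathsf{p}_1 + \mathsf{p}_2,\,-3\mathsf{p}_1 - \mathsf{p}_2\} \subset \mathcal{E}$ acquire an extra symmetry exchanging the bielliptic fixed-point data, which is what lifts to the additional involution $\sigma$ on $\mathcal{D}$ via the normal form $(u,v,w) \mapsto (\alpha^2 v,u,\alpha w)$.
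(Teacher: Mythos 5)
Your proof is correct and follows essentially the same route as the paper: the published proof simply declares the coefficient comparison ``immediate'' and reduces the second claim to a symbolic computation of $e^2a-d^2$ (evidently a typo for $a^2e^2-d^2$, which is the quantity you correctly identify, since $a=\pm d/e$ is equivalent to $a^2e^2=d^2$) in terms of $\lambda_0,\dots,\lambda_3,l$. Your write-up is in fact more explicit than the paper's, as you carry out the pullback $\sigma^*F=\alpha^4 F$ monomial by monomial and record exactly which coefficients force $\alpha^2=d/e$ and $a=\pm d/e$, and you correctly note that on each locus the constraint $l^2=\lambda_0\lambda_1\lambda_2\lambda_3$ pins down $l$ up to sign before the final polynomial verification.
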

\begin{proof}
The first statement is immediate. The second follows when computing $e^2a-d^2$ in terms of $\lambda_0, \dots, \lambda_3, l$.
\end{proof}
\begin{remark}
For $\lambda_0\lambda_1=\lambda_2\lambda_3$ we find $\Delta_{\mathcal{D}}=0$ in Equation~(\ref{eqn:singular_locus}), and the curve $\mathcal{D}$ is singular. This is easily understood when observing that the construction of $\mathcal{D}$ in Proposition~\ref{cor:connection_3points} depends on two Weierstrass points corresponding to $\lambda_0$ and $\lambda_1$.
\end{remark}
\section{Proof of Theorem~\ref{thm:main}}
\label{sec:proof}
Until now we constructed a bi-double cover of $\mathbb{P}^1$ introducing the curves $\mathcal{H}$, $\mathcal{C}$, and $\mathcal{E} \cong \mathcal{Q}$ of genus three, two, and one in Section~\ref{sec3}, and  provided a precise geometric characterization of plane bielliptic genus-three curves $\mathcal{D}$, their bielliptic quotients, and the associated branch loci in Section~\ref{sec4}.  We now combine the results of the previous sections to prove our main theorem. In this Section we will prove Corollary~\ref{cor:smooth_irreducible} which implies that, under certain conditions, the bielliptic genus-three curve $\mathcal{D}$ will be smooth and irreducible. Moreover we will prove in Theorem~\ref{thm:main2} the existence of the $(1,2)$-isogeny between the Prym variety of $\mathcal{D}$ and the Jacobian variety of a smooth genus-two curve $\mathcal{C}$, by using Theorem~\ref{thm:Barth}, Propositions~\ref{prop:Divisors},~\ref{prop:Garbagnati2},~\ref{prop:Kum},~\ref{cor:bielliptic_quotient},~\ref{cor:connection_3points}.
\par We first determine on which fibers of the elliptic fibration with section $(\pi, \mathsf{O})$ given by Equation~(\ref{eqn:EFS}) on the abelian surfaces $\mathfrak{A}$ with $(1,2)$-polarization line bundle $\mathcal{L}$ the branch locus -- with respect to of the action induced by $-\mathbb{I}$ on $\mathfrak{A}$ -- consists of four points $\{\mathrm{pt}_1$, $\mathrm{pt}_2$, $\mathrm{pt}_3$, $\mathrm{pt}_4\} \subset \mathcal{E}$ such that $\sum_{i=1}^4 \mathrm{pt}_i=\mathsf{O}$.  A normal form for the elliptic fibration and the generators $\{\mathsf{O}, \mathsf{S}_1, \mathsf{S}_2, \mathsf{S}_3 \}$ of the Mordell-Weil group was provided in Corollary~\ref{cor:pencilQ} and Section~\ref{ssec:KummerPencils}. As explained in Section~\ref{ssec:AbSrfc}, Barth's Theorem~\ref{thm:Barth} asserts that  $\mathfrak{A}$ is naturally isomorphic to the Prym variety $\operatorname{Prym}(\mathcal{D},\pi^{\mathcal{D}}_{\mathcal{E}})$ of a smooth genus-three curve $\mathcal{D} \in |\mathcal{L}|$ with bielliptic involution $\tau$ such that $-\mathbb{I}$ restrict to $\tau$, the linear pencil $|\mathcal{L}|$ has precisely $T(\mathcal{L})=\{ \mathsf{P}_0, \mathsf{P}_1, \mathsf{P}_2, \mathsf{P}_3 \}$ as base points. The blow up in the base points is equivalent to the elliptic fibration with section $(\pi, \mathsf{O})$ with sections $\{\mathsf{O}, \mathsf{S}'_1, \mathsf{S}'_2, \mathsf{S}'_3 \}$ such that  the divisor classes $\{K_0, K_1, K_2, K_3\}$ given by
\begin{equation}
 K_0 = [\mathsf{O}]\, \quad K_1 = [ \mathsf{S}'_1] \,,\quad K_2 = [\mathsf{S}'_2] \,, \quad K_3 = [\mathsf{S}_3'] \,,
\end{equation}
are the four exceptional curves of the blow up; see Proposition~\ref{prop:Divisors}.  To that end, the sum of the sections $\{ \mathsf{S}'_1, \mathsf{S}'_2, \mathsf{S}'_3 \}$ representing the divisor classes $K_1, K_2, K_3$ in Proposition~\ref{prop:MW} has to vanish. This will happen in certain smooth and certain singular fibers of the elliptic fibration, and we are interested in the former. We have the following:
\begin{proposition}
\label{prop:SpecialFibers}
Table~\ref{tab:special_fibers} lists all points in the base curve of the elliptic fibration with section $(\pi,\mathsf{O})$ on $\operatorname{Kum}(\mathfrak{A})$ where the sum of sections $\{ \mathsf{S}'_1, \mathsf{S}'_2, \mathsf{S}'_3 \}$ representing divisor classes $K_1, K_2, K_3$ vanishes. Table~\ref{tab:special_fibers} is based on the four possible choices for $\{ \mathsf{S}'_1, \mathsf{S}'_2, \mathsf{S}'_3 \}$ determined by Proposition~\ref{prop:MW}. The polynomials $p_4(s_0, s_1)$ and $p_2^{(1,2,3)}(s_0, s_1)$ are the polynomials of degree $4$ and $2$, respectively, given by
\begin{equation}
\begin{split}
p_4(s_0, s_1) & = 2 \lambda_0  \lambda_1\lambda_2\lambda_3 l s_1^4 -  \lambda_0\lambda_1\lambda_2\lambda_3 (\lambda_0+\lambda_1+\lambda_2+\lambda_3)s_1^3 s_0 \\
& +( \lambda_0\lambda_1\lambda_2\lambda_3 + \lambda_0\lambda_1\lambda_2 + \lambda_0\lambda_1\lambda_3 + \lambda_0\lambda_2\lambda_3)s_1s_0^3-2l s_0^4\,,\\
p_2^{(1,2,3)}(s_0, s_1)& = \lambda_0\lambda_1\lambda_2\lambda_3(\lambda_0+\lambda_1-\lambda_2-\lambda_3) s_1^2 - 2 l (\lambda_0\lambda_1-\lambda_2\lambda_3) s_0 s_1\\
&+\lambda_0\lambda_1(\lambda_2 + \lambda_3)-\lambda_2\lambda_3(\lambda_0+\lambda_1)  \,,
\end{split}
\end{equation}
and $p_2^{(2,1,3)}$ and $p_2^{(3,1,2)}$ are obtained by interchanging $\lambda_1 \leftrightarrow \lambda_2$ and $\lambda_1 \leftrightarrow \lambda_3$, respectively, in $p_2^{(1,2, 3)}$. The parameters $\lambda_0, \lambda_1, \lambda_2, \lambda_3, l$ are the moduli of a general genus-two curve\footnote{We remind the reader that we write $\lambda_0$ and $l^2=\lambda_0\lambda_1\lambda_3\lambda_3$ rather than substituting in $\lambda_0=1$.} given in Lemma~\ref{Picard}.\end{proposition}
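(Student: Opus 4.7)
The plan is to work fiber by fiber on the elliptic fibration with section $(\pi,\mathsf{O})$ given by the Weierstrass model in Corollary~\ref{cor:pencilQ}, using the explicit coordinates of the generating sections $\mathsf{S}_1, \mathsf{S}_2, \mathsf{S}_3$ listed just before Proposition~\ref{prop:Garbagnati2}. For each of the four choices of $\{\mathsf{S}'_1, \mathsf{S}'_2, \mathsf{S}'_3\}$ enumerated in Proposition~\ref{prop:Divisors}, the sum $\mathsf{S}'_1 + \mathsf{S}'_2 + \mathsf{S}'_3$ appears in the last column of that table as $\pm 2(2\mathsf{S}_1 \pm \mathsf{S}_3)$ or $\pm 2(2\mathsf{S}_2 \pm \mathsf{S}_3)$. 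The task is therefore to determine, in each of the four cases, the locus in the base $\mathbb{P}^1\ni[s_0:s_1]$ where this linear combination evaluates to the neutral element $\mathsf{O}:[\xi:\eta:\rho]=[0:0:1]$ of the fiberwise elliptic-curve group.

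First, I would apply the doubling formula in the Weierstrass model to compute the fiberwise coordinates of $2\mathsf{S}_1$ and $2\mathsf{S}_2$ as rational functions of $s_0, s_1$ with coefficients in the coordinate ring generated by $\lambda_0,\lambda_1,\lambda_2,\lambda_3,l$ subject to $l^2=\lambda_0\lambda_1\lambda_2\lambda_3$. Then, using the chord-secant addition formula, compute $\pm 2\mathsf{S}_1 \pm \mathsf{S}_3$ and $\pm 2\mathsf{S}_2 \pm \mathsf{S}_3$ as points on the generic fiber. The condition that such a sum equals $\mathsf{O}$ is equivalent to the vanishing of its $\eta$-coordinate (equivalently, its $X$-coordinate becoming infinite in the affine chart), which produces a single polynomial condition in $(s_0:s_1)$.

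Second, I would simplify these polynomial conditions. For Case~1 of Proposition~\ref{prop:Divisors}, after clearing denominators and cancelling factors corresponding to singular fibers (i.e.\ zeros of $A(s_0,s_1)$ and $B(s_0,s_1)^2-4A(s_0,s_1)^2$), the remaining factor is the degree-four polynomial $p_4(s_0,s_1)$ stated in the proposition. Cases~2,~3,~4 are obtained from Case~1 by applying the anti-symplectic involutions $\jmath_1, \jmath_2, \jmath_3$ of Lemma~\ref{lem:antisymplectic}, which by Proposition~\ref{prop:Divisors} permute the four choices and simultaneously act on the base by $s \mapsto \pm s, \pm 1/s$ from Equation~(\ref{eqn:involutions}); tracking these symmetries one identifies the corresponding conditions as the three degree-two polynomials $p_2^{(i,j,k)}(s_0,s_1)$ indexed by the cyclic permutations of $\{1,2,3\}$, where the interchange $\lambda_1 \leftrightarrow \lambda_2$ (resp.\ $\lambda_1 \leftrightarrow \lambda_3$) implements the action of the relevant involution on the $\lambda$-parameters.

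The main obstacle will be the sheer algebraic complexity of the group-law computations: the sections $\mathsf{S}_1, \mathsf{S}_2, \mathsf{S}_3$ have coordinates of bidegree $\approx (4,4)$ in $(s_0,s_1)$, so $2\mathsf{S}_i \pm \mathsf{S}_3$ produces rational functions of substantial degree whose numerators must be shown to factor cleanly into $p_4$ or $p_2^{(i,j,k)}$ (modulo $l^2 = \lambda_0\lambda_1\lambda_2\lambda_3$ and the discriminant factors of singular fibers). This step is in practice a computer-algebra calculation, and the critical check is that the extraneous factors appearing after the group-law computation correspond precisely to the discriminant locus and hence do not contribute genuine smooth fibers. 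Once this factorization is verified, assembling the four cases into the table yields the proposition.
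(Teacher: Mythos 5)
Your overall strategy --- explicit fiberwise group-law computation in the Weierstrass model of Corollary~\ref{cor:pencilQ}, using the listed coordinates of $\mathsf{S}_1,\mathsf{S}_2,\mathsf{S}_3$ and the last column of the table in Proposition~\ref{prop:Divisors} --- is exactly the paper's route. But there is a genuine logical gap in how you translate ``$\mathsf{S}'_1+\mathsf{S}'_2+\mathsf{S}'_3=\mathsf{O}$'' into polynomial conditions. The sum in question is $\pm 2(2\mathsf{S}_1\pm\mathsf{S}_3)$ (or $\pm 2(2\mathsf{S}_2\pm\mathsf{S}_3)$), so its vanishing is equivalent to $2\mathsf{S}_1\pm\mathsf{S}_3$ being a \emph{two-torsion} point of the fiber, i.e.\ $2\mathsf{S}_1\pm\mathsf{S}_3\in\{\mathsf{O},\mathsf{T}_1,\mathsf{T}_2,\mathsf{T}_3\}$ --- not $2\mathsf{S}_1\pm\mathsf{S}_3=\mathsf{O}$, which is the only condition you impose. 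Each of the four cases of Proposition~\ref{prop:Divisors} therefore yields \emph{four} loci: the condition $2\mathsf{S}_1+\mathsf{S}_3=\mathsf{O}$ gives the degree-four locus $p_4(s_0,s_1)=0$, while the conditions $2\mathsf{S}_1+\mathsf{S}_3=\mathsf{T}_i$ for $i=1,2,3$ give the three degree-two loci $p_2^{(i,j,k)}(s_0,s_1)=0$. Your plan finds only the first and thus misses precisely the conditions (the marked cells of Table~\ref{tab:special_fibers}) that feed into Corollary~\ref{cor:special_points} and the main theorem.

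Your attempt to recover the $p_2^{(i,j,k)}$ from the symmetry relating Cases~1--4 cannot work: the involutions $\jmath_1,\jmath_2,\jmath_3$ act on the base by $[s_0:s_1]\mapsto[s_0:-s_1]$, $[ls_1:s_0]$, $[ls_1:-s_0]$, which preserve homogeneous degree, so they carry the degree-four condition of Case~1 to the degree-four conditions $p_4(s_0,-s_1)=0$, $p_4(ls_1,s_0)=0$, $p_4(ls_1,-s_0)=0$ of Cases~2--4 (and likewise permute the $p_2$-columns within the table); they can never produce a degree-two polynomial from a degree-four one. The superscripts $(i,j,k)$ index the two-torsion section $\mathsf{T}_i$ being hit, not the choice of section triple. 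To close the gap, replace the single equation ``$\eta$-coordinate of $2\mathsf{S}_1\pm\mathsf{S}_3$ vanishes'' by the four equations obtained from setting $2\mathsf{S}_1\pm\mathsf{S}_3$ equal to each of $\mathsf{O},\mathsf{T}_1,\mathsf{T}_2,\mathsf{T}_3$ (the latter three read off from equating the $[\xi:\eta:\rho]$-coordinates with $[0:1:0]$ and $[B\pm2A:1:0]$), and argue completeness from the fact that these exhaust $\mathscr{E}_{[s_0:s_1]}[2]$ on every smooth fiber; the rest of your computational outline then goes through.
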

\begin{proof}
The conditions $2\mathsf{S}_1\pm \mathsf{S}_3=\mathsf{T}_i$ and $2\mathsf{S}_2\pm \mathsf{S}_3=\mathsf{T}_i$ for $0 \le i \le 3$ result from making the last column in the table in Proposition~\ref{prop:Divisors} vanish. Staying away from singular fibers of the elliptic fibration described in Section~\ref{ssec:EF}, the rest of the statement follows from explicit computation using the group law on the smooth elliptic fibers.
\end{proof}
\begin{table}
\scalemath{0.62}{
\begin{tabular}{c|c||c|c|c|c||c}
 \# & & $\mathsf{O}=\mathsf{T}_0$	& $\mathsf{T}_1$	& $\mathsf{T}_2$ 	& $\mathsf{T}_3$ & $\{\mathsf{O}, \mathsf{S}'_1, \mathsf{S}'_2, \mathsf{S}'_3\}$ \\[0.2em]
\hline\hline
1 & $\begin{array}{l} \text{relation:} \\ \text{constraint:} \end{array}$
& $\begin{array}{c} 2\mathsf{S}_1+\mathsf{S}_3=\mathsf{O} \,, \\ p_4(s_0,s_1)=0 \end{array}$
& \cellcolor{blue!25} $\begin{array}{c} 2\mathsf{S}_1+\mathsf{S}_3=\mathsf{T}_1 \,, \\ p_2^{(1,2,3)}(s_0,s_1)=0 \end{array}$
& \cellcolor{blue!25} $\begin{array}{c} 2\mathsf{S}_1+\mathsf{S}_3=\mathsf{T}_2 \,, \\ p_2^{(2,1,3)}(s_0,s_1)=0 \end{array}$
& \cellcolor{blue!25} $\begin{array}{c} 2\mathsf{S}_1+\mathsf{S}_3=\mathsf{T}_3 \,, \\ p_2^{(3,1,2)}(s_0,s_1)=0 \end{array}$
& $\begin{array}{c}\pm\left\lbrace \mathsf{O}, -\mathsf{S}_1+\mathsf{S}_2+\mathsf{T}_i, \right.\\\left. -\mathsf{S}_1-\mathsf{S}_2+\mathsf{T}_i, \, 2\mathsf{S}_1\right\rbrace\end{array}$\\[0.8em]
\hline\hline
2 & $\begin{array}{l} \text{relation:} \\ \text{constraint:} \end{array}$
& $\begin{array}{c} 2\mathsf{S}_1-\mathsf{S}_3=\mathsf{O} \,, \\ p_4(s_0,-s_1)=0 \end{array}$
&$\begin{array}{c} 2\mathsf{S}_1-\mathsf{S}_3=\mathsf{T}_1 \,, \\ p_2^{(1,2,3)}(s_0,-s_1)=0 \end{array}$
&$\begin{array}{c} 2\mathsf{S}_1-\mathsf{S}_3=\mathsf{T}_2 \,, \\ p_2^{(2,1,3)}(s_0,-s_1)=0 \end{array}$
&$\begin{array}{c} 2\mathsf{S}_1-\mathsf{S}_3=\mathsf{T}_3 \,, \\ p_2^{(3,1,2)}(s_0,-s_1)=0 \end{array}$
& $\begin{array}{c}\pm\left\lbrace \mathsf{O}, -\mathsf{S}_1+\mathsf{S}_2+\mathsf{T}_i, \right.\\\left. -\mathsf{S}_1-\mathsf{S}_2+\mathsf{T}_i, \, 2\mathsf{S}_1\right\rbrace\end{array}$\\[0.8em]
\hline\hline
3 & $\begin{array}{l} \text{relation:} \\ \text{constraint:} \end{array}$
& $\begin{array}{c} 2\mathsf{S}_2+\mathsf{S}_3=\mathsf{O} \,, \\ p_4(l s_1,s_0)=0 \end{array}$
& $\begin{array}{c} 2\mathsf{S}_2+\mathsf{S}_3=\mathsf{T}_1 \,, \\ p_2^{(1,2,3)}(l s_1,s_0)=0 \end{array}$
& $\begin{array}{c} 2\mathsf{S}_2+\mathsf{S}_3=\mathsf{T}_2 \,, \\ p_2^{(2,1,3)}(l s_1,s_0)=0 \end{array}$
& $\begin{array}{c} 2\mathsf{S}_2+\mathsf{S}_3=\mathsf{T}_3 \,, \\ p_2^{(3,1,2)}(l s_1,s_0)=0 \end{array}$
& $\begin{array}{c}\pm\left\lbrace \mathsf{O}, -\mathsf{S}_2+\mathsf{S}_1+\mathsf{T}_i, \right.\\\left. -\mathsf{S}_2-\mathsf{S}_1+\mathsf{T}_i, \, 2\mathsf{S}_2\right\rbrace\end{array}$\\[0.8em]
\hline\hline
4 &  $\begin{array}{l} \text{relation:} \\ \text{constraint:} \end{array}$
& $\begin{array}{c} 2\mathsf{S}_2-\mathsf{S}_3=\mathsf{O} \,, \\ p_4(l s_1,-s_0)=0 \end{array}$
& $\begin{array}{c} 2\mathsf{S}_2-\mathsf{S}_3=\mathsf{T}_1 \,, \\ p_2^{(1,2,3)}(l s_1,-s_0)=0 \end{array}$
& $\begin{array}{c} 2\mathsf{S}_2-\mathsf{S}_3=\mathsf{T}_2 \,, \\ p_2^{(2,1,3)}(l s_1,-s_0)=0 \end{array}$
& $\begin{array}{c} 2\mathsf{S}_2-\mathsf{S}_3=\mathsf{T}_3 \,, \\ p_2^{(3,1,2)}(l s_1,-s_0)=0 \end{array}$
& $\begin{array}{c}\pm\left\lbrace \mathsf{O}, -\mathsf{S}_2+\mathsf{S}_1+\mathsf{T}_i, \right.\\\left. -\mathsf{S}_2-\mathsf{S}_1+\mathsf{T}_i, \, 2\mathsf{S}_2\right\rbrace\end{array}$
\medskip
\end{tabular}}
\caption{Special fibers in the elliptic fibration $(\pi,\mathsf{O})$ on $\operatorname{Kum}(A)$}
\label{tab:special_fibers}
\end{table} 
The marked cells in Table~\ref{tab:special_fibers} determine points in the base curve of the elliptic fibration with section $(\pi: \mathscr{E} \to \mathbb{P}^1,\mathsf{O})$ where  the sum of sections $\{\mathsf{O}, \mathsf{S}'_1, \mathsf{S}'_2, \mathsf{S}'_3 \}$ vanishes. In particular, these points can be explicitly expressed in terms of modular forms \emph{and} the sections in terms of the rational points $\mathsf{p}_1, \mathsf{p}_2$ in Lemma~\ref{lem:adding}. We have the following:
\begin{corollary}
\label{cor:special_points}
For the six points in $\mathbb{P}^1$ given by
\begin{equation}
\label{eqn:base_points}
  [s^*_0:s^*_1]=[  \big(\lambda_0+\lambda_i - \lambda_j - \lambda_k\big) l \ : \ \lambda_0 \lambda_i - \lambda_j \lambda_k 
  \pm  m^{(i,j,k)}  (\lambda_0-\lambda_j)(\lambda_0-\lambda_k)   ]\;,
\end{equation}  
the sections $\{\mathsf{O}, \mathsf{S}'_1, \mathsf{S}'_2, \mathsf{S}'_3 \}$ coincide with the points $\{ \mathsf{O}, 2\mathsf{p}_1, \mathsf{p}_1+  \mathsf{p}_2, -3\mathsf{p}_1- \mathsf{p}_2 \}$ in fibers $\mathscr{E}_{[s^*_0:s^*_1]}$ given by Equation~(\ref{eqn:Kummer_EF}).  Here, $m^{(i,j,k)}$ satisfies
\[ 
 (m^{(i,j,k)})^2 = \frac{(\lambda_i-\lambda_j)(\lambda_i-\lambda_k)}{(\lambda_0-\lambda_i)(\lambda_0-\lambda_j)} \,,
 \]
 for $\{i, j ,k\} =\{1,2,3\}$,  the point $2\mathsf{p}_1$ has coordinates
\begin{equation}
\label{eqn:2p1_back}
\begin{split}
  \xi  & =  \big(\Lambda_0+\Lambda_1-\Lambda_2-\Lambda_3\big)^2\,,\quad \eta  = 1\,,\\
  \rho & = \big(\Lambda_0+\Lambda_1-\Lambda_2-\Lambda_3\big) \big(\Lambda_0-\Lambda_1-\Lambda_2+\Lambda_3\big)\big(\Lambda_0-\Lambda_1+\Lambda_2+\Lambda_3\big)\,,
\end{split}
\end{equation}
and $\mathsf{p}_1 + \mathsf{p}_2$ has coordinates
\begin{equation}
\label{eqn:p1+p2_back}
\begin{split}
  \xi  &  = 4 ( \Lambda_0\Lambda_1+\Lambda_2\Lambda_3 - 2L)\,, \quad   \eta  = 1\,,\\
  \rho   & = 8 \big(  L (\Lambda_0 + \Lambda_1+\Lambda_2 + \Lambda_3) - \Lambda_0 \Lambda_1\Lambda_2 -\Lambda_0 \Lambda_1\Lambda_3-\Lambda_0 \Lambda_2\Lambda_3-\Lambda_1 \Lambda_2\Lambda_3\big)\,,
\end{split}
\end{equation}
and $\Lambda_i=\Lambda_i(s^*_0,s^*_1)$ for $0 \le i \le 3$ and $L=L(s^*_0,s^*_1)$ are given in Equation~(\ref{eqn:params_Lambdas}).
\end{corollary}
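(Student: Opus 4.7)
The plan is to derive Corollary~\ref{cor:special_points} as a direct consequence of the first row of Table~\ref{tab:special_fibers} in Proposition~\ref{prop:SpecialFibers}: on each fiber where the relation $2\mathsf{S}_1+\mathsf{S}_3=\mathsf{T}_i$ holds for some $i\in\{1,2,3\}$, the four sections representing the divisor classes $\{K_0, K_1, K_2, K_3\}$ sum to a two-torsion, and by Proposition~\ref{prop:Divisors} (case 1) they are given by $\{\mathsf{O}, 2\mathsf{S}_1, \mathsf{S}_1+\mathsf{S}_2+\mathsf{S}_3, \mathsf{S}_1-\mathsf{S}_2+\mathsf{S}_3\}$. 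By Proposition~\ref{prop:Garbagnati2} we have $\mathsf{p}_1=\mathsf{S}_1$ and $\mathsf{p}_2=-\mathsf{S}_2+\mathsf{S}_3$, so modulo the two-torsion this identifies the three non-zero sections with $\{2\mathsf{p}_1,\, \mathsf{p}_1+\mathsf{p}_2,\, -3\mathsf{p}_1-\mathsf{p}_2\}$ as claimed. Thus the corollary reduces to two tasks: explicitly solve the three quadratic constraints $p_2^{(i,j,k)}(s_0,s_1)=0$ to produce the six base points in~(\ref{eqn:base_points}), and verify fiberwise that the coordinates of $2\mathsf{p}_1$ and $\mathsf{p}_1+\mathsf{p}_2$ at those points are given by (\ref{eqn:2p1_back})--(\ref{eqn:p1+p2_back}).

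For the first task, I would apply the quadratic formula to $p_2^{(i,j,k)}(s_0,s_1)=0$ regarded as a degree-two polynomial in $s_1/s_0$ (or vice versa). After clearing denominators, the discriminant is
\[
4l^2(\lambda_0\lambda_i-\lambda_j\lambda_k)^2 - 4\lambda_0\lambda_1\lambda_2\lambda_3(\lambda_0+\lambda_i-\lambda_j-\lambda_k)\bigl(\lambda_0\lambda_i(\lambda_j+\lambda_k)-\lambda_j\lambda_k(\lambda_0+\lambda_i)\bigr).
\]
Using $l^2=\lambda_0\lambda_1\lambda_2\lambda_3$ one extracts the common factor $(\lambda_0-\lambda_j)^2(\lambda_0-\lambda_k)^2(\lambda_i-\lambda_j)(\lambda_i-\lambda_k)/[(\lambda_0-\lambda_i)(\lambda_0-\lambda_j)]$ (up to suitable scaling), which is precisely $(\lambda_0-\lambda_j)^2(\lambda_0-\lambda_k)^2\,(m^{(i,j,k)})^2$. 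This produces the six explicit points~(\ref{eqn:base_points}), indexed by the three choices of $(i,j,k)$ and the two choices of square root, matching the six marked cells in the top row of Table~\ref{tab:special_fibers}.

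For the second task, I substitute each such $[s_0^*:s_1^*]$ into the functions $\Lambda_i(s_0,s_1)=(s_0+\lambda_i s_1)^2/\lambda_i$ and $L(s_0,s_1)=\prod_i(s_0+\lambda_i s_1)/l$ from (\ref{eqn:params_Lambdas}), and then evaluate the right-hand sides of Lemma~\ref{lem:adding} with the replacements $\lambda_i\mapsto\Lambda_i(s_0^*,s_1^*)$, $l\mapsto L(s_0^*,s_1^*)$. These are exactly the expressions (\ref{eqn:2p1_back}) and~(\ref{eqn:p1+p2_back}), so there is nothing to verify beyond the applicability of Lemma~\ref{lem:adding} fiberwise, which is precisely the content of Corollary~\ref{cor:pencilQ} and Proposition~\ref{prop:Garbagnati2}. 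Finally, the sign $\epsilon_2\in\{\pm 1\}$ recorded in (\ref{eqn:p1+p2_back}) corresponds to the choice of square root $L=\pm\sqrt{\Lambda_0\Lambda_1\Lambda_2\Lambda_3}$ consistent with a fixed global sign of $l$, as noted in the remark following Proposition~\ref{cor:connection_3points}.

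The principal obstacle is computational: verifying that the discriminant of $p_2^{(i,j,k)}$ factors into the form claimed requires a careful manipulation using only $l^2=\lambda_0\lambda_1\lambda_2\lambda_3$ together with the Frobenius-type identity defining $(m^{(i,j,k)})^2$, and similarly confirming that the symmetric combinations $\Lambda_0+\Lambda_1-\Lambda_2-\Lambda_3$ appearing in (\ref{eqn:2p1_back}) and the expressions $\Lambda_0\Lambda_1+\Lambda_2\Lambda_3\mp 2L$ in (\ref{eqn:p1+p2_back}) simplify in exactly the way needed to line up with the sections $\mathsf{S}'_1,\mathsf{S}'_2,\mathsf{S}'_3$ produced by the group law on the smooth Weierstrass fiber~(\ref{eqn:Kummer_EF}). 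This is routine but lengthy, and is most naturally carried out in a computer algebra system after the initial reduction to Proposition~\ref{prop:SpecialFibers} has been made.
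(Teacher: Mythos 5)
Your argument follows the paper's own proof in all essentials: reduce to the first row of Table~\ref{tab:special_fibers}, use Proposition~\ref{prop:Divisors} together with the relations $\mathsf{p}_1=\mathsf{S}_1$, $\mathsf{p}_2=-\mathsf{S}_2+\mathsf{S}_3$ from Equation~(\ref{eqn:sections_EF}) to identify $\{\mathsf{S}'_1,\mathsf{S}'_2,\mathsf{S}'_3\}$ with $\{2\mathsf{p}_1,\mathsf{p}_1+\mathsf{p}_2,-3\mathsf{p}_1-\mathsf{p}_2\}$ via the vanishing of $\sum_i\mathsf{S}'_i$, and confirm that the points~(\ref{eqn:base_points}) solve $p_2^{(i,j,k)}=0$, with the remaining coordinate formulas obtained fiberwise from Lemma~\ref{lem:adding}. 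The only difference is presentational: you derive the six roots from the quadratic formula and the factorization of the discriminant, whereas the paper simply verifies that the stated points satisfy the constraint; both are correct.
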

\begin{proof}
We check that the points in Equation~(\ref{eqn:base_points}) satisfy $p_2^{(1,2,3)}(  s^*_0, s^*_1 )=0$ in Proposition~\ref{prop:SpecialFibers}. The relation between the generators of the Mordell-Weil group $\operatorname{MW}(\pi,\mathsf{O})$ and the points $\mathsf{p}_1$ and $\mathsf{p}_2$ is given in Equation~(\ref{eqn:sections_EF}). Using Proposition~\ref{prop:Divisors} it follows
\[
 \mathsf{S}'_1 = 2\mathsf{S}_1 =2 \mathsf{p}_1 \,, \quad
 \mathsf{S}'_2 =\mathsf{S}_1 + \mathsf{S}_2 + \mathsf{S}_3= \mathsf{p}_1 +  \mathsf{p}_3  \,, \quad
 \mathsf{S}'_3 = \mathsf{S}_1 - \mathsf{S}_2 + \mathsf{S}_3= \mathsf{p}_1 +  \mathsf{p}_2 \,.
 \]
The condition $2\mathsf{S}_1+ \mathsf{S}_3=\mathsf{T}_i$ with $1\le i \le 3$ in Proposition~\ref{prop:SpecialFibers}, then implies $\sum  \mathsf{S}'_i = 2\mathsf{T}_i = \mathsf{O}$. Because  $p_2^{(2,1,3)}$ and $p_2^{(3,1,2)}$ are obtained by interchanging $\lambda_1 \leftrightarrow \lambda_2$ and $\lambda_1 \leftrightarrow \lambda_3$, the statements follow for the other points in the base curve as well. 
\end{proof}
\par  By replacing $\lambda_i \mapsto \Lambda_i(s_0,s_1)$ for $0 \le i \le 3$ and $l \mapsto L(s_0,s_1)$ in Proposition~\ref{cor:connection_3points}, we obtain coefficients $A, B, C, D, E$ from $a, b, c, d, e$; $A, B$ are given by Equation~(\ref{eqn:coeffs_AB}) and
\begin{equation}\label{eqn:pencil_quartic_coeffs}
\begin{split}
 C(s_0,s_1) & = \; c\Big(\Lambda_0(s_0,s_1), \, \Lambda_1(s_0,s_1), \,  \Lambda_2(s_0,s_1), \,  \Lambda_3(s_0,s_1), \,  L(s_0,s_1) \Big) \,,\\
 D(s_0,s_1) & = \; d\Big(\Lambda_0(s_0,s_1), \, \Lambda_1(s_0,s_1), \,  \Lambda_2(s_0,s_1), \,  \Lambda_3(s_0,s_1), \,  L(s_0,s_1) \Big) \,, \\
 E(s_0,s_1) & = \; e\Big(\Lambda_0(s_0,s_1), \, \Lambda_1(s_0,s_1), \,  \Lambda_2(s_0,s_1), \,  \Lambda_3(s_0,s_1), \,  L(s_0,s_1) \Big) \,.
\end{split}
\end{equation}
A pencil $\mathscr{D}$ of plane bielliptic genus-three curves $\mathscr{D}_{[s_0:s_1]}$ is then given by
\begin{equation}
\label{eqn:pencil_D}
\begin{split}
 & \Big( E(s_0,s_1) (w^2 - u^2) - C(s_0,s_1) \, uv - D(s_0,s_1)\,  v^2 \Big)^2 \\
 & =  \; E(s_0,s_1)^2 \Big(u^4 + B(s_0,s_1) \, u^2 v^2 + A^2(s_0,s_1) \, v^4\Big) \,,
\end{split}
\end{equation}
with $[u:v:w] \in \mathbb{P}^2$  and $[s_0:s_1] \in \mathbb{P}^1$. We have the immediate:
\begin{corollary}
\label{cor:smooth_irreducible}
The plane genus-three curves $\mathscr{D}_{[s_0:s_1]}$ in Equation~(\ref{eqn:pencil_D}) are irreducible and non-singular for all $[s_0:s_1] \in \mathbb{P}^1$ with $\Delta_{\mathscr{E}}(s_0,s_1)  \Delta_{\mathscr{D}}(s_0,s_1) \not =0$ where
\begin{equation}
\label{eqn:singular_locus_pencil}
\begin{split}
\Delta_{\mathscr{E}}(s_0,s_1)  = & \;16 \, A^2 \big(B^2-4A^2\big)^2 \,, \\
\Delta_{\mathscr{D}}(s_0,s_1)  = & \; - \left(  \left( C^2-B E^2-4 D E\right)^2 -12 D E^2 \left( B E+D \right)  \right)^3  \\
&\; + \Big( 54 A  C^2 E^4 - C^6+ 3 \left( BE + 4 D \right) C^4 E\\
&\quad  - 3 \left( B^2E^2+2 BDE+10 D^2 \right) C^2E^2+ \left( BE-2 D \right)^{3}E^3 \Big)^{2} \,,
\end{split}
\end{equation}
and $A=A(s_0,s_1)$, $B=B(s_0,s_1)$, etc. In particular, we then have $E(s_0,s_1)\not = 0$.
\end{corollary}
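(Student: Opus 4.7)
The plan is to deduce Corollary~\ref{cor:smooth_irreducible} as a direct fiberwise application of Lemma~\ref{lem:smooth}. For each fixed value $[s_0:s_1] \in \mathbb{P}^1$, the fiber $\mathscr{D}_{[s_0:s_1]}$ in Equation~(\ref{eqn:pencil_D}) has exactly the shape of the single bielliptic plane genus-three curve $\mathcal{D}$ in Equation~(\ref{eqn:master_curve_3points}), with the scalar coefficients $(a,b,c,d,e)$ replaced by the values $\bigl(A(s_0,s_1), B(s_0,s_1), C(s_0,s_1), D(s_0,s_1), E(s_0,s_1)\bigr)$; this is precisely how these polynomials were defined in Equation~(\ref{eqn:pencil_quartic_coeffs}) together with Equation~(\ref{eqn:coeffs_AB}). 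Consequently, the smoothness and irreducibility question for each fiber is formally the same question addressed at the scalar level by Lemma~\ref{lem:smooth}.

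The second step is to compare the two expressions displayed in Equation~(\ref{eqn:singular_locus_pencil}) with those in Equation~(\ref{eqn:singular_locus}) and to observe by direct inspection that $\Delta_{\mathscr{E}}(s_0,s_1)$ and $\Delta_{\mathscr{D}}(s_0,s_1)$ are obtained from the scalar discriminants $\Delta_{\mathcal{E}}$ and $\Delta_{\mathcal{D}}$ by that very same substitution; no real computation is needed, since the two formulas agree symbol-for-symbol after capitalization. Hence the hypothesis $\Delta_{\mathscr{E}}(s_0,s_1)\,\Delta_{\mathscr{D}}(s_0,s_1) \neq 0$ is exactly the condition that the scalar discriminants for the fiber equation do not vanish, and Lemma~\ref{lem:smooth} applied to that fiber yields its irreducibility and smoothness.

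For the final assertion $E(s_0,s_1)\neq 0$, I would invoke Remark~\ref{rem:smooth}, which records that $e = 0$ forces $\Delta_{\mathcal{D}} = 0$ at the scalar level; the substitution-compatibility established in the previous paragraph transfers this implication directly into the pencil setting, so $E(s_0,s_1)=0$ would force $\Delta_{\mathscr{D}}(s_0,s_1)=0$, contradicting the hypothesis. I do not anticipate any real obstacle: the corollary is essentially bookkeeping, and the only point requiring care is the verification that the substitution $(a,b,c,d,e) \mapsto (A,B,C,D,E)(s_0,s_1)$ intertwines the definitions of the two discriminant polynomials, which is manifest from the displayed formulas.
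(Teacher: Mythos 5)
Your proposal is correct and matches the paper's own proof, which reads simply ``The proof follows from Lemma~\ref{lem:smooth} when replacing $a \mapsto A(s_0,s_1)$, $b \mapsto B(s_0,s_1)$, etc.'' Your additional observations---that the discriminants in Equation~(\ref{eqn:singular_locus_pencil}) are the symbol-for-symbol capitalizations of those in Equation~(\ref{eqn:singular_locus}), and that $E(s_0,s_1)\neq 0$ follows from Remark~\ref{rem:smooth}---merely make explicit the bookkeeping the paper leaves implicit.
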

\begin{proof}
The proof follows from Lemma~\ref{lem:smooth} when replacing $a \mapsto A(s_0,s_1)$, $b \mapsto B(s_0,s_1)$, etc.
\end{proof}
\par Then, each smooth and irreducible curve $\mathscr{D}_{[s_0:s_1]}$ in the pencil $\mathscr{D}$ admits the bielliptic involution $\tau: [u:v:w] \in \mathbb{P}^2 \mapsto [u:v:-w]$ interchanging the sheets of the degree-two cover 
\begin{equation}
\begin{split}
 \pi^{\mathscr{D}}_{\mathscr{E}}: \quad \mathscr{D}_{[s_0:s_1]} & \to \mathscr{E}_{[s_0:s_1]} \\
 [u:v:w] & \mapsto \left[ u:v:W=  w^2 - u^2 - \frac{C(s_0,s_1)}{E(s_0,s_1)} \,   uv - \frac{D(s_0,s_1)}{E(s_0,s_1)} \,  v^2\right]\,,
\end{split}
\end{equation}
onto the elliptic curve $\mathscr{E}_{[s_0:s_1]} \cong \mathscr{D}_{[s_0:s_1]}/\langle \tau \rangle$ in Equation~(\ref{eqn:Kummer_EF}). It follows from Barth's Theorem~\ref{thm:Barth} that the Prym varieties for the bielliptic curves $\mathscr{D}_{[s_0:s_1]}$
are abelian surfaces with polarization of type $(1,2)$.
We have the following:
\begin{theorem}
\label{thm:main2}
The Prym varieties of the smooth plane bielliptic genus-three curves $\mathscr{D}_{[s^*_0:s^*_1]}$ obtained as fibers of the pencil in Equation~(\ref{eqn:pencil_D}) over $[s^*_0:s^*_1] \in \mathbb{P}^1$ in Equation~(\ref{eqn:base_points}) with $\Delta_{\mathscr{E}}(s^*_0,s^*_1)  \Delta_{\mathscr{D}}(s_0^*,s_1^*) \not =0$ admit a $(1,2)$-isogeny 
\[ 
\Psi: \operatorname{Prym}\left(\mathscr{D}_{[s^*_0:s^*_1]},\pi^{\mathscr{D}}_{\mathscr{E}}\right) \to \operatorname{Jac}(\mathcal{C})
\]
onto the principally polarized abelian surface $\operatorname{Jac}(\mathcal{C})$ for $\mathcal{C}$ in Equation~(\ref{Eq:Rosenhain_g2}).
\end{theorem}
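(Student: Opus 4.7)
The plan is to chain together four structural results established in the previous sections. The proof proceeds in steps: (i) smoothness and irreducibility of $\mathscr{D}_{[s^*_0:s^*_1]}$, (ii) existence of the bielliptic quotient $\pi^{\mathscr{D}}_{\mathscr{E}}$ onto the smooth elliptic curve $\mathscr{E}_{[s^*_0:s^*_1]}$, (iii) identification of the Prym as the abelian surface $\mathfrak{A}$ of Proposition~\ref{prop:Kum}, and (iv) post-composition with the degree-two isogeny to $\operatorname{Jac}(\mathcal{C})$. The first two steps are essentially built into the construction, while step (iii) is the substantive one, requiring us to match the bielliptic branch divisor of the plane model with the distinguished base-point configuration of the linear system $|\mathcal{L}|$ on $\mathfrak{A}$.

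For steps (i) and (ii), I would first invoke Corollary~\ref{cor:smooth_irreducible}: the hypothesis $\Delta_{\mathscr{E}}(s^*_0,s^*_1)\, \Delta_{\mathscr{D}}(s^*_0,s^*_1) \neq 0$ forces $E(s^*_0,s^*_1) \neq 0$ and guarantees that the plane quartic $\mathscr{D}_{[s^*_0:s^*_1]}$ is smooth and irreducible. The involution $\tau: [u:v:w] \mapsto [u:v:-w]$ then yields the bielliptic quotient map onto $\mathscr{E}_{[s^*_0:s^*_1]}$ of Equation~(\ref{eqn:Kummer_EF}) by the fiberwise application of Proposition~\ref{cor:connection_3points}, and $\mathscr{E}_{[s^*_0:s^*_1]}$ is smooth because $\Delta_{\mathscr{E}}(s^*_0,s^*_1)\neq 0$.

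Step (iii) is the crux. By construction, Proposition~\ref{cor:connection_3points} arranges the coefficients $C, D, E$ precisely so that the branch divisor of $\pi^{\mathscr{D}}_{\mathscr{E}}$ on $\mathscr{E}_{[s^*_0:s^*_1]}$ is the four-point set $\{\mathsf{O},\, 2\mathsf{p}_1,\, \mathsf{p}_1+\mathsf{p}_2,\, -3\mathsf{p}_1-\mathsf{p}_2\}$. Corollary~\ref{cor:special_points} then shows that exactly at the six values $[s^*_0:s^*_1]$ of Equation~(\ref{eqn:sol_s_intro})---which are the zero loci of the polynomials $p_2^{(i,j,k)}$ of Proposition~\ref{prop:SpecialFibers}---the sections $\{\mathsf{O}, \mathsf{S}'_1, \mathsf{S}'_2, \mathsf{S}'_3\}$ (in the cases of Proposition~\ref{prop:Divisors}) coincide with this branch set and satisfy $\mathsf{S}'_1+\mathsf{S}'_2+\mathsf{S}'_3=\mathsf{O}$ in the given fiber. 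Since by Proposition~\ref{prop:Divisors} the divisor classes $K_0, K_1, K_2, K_3$ represented by these sections are exactly the exceptional curves from blowing up the four base points $T(\mathcal{L}) = \{\mathsf{P}_0,\mathsf{P}_1,\mathsf{P}_2,\mathsf{P}_3\}$ of the pencil $|\mathcal{L}|$ on $\mathfrak{A}$, this exhibits $\mathscr{D}_{[s^*_0:s^*_1]}$ as isomorphic to a smooth member of $|\mathcal{L}|$, with the bielliptic involution $\tau$ induced by $-\mathbb{I}$ on $\mathfrak{A}$. Barth's Theorem~\ref{thm:Barth} then yields $\operatorname{Prym}(\mathscr{D}_{[s^*_0:s^*_1]}, \pi^{\mathscr{D}}_{\mathscr{E}}) \cong \mathfrak{A}$ as $(1,2)$-polarized abelian surfaces, and composing with the degree-two isogeny $\Psi: \mathfrak{A} \to \operatorname{Jac}(\mathcal{C})$ of Proposition~\ref{prop:Kum}---induced by the rational double cover $\operatorname{Kum}(\mathfrak{A}) \dashrightarrow \operatorname{Kum}(\operatorname{Jac}\mathcal{C})$ ramified along the even eight $\Delta_{45}$---delivers the asserted $(1,2)$-isogeny.

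The main obstacle, and the reason for the elaborate machinery built up in Section~\ref{ssec:KummerPencils}, is the explicit translation in step (iii) between the abstract branch divisor produced by the plane-quartic construction of Proposition~\ref{cor:connection_3points} and the explicit modular-form expressions for the six special parameters in Equation~(\ref{eqn:sol_s_intro}). This is where the identification of the non-torsion sections $\mathsf{S}_1, \mathsf{S}_2, \mathsf{S}_3$ in Proposition~\ref{prop:Garbagnati2} together with the forms $l$ and $m^{(i,j,k)}$ is unavoidable: one must verify that $p_2^{(i,j,k)}(s^*_0,s^*_1) = 0$ at each listed point and that the resulting ordered configuration of $\{\mathsf{O}, \mathsf{S}'_1, \mathsf{S}'_2, \mathsf{S}'_3\}$ matches the branch set of $\pi^{\mathscr{D}}_{\mathscr{E}}$ up to the ambiguity catalogued by Proposition~\ref{prop:Divisors}. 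Once this matching is established, the remainder is purely an appeal to structural results.
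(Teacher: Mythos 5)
Your proposal is correct and follows essentially the same route as the paper's proof: the same chain Corollary~\ref{cor:smooth_irreducible} $\to$ Proposition~\ref{cor:connection_3points} $\to$ Corollary~\ref{cor:special_points} $\to$ Proposition~\ref{prop:Divisors} $\to$ Barth's Theorem~\ref{thm:Barth} $\to$ Proposition~\ref{prop:Kum}. The only step you leave implicit is the paper's explicit appeal to Proposition~\ref{cor:bielliptic_quotient} for the uniqueness of the plane quartic model with prescribed branch divisor on $\mathscr{E}_{[s^*_0:s^*_1]}$, which is what licenses identifying your explicitly constructed curve with the abstract smooth member of $|\mathcal{L}|$ furnished by Barth's theorem.
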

\begin{proof}
It follows from Proposition~\ref{prop:Garbagnati2} that the elliptic fibration $\pi: \mathscr{E} \to \mathbb{P}^1$ in Equation~(\ref{eqn:Kummer_EF}) is the special elliptic fibration $\pi: \operatorname{Kum}(\mathfrak{A}) \to \mathbb{P}^1$ with section $\mathsf{O}$  in Proposition~\ref{prop:Garbagnati} on the Kummer surface of the abelian surface $\mathfrak{A}$ with a polarization of type $(1,2)$. By Barth's Theorem~\ref{thm:Barth} the elliptic fibration is induced by a pencil of bielliptic genus-three curves. The bielliptic involution $\tau$ has fixed points $\{ \mathsf{P}_0, \mathsf{P}_1, \mathsf{P}_2, \mathsf{P}_3 \}$. We proved in Proposition~\ref{prop:Divisors} that the branch points of the bielliptic involution are given by the sections $\{\mathsf{O}, \mathsf{S}'_1 ,\mathsf{S}'_2 , \mathsf{S}'_3\}$ that represent divisor classes 
\begin{equation}
 K_0 = [\mathsf{O}]\, \quad K_1 = [ \mathsf{S}'_1] \,,\quad K_2 = [\mathsf{S}'_2] \,, \quad K_3 = [\mathsf{S}_3'] \,.
\end{equation}
The double points are the images of the order-two points $\{ \mathsf{P}_0, \dots, \mathsf{P}_{15}\}$ on $\mathfrak{A}$, i.e., elements of $\mathfrak{A}[2]$, and the disjoint rational curves $\{ K_0, \dots, K_{15} \}$ are the exceptional divisors introduced in the blow-up process.  We proved in Corollary~\ref{cor:special_points} that over the points $[s^*_0:s^*_1] \in \mathbb{P}^1$ in Equation~(\ref{eqn:base_points}) the sections add up to zero with respect to the elliptic-curve group law and coincides with $\{ \mathsf{O}, 2\mathsf{p}_1, \mathsf{p}_1+  \mathsf{p}_2, -3\mathsf{p}_1- \mathsf{p}_2 \}$ in fibers $\mathscr{E}_{[s^*_0:s^*_1]}$. It follows from Proposition~\ref{cor:connection_3points} that a plane bielliptic genus-three curve covering $\mathscr{E}_{[s^*_0:s^*_1]}$ with the same branch locus is obtained from Equation~(\ref{eqn:master_curve_3points}) by replacing $\lambda_i \mapsto \Lambda_i(s^*_0,s^*_1)$ for $0 \le i \le 3$ and $l \mapsto L(s^*_0,s^*_1)$ in Proposition~\ref{cor:connection_3points}. By Proposition~\ref{cor:bielliptic_quotient} this model is unique. Moreover, it follows from \cite{MR1816214}*{Corollary~2.2} that any such smooth curve is the canonical model of a bielliptic non-hyperelliptic curve of genus three. It follows from Proposition~\ref{prop:Kum} that the polarization of type $(1,2)$ on the abelian surface $\mathfrak{A}$ is induced by an isogeny $\Psi: \mathfrak{A} \to \operatorname{Jac}(\mathcal{C})$ onto the principally polarized abelian surface $\operatorname{Jac}(\mathcal{C})$ for the genus-two curve in Equation~(\ref{Eq:Rosenhain_g2}).
\end{proof}
A tedious computation shows:
\begin{corollary}
The coefficients $A, B, C, D, E$ of the bielliptic genus-three curves $\mathscr{D}_{[s^*_0:s^*_1]}$ are polynomials in $\mathbb{Z}[\lambda_0, \lambda_1, \lambda_2, \lambda_3, l, l^{-1}, m^{(i,j,k)}]$.
\end{corollary}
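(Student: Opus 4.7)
The plan is to verify the statement by a bookkeeping argument controlling the denominators that can appear after the substitution $[s_0:s_1]=[s_0^*:s_1^*]$. First I would use Corollary~\ref{cor:pencilQ} and Equation~(\ref{eqn:pencil_quartic_coeffs}) to exhibit each of $A,B,C,D,E$ as the result of applying $\lambda_i \mapsto \Lambda_i(s_0,s_1)=(s_0+\lambda_i s_1)^2/\lambda_i$ and $l \mapsto L(s_0,s_1)=\prod_{i=0}^{3}(s_0+\lambda_i s_1)/l$ to the polynomials $a,b,c,d,e$ of Proposition~\ref{cor:connection_3points}. Using the identity $l^2=\lambda_0\lambda_1\lambda_2\lambda_3$ to absorb the individual inverses $\lambda_i^{-1}$ that arise, a direct comparison with the formulas of Appendix~\ref{App:coeffs} shows that the coefficients of $A,B,C,D,E$ as polynomials in $s_0,s_1$ already lie in $\mathbb{Z}[\lambda_0,\lambda_1,\lambda_2,\lambda_3,l,l^{-1}]$.

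Next I would substitute $s_0=(\lambda_0+\lambda_i-\lambda_j-\lambda_k)\,l$ and $s_1=(\lambda_0\lambda_i-\lambda_j\lambda_k)\pm m^{(i,j,k)}(\lambda_0-\lambda_j)(\lambda_0-\lambda_k)$. Both lie manifestly in $\mathbb{Z}[\lambda_0,\lambda_1,\lambda_2,\lambda_3,l,m^{(i,j,k)}]$, so after expansion each of $A(s_0^*,s_1^*),\dots,E(s_0^*,s_1^*)$ is a polynomial in $m^{(i,j,k)}$ whose coefficients lie in $\mathbb{Z}[\lambda_0,\lambda_1,\lambda_2,\lambda_3,l,l^{-1}]$. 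The only remaining subtlety is the reduction of even powers of $m^{(i,j,k)}$ via the defining relation $(m^{(i,j,k)})^2=(\lambda_i-\lambda_j)(\lambda_i-\lambda_k)/[(\lambda_0-\lambda_j)(\lambda_0-\lambda_k)]$, which a priori introduces denominators $(\lambda_0-\lambda_j)(\lambda_0-\lambda_k)$.

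These potential denominators cancel automatically. Expanding $(s_1^*)^b$ by the binomial theorem, each term carrying $(m^{(i,j,k)})^r$ also carries a factor $[(\lambda_0-\lambda_j)(\lambda_0-\lambda_k)]^r$; replacing $(m^{(i,j,k)})^{2\lfloor r/2\rfloor}$ by $[(\lambda_i-\lambda_j)(\lambda_i-\lambda_k)]^{\lfloor r/2\rfloor}[(\lambda_0-\lambda_j)(\lambda_0-\lambda_k)]^{-\lfloor r/2\rfloor}$ leaves a net power $r-\lfloor r/2\rfloor\ge 0$ of $(\lambda_0-\lambda_j)(\lambda_0-\lambda_k)$. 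After reduction, each term is polynomial in the $\lambda$'s and at most linear in $m^{(i,j,k)}$, so the specialized coefficients lie in $\mathbb{Z}[\lambda_0,\lambda_1,\lambda_2,\lambda_3,l,l^{-1},m^{(i,j,k)}]$, as claimed.

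The main obstacle is the first step: one must verify that the denominators in the coefficients of $A,B,C,D,E$ as polynomials in $s_0,s_1$ are powers of $l$ alone and not of individual $\lambda_i$'s. This reduces to a finite symbolic check against Appendix~\ref{App:coeffs}, which becomes transparent once one observes the weighted homogeneity of the whole construction under $(\lambda_i,l,s_0,s_1)\mapsto (c\lambda_i,c^2 l,s_0,c^{-1}s_1)$: each coefficient of $s_0^a s_1^b$ has a unique total weight, so any occurring denominator must be a monomial in $l$.
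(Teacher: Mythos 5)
Your overall strategy is sound, and it is considerably more explicit than the paper's own justification, which consists solely of the phrase ``a tedious computation shows'' with no argument supplied. Your first two paragraphs already contain a complete proof of the statement as written: the substitutions $\lambda_i \mapsto \Lambda_i = (s_0+\lambda_i s_1)^2/\lambda_i$ and $l \mapsto L = \prod_i (s_0+\lambda_i s_1)/l$ applied to the integer polynomials $a,b,c,d,e$ of Appendix~\ref{App:coeffs} introduce only \emph{monomial} denominators in the $\lambda_i$ and $l$, and since $\lambda_i^{-1} = l^{-2}\prod_{j\neq i}\lambda_j$, every such monomial is a unit of $\mathbb{Z}[\lambda_0,\ldots,\lambda_3,l,l^{-1}]$; composing with $s_0^*, s_1^* \in \mathbb{Z}[\lambda_0,\ldots,\lambda_3,l,m^{(i,j,k)}]$ then lands in the asserted ring. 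Your third paragraph proves the stronger normal form (at most linear in $m^{(i,j,k)}$), and the cancellation there is correct, provided one uses the normalization $(m^{(i,j,k)})^2=(\lambda_i-\lambda_j)(\lambda_i-\lambda_k)/[(\lambda_0-\lambda_j)(\lambda_0-\lambda_k)]$ from the Introduction, which is the one matching the factor $(\lambda_0-\lambda_j)(\lambda_0-\lambda_k)$ multiplying $m^{(i,j,k)}$ in $s_1^*$ (Remark~\ref{rem:mu} states the relation with a different index pattern, an inconsistency in the paper rather than in your argument).

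The one genuine flaw is in your closing paragraph. First, the ``main obstacle'' you identify --- that the denominators of the coefficients of $A,\ldots,E$ must be powers of $l$ alone rather than of individual $\lambda_i$ --- is not an obstacle at all: a denominator $\lambda_0^{e_0}\cdots\lambda_3^{e_3}l^f$ equals $l^{-2M-f}\prod_i\lambda_i^{M-e_i}$ with $M=\max_i e_i$, so no cancellation among the terms of Appendix~\ref{App:coeffs} needs to be verified. Second, the weighted-homogeneity argument you offer would not establish the claim even if it were needed: knowing that a coefficient of $s_0^a s_1^b$ is weighted-homogeneous of a fixed weight does not constrain the shape of its denominator, since, for example, $\lambda_1/\lambda_0$ and $\lambda_0\lambda_1/l$ both have weight zero under $(\lambda_i,l)\mapsto(c\lambda_i,c^2l)$ yet have denominators of different types. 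Because that paragraph is dispensable, the proof as a whole stands; the homogeneity remark should simply be deleted or replaced by the unit computation above.
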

We determine some symmetries of these functions. We have the following:
\begin{lemma}
For $\{i, j ,k\} =\{1,2,3\}$ and
\begin{equation}
  [s^*_0:s^*_1]=[  \big(\lambda_0+\lambda_i - \lambda_j - \lambda_k\big) l \ : \ \lambda_0 \lambda_i - \lambda_j \lambda_k 
  \pm  m^{(i,j,k)}  (\lambda_0-\lambda_j)(\lambda_0-\lambda_k)   ]\;,
\end{equation}
and $\lambda_0 \lambda_i=\lambda_j\lambda_k$, we have $C = D = E = 0$ in Equation~(\ref{eqn:pencil_D}).
\end{lemma}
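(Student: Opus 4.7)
The plan is to reduce by the $S_3$-symmetry acting on $\{\lambda_1,\lambda_2,\lambda_3\}$ to a single case, then show that at the special point $[s_0^*:s_1^*]$ the hypothesis $\lambda_0\lambda_i=\lambda_j\lambda_k$ forces the parameters $\Lambda_i(s_0^*,s_1^*)$ of Equation~(\ref{eqn:params_Lambdas}) to coincide in pairs, causing the elliptic fiber $\mathscr{E}_{[s_0^*:s_1^*]}$ and all of its marked branch points to collapse simultaneously, from which $C=D=E=0$ follows. Concretely, the permutation action of $S_3$ cycles through the three choices of $(i,j,k)$ in Equation~(\ref{eqn:base_points}), through the polynomials $p_2^{(i,j,k)}$ of Proposition~\ref{prop:SpecialFibers}, and through the three possible constraints $\lambda_0\lambda_i=\lambda_j\lambda_k$, so it suffices to handle $(i,j,k)=(1,2,3)$ with $\lambda_0\lambda_1=\lambda_2\lambda_3$. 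Under this constraint $l^2=\lambda_0\lambda_1\lambda_2\lambda_3=(\lambda_0\lambda_1)^2$ gives $l=\pm\lambda_0\lambda_1$, and the special point reduces to
\[
s_0^* = (\lambda_0+\lambda_1-\lambda_2-\lambda_3)\,l, \qquad s_1^* = \pm\, m^{(1,2,3)}(\lambda_0-\lambda_2)(\lambda_0-\lambda_3).
\]

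I would next verify the pair of equalities $\Lambda_0(s_0^*,s_1^*)=\Lambda_1(s_0^*,s_1^*)$ and $\Lambda_2(s_0^*,s_1^*)=\Lambda_3(s_0^*,s_1^*)$. From Equation~(\ref{eqn:params_Lambdas}) one has the clean identity
\[
\Lambda_i-\Lambda_j=\frac{(\lambda_j-\lambda_i)\bigl(s_0^2-\lambda_i\lambda_j\,s_1^2\bigr)}{\lambda_i\lambda_j},
\]
so under $\lambda_0\lambda_1=\lambda_2\lambda_3$ both pairs of equalities collapse to the single condition $(s_0^*)^2=\lambda_0\lambda_1\,(s_1^*)^2$. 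Substituting the formulas for $s_0^*,s_1^*$ and the Frobenius identity $(m^{(1,2,3)})^2=(\lambda_1-\lambda_2)(\lambda_1-\lambda_3)/[(\lambda_0-\lambda_2)(\lambda_0-\lambda_3)]$ from Remark~\ref{rem:mu}, this reduces to the polynomial identity
\[
\lambda_0\lambda_1\,(\lambda_0+\lambda_1-\lambda_2-\lambda_3)^2=(\lambda_1-\lambda_2)(\lambda_1-\lambda_3)(\lambda_0-\lambda_2)(\lambda_0-\lambda_3),
\]
which follows by direct expansion and repeated use of $\lambda_2\lambda_3=\lambda_0\lambda_1$.

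Setting $\alpha:=\Lambda_0=\Lambda_1$, $\beta:=\Lambda_2=\Lambda_3$, and $L(s_0^*,s_1^*)=\pm\alpha\beta$, the fiber in Equation~(\ref{eqn:Kummer_EF}) has $A(s_0^*,s_1^*)=0$ and becomes the nodal cubic $\rho^2\eta=\xi\bigl(\xi-4(\alpha-\beta)^2\eta\bigr)^2$. Substituting into Equations~(\ref{eqn:2p1_back})--(\ref{eqn:p1+p2_back}) shows that both $2\mathsf{p}_1$ and $\mathsf{p}_1+\mathsf{p}_2$ collapse to the node $[\xi:\eta:\rho]=[4(\alpha-\beta)^2:1:0]$, and pulling them back through $\varphi$ of Equation~(\ref{eqn:iso_curves}) sends both to the point with $u=W=0$, so the input data $(R_1,S_1)$ and $(R_2,S_2)$ feeding the construction of $c,d,e$ via Lemma~\ref{lem:adding_points} and Proposition~\ref{cor:connection_3points} vanish simultaneously. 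The main obstacle is the final step: checking that the explicit polynomial normalizations of $c,d,e$ in Appendix~\ref{App:coeffs} really do vanish at the configuration $\Lambda_0=\Lambda_1$, $\Lambda_2=\Lambda_3$, $L=\pm\Lambda_0\Lambda_2$. Conceptually this vanishing is forced because the branch locus of $\pi^{\mathscr{D}}_{\mathscr{E}}$ has collapsed to a single point of the nodal fiber, so no bielliptic quartic can be presented in the form of Equation~(\ref{eqn:pencil_D}) with $E\ne 0$; a direct substitution in the appendix then confirms $C(s_0^*,s_1^*)=D(s_0^*,s_1^*)=E(s_0^*,s_1^*)=0$.
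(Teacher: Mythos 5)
The paper states this lemma without proof (the subsequent corollary attributes everything to ``a tedious computation''), so the only real question is whether your argument is self-contained and correct. It is not, for two concrete reasons. First, the $S_3$-reduction at the outset is invalid: the polynomials $c,d,e$ of Appendix~\ref{App:coeffs} (hence $C,D,E$) are \emph{not} symmetric under permutations of $\lambda_1,\lambda_2,\lambda_3$ --- they single out the partition $\{\lambda_0,\lambda_1\}\,|\,\{\lambda_2,\lambda_3\}$, e.g.\ through the factor $(\lambda_0+\lambda_1-\lambda_2-\lambda_3)$ in $e$. Permuting the $\lambda$'s does carry the special points, the $p_2^{(i,j,k)}$, and the constraints into one another, but it changes $C,D,E$ into different functions, so the three cases do not reduce to one. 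Indeed the mechanism is genuinely different across cases: for $(i,j,k)=(2,1,3)$ with $\lambda_0\lambda_2=\lambda_1\lambda_3$ the same computation you do gives $(s_0^*)^2=\lambda_0\lambda_2 (s_1^*)^2$, hence $\Lambda_0=\Lambda_2$ and $\Lambda_1=\Lambda_3$; then $A=(\Lambda_0-\Lambda_1)(\Lambda_2-\Lambda_3)=(\Lambda_0-\Lambda_1)^2\neq 0$, the fiber is \emph{smooth}, and your ``nodal fiber / collapsed branch locus'' picture does not apply at all. (In those cases $E$ and $C$ vanish instead because of the visible factors $(\Lambda_0+\Lambda_1-\Lambda_2-\Lambda_3)$ and $(\Lambda_0-\Lambda_j)(\Lambda_1-\Lambda_k)$, while $D$ still needs a separate computation.)

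Second, even in the case $(1,2,3)$ your argument hinges on an unresolved sign. With $\Lambda_0=\Lambda_1=\alpha$, $\Lambda_2=\Lambda_3=\beta$ one computes directly from the appendix that
\begin{equation*}
E=2(\alpha-\beta)\cdot 8(\alpha^2-\beta^2)\bigl(L-\alpha\beta\bigr),
\end{equation*}
and similarly the second factor of $C$ equals $8(L-\alpha\beta)$ and the $\xi$-coordinate of $\mathsf{p}_1+\mathsf{p}_2$ equals $4(\alpha^2+\beta^2-2L)$. So the vanishing of $C,D,E$, and likewise your claim that $\mathsf{p}_1+\mathsf{p}_2$ sits at the node, all require $L(s_0^*,s_1^*)=+\alpha\beta$ rather than $-\alpha\beta$; writing ``$L=\pm\alpha\beta$'' and moving on leaves precisely the step that could fail unverified. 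Finally, the conceptual claim that ``no bielliptic quartic can be presented with $E\neq 0$'' does not by itself force the specific polynomial normalizations of $c,d,e$ to vanish (they were obtained by clearing the denominators $R_1^2-R_2^2$ of Lemma~\ref{lem:adding_points}, which become $0/0$ here), and you concede the proof ultimately rests on the direct substitution you have not carried out. The correct identities $(s_0^*)^2=\lambda_0\lambda_i(s_1^*)^2$ and $(\lambda_0-\lambda_j)(\lambda_0-\lambda_k)=\lambda_0(\lambda_0+\lambda_i-\lambda_j-\lambda_k)$ under $\lambda_0\lambda_i=\lambda_j\lambda_k$ are a genuinely useful observation that organizes the computation, but as written the proof covers at most one of the three cases and that one only up to a sign.
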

\begin{corollary}
For $\{i, j ,k\} =\{1,2,3\}$ and
\begin{equation}
  [s^*_0:s^*_1]=[  \big(\lambda_0+\lambda_i - \lambda_j - \lambda_k\big) l : \lambda_0 \lambda_i - \lambda_j \lambda_k 
  \pm  m^{(i,j,k)}  (\lambda_0-\lambda_j)(\lambda_0-\lambda_k)   ]\;,
\end{equation}
and one additional relation given by
\begin{center}
\begin{tabular}{c|c}
$(i,j,k)$ & relation \\
\hline
$(1,2,3)$ & $\lambda_0\lambda_2=\lambda_1\lambda_3$ or $\lambda_0\lambda_3=\lambda_1\lambda_2$ \\
$(2,1,3)$ & $\lambda_0\lambda_1=\lambda_2\lambda_3$  \\
$(3,1,2)$ & $\lambda_0\lambda_1=\lambda_2\lambda_3$  \\
\end{tabular}
\end{center}
the smooth and irreducible bielliptic plane genus-three curve $\mathscr{D}_{[s^*_0:s^*_1]}$ in Theorem~\ref{thm:main2} admits an additional involution of the form
\[
  [u:v:w] \mapsto [\alpha^2 v: u : \alpha w] \,,
\]
with $\alpha^2=D(s_0^*,s_1^*)/E(s_0^*,s_1^*)$. In particular, the Jacobian variety of the smooth genus-two curve $\mathcal{C}$ is two-isogenous to a product of two elliptic curves, i.e., $\operatorname{Jac}(\mathcal{C}) \sim_2 \mathcal{E}_1 \times \mathcal{E}_2$.
\end{corollary}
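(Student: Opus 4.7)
The plan is to reduce the claim to Lemma~\ref{lem:extra_auto} applied fiberwise to the pencil~(\ref{eqn:pencil_D}). By the construction in~(\ref{eqn:pencil_quartic_coeffs}), the coefficients $A, B, C, D, E$ are obtained from $a, b, c, d, e$ by the substitutions $\lambda_i \mapsto \Lambda_i(s_0, s_1)$ and $l \mapsto L(s_0, s_1)$. Consequently the iff-statement of Lemma~\ref{lem:extra_auto} specializes to: $\mathscr{D}_{[s_0^*:s_1^*]}$ admits the claimed involution with $\alpha^2 = D(s_0^*,s_1^*)/E(s_0^*,s_1^*)$ precisely when $A(s_0^*,s_1^*)\,E(s_0^*,s_1^*) = \pm D(s_0^*,s_1^*)$, and the second part of Lemma~\ref{lem:extra_auto} provides the sufficient criterion $\Lambda_0\Lambda_2 = \Lambda_1\Lambda_3$ or $\Lambda_0\Lambda_3 = \Lambda_1\Lambda_2$ evaluated at $(s_0^*,s_1^*)$.

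The core calculation is to verify these $\Lambda$-product identities in each of the three cases listed in the table. Since $\Lambda_i\Lambda_j = (s_0+\lambda_i s_1)^2(s_0+\lambda_j s_1)^2/(\lambda_i\lambda_j)$, the identity $\Lambda_0\Lambda_2 = \Lambda_1\Lambda_3$ reduces, under the assumption $\lambda_0\lambda_2 = \lambda_1\lambda_3$, to
\[
(s_0^*+\lambda_0 s_1^*)(s_0^*+\lambda_2 s_1^*) \;=\; \pm\,(s_0^*+\lambda_1 s_1^*)(s_0^*+\lambda_3 s_1^*),
\]
a polynomial identity in $\lambda_0, \ldots, \lambda_3$ to be checked by substituting the coordinates of $[s_0^*:s_1^*]$ from~(\ref{eqn:base_points}) and using $(m^{(1,2,3)})^2 = (\lambda_1-\lambda_2)(\lambda_1-\lambda_3)/[(\lambda_0-\lambda_2)(\lambda_0-\lambda_3)]$. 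The cases $(2,1,3)$ and $(3,1,2)$ reduce to $(1,2,3)$ after relabeling $\lambda_1\leftrightarrow\lambda_2$ and $\lambda_1\leftrightarrow\lambda_3$, respectively, which is how $p_2^{(i,j,k)}$ is defined in Proposition~\ref{prop:SpecialFibers}.

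For the isogeny consequence, note that the additional involution together with $\tau$ generates a Klein four-subgroup of $\operatorname{Aut}(\mathscr{D}_{[s_0^*:s_1^*]})$. Since the plane quartic $\mathscr{D}_{[s_0^*:s_1^*]}$ is non-hyperelliptic, each nontrivial involution has a genus-one (elliptic) quotient; call them $\mathscr{E}_{[s_0^*:s_1^*]}$ (for $\tau$), $\mathcal{E}_1$, and $\mathcal{E}_2$. By the Kani--Rosen decomposition, $\operatorname{Jac}(\mathscr{D}_{[s_0^*:s_1^*]})$ is isogenous to $\mathscr{E}_{[s_0^*:s_1^*]}\times \mathcal{E}_1\times\mathcal{E}_2$, which combined with the short exact sequence defining the Prym yields $\operatorname{Prym}(\mathscr{D}_{[s_0^*:s_1^*]},\pi^{\mathscr{D}}_{\mathscr{E}}) \sim \mathcal{E}_1\times\mathcal{E}_2$. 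Composing with the $(1,2)$-isogeny $\Psi$ from Theorem~\ref{thm:main2} gives $\operatorname{Jac}(\mathcal{C}) \sim_2 \mathcal{E}_1\times\mathcal{E}_2$.

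The main obstacle is the bookkeeping in the polynomial identity, in particular tracking the sign ambiguity coming from the $\pm m^{(i,j,k)}$ in~(\ref{eqn:base_points}) and showing that exactly one of the two $\pm$-choices in $AE = \pm D$ is realized, for the $+$ sign determining $\alpha$ via $\alpha^2 = D/E$. A secondary technical point is confirming that under the listed $\lambda$-relations we still have $\Delta_{\mathscr{E}}(s_0^*,s_1^*)\Delta_{\mathscr{D}}(s_0^*,s_1^*)\neq 0$, so that Theorem~\ref{thm:main2} genuinely applies and the excluded degenerate case $\lambda_0\lambda_1 = \lambda_2\lambda_3$ (which forces $\Delta_{\mathscr{D}}=0$ by the closing remark of the previous section) is correctly sidestepped for the $(1,2,3)$-partition.
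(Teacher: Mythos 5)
Your overall strategy (specialize Lemma~\ref{lem:extra_auto} by the substitution $a\mapsto A(s_0^*,s_1^*)$, $d\mapsto D(s_0^*,s_1^*)$, $e\mapsto E(s_0^*,s_1^*)$) is the same as the paper's, but the step you single out as the ``core calculation'' would fail. You propose to verify the \emph{sufficient} criterion $\Lambda_0\Lambda_2=\Lambda_1\Lambda_3$ or $\Lambda_0\Lambda_3=\Lambda_1\Lambda_2$ at $(s_0^*,s_1^*)$ in place of the necessary-and-sufficient condition $AE=\pm D$. Under $\lambda_0\lambda_2=\lambda_1\lambda_3$ the identity $\Lambda_0\Lambda_2=\Lambda_1\Lambda_3$ is equivalent to $(s_0^*+\lambda_0 s_1^*)(s_0^*+\lambda_2 s_1^*)=\pm(s_0^*+\lambda_1 s_1^*)(s_0^*+\lambda_3 s_1^*)$, i.e.\ to $(\lambda_0+\lambda_2-\lambda_1-\lambda_3)\,s_0^*s_1^*=0$ for the plus sign, or to $2(s_0^*)^2+(\lambda_0+\lambda_1+\lambda_2+\lambda_3)s_0^*s_1^*+2\lambda_0\lambda_2(s_1^*)^2=0$ for the minus sign. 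A numerical test (say $\lambda_0=1$, $\lambda_1=2$, $\lambda_3=3$, $\lambda_2=\lambda_1\lambda_3=6$, $l=6$) shows that neither alternative holds at the points~(\ref{eqn:base_points}), and the same happens for $\Lambda_0\Lambda_3=\Lambda_1\Lambda_2$. So the $\Lambda$-product identities are simply false at the special fibers; the vanishing of $A^2E^2-D^2$ there occurs for a different reason and must be checked directly. That direct check is precisely the ``tedious computation'' the paper performs: it verifies $a=\pm d/e$ itself after the replacement $\lambda_i\mapsto\Lambda_i(s_0^*,s_1^*)$, $l\mapsto L(s_0^*,s_1^*)$, rather than routing through the product conditions.

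A second gap is the claimed reduction of the cases $(2,1,3)$ and $(3,1,2)$ to $(1,2,3)$ by relabeling $\lambda_1\leftrightarrow\lambda_2$, resp.\ $\lambda_1\leftrightarrow\lambda_3$. That relabeling permutes the special points, but it does not preserve the pencil: $a=(\lambda_0-\lambda_1)(\lambda_2-\lambda_3)$ and $e$ (which carries the factor $\lambda_0+\lambda_1-\lambda_2-\lambda_3$) are not invariant under these transpositions, so $A$, $D$, $E$ change and the condition $AE=\pm D$ has to be re-derived in each case. The asymmetry of the table itself --- two alternative relations for $(1,2,3)$ but a single, different relation for the other two partitions --- already shows the three cases are not images of one another under a symmetry. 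Finally, your Kani--Rosen argument for $\operatorname{Jac}(\mathcal{C})\sim_2\mathcal{E}_1\times\mathcal{E}_2$ is a legitimate alternative to the paper's appeal to $\chi_{35}(\mathcal{C})=0$ (the listed $\lambda$-relations give $\mathcal{C}$ an extra involution directly), but it only becomes available once the extra involution on $\mathscr{D}_{[s_0^*:s_1^*]}$ has actually been established.
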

\begin{proof}
The first part follows from Lemma~\ref{lem:extra_auto} and a tedious computation after replacing $a \mapsto A(s^*_0,s^*_1)$, $b \mapsto B(s^*_0,s^*_1)$, etc. For the second part, we compute the Igusa-Clebsch invariants of $\mathcal{C}$, denoted by by $[ I_2 : I_4 : I_6 : I_{10} ] \in \mathbb{P}(2,4,6,10)$, using the same normalization as in \cites{MR3712162,MR3731039}. One can then ask what the Igusa invariants of a genus-two curve $\mathcal{C}$ defined by a sextic curve are in terms of $\underline{\tau}$  such that $(\underline{\tau}, \mathbb{I}_2) \in \mathrm{Mat}(2, 4;\mathbb{C})$ is the period matrix of the principally polarized abelian surface  $\mathrm{Jac}(\mathcal{C})$. This allows us to compute the Siegel modular forms $\psi_4$, $\psi_6$, $\chi_{10}$, $\chi_{12}$ for $\mathrm{Jac}(\mathcal{C})$, as introduced by Igusa in \cite{MR229643}. Igusa also proved in \cite{MR527830} that the ring of Siegel modular forms is generated by $\psi_4$, $\psi_6$, $\chi_{10}$, $\chi_{12}$ and by one more cusp form $\chi_{35}$ of odd weight $35$ whose square is the following polynomial \cite{MR229643}*{p.~\!849}. One checks that the additional relation implies $\chi_{35}(\mathcal{C})^2=0$. On the other hand it is well known that for any smooth genus-two curve $\mathcal{C}$ with $\chi_{35}(\mathcal{C})=0$, its Jacobian is two-isogenous to a product of two elliptic curves, i.e., $\operatorname{Jac}(\mathcal{C}) \sim_2 \mathcal{E}_1 \times \mathcal{E}_2$; see \cite{MR229643}.
\end{proof}
\begin{proof}[Proof of Main Theorem~\ref{thm:main}]
Corollary~\ref{cor:smooth_irreducible} proves that for $\Delta_{\mathscr{E}}(s^*_0,s^*_1)  \Delta_{\mathscr{D}}(s^*_0,s^*_1) \not =0$ the curves $\mathscr{D}_{[s^*_0:s^*_1]}$ are smooth and irreducible. Theorem~\ref{thm:main2} proves the existence of a $(1,2)$-isogeny. Lemma~\ref{lem:Picard} and Remark~\ref{rem:mu} provide explicit formulas for $\lambda_0$, $\lambda_1$, $\lambda_2$, $\lambda_3$, $l$, $m^{(i,j,k)}$ in terms of theta functions.
\end{proof}
\begin{appendix}
\section{Coefficients of plane bielliptic genus-three curves} \label{App:coeffs}
The plane bielliptic genus-three curve $\mathcal{D}$ in Proposition~\ref{cor:connection_3points} is given by
\begin{equation}
\left( w^2 - u^2 - \frac{c}{e}  uv - \frac{d}{e}  v^2 \right)^2 = u^4 + b  u^2 v^2 + a^2 v^4 \,,
\end{equation}
with $[u:v:w] \in \mathbb{P}^2$  and coefficients
\begin{equation}\label{eqn:coeffs_ab_app}
\begin{split}
 a & = (\lambda_0-\lambda_1)(\lambda_2-\lambda_3) \,, \\
 b & = 4 \lambda_0 \lambda_1+ 4 \lambda_2 \lambda_3-2 \lambda_0 \lambda_2 - 2  \lambda_0 \lambda_3 -  2\lambda_1 \lambda_2  - 2 \lambda_1 \lambda_3 \,,
\end{split}
\end{equation}
and
\begin{equation}\label{eqn:coeffs_c_app}
\begin{split}
c=& \; c(\lambda_0, \lambda_1, \lambda_2, \lambda_3, l)  = -4 \prod_{i=0}^1 \prod_{j=1}^2 (\lambda_i-\lambda_j)
\big( \sum_{i=0}^3 \lambda_i^2 - 2 \sum_{0\le i < j \le 3} \lambda_i \lambda_j + 8l \big) \,,
 \end{split}
\end{equation}
\begin{equation}\label{eqn:coeffs_d_app}
\begin{split}
d=& \; d(\lambda_0, \lambda_1, \lambda_2, \lambda_3, l) = -\lambda_0^5 \lambda_1 \lambda_2-\lambda_0^5 \lambda_1 \lambda_3+2 \lambda_0^5 \lambda_2 \lambda_3-4 \lambda_0^4 \lambda_1^2 \lambda_2-4 \lambda_0^4 \lambda_1^2 \lambda_3\\
&+5 \lambda_0^4 \lambda_1 \lambda_2^2+5 \lambda_0^4 \lambda_1 \lambda_3^2-\lambda_0^4 \lambda_2^2 \lambda_3-\lambda_0^4 \lambda_2 \lambda_3^2+10 \lambda_0^3 \lambda_1^3 \lambda_2+10 \lambda_0^3 \lambda_1^3 \lambda_3
-5 \lambda_0^3 \lambda_1^2 \lambda_2^2\\
&-2 \lambda_0^3 \lambda_1^2 \lambda_2 \lambda_3-5 \lambda_0^3 \lambda_1^2 \lambda_3^2-5 \lambda_0^3 \lambda_1 \lambda_2^3+\lambda_0^3 \lambda_1 \lambda_2^2 \lambda_3+\lambda_0^3 \lambda_1 \lambda_2 \lambda_3^2-5 \lambda_0^3 \lambda_1 \lambda_3^3\\
&-5 \lambda_0^3 \lambda_2^3 \lambda_3 +10 \lambda_0^3 \lambda_2^2 \lambda_3^2-5 \lambda_0^3 \lambda_2 \lambda_3^3-4 \lambda_0^2 \lambda_1^4 \lambda_2-4 \lambda_0^2 \lambda_1^4 \lambda_3-5 \lambda_0^2 \lambda_1^3 \lambda_2^2\\
&-2 \lambda_0^2 \lambda_1^3 \lambda_2 \lambda_3-5 \lambda_0^2 \lambda_1^3 \lambda_3^2+10 \lambda_0^2 \lambda_1^2 \lambda_2^3+10 \lambda_0^2 \lambda_1^2 \lambda_3^3-\lambda_0^2 \lambda_1 \lambda_2^4+\lambda_0^2 \lambda_1 \lambda_2^3 \lambda_3\\
&+\lambda_0^2 \lambda_1 \lambda_2 \lambda_3^3-\lambda_0^2 \lambda_1 \lambda_3^4+5 \lambda_0^2 \lambda_2^4 \lambda_3-5 \lambda_0^2 \lambda_2^3 \lambda_3^2-5 \lambda_0^2 \lambda_2^2 \lambda_3^3+5 \lambda_0^2 \lambda_2 \lambda_3^4-\lambda_0 \lambda_1^5 \lambda_2\\
&-\lambda_0 \lambda_1^5 \lambda_3+5 \lambda_0 \lambda_1^4 \lambda_2^2+5 \lambda_0 \lambda_1^4 \lambda_3^2-5 \lambda_0 \lambda_1^3 \lambda_2^3+\lambda_0 \lambda_1^3 \lambda_2^2 \lambda_3+\lambda_0 \lambda_1^3 \lambda_2 \lambda_3^2-5 \lambda_0 \lambda_1^3 \lambda_3^3\\
&-\lambda_0 \lambda_1^2 \lambda_2^4+\lambda_0 \lambda_1^2 \lambda_2^3 \lambda_3+\lambda_0 \lambda_1^2 \lambda_2 \lambda_3^3-\lambda_0 \lambda_1^2 \lambda_3^4+2 \lambda_0 \lambda_1 \lambda_2^5-2 \lambda_0 \lambda_1 \lambda_2^3 \lambda_3^2\\
&-2 \lambda_0 \lambda_1 \lambda_2^2 \lambda_3^3+2 \lambda_0 \lambda_1 \lambda_3^5-\lambda_0 \lambda_2^5 \lambda_3
-4 \lambda_0 \lambda_2^4 \lambda_3^2+10 \lambda_0 \lambda_2^3 \lambda_3^3-4 \lambda_0 \lambda_2^2 \lambda_3^4\\
&-\lambda_0 \lambda_2 \lambda_3^5+2 \lambda_1^5 \lambda_2 \lambda_3-\lambda_1^4 \lambda_2^2 \lambda_3-\lambda_1^4 \lambda_2 \lambda_3^2-5 \lambda_1^3 \lambda_2^3 \lambda_3+10 \lambda_1^3 \lambda_2^2 \lambda_3^2\\
&-5 \lambda_1^3 \lambda_2 \lambda_3^3+5 \lambda_1^2 \lambda_2^4 \lambda_3-5 \lambda_1^2 \lambda_2^3 \lambda_3^2
-5 \lambda_1^2 \lambda_2^2 \lambda_3^3+5 \lambda_1^2 \lambda_2 \lambda_3^4-\lambda_1 \lambda_2^5 \lambda_3-4 \lambda_1 \lambda_2^4 \lambda_3^2\\
&+10 \lambda_1 \lambda_2^3 \lambda_3^3-4 \lambda_1 \lambda_2^2 \lambda_3^4-\lambda_1 \lambda_2 \lambda_3^5 + \Big(8 \lambda_0^4 \lambda_1-4 \lambda_0^4 \lambda_2-4 \lambda_0^4 \lambda_3-8 \lambda_0^3 \lambda_1^2+4 \lambda_0^3 \lambda_2^2\\
&+4 \lambda_0^3 \lambda_3^2-8 \lambda_0^2 \lambda_1^3+8 \lambda_0^2 \lambda_1^2 \lambda_2+8 \lambda_0^2 \lambda_1^2 \lambda_3
-4 \lambda_0^2 \lambda_1 \lambda_2^2-4 \lambda_0^2 \lambda_1 \lambda_3^2+4 \lambda_0^2 \lambda_2^3\\&
-4 \lambda_0^2 \lambda_2^2 \lambda_3-4 \lambda_0^2 \lambda_2 \lambda_3^2+4 \lambda_0^2 \lambda_3^3+8 \lambda_0 \lambda_1^4-4 \lambda_0 \lambda_1^2 \lambda_2^2-4 \lambda_0 \lambda_1^2 \lambda_3^2-4 \lambda_0 \lambda_2^4\\
&+8 \lambda_0 \lambda_2^2 \lambda_3^2-4 \lambda_0 \lambda_3^4-4 \lambda_1^4 \lambda_2-4 \lambda_1^4 \lambda_3+4 \lambda_1^3 \lambda_2^2+4 \lambda_1^3 \lambda_3^2+4 \lambda_1^2 \lambda_2^3-4 \lambda_1^2 \lambda_2^2 \lambda_3\\
&-4 \lambda_1^2 \lambda_2 \lambda_3^2+4 \lambda_1^2 \lambda_3^3-4 \lambda_1 \lambda_2^4
+8 \lambda_1 \lambda_2^2 \lambda_3^2-4 \lambda_1 \lambda_3^4+8 \lambda_2^4 \lambda_3\\
&-8 \lambda_2^3 \lambda_3^2-8 \lambda_2^2 \lambda_3^3+8 \lambda_2 \lambda_3^4\Big) l\,,
 \end{split}
\end{equation}
\begin{equation}\label{eqn:coeffs_e_app}
\begin{split}
e=&\; e(\lambda_0, \lambda_1, \lambda_2, \lambda_3, l)  = (\lambda_0 + \lambda_1 - \lambda_2 - \lambda_3)
	\Big(  2\lambda_0^2\lambda_1^2 -\lambda_0^3 \lambda_1  -2\lambda_0^2\lambda_1\lambda_2
	\\ &-3\lambda_0^2\lambda_2\lambda_3-\lambda_0\lambda_1^3 -2\lambda_0\lambda_1^2\lambda_2 -2\lambda_0\lambda_1^2\lambda_3
	+3\lambda_0\lambda_1\lambda_2^2+3\lambda_0\lambda_1\lambda_3^2\\
&+2\lambda_0\lambda_2^2\lambda_3+2\lambda_0\lambda_2\lambda_3^2-3\lambda_1^2\lambda_2\lambda_3
	+2\lambda_1\lambda_2^2\lambda_3+2\lambda_1\lambda_2\lambda_3^2+\lambda_2^3\lambda_3\\
& -2\lambda_0^2\lambda_1\lambda_3-2\lambda_2^2\lambda_3^2+\lambda_2\lambda_3^3 + 4 \big(\lambda_0^2+\lambda_1^2-\lambda_2^2-\lambda_3^2\big) l \Big) \,.
 \end{split}
\end{equation}
Here, we assumed that the curve $\mathcal{D}$ is irreducible and smooth such that $\Delta_{\mathcal{E}} \Delta_{\mathcal{D}} \not =0$ (and $e\not =0$ in particular) where

\begin{equation}
\label{eqn:singular_locus_app}
\begin{split}
\Delta_{\mathcal{E}} = & \; 16 \, a^2 \big(b^2-4a^2\big)^2 \,,  \\
\Delta_{\mathcal{D}} = & \; - \left(  \left( c^2-be^2-4\, de \right)^2 -12 \, d e^2 \left( be+d \right)  \right)^3 \ \\
&\; + \Big( 54 \, a  c^2 e^4 - c^6+ 3 \left( be + 4 d \right) c^4 e\\
&\quad - 3 \left( b^2e^2+2 \, bde+10 \, d^2 \right) c^2e^2+ \left( be-2\,d \right)^{3}e^3 \Big)^{2} \,.
\end{split}
\end{equation}
\medskip
\par The plane bielliptic genus-three curves $\mathscr{D}_{[s^*_0:s^*_1]}$ in Theorem~\ref{thm:main} are given by
\begin{equation}
\label{eqn:pencil_D_app}
\left( w^2 - u^2 - \frac{C(s^*_0,s^*_1)}{E(s^*_0,s^*_1)} \, uv - \frac{D(s^*_0,s^*_1)}{E(s^*_0,s^*_1)}\,  v^2 \right)^2 = u^4 + B(s^*_0,s^*_1) \, u^2 v^2 + A^2(s^*_0,s^*_1) \, v^4 \,,
\end{equation}
with $[u:v:w] \in \mathbb{P}^2$  and $[s_0:s_1] \in \mathbb{P}^1$, and coefficients
\begin{equation}
\label{eqn:coeffs_AB_app}
\begin{split}
 A(s_0,s_1) & =\big(\Lambda_0(s_0,s_1)-\Lambda_1(s_0,s_1)\big)\big(\Lambda_2(s_0,s_1)-\Lambda_3(s_0,s_1)\big) \,,\\
 B(s_0,s_1) & = 4 \Lambda_0(s_0,s_1) \, \Lambda_1(s_0,s_1)+ 4 \Lambda_2(s_0,s_1) \, \Lambda_3(s_0,s_1)-2 \Lambda_0(s_0,s_1) \, \Lambda_2(s_0,s_1) \\
 &  - 2  \Lambda_0(s_0,s_1)  \, \Lambda_3(s_0,s_1)  -  2\Lambda_1(s_0,s_1) \, \Lambda_2(s_0,s_1)   - 2 \Lambda_1(s_0,s_1) \, \Lambda_3(s_0,s_1)  \,,
\end{split}
\end{equation}
and
\begin{equation}\label{eqn:coeffs_CDE_app}
\begin{split}
 C(s_0,s_1) & = \; c\Big(\Lambda_0(s_0,s_1), \, \Lambda_1(s_0,s_1), \,  \Lambda_2(s_0,s_1), \,  \Lambda_3(s_0,s_1), \,  L(s_0,s_1) \Big) \,,\\
 D(s_0,s_1) & = \; d\Big(\Lambda_0(s_0,s_1), \, \Lambda_1(s_0,s_1), \,  \Lambda_2(s_0,s_1), \,  \Lambda_3(s_0,s_1), \,  L(s_0,s_1) \Big) \,, \\
 E(s_0,s_1) & = \; e\Big(\Lambda_0(s_0,s_1), \, \Lambda_1(s_0,s_1), \,  \Lambda_2(s_0,s_1), \,  \Lambda_3(s_0,s_1), \,  L(s_0,s_1) \Big) \,,
\end{split}
\end{equation}
where the functions $c, d, e$ are given in Equations~(\ref{eqn:coeffs_c_app})--(\ref{eqn:coeffs_e_app}) with $\lambda_0, \lambda_1, \lambda_2, \lambda_3, l$ replaced by $\Lambda_0, \Lambda_1, \Lambda_2, \Lambda_3, L$ with
\begin{equation}
\label{eqn:Lambdas_app}
 \Lambda_i(s_0,s_1) =   \frac{(s_0+ \lambda_i s_1)^2}{\lambda_i}  \,, \qquad  L(s_0,s_1) = \frac{\prod_{i=0}^3 (s_0+ \lambda_i s_1)}{l} \,, 
\end{equation}
for $0 \le i \le 3$. Moreover, the special point $[s_0:s_1]=[s^*_0:s^*_1]$ to be used in Equation~(\ref{eqn:pencil_D_app}) is given by
\begin{equation}
\label{eqn:sol_s_app}
  [s^*_0:s^*_1]=\Big[  \big(\lambda_0+\lambda_i - \lambda_j - \lambda_k\big) l  :  (\lambda_0 \lambda_i - \lambda_j \lambda_k)
  \pm  m^{(i,j,k)}  (\lambda_0-\lambda_j)(\lambda_0-\lambda_k)  \Big]\;,
\end{equation}  
for all $\{ i, j, k\} = \{1,2,3\}$, and
\begin{equation}\label{eqn:lambdas_app}
\begin{aligned}
\lambda_0=1 \,, \quad \lambda_1 = \frac{\theta_1^2\theta_3^2}{\theta_2^2\theta_4^2} \,, \quad \lambda_2 = \frac{\theta_3^2\theta_8^2}{\theta_4^2\theta_{10}^2}\,, \quad \lambda_3 = \frac{\theta_1^2\theta_8^2}{\theta_2^2\theta_{10}^2}\,,\quad
 l =  \frac{\theta_1^2\theta_3^2\theta_8^2}{\theta_2^2\theta_4^2\theta_{10}^2}  \,,\\
 m^{(1,2,3)} =  \frac{\theta_1\theta_3\theta^2_6}{\theta_2\theta_4\theta^2_5}\,, \quad
  m^{(2,1,3)} =  i \frac{\theta_3\theta_8\theta^2_6}{\theta_4\theta_{10}\theta^2_7}\,,\quad
 m^{(3,1,2)} =  \frac{\theta_1\theta_8\theta^2_6}{\theta_2\theta_{10}\theta^2_9}\,,
\end{aligned}
\end{equation}
and the ten even theta functions $\theta_i^2=\theta_i^2(0,\tau)$  with zero elliptic argument, modular argument $\tau \in \mathbb{H}_2/\Gamma_2(2)$, and $1\le i \le10$ follow the same standard notation for even theta functions as the one used in \cites{MR0141643, MR0168805, MR3712162, MR3731039, Clingher:2018aa}. 
Since the curves $\mathscr{D}_{[s^*_0:s^*_1]}$ are assumed to be irreducible and smooth we have $\Delta_{\mathscr{E}}(s^*_0,s^*_1)  \Delta_{\mathscr{D}}(s_0^*,s_1^*) \not =0$ where
\begin{equation}
\label{eqn:singular_locus_pencil_app}
\begin{split}
\Delta_{\mathscr{E}}(s_0,s_1)  = & \;16 \, A^2 \big(B^2-4A^2\big)^2 \,, \\
\Delta_{\mathscr{D}}(s_0,s_1)  = & \; - \left(  \left( C^2-B E^2-4 D E\right)^2 -12 D E^2 \left( B E+D \right)  \right)^3  \\
&\; + \Big( 54 A  C^2 E^4 - C^6+ 3 \left( BE + 4 D \right) C^4 E\\
&\quad  - 3 \left( B^2E^2+2 BDE+10 D^2 \right) C^2E^2+ \left( BE-2 D \right)^{3}E^3 \Big)^{2} \,,
\end{split}
\end{equation}
and $A=A(s_0,s_1)$, $B=B(s_0,s_1)$, etc.%
\end{appendix} 
\bibliographystyle{amsplain}
\bibliography{ref}{}
\end{document}